\documentclass[reqno]{amsart} 
\usepackage{amsfonts,dsfont}
\usepackage{mathtools}
\usepackage{amssymb}
\usepackage{fullpage}
\usepackage{color}
\usepackage{graphicx,scalerel}
\usepackage{nccmath} 
\usepackage[shortlabels]{enumitem} 
\usepackage[hidelinks]{hyperref}
\usepackage[all]{xy} 
\input xy
\usepackage{framed}

\newtheorem{theorem}{Theorem}
\newtheorem{corollary}[theorem]{Corollary}
\newtheorem{lemma}[theorem]{Lemma}
\newtheorem{proposition}[theorem]{Proposition}

\theoremstyle{definition}
\newtheorem{remark}[theorem]{Remark}
\newtheorem{remarks}[theorem]{Remarks}
\newtheorem{question}[theorem]{Open Problem}
\newtheorem{pgraph}[theorem]{{\!\!}}
\numberwithin{equation}{section}
\numberwithin{theorem}{subsection}

\newcommand{\la}{\langle}
\newcommand{\ra}{\rangle}
\newcommand{\vp}{\varphi}
\newcommand{\vr}{\varrho}
\newcommand{\pma}{\begin{pmatrix}}
\newcommand{\epma}{\end{pmatrix}}
\newcommand{\equi}[1]{\underset{#1}{\equiv}}
\newcommand{\stirlingtwo}[2]{\genfrac{\lbrace}{\rbrace}{0pt}{}{#1}{#2}}
\newcommand{\bs}{\boldsymbol}

\newcommand{\enum}{\begin{enumerate}[{\rm (1)},leftmargin=*,itemsep=1ex]}
\newcommand{\enumi}{\begin{enumerate}[{\rm i)},leftmargin=*,itemsep=1ex]}
\newcommand{\enuma}{\begin{enumerate}[{\rm a)},leftmargin=*,itemsep=1ex]}
\newcommand{\eenum}{\end{enumerate}}

\newcommand{\Co}{\mathds{C}} 

\newcommand\sbullet[1][.5]{\mathbin{\ThisStyle{\vcenter{\hbox{%
  \scalebox{#1}{$\SavedStyle\bullet$}}}}}} 

\DeclareMathOperator{\I}{Id}
\DeclareMathOperator{\V}{var}

\DeclareMathOperator{\FL}{F\langle X|L\rangle}
\DeclareMathOperator{\FU}{F\langle X|U\rangle}
\DeclareMathOperator{\IU}{Id^U\langle X\rangle}
\DeclareMathOperator{\FLU}{F^{L,U}\langle X\rangle}
\DeclareMathOperator{\ILU}{Id^{L,U}\langle X\rangle}

\DeclareMathOperator{\Prim}{Prim}
\DeclareMathOperator{\Max}{Max}
\DeclareMathOperator{\Ann}{Ann}
\DeclareMathOperator{\tr}{tr}
\DeclareMathOperator{\spn}{span}

\DeclareMathOperator{\ad}{ad}
\DeclareMathOperator{\Ad}{Ad}
\DeclareMathOperator{\LM}{LM}
\DeclareMathOperator{\chr}{char}
\DeclareMathOperator{\Der}{Der}
\DeclareMathOperator{\op}{op}

\DeclareMathOperator{\md}{mod}
\DeclareMathOperator{\Mat}{M}
\DeclareMathOperator{\Hom}{Hom}
\DeclareMathOperator{\E}{End}
\DeclareMathOperator{\End}{End}

\newcommand{\Uop}{U(L)^{\op}}
\DeclareMathOperator{\Sl}{\mathrm{sl}}
\newcommand{\M}{M_k(F)}
\DeclareMathOperator{\SL}{\mathrm{sl}_k(F)}
\DeclareMathOperator{\N}{\mathds{N}}

\DeclareMathOperator{\sgn}{sgn}

\usepackage{xcolor}

\begin{document}

\title{Differential identities of matrix algebras}

\author[Brox]{Jose Brox}
\address{Departamento de Álgebra, Análisis Matemático, Geometría y Topología, Universidad de Valladolid, Palacio de Santa Cruz, 47002, Valladolid, Spain}
\email{josebrox@uva.es}

\author[C. Rizzo]{Carla Rizzo}
\address{Dipartimento di Matematica e Informatica, Università degli Studi di Palermo, via Archirafi 34, 90123, Palermo, Italy 
}
\email{carla.rizzo@unipa.it}

\keywords{polynomial identity, differential identity, matrix algebra, universal enveloping algebra, variety of algebras, codimension growth, cocharacter}

\subjclass[2020]{Primary 16R10, 16R50, 17B10; Secondary 16W25, 16P90, 16S30, 17B35, 17B20, 16G30, 15B30}

\thanks{This work was partially supported by the Centre for Mathematics of the University of Coimbra - UIDB/00324/2020, funded by the Portuguese Government through FCT/MCTES. Jose Brox was first supported by the Portuguese Government through grant SFRH/BPD/118665/2016 (FCT/Centro 2020/Portugal 2020/ESF), later by a postdoctoral fellowship ``Convocatoria 2021'' funded by Universidad de Valladolid and partially supported by grant PID2022-137283NB-C22 funded by MCIN/AEI/10.13039/501100011033 and ERDF ``A way of making Europe''.
\includegraphics[width=75pt]{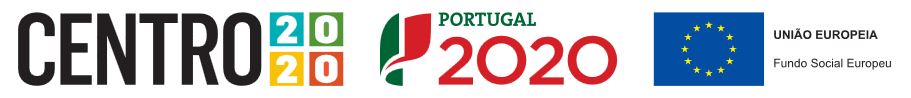}}

\begin{abstract}
We study the differential identities of the algebra $\M$ of $k\times k$ matrices over a field $F$ of characteristic zero when its full Lie algebra of derivations, $L=\Der(\M)$, acts on it. We determine a set of 2 generators of the ideal of differential identities of $\M$ for $k\geq 2$. Moreover, we obtain the exact values of the corresponding differential codimensions and differential cocharacters. Finally  we prove that, unlike the ordinary case, the variety of differential algebras with $L$-action generated by $\M$ has almost polynomial growth for all $k\geq 2$.
\end{abstract}

\maketitle

\section{Introduction}\label{Section:Introduction}

Let $A$ be an associative algebra over a field $F$ of characteristic zero, $F\langle X \rangle$ be the free associative algebra freely generated by an infinite countable set $X$ over $F$, and $\I(A)\subset F\langle X \rangle$ be the $T$-ideal of all polynomial identities of $A$. From a celebrated theorem of Kemer it is known that in characteristic zero every $T$-ideal is finitely generated (see \cite{Kemer1987}). The proof given by Kemer is not constructive, and finding an explicit finite basis of the $T$-ideal of polynomial identities of an algebra is, in general, an extremely hard task. Indeed there is only a handful of nontrivial examples of algebras for which this problem is completely solved. These include the algebra $UT_k(F)$ of upper triangular matrices (\cite{Maltsev1971}), the infinite-dimensional Grassmann algebra $G$ (\cite{KrakowskiRegev1973}), and the tensor product $G\otimes G$ of Grassmann algebras (\cite{Popov1982}). If one adds to the above the full matrix algebra $M_2(F)$ of order $2$ (see \cite{Drensky1981,Razmyslov1973}), one approximately gets the complete list of algebras for which the identities are known. In fact, even the description of the $T$-ideal of $3\times 3$ matrices is still an open problem with no solution in sight.
Since finding the exact form of the polynomial identities satisfied by a given algebra is a goal that seems too hard to achieve for the vast majority of relevant algebras, one is led to the study of identities of algebras with additional structure, such as algebras with a trace, group-graded algebras, algebras with involution, algebras with a Lie algebra action induced by derivations and, more in general, algebras with a Hopf algebra action. Such theories of identities include the theory of ordinary ones as a special case and, overall, their study tends to be less challenging.

In this paper we focus our attention on algebras with derivations, i.e., associative algebras with a Lie algebra action by derivations. If $L$ is such a Lie algebra, then its action can be naturally extended to an action of its universal enveloping algebra $U(L)$, and we say that $A$ is an algebra with derivations from $L$, or an $L$-algebra. In this context the differential identities of $A$ are defined as the polynomials vanishing on $A$ in the variables $x^u:=u(x)$ with $u\in U(L)$, i.e., coming from the free $L$-algebra $\FL$. Notice that the theory of differential identities generalizes the theory of ordinary polynomial identities, as any algebra $A$ can be regarded as an $L$-algebra by letting $L$ act on $A$ trivially,
with $L$ acting on $A$ as the trivial Lie algebra and $U(L)\cong F$. Differential identities were introduced by Kharchenko in \cite{Kharchenko1978} (see also \cite{Kharchenko1979}) and, in later years, relevant work by Gordienko and Kochetov (\cite{GordienkoKochetov2014}) has motivated a growing interest in them. The $T_L$-ideals of differential identities of some important algebras have been determined: in \cite{GiambrunoRizzo2019,Rizzo2021}, Giambruno and Rizzo gave a complete description of the differential identities of the algebra $UT_2(F)$ of $2\times 2$ upper triangular matrices endowed with all possible actions of Lie algebras by derivations; in \cite{DiVincenzoNardozza2021}, Di Vincenzo and Nardozza determined the generators of the $T_L$-ideal of the algebras $UT_k(F)$ under the action of the nonabelian two-dimensional Lie algebra; in \cite{Rizzo2020}, Rizzo studied the differential identities of $G$ with the action of a finite-dimensional abelian Lie algebra of inner derivations. We also refer the interested reader to \cite{MartinoRizzo2022,Nardozza2023} for more results on differential identities of other interesting algebras. 

Since the base field $F$ is of characteristic zero, as it happens in ordinary PI theory, the $T_L$-ideal $\I^L(A)$ is completely determined by its multilinear elements as well. Recall from ordinary PI theory that the codimensions sequence $\{c_n(A)\}_{n\in\N}$ of an algebra $A$ is defined by taking $c_n(A)$ as the dimension of the space $P_n$ of multilinear polynomials of degree $n$ modulo $\I(A)$. The codimensions sequence is also hard to compute, in the sense that, quoting Regev (\cite[p.2]{Regev2012}), in general there is no hope to find a closed formula for $c_n(A)$. Therefore one resorts to studying the growth of the sequence as $n$ tends to infinity. In the late nineties Giambruno and Zaicev (\cite{GiambrunoZaicev1998,GiambrunoZaicev1999}) proved that if $A$ is an algebra satisfying a nontrivial polynomial identity, then the limit $\exp(A):=\lim_{n\to \infty}\sqrt[n]{c_n(A)}$ exists and is always a nonnegative integer called the (ordinary) exponent of $A$. As a consequence, it follows that the codimensions of an algebra are either polynomially bounded or grow exponentially. Given a variety $\textsc V$, its growth is defined as the growth of $c_n(A)$ for any $A$ generating $\textsc V$. Then one says that variety $\textsc V$ has almost polynomial growth if its growth is not polynomially bounded but every proper subvariety of $\textsc V$ has polynomial growth. In the ordinary setting, $G$ and $UT_2(F)$ are the only algebras generating varieties of almost polynomial growth (\cite{Kemer1979}).

Analogous definitions of $P_n^L$, $c_n^L(A)$ and $\exp^L(A)$ can be given in the differential setting. In \cite{Gordienko2013} Gordienko proved that, in case $A$ is finite dimensional, $\exp^L(A)$ indeed exists and is a nonnegative integer called the $L$-exponent of $A$, which allows to likewise define the concept of almost polynomial growth in this case. Moreover, since $U(L)$ is a unital algebra, we can identify $P_n$ with a subspace of $P_n^L$ in a natural way and hence we have $c_n(A)\leq c_n^L(A)$ for all $n\geq 1$, from what it is clear that $\exp(A)\leq \exp^L(A)$. In \cite{GordienkoKochetov2014}, Gordienko and Kochetov proved that in case $L$ is a finite-dimensional semisimple Lie algebra then $\exp(A)=\exp^L(A)$; in \cite{RizzodosSantosVieira2021}, it was shown that if $A$ is finite dimensional and $L$ is any Lie algebra then $\exp^L(A)=1$ if and only if $\exp(A)=1$; and in \cite{Rizzo2023}, the author proved that $\exp^L(A)$ coincides with $\exp(A)$ for any Lie algebra $L$. In case $L$ is finite dimensional and solvable, the only two finite-dimensional $L$-algebras generating $L$-varieties of almost polynomial growth are $UT_2(F)$ with trivial $L$-action, and $UT_2(F)^\varepsilon$ with $L$ the one-dimensional Lie algebra generated by the inner derivation $\varepsilon$ induced by the matrix unit $e_{11}$ (\cite[Corollary 5.5]{Rizzo2023}). The assumption that $L$ is solvable is crucial; in fact, in this paper we present an infinite family of finite-dimensional $L$-algebras of almost polynomial growth for a simple Lie algebra $L$. This points out that the structural properties of the acting Lie algebra deeply affect the growth of the related varieties.

As matrix algebras are of great importance for both mathematics and its applications, the identities satisfied by matrices have been an attractive object of study from the very origins of polynomial identities theory. Concerning matrices with additional structure, to the best of our knowledge, so far the only known results are on graded identities of the matrix algebras $\M$ for cross-product gradings  (\cite{Vasilovsky98,Vasilovsky99} for gradings by $\mathbb{Z}_k$ and $\mathbb{Z}$, \cite{BahturinDrensky2002} for gradings by an arbitrary group), and on the trace identities of the full matrix algebras (see \cite{Procesi1976,Razmyslov1974}).

The main purpose of this paper is to determine the differential identities of the algebra $\M$ of $k\times k$ matrices for $k\geq 2$ over a field $F$ of characteristic zero, when acted by its Lie algebra of all derivations $\Der(\M)$, making all computations explicit along the way. 
To do so, in order to have a finite-dimensional algebra at our disposal, for any $L$-algebra $A$ we call $U$ to the image of the representation of $U(L)$ in $\E_F(A)$ and define two related free $L$-algebras, $\FU$ and $\FLU$ (with their corresponding notions of $U$- and $(L,U)$-polynomials and $T_U$- and $T_{L,U}$-ideals). These algebras allow us to make computations with $U$, and between the two they model the desired properties of $\FL$: roughly speaking, $\FU$ inherits the linear behavior of $\FL$, while $\FLU$ inherits its $L$-action behavior. In Section \ref{Section:GeneralSetting} we conduct a careful analysis of these algebras and their relations, and develop the general setting of the variety of $(L,U)$-algebras (which we define), for which $\FLU$ is the free algebra. In this way we show that we can study differential identities, codimensions, and growth by considering $U$- and $(L,U)$-polynomials and the variety of $(L,U)$-algebras.

In Section \ref{Section:MatrixSetting} we particularize to the case $L:=\Der(\M)\cong\Sl_k(F)$ and, via the representation theory of $L$ (see Theorem \ref{fullenvelopingalgebraTHEOREM}), describe $U$-polynomials of $\FU$ as being composed of variables either of the form $x^{\vp_{ab}}=x^{ab}$ for $a,b$ elements of the standard Cartan-Weyl basis $\mathcal S:=\{h_1,\ldots,h_{k-1},e_{12},\ldots,e_{kk-1}\}$ of $\Sl_k(F)$, or of the form $x^{\vp_{gg}}=x^{gg}$, with $g=I_k$ the identity matrix, with the exponent endomorphisms satisfying $\vp_{ab}\vp_{cd}=\delta_{bc}\vp_{ad}$ and $1=\sum_{a\in\mathcal S}\vp_{aa} + \vp_{gg}\in\E_F(\M)$. It is this partition of unity into orthogonal idempotents what allows us to circumvent the apparition of ordinary PIs in our computations. Moreover, the definitions of the endomorphisms $\vp_{ab}$ allow us to directly translate identities from $\mathcal S\cup\{g\}$ to $U$-identities of $\M$ (see Lemma \ref{multiplicationtablegivesidentitiesLEMMA}),
e.g. $e_{12}e_{31}=0$ implies $x^{e_{12}e_{12}}y^{e_{12}e_{31}}=0$, with the second exponent index carrying the weight of the identities. In Section \ref{Section:DifferentialIdentities} we use this idea together with the linear structure of $\FU$ to show a set of generators of $\I_U(\M)$, in 2 variables and with at most 3 terms, which afterwards we reduce to a minimal set of $4$ generators (in 2 variables and 2 terms) with the aid of the $L$-action of $\FU$, which allows to modify the second index of an exponent; finally we show, through the result from the representation theory of $L$ that we call the primitive element lemma (Lemma \ref{primitiveelementLEMMA}), that $\I_{L,U}(\M)$ is principal.

In order to translate this last result to $\FL$ in an explicit way, if $\phi$ is the homomorphism sending $U(L)$ to $U$, we need to compute some preimages $\phi^{-1}(\vp_{ab})\in U(L)$ of the endomorphisms $\vp_{ab}\in U$, and also some generators of $\ker\phi$, what we also do in Section \ref{Section:MatrixSetting}. For the preimages, we just find expressions formed with polynomials of degree at most $6$ in the elements $e_{ij}\in U(L)$. For the kernel, we recall that the center of $U(L)$ is a polynomial ring in $k-1$ indeterminates $c_i$ which on each irreducible representation $\rho$ of $L$ act as scalars $\lambda^\rho_i$, from which each $c_i-\lambda^\rho_i$ is in the kernel of $\rho$.\footnote{We also compute explicitly the values of the eigenvalues of a standard set of Casimir generators of $\Sl_k(F)$ for the adjoint representation, a result which may be of independent interest.} On the other hand, we know that $e_{12}^3\in\ker\phi$ and that $\phi$ is the direct sum of the trivial and the adjoint representations of $L$. From these facts, the algebraic geometry of $U(L)$ (Gröbner bases, primitive spectrum), and the primitive element lemma, we show that $\ker\phi$ is principal (Theorem \ref{KerPhiGeneratorsTHEOREM}). Then, as $\FLU\cong\FL/\ILU$ with $\ILU$ the $T_L$-ideal generated by $x^z$ for $z\in\ker\phi$, we get as our main result, Theorem \ref{differentialidentitiesTHEOREM}, that the differential identities of $\M$ are generated by 1 identity in 1 variable (coming from $\ker\phi$ and depending on $k$) and 1 identity in 2 variables (coming from $\I_{L,U}(\M)$ and not depending on $k$ except for $k=2$).

In addition, in Section \ref{Section:MatrixSetting} we also show a special kind of symmetry that holds for the $U$-identities of any $(L,U)$-algebra $A$ and that we use profusely thereafter, roughly speaking, that changes in the first exponent index leave $T_U(A)$ invariant. More concretely, consider the space $P^U_{\mathcal I,\mathcal J,(a_1,\ldots,a_{n-r})}$ with $(\mathcal I,\mathcal J)$ a partition of $\{1,\ldots,n\}$ and $|\mathcal I|=r$, of those multilinear $U$-polynomials in which variable $x_i$ always appears paired with first exponent $g$ for $i\in\mathcal I$ and variable $x_j$ always appears paired with first exponent $a_j\in\mathcal S$ for $j\in\mathcal J$. Then $P^U_{\mathcal I,\mathcal J,(a_1,\ldots,a_{n-r})}$ is linearly isomorphic to $P^U_{\mathcal I',\mathcal J',(a'_1,\ldots,a'_{n-r'})}$ if and only if $r=r'$, and $U$-identities of $A$ map to $U$-identities. Moreover, defining an action of $S_r\times S_{n-r}$ by permutations of variables together with their first exponents, the linear isomorphisms are in fact isomorphisms of $S_r\times S_{n-r}$-modules. Since then all of them are isomorphic to $P^U_{r,n-r}:=P^U_{\{1,\ldots,r\},\{r+1,\ldots,n-r\},(a,\ldots,a)}$ for fixed $a$, we can restrict to the study of these $S_r\times S_{n-r}$-modules for each $0\leq r\leq n$.
With these ideas at hand, we show a combinatorial formula for the $U$-codimensions (Formula \eqref{formula(C)}), arising from $P^U_{r,n-r}(A)$, that is used in Section \ref{Section:DifferentialIdentities} together with the $U$-identities of $\M$ to find a closed formula for $c^L_n(\M)$ (see Theorem \ref{differentialidentitiesTHEOREM}). In particular, the associated generating function is rational; in contrast, when $k\geq3$ is odd, the generating function of (ordinary) $c_n(\M)$ is not algebraic (\cite[Theorem 12.4]{Regev2012}). As an aside, this proof also constitutes a simple way of showing that $\exp(\M)=k^2$ (as $\exp(\M)=\exp^L(\M)=k^2$), which was originally shown by Regev by resorting to the asymptotics of trace identities (\cite{Regev1984}), and can also be proved by Wedderburn's decomposition (\cite[Theorem 6.6.1]{GiambrunoZaicevbook}).\\
Now let $P_{(n;r)}^U(A)$ be the direct sum of all $P_{\mathcal I,\mathcal J,(a_1,\ldots,a_{n-r})}(A)$ such that $|\mathcal I|=r$; it is an $S_r\times S_{n-r}$-module whose character $\chi_{(n;r)}^U(A)$, which we call the $(n,r)$th $U$-cocharacter of $A$, is a multiple of that of $P^U_{r,n-r}(A)$ (Formula \eqref{formula(chi)}). In Section \ref{Section:Ucocharacter} we show, by a counting argument, that $\chi_{(n;r)}^U(\M)$ is a multiple of the irreducible $S_r\times S_{n-r}$-cocharacter $\chi_{(r)}\otimes\chi_{(n-r)}$ for each $0\leq r\leq n$ (Theorem \ref{UcocharacterTHEOREM}).

 Lastly, in Section \ref{Section:AlmostPolynomialGrowth} we prove a result which is, in our view, one of the most interesting and unexpected PI results of this paper: unlike the ordinary case, the variety $\V^L(M_k(F))$ of differential algebras with $L$-action generated by $M_k(F)$ has almost polynomial growth for all $k\geq 2$, i.e., $\V^L(M_k(F))$ has exponential growth but any of its proper $L$-subvarieties has polynomial growth (Theorem \ref{AlmostPolynomialGrowthTHEOREM}). To show it, we prove that if an $L$-subvariety $\textsc V$ satisfies any $L$-identity not belonging to $T_L(\M)$, then it must satisfy all $U$-identities of the form $x_1^{a_1a_2}\cdots x_t^{a_ta_{2t}}$ for $a_i\in\mathcal S$ for some $t$, implying that $c_{r,n-r}^L(\textsc V) = c_{r,n-r}^U(\textsc V)=0$ whenever $n-r\geq N$ for some $N$.

\section{General setting}
\label{Section:GeneralSetting}

\subsection{Preliminaries}
\mbox{}

\medskip

Throughout this paper, $F$ will denote a field of characteristic zero, $A$ an associative algebra, and $(L,[\cdot\, ,\cdot]_L)$ a Lie algebra. All algebras and vector spaces have $F$ as their underlying field. Although we work with varieties of nonunital associative algebras, all results can be easily adapted to unital associative algebras as well.  All notations, once introduced, will maintain their meanings in the ensuing sections of the paper.

\begin{pgraph}\textbf{Associative algebras.}\label{associativealgebras}
 Given a set $S\subseteq A$, by $\la S\ra$ we denote the ideal generated by $S$. $A$ is \emph{split semisimple} (over $F$) if it is a direct sum of matrix algebras over $F$. Given a unital associative algebra $U$ with product $\cdot$, the \emph{opposite algebra} $U^{\op}$ is the underlying vector space of $U$ endowed with the \emph{opposite product} $a\cdot^{\op} b:=b\cdot a$ for $a,b\in U$; $U^{\op}$ is antiisomorphic to $U$ as unital associative algebras (with $(U^{\op})^{\op}=U$) through the map $^{\op}:U\to U^{\op}$ such that $a^{\op}:=a$; in particular, any subset of $U$ is mapped to itself. If $\phi:U_1\to U_2$ is a homomorphism of unital associative algebras, then $\phi^{\op}:U_1^{\op}\to U_2^{\op}$ defined by $\phi^{\op}(a):=\phi(a)$ is a homomorphism of unital associative algebras.
Given a vector space $V$, a \emph{left (resp. right) (algebra) $U$-action} of a unital associative algebra $U$ on $V$ is a map $\cdot:U\times V\rightarrow V$ (resp. $\cdot:V\times U\rightarrow V$) such that $1\cdot x=x$, $a\cdot(\lambda x+y)=\lambda(a\cdot x)+a\cdot y$, $(\lambda a+b)\cdot x = \lambda(a\cdot x)+b\cdot x$, and $(ab)\cdot x = a\cdot(b\cdot x)$ (resp. $x\cdot 1=x$, $(\lambda x+y)\cdot a=\lambda(x\cdot a)+y\cdot a$, $x\cdot(\lambda a+b) = \lambda(x\cdot a)+x\cdot b$, and $x\cdot (ab) = (x\cdot a)\cdot b$) for $a,b\in U$, $x,y\in V$ and $\lambda\in F$. Let $\End_F(V)$ be the algebra of the endomorphisms of $V$ acting on the left of $V$. Then a left (resp. right) $U$-action on $V$ produces a \emph{left (resp. right) representation} of $U$, i.e., a homomorphism of unital algebras $\phi:U\to\End_F(V)$ (resp. $\phi:U^{\op}\to\End_F(V)$) and vice versa. Any left action $\cdot$ of $U$ on $V$ has an associated right action $\cdot^{\op}$ of $U^{\op}$ on $V$ given by $x\cdot^{\op}a:=a\cdot x$ for $a\in U^{\op}$, $x\in V$, and vice versa (with $(\cdot^{\op})^{\op}=\cdot$). If there is a (left, right) $U$-action on $A$ we say that $A$ is a \emph{(left, right) $U$-algebra} (for this action).
Throughout this paper we define endomorphisms as acting on the left, but we use exponential notation to denote their actions: hence we see any left $U$-algebra as a right $U^{\op}$-algebra (notice that the associated representation $\phi$ is the same), with exponents living in $U^{\op}$; in addition, we denote the opposite products appearing in exponents just by juxtaposition. Moreover, by abuse of notation we may also denote $\phi^{\op}:U^{\op}\to\End_F(A)^{\op}$ by $\phi$. For example, if $\phi:U\to\End_F(A)$ with $\phi(u_i)=\phi_i$, $u_i\in U$ for $i=1,2$ and associated left action denoted by $\sbullet$, then we write
\[(a^{u_1})^{u_2} = a^{u_1u_2} = a^{\phi(u_1)\phi(u_2)} = a^{\phi_1\phi_2} =\phi_2(\phi_1(a)) = u_2\sbullet(u_1\sbullet a) = (u_2u_1)\sbullet a = (u_1\cdot^{\op} u_2)\sbullet a\]
for $a\in A$ and, in the exponents, $u_1,u_2\in U^{\op}$, $\phi_1,\phi_2\in\End_F(A)^{\op}$. Since for any set $S\subseteq U$ we have $S^{\op}=S$ inside $U^{\op}$, if no confusion may arise, when picking exponents we may write $s\in S$ instead of $s\in S^{\op}$.
\end{pgraph}
\begin{pgraph}\textbf{Lie algebras.}
Given $L$, the \emph{opposite Lie algebra} $L^{\op}$ is the underlying vector space of $L$ endowed with the \emph{opposite product} $[a,b]_{L^{\op}}:=-[a,b]_L$ ($L^{\op}$ is isomorphic to $L$). If $\vp:L\rightarrow M$ is a homomorphism of Lie algebras, then $\vp^{\op}:L^{\op}\rightarrow M^{\op}$ defined by $\vp^{\op}(a):=\vp(a)$ is a homomorphism of Lie algebras. 
The underlying vector space of $A$ endowed with the \emph{commutator product} $[a,b]:=ab-ba$ for all $a,b\in A$ is a Lie algebra, denoted by $A^-$. We have $(A^{\op})^- = (A^-)^{\op}$. A linear endomorphism $\delta:A \to A$ is a \emph{derivation} of $A$ if it satisfies  $(ab)^\delta=a^\delta b + a b^\delta$ for all $a,b\in A$. If $a\in A$, the endomorphism $\ad_a:A\to A$ defined by $\ad_a(b):=[a,b]$ for all $b\in A$ is a derivation of $A$, called the \emph{inner derivation} induced by $a$. For ease of reading, given an element in $A$ denoted by a lowercase letter, at times we denote the inner derivation this element induces by the corresponding uppercase letter, e.g. $E:=\ad_e\in\End_F(A)$ for $e\in A$. The vector space of all derivations of $A$ endowed with the commutator product is a Lie algebra denoted by $\Der(A)\subseteq\E_F(A)^-$, with the subspace $\ad (A)\subseteq\Der(A)$ of inner derivations of $A$ being a Lie ideal of $\Der(A)$.
 A semisimple Lie algebra is \emph{split} (over $F$) if it has a Cartan subalgebra $H$ such that the eigenvalues of $\ad_h$ lie in $F$ for all $h\in H$, called a \emph{splitting} Cartan subalgebra; hence a split-semisimple Lie algebra has a root system (\cite[Chapter IV]{JacobsonLie}). $L$ is \emph{split simple} if it is simple and split semisimple.
\end{pgraph}
\begin{pgraph}\textbf{$L$-modules.} 
A \emph{(left) representation} of $L$ on the vector space $V$ is a Lie algebra homomorphism $\rho:L\rightarrow\End_F(V)^-$; we also say that that $V$ has a \emph{(left) $L$-action} given by $d\cdot x:=\rho(d)x$ for $d\in L$, $x\in V$, and that $V$ (together with $\rho$) is a \emph{(left) $L$-module}. We similarly define the same notions for the right side, with any left $L$-action $\cdot$ giving rise to a right $L^{\op}$-action $\cdot^{\op}$ by the rule $x\cdot^{\op} d:=d\cdot x$; accordingly, we will write left $L$-actions as right $L^{\op}$-actions with exponential notation.
The \emph{trivial representation} on $V$ is given by $\rho_0:=0$. The \emph{adjoint representation} of $L$ is given by $\ad$ on $L$. A representation $\rho$ on $V$ is \emph{irreducible} if it has no proper nontrivial subrepresentation $\rho|_W$ on $W\subseteq V$. Weyl's theorem states that, when $L$ is finite dimensional semisimple, every finite-dimensional $L$-module is \emph{completely reducible}, i.e., a direct sum of irreducible $L$-modules (\cite[III.7 Theorem 8]{JacobsonLie}). 
For the rest of this paragraph let $L$ be a finite-dimensional split-semisimple Lie algebra (over a field of characteristic $0$), with a fixed splitting Cartan subalgebra $H\subseteq L$ and a fixed set of positive roots; then $L$ has a \emph{triangular decomposition} $L=N_-\oplus H\oplus N_+$ where $N_{-,+}$ are the linear spans of the negative and positive root spaces, respectively. Any finite-dimensional irreducible $L$-module is \emph{absolutely irreducible}, i.e., irreducible for any extension of scalars of the base field (\cite[p.223]{JacobsonLie}).
A \emph{weight} of $L$ is an algebra homomorphism $\lambda\in\Hom(H,F)$. A vector $v$ of the $L$-module $V$ is a \emph{weight vector} of weight $\lambda$ if $\rho(h)v=\lambda(h)v$ for all $h\in H$, a \emph{highest weight vector} if in addition $\rho(N_+)v=0$, in which case $\lambda$ is a \emph{highest weight} of $V$. An $L$-module is a \emph{(cyclic) highest weight module} if it is generated by a single highest weight vector. Given a weight $\lambda$ of $L$ we can build a universal highest weight $L$-module with $\lambda$ as highest weight, the \emph{Verma module} $W_\lambda$, such that any highest weight $L$-module with highest weight $\lambda$ is a quotient of $W_\lambda$ (\cite[Chapter VII.2]{JacobsonLie}).

 Given a representation $\rho:L\to\E_F(V)^-$, the \emph{enveloping (associative) algebra} of $\rho$ is the unital associative subalgebra of $\E_F(V)$ generated by $\rho(L)$, denoted here by $[\rho(L)]$. The following result is known (see e.g. \cite[Theorem III.5.10]{JacobsonLie} for a close result); we include a proof here since it is a key tool of our paper.

\begin{theorem}[\textbf{Full matrix algebras as enveloping algebras}]\label{fullenvelopingalgebraTHEOREM}
\mbox{}

\noindent Let $L$ be a finite-dimensional split-semisimple Lie algebra over a field of characteristic $0$ and $\rho$ be a finite-dimensional representation of $L$. Then the enveloping associative algebra $[\rho(L)]$ of $\rho$ is split semisimple. Moreover, if $\rho$ is irreducible of dimension $d$, then $[\rho(L)]$ is a matrix algebra of dimension $d^2$.
\end{theorem}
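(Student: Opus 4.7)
The plan is to combine Weyl's complete reducibility theorem (cited just before the statement) with the finite-dimensional double centralizer theorem (i.e., Burnside's theorem). First I handle the irreducible case, which is the \emph{moreover} assertion. Since $L$ is split semisimple in characteristic zero and $\rho$ is finite-dimensional irreducible, the result on absolute irreducibility cited in the excerpt says that the representation space $V$ (of dimension $d$) is absolutely irreducible as a module over $A:=[\rho(L)]$. Burnside's theorem for absolutely irreducible representations then yields $A=\End_F(V)\cong\Mat_d(F)$, so $[\rho(L)]$ is a matrix algebra of dimension $d^2$.

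For the general case, I would first apply Weyl's theorem to decompose $V=\bigoplus_{j=1}^m V_j^{\oplus n_j}$, where $V_1,\ldots,V_m$ are pairwise non-isomorphic irreducible $L$-modules of dimensions $d_1,\ldots,d_m$. Each $V_j$ is $L$-absolutely-irreducible and hence also absolutely irreducible as a module over $A:=[\rho(L)]$, so that $V$ is a semisimple finite-dimensional $A$-module. To identify $A$, I would compute the commutant $A':=\End_A(V)=\End_L(V)$: by Schur's lemma combined with absolute irreducibility, $\Hom_L(V_i,V_j)=0$ for $i\neq j$ and $\End_L(V_j)=F\cdot\I_{V_j}$, whence $A'\cong\bigoplus_{j=1}^m\Mat_{n_j}(F)$, acting by mixing the $n_j$ copies of $V_j$ for each $j$. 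The finite-dimensional double centralizer theorem applied to the semisimple $A$-module $V$ then gives $A=A''=\End_{A'}(V)$, and a direct block-by-block calculation shows $\End_{A'}(V)\cong\bigoplus_{j=1}^m\End_F(V_j)\cong\bigoplus_{j=1}^m\Mat_{d_j}(F)$, acting diagonally on the $n_j$ copies of each $V_j$. This is split semisimple, as required.

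The main obstacle is just to pin down a clean reference for the double centralizer theorem (or equivalently Burnside's theorem / Jacobson density in finite dimensions) in the form I need; once that is in place the rest is structure-theoretic bookkeeping. A minor alternative, which avoids invoking the double centralizer directly, is to note that an element of $A$ vanishes as soon as it acts trivially on each $V_j$ (since then it acts trivially on all of $V$), so the natural map $A\to\bigoplus_j\End_F(V_j)$ obtained by restriction is injective; surjectivity onto each factor follows from the irreducible case handled above, and surjectivity onto the product then follows by combining Schur's lemma (to separate blocks) with Jacobson density on each block. Either route yields $[\rho(L)]\cong\bigoplus_{j=1}^m\Mat_{d_j}(F)$, completing the proof.
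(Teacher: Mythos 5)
Your proof is correct and follows the same overall strategy as the paper's: Weyl's complete reducibility to reduce to the irreducible case, absolute irreducibility of finite-dimensional irreducible representations of a split-semisimple Lie algebra in characteristic zero, and Jacobson density (Burnside). The paper executes the irreducible case by first extending scalars to an algebraic closure $K$ and then applying density together with Schur's lemma over $K$ before descending by a dimension count, whereas you invoke Burnside for absolutely irreducible modules directly over $F$, and you also spell out the reduction from the general case to the irreducible one (via the double centralizer theorem, or equivalently the injective block map $[\rho(L)]\to\bigoplus_j\End_F(V_j)$ with surjectivity from density and Schur), a step the paper states very tersely.
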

\begin{proof}
Put $\rho:L\to\End_F(V)^-$ with $\dim_F V=d$. Since $\rho$ is finite dimensional, by Weyl's theorem it is completely reducible, so it suffices to assume that $\rho$ is irreducible. Denote $U:=[\rho(L)]$, let $K$ be an algebraic closure of $F$, and consider the following objects produced by extension of scalars:
$L_K:=L\otimes_F K, \, V_K:=V\otimes_F K, \E_F(V)\otimes_F K = \E_K(V_K),$
the representation $\rho_K:=\rho\otimes_F K:L_K\rightarrow\E_K(V_K)^-$, and its enveloping algebra $U_K:=[\rho_K(L_K)] = U\otimes_F K$.
Since $L$ is a finite-dimensional split-semisimple Lie algebra and $\chr(F)=0$, $\rho$ is absolutely irreducible. Therefore $\rho_K$ is irreducible, hence $V_K$ is a finite-dimensional irreducible $U_K$-module (since $U_K$ is generated by $L_K$) which is also faithful (as $U_K\subseteq\E_K(V_K)$); by Jacobson's density theorem (\cite[pp.197,199]{JacobsonAlgebra}), $U_K=\End_D(V_K)$ with $D:=\End_{U_K}(V_K)$ a finite-dimensional division algebra (by Schur's lemma) over the algebraically closed field $K$, which forces $D=K$ and $U_K = \End_K(V_K)\cong M_d(K)$. Now
$U\otimes_F K = U_K =  \E_K(V_K) = \E_F(V)\otimes_F K$
implies $U=\E_F(V)$ (since $U\subseteq\E_F(V)$), so $U\cong M_d(F)$.
\end{proof}
\end{pgraph}
\begin{pgraph}\textbf{Universal enveloping algebras.}
The \emph{universal enveloping algebra} $U(L)$ of $L$ is the quotient of the unital tensor algebra generated by $L$ by the ideal generated by relations $a\otimes b-b\otimes a-[a,b]_L$ for all $a,b\in L$, i.e., for $a,b\in L$ we have $[a,b]_{U(L)}=[a,b]_L$, hence $[a,b]_{\Uop}=-[a,b]_L = [a,b]_{L^{\op}}$.
The universal enveloping algebra satisfies the following universal property: there is a monomorphism $\gamma:L\rightarrow U(L)^-$ of Lie algebras such that for any homomorphism $\vp:L\rightarrow A^-$ of Lie algebras there exists a unique homomorphism $\phi:U(L)\rightarrow A$ of associative algebras such that $\vp=\phi\circ\gamma$. Since its universal property determines $U(L^{\op})$ up to isomorphism, and $U(L)^{\op}$ is easily seen to satisfy said universal property, we have $U(L^{\op})\cong U(L)^{\op}$ as unital associative algebras.
By abuse of notation, we identify $L$ with $\gamma(L)$ inside $U(L)$. Then the Poincaré-Birkhoff-Witt (PBW) theorem asserts that if $\{e_i\}_{i\in I}$ is an ordered basis of $L$, the set $B:=\{e_{i_1}^{k_1}\cdots e_{i_j}^{k_j} \ | \ j\in\N, e_{i_1}<\cdots< e_{i_j}, k_1,\ldots,k_j\in\N^*\}$ is a basis of $U(L)$ (including $1\in B$). In particular, $U(L)=:U^*(L)\oplus F\cdot1$, where $U^*(L)$ is the \emph{nonunital universal enveloping algebra} of $L$ and the \emph{augmentation ideal} of $U(L)$ (which is a maximal ideal). Note that $U(L)$ is always an infinite-dimensional algebra, even if $L$ is finite dimensional. Universal enveloping algebras of finite-dimensional Lie algebras are Noetherian rings (\cite[V.3 Theorem 6(2)]{JacobsonLie}) and the theory of Gröbner bases is available for $U(L)$ (see \cite[Section 6.3]{deGraafLie}); we are interested in a particular application. Let $L$ be finite dimensional, fix an ordered basis $\{e_1,\ldots,e_d\}$ with $e_i<e_j$ if $i<j$, and let $B$ denote the corresponding basis of monomials of $U(L)$ given by the PBW theorem. The \emph{deglex order} extends $<$ to $B$ by the rule $m_1:=e_1^{k_1}\cdots e_d^{k_d}< e_1^{l_1}\cdots e_d^{l_d}=:m_2$ if either $\deg(m_1)<\deg(m_2)$ or $\deg(m_1)=\deg(m_2)$ and the first nonzero entry of $(k_1-l_1,\ldots,k_d-l_d)$ is positive. Given an element $f\in U(L)$, its \emph{leading monomial} $\LM(f)$ with respect to $<$ is the largest monomial of $f$ with nonzero coefficient. If $I$ is an ideal of $U(L)$, its set of leading monomials is $\LM(I):=\{\LM(f) \ | \ f\in I\}$ and its set of \emph{normal words} is $N(I):=\{m\in B \ | \ m\not\in\LM(I)\}$. Then
$U(L)=I\oplus \spn N(I)$ (\cite[p.231]{deGraafLie}).

We can make $U(L)$ an $L$-module by extending the adjoint action of $L$ to $U(L)$, which is the restriction to $L$ of the adjoint action of $U(L)$ on itself (given by $\ad_x(y)=[x,y]$ for $x,y\in U(L)$). By the universal property of $U(L)$, any representation $\rho:L\rightarrow\End_F(V)^-$ of $L$ extends to an associative left $U(L)$-representation $\rho:U(L)\rightarrow\End_F(V)$ (note the abuse of notation). A $U(L)$-representation is irreducible if and only if it is irreducible as an $L$-representation (since $U(L)$ is generated as an algebra by $L$).
If $L$ is finite dimensional split semisimple, the Verma module associated to weight $\lambda$ of $L$ can be constructed as $M_\lambda\cong U(L)/I_\lambda$, with $I_\lambda$ the left ideal of $U(L)$ generated by $\{h-\lambda(h)1 \ | \ h\in H\}\cup N^+$ (\cite[Chapter VII.2]{JacobsonLie}, \cite[Chapter 9]{Hall2015}).

The \emph{center} $Z(U(L))$ of $U(L)$ is the set of elements $c\in U(L)$ such that $[c,U(L)]=0$, which are called \emph{Casimir elements}. For the rest of this paragraph let $L$ be a finite-dimensional semisimple Lie algebra over an algebraically closed field (of characteristic $0$), and denote $Z:=Z(U(L))$. By the Harish-Chandra isomorphism (\cite[Lemma 36]{HarishChandra1951}), $Z$ is isomorphic to the algebra of polynomials in rank$(L)$ indeterminates (see also \cite[Theorem 7.3.8]{DixmierEnvelopingAlgebras}); we call a set of \emph{Casimir generators} to any set of rank$(L)$ algebraically independent elements of $Z$. By Schur's lemma, if $c\in Z$ and $\rho$ is an irreducible representation of $L$ on $V$ then $\rho(c)$ acts as a scalar on $V$. A \emph{central character} of $L$ is an algebra homomorphism $\chi:Z\rightarrow F$. If $\rho$ is an irreducible representation of $L$ of dimension $d$ then $\chi_\rho(c):=\frac1d\tr(\rho(c))$ for $c\in Z$ is the central character \emph{associated to $\rho$}; we call $\chi_\rho(c)$ the \emph{eigenvalue} of $c$ for $\rho$. We clearly have $U(L)\ker\chi_\rho\subseteq\ker\rho$. If $\rho,\rho'$ are two finite-dimensional irreducible representations of $L$ and $\chi_\rho=\chi_{\rho'}$ then $\rho\cong\rho'$ (\cite[Section 2]{Cartier1955}). Note that, for a fixed set of Casimir generators, the maximal ideals of $Z$ are in 1-to-1 correspondence with the central characters of $L$. Now fix $F=\Co$, the field of complex numbers.  We will need the following results from the algebraic geometry of $U(L)$. Let $\Max(A), \Prim(A)$ respectively denote the sets of maximal and primitive ideals of the algebra $A$, and for $I\in\Prim(U(L))$ let $\pi(I):=I\cap Z$ and say that $I$ \emph{lies over} $J$ for an ideal $J$ of $Z$ if $I\in\pi^{-1}(J)$. Then $\pi(I)\in\Max(Z)$, and \emph{Dixmier's theorem} asserts the following: given $M\in\Max(Z)$, the set of primitive ideals of $U(L)$ lying over $M$ is finite and contains a minimal and a maximal element, with $\pi:\Max(U(L))\rightarrow\Max(Z)$ being a bijection (\cite{Dixmier1970,Dixmier1971}, see also \cite[3.2-3.3]{Borho1978} and \cite[Theorem 3.1.3]{Catoiu2000}). \emph{Joseph's theorem on principal series submodules} states that if the ideal $I$ of $U(L)$ satisfies $I\cap Z(L)\in\Max(Z(L))$ then $I$ is the annihilator of the $L$-module $W_\lambda/IW_\lambda$ for some weight $\lambda$ of $L$ (\cite[Theorem 5.1]{Joseph1979}).
We can say more when the ideal is of finite codimension:

\begin{lemma}\label{maximalideal}
Let $L$ be a complex finite-dimensional semisimple Lie algebra and $I\neq U(L)$ be an ideal of finite codimension of $U(L)$ such that $I\cap Z(U(L))\in\Max(Z(U(L)))$. Then $I\in\Max(U(L))$.
\end{lemma}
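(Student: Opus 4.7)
The plan is to show that the finite-dimensional quotient $A := U(L)/I$ is a simple associative algebra, which (since $I \neq U(L)$) immediately yields $I \in \Max(U(L))$. I would proceed in two stages: first, realize $A$ as the associative enveloping algebra of a finite-dimensional representation of $L$, so that Theorem \ref{fullenvelopingalgebraTHEOREM} gives the split semisimplicity of $A$; second, use the hypothesis $I \cap Z \in \Max(Z)$ (where $Z := Z(U(L))$) together with the Dixmier bijection $\pi : \Max(U(L)) \to \Max(Z)$ to force $A$ to have exactly one simple summand.

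For the first stage, consider the left regular representation $\rho : U(L) \to \End_{\mathbb{C}}(A)$ given by left multiplication on $A$. Because $A$ is unital, $u \cdot \overline{1} = \overline{0}$ in $A$ forces $u \in I$, so $\ker \rho = I$ and $\rho(U(L)) \cong A$. Since $L$ generates $U(L)$ as a unital associative algebra (by the PBW theorem), the enveloping algebra $[\rho(L)]$ coincides with $\rho(U(L))$, whence $[\rho(L)] \cong A$. Now $L$ is complex semisimple hence split semisimple, and $\rho|_L$ is finite-dimensional because $I$ has finite codimension, so Theorem \ref{fullenvelopingalgebraTHEOREM} applies and gives that $A$ is split semisimple; say $A \cong \bigoplus_{i=1}^n M_{d_i}(\mathbb{C})$.

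For the second stage, each simple summand of $A$ corresponds to a maximal two-sided ideal $\tilde M_i \supseteq I$ of $U(L)$. By Dixmier's theorem each $\tilde M_i \cap Z$ lies in $\Max(Z)$, and since it contains the maximal ideal $M := I \cap Z$, it must coincide with $M$. The Dixmier bijection $\pi : \Max(U(L)) \to \Max(Z)$ then forces all the $\tilde M_i$ to coincide, so $n = 1$ and $A$ is simple, i.e., $I$ is maximal in $U(L)$.

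The main step to get right is the identification $[\rho(L)] = A$, which rests on the unitality of $A$ to pin down $\ker \rho = I$ and on PBW so that $L$ generates $U(L)$ as a unital algebra; after that, the argument is a direct combination of Theorem \ref{fullenvelopingalgebraTHEOREM} with the Dixmier bijection, and I do not anticipate any substantial obstacle.
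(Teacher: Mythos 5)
Your proof is correct, and it takes a genuinely different route from the paper's. The paper uses the hypothesis $I\cap Z\in\Max(Z)$ twice: first to invoke Joseph's theorem on principal series submodules, realizing $I$ as the annihilator of a finite-dimensional $U(L)$-module $N=W_\lambda/IW_\lambda$, then (after decomposing $N$ via Weyl's theorem and applying Jacobson's density) again via Dixmier's bijection to collapse the resulting sum of matrix algebras to a single summand. You dispense with Joseph's theorem entirely: the finite-codimension hypothesis alone makes the left regular representation $\rho$ of $U(L)$ on $A:=U(L)/I$ a finite-dimensional representation with $\ker\rho=I$ (the unitality of $A$ pins this down), and since $L$ generates $U(L)$ as a unital associative algebra, $[\rho(L)]=\rho(U(L))\cong A$; then the paper's own Theorem \ref{fullenvelopingalgebraTHEOREM} — which already packages the Weyl/Jacobson-density machinery — gives that $A$ is split semisimple. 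Both proofs then finish identically via Dixmier's bijection $\pi:\Max(U(L))\to\Max(Z)$. Your route is more self-contained and exploits the finite-codimension hypothesis up front to avoid a deep external citation; the paper's route exhibits an explicit highest-weight module annihilated by $I$, information that your argument does not produce but that Lemma \ref{maximalideal} does not need.
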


\begin{proof}
Denote $Z:=Z(U(L))$. Since $I\cap Z\in\Max(Z)$, $I$ is the annihilator of the $U(L)$-module $N:=W_\lambda/IW_\lambda$ for some weight $\lambda$ of $L$ by Joseph's theorem on principal series submodules. Since $W_\lambda\cong U(L)/I_\lambda$ as $U(L)$-modules, we get $N\cong\frac{U(L)/I_\lambda}{(I+I_\lambda)/I_\lambda}\cong U(L)/(I+I_\lambda)$ as $U(L)$-modules, with $I+I_\lambda$ of finite codimension, proving $N$ a finite-dimensional $U(L)$-module. By Weyl's theorem $N$ is completely reducible, $N=\bigoplus_{i=1}^n N_i$ with $N_i$ irreducible and finite dimensional for $1\leq i\leq n$. The primitive ideals $M_i:=\Ann N_i$ are maximal by Jacobson's density theorem (\cite[pp.197,199]{JacobsonAlgebra}), in particular pairwise coprime, so $I=\Ann N=\Ann(\bigoplus_{i=1}^n N_i) = \bigcap_{i=1}^n\Ann N_i =\bigcap_{i=1}^n M_i$ and (by the Chinese remainder theorem) $U(L)/I\cong \prod_{i=1}^n U(L)/M_i$ is a sum of $n\geq1$ matrix algebras ($n>0$ because $I\neq U(L)$). Suppose $n\geq2$; then there are at least two different maximal ideals $M_1,M_2$ of $U(L)$ containing $I$, with $I\cap Z$ maximal in $Z$ forcing $M_1\cap Z = I\cap Z = M_2\cap Z$. But Dixmier's theorem implies the uniqueness of the maximal ideal of $U(L)$ lying over $I\cap Z$, so $M_1=M_2$, a contradiction which forces $n=1$. So $U(L)/I$ is simple and $I$ is maximal.
\end{proof}
\end{pgraph}
\begin{pgraph}\textbf{Hopf algebras.} 
Consider a Hopf algebra $H$ with comultiplication $\Delta:H\rightarrow H\otimes H$ (in particular $\Delta(xy)=\Delta(x)\Delta(y)$ in $H\otimes H$). $H$ is \emph{cocommutative} if $\tau\circ\Delta=\Delta$ for the \emph{twist map} $\tau:H\otimes H\rightarrow H\otimes H$ defined by $\tau(a\otimes b):=b\otimes a$. If $H$ is cocommutative then $H^{\op}$ is also a Hopf algebra with the same comultiplication, counit, and antipode.
We write comultiplications in Sweedler's notation, $\Delta(h)=:\sum h_1\otimes h_2$. For $n\in\N^*$, the \emph{$(n-1)$th iterated comultiplication} is the operation $\Delta_{n-1}:H\rightarrow \overbrace{H\otimes\cdots\otimes H}^n$ given iteratively by \[\Delta_{n-1}(h):=\sum \Delta(h_1)\otimes\cdots\otimes h_{n-1} =: \sum h_1\otimes\cdots\otimes h_n\] (which is well defined by the coassociativity axiom, see \cite[1.4.2]{Montgomery1993}).

Given $H$, we say that $A$ has a right \emph{Hopf (algebra) $H$-action}, or that $A$ is a right \emph{$H$-module algebra}, if there is a right algebra action of $H$ on $A$, $\cdot:A\otimes H\rightarrow A$, such that $(ab)\cdot h=\sum(a\cdot h_1)(b\cdot h_2)$ and $1\cdot h=\varepsilon(h)1$, for all $a,b\in A$ and all $h\in H$, with $\Delta(h)=\sum h_1\otimes h_2$. When $A$ has a right Hopf action, then, for $h\in H$,
\[(a_1\ldots a_n)^h = \sum a_1^{h_1}\cdots a_n^{h_n}, \, \text{ with }\Delta_{n-1}(h)=\sum h_1\otimes\cdots\otimes h_n.\]

The universal enveloping algebra $U(L)$ of $L$ (and hence $\Uop$) becomes a cocommutative Hopf algebra when endowed with comultiplication $\Delta(\delta):=\delta\otimes 1 + 1\otimes \delta$, counit $\varepsilon(\delta):=0$ and antipode $S(\delta):=-\delta$ for $\delta\in L$, and extended to $U(L)$ via the PBW theorem: $\Delta(e_{i_1}^{k_1}\cdots e_{i_j}^{k_j} ):=\Delta(e_{i_1})^{k_1}\cdots\Delta(e_{i_j})^{k_j}$, $S(e_{i_1}^{k_1}\cdots e_{i_j}^{k_j} ):=S(e_{i_1})^{k_1}\cdots S(e_{i_j})^{k_j}$. E.g., for $\delta_1,\delta_2\in L$ we have $\Delta(\delta_1\delta_2)=\delta_1\delta_2\otimes 1 + \delta_1\otimes\delta_2 + \delta_2\otimes\delta_1 + 1\otimes\delta_1\delta_2$ and $\Delta_2(\delta_1\delta_2)= \delta_1\delta_2\otimes 1\otimes 1 + \delta_1\otimes\delta_2\otimes 1 + \delta_1\otimes 1\otimes\delta_2 + \delta_2\otimes\delta_1\otimes 1 + \delta_2\otimes 1\otimes\delta_1 + 1\otimes\delta_1\delta_2\otimes 1 + 1\otimes\delta_1\otimes\delta_2 + 1\otimes\delta_2\otimes\delta_1 + 1\otimes 1\otimes\delta_1\delta_2$.
\end{pgraph}

\subsection{The variety of $L$-algebras}
\mbox{}

\begin{pgraph}\textbf{$L$-algebras.} Given $L$, we say that $A$ is an \emph{$L$-algebra} or that $L$ \emph{acts on $A$ by derivations}, if there exists a homomorphism of Lie algebras $\varphi:L\to\Der(A)$, hence $A$ has a right $L^{\op}$-action satisfying $(a_1a_2)^\delta = a_1^\delta a_2+a_1a_2^\delta$ for $a_1,a_2\in A$, $\delta\in  L$. From now on, when we say that $A$ has an $L$-action it will imply that $A$ is an $L$-algebra for that $L$-action. Note that when $L$ is simple either $\varphi=0$ or $\varphi$ is a monomorphism.
By the universal property of $U(L)$, an $L$-action on $A$ can be uniquely extended to a right Hopf $\Uop$-action (which by abuse of notation we also call an $L$-action), by extending $\varphi$ to the homomorphism of unital associative algebras $\phi:U(L)\to\E_F(A)$ such that $\phi(ab):=\vp(a)\vp(b)$ (recall that we also denote the opposite homomorphism $\phi^{\op}:\Uop\to\E_F(A)^{\op}$ by $\phi$, see \ref{associativealgebras}). In this way, $A$ becomes a right $\Uop$-module algebra (with action in exponential notation and opposite product denoted by juxtaposition).
More explicitly, the $L$-action on $A$ satisfies
\[(a_1a_2\cdots a_n)^\delta =a_1^\delta a_2\cdots a_n + a_1a_2^\delta\cdots a_n + \cdots + a_1a_2\cdots a_n^\delta\]
for $a_1,\ldots,a_n\in A$ and $\delta\in  L$, and, if $\Delta_{n-1}(u)=\sum u_1\otimes\cdots\otimes u_n$ for $u\in \Uop$, then
\[(a_1a_2\cdots a_n)^u=\sum a_1^{u_1}a_2^{u_2}\cdots a_n^{u_n}. \tag{U}\label{formula(U)}\]
Note that when $\varphi(L)=0$, then $\phi(U^*(L))=0$, $\phi(1)=1$, and the Hopf $\Uop$-action is just the linear action of $F$.

For fixed $L$ the class of $L$-algebras is equational, and so it is a variety in the sense of universal algebra (see e.g. \cite{BS2012}). This variety is nontrivial, as it contains $A$. Ideals of $L$-algebras (\emph{$L$-ideals}) are understood to be invariant under the $\Uop$-action, and homomorphisms $f:A\rightarrow B$ between $L$-algebras $A,B$ (\emph{$L$-homomorphisms}) must satisfy $f(a^u)=f(a)^u$ for $a\in A$, $u\in\Uop$. The $L$-ideal generated by elements $a_1,\ldots,a_n\in A$ we denote by $\la a_1,\ldots,a_n\ra_L$. In the next result, which we call the \emph{primitive element lemma}, it is shown that some $L$-ideals are principal; it is a direct generalization of \cite[Lemma 3.1.5 and Corollary 3.1.6]{Catoiu2000} by Catoiu.
\begin{lemma}[\textbf{Primitive element}]\label{primitiveelementLEMMA}
Let $L$ be a finite-dimensional split-semisimple Lie algebra and $A$ be an $L$-algebra. If the $L$-ideal $I$ of $A$ is generated by weight vectors $a_1,\ldots,a_n\in A$ of different weights, then $I$ is generated by $a_1+\cdots+a_n$.
\end{lemma}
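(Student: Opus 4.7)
\medskip

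\noindent\textbf{Proof plan.} The plan is to exhibit $a:=a_1+\cdots+a_n$ as a \emph{primitive element} for $I$ in the sense that $I=\la a\ra_L$. One inclusion is immediate: since $a\in I$ we have $\la a\ra_L\subseteq I$. For the reverse inclusion it suffices to show that each generator $a_i$ already lies in $\la a\ra_L$, for then the $L$-ideal $\la a\ra_L$ contains all generators of $I$ and we are done.

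The mechanism to extract $a_i$ from $a$ is a Vandermonde argument using the Cartan subalgebra $H$ of $L$. Fix a splitting Cartan subalgebra $H\subseteq L$, so each weight $\lambda_i\in\Hom(H,F)$ is well defined and the $\lambda_i$ are pairwise distinct elements of the dual space $H^*$. Since $F$ has characteristic $0$ (hence is infinite), $H$ cannot be covered by the finitely many hyperplanes $\ker(\lambda_i-\lambda_j)$ for $i\ne j$; therefore we can choose $h\in H$ such that the scalars $\mu_i:=\lambda_i(h)\in F$ are pairwise distinct. This is the only subtle point and the choice of $h$ is the main (albeit minor) obstacle; everything else is mechanical.

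Now observe that each $a_i$ is a weight vector of weight $\lambda_i$, so $a_i^h=\mu_i\, a_i$ and by induction $a_i^{h^k}=\mu_i^k\, a_i$ for every $k\ge 0$. Since $h^k\in U(L)$ and the $L$-ideal $\la a\ra_L$ is invariant under the right $U(L)^{\op}$-action, we obtain
\[
a^{h^k}\;=\;\sum_{i=1}^n \mu_i^k\, a_i \;\in\;\la a\ra_L \qquad\text{for all }k=0,1,\ldots,n-1.
\]
The $n\times n$ coefficient matrix $(\mu_i^{k})_{0\le k\le n-1,\,1\le i\le n}$ is a Vandermonde matrix with determinant $\prod_{i<j}(\mu_j-\mu_i)\ne 0$, so it is invertible over $F$. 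Consequently each $a_i$ is an $F$-linear combination of the elements $a,a^h,a^{h^2},\ldots,a^{h^{n-1}}$, and therefore $a_i\in\la a\ra_L$ for every $i$.

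This yields $I=\la a_1,\ldots,a_n\ra_L\subseteq\la a\ra_L$, which together with the reverse inclusion proves $I=\la a\ra_L$, as claimed.
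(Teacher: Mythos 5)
Your proof is correct, and it takes a genuinely different route from the paper's. The paper (following Catoiu) proceeds by induction on $n$: it finds one basis element $h_i$ of $H$ and one shifted element $z_{in}:=h_i-\lambda_{in}$ that annihilates $a_n$ but not all of the $a_j$, so that $a^{z_{in}}$ splits off a proper nonempty sub-sum of the $a_j$; the inductive hypothesis is then applied twice, to that sub-sum and to its complement. Your argument avoids induction entirely: you pick a single $h\in H$ with $\lambda_1(h),\ldots,\lambda_n(h)$ pairwise distinct (possible because $H$, being a finite-dimensional space over the infinite field $F$, cannot be covered by the finitely many hyperplanes $\ker(\lambda_i-\lambda_j)$), and then use the powers $h^0,\ldots,h^{n-1}\in U(L)^{\op}$ and the invertibility of the Vandermonde matrix $(\lambda_i(h)^k)$ to recover each $a_i$ directly as an $F$-linear combination of $a^{h^k}\in\la a\ra_L$. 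Both proofs use that $L$-ideals are closed under the $\Uop$-action; yours buys a one-shot, induction-free argument at the small cost of the hyperplane-avoidance observation, whereas the paper's only ever uses degree-one elements of $U(L)$ (the $z_{in}$) and works over any field once one grants the existence of some separating $h_i$, but pays with a two-fold recursion. Both are clean; yours is arguably the more transparent of the two.
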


\begin{proof}(\cite{Catoiu2000})
Let $H$ be a fixed Cartan subalgebra of $L$ with basis $\{h_1,\ldots,h_m\}$ and let $\lambda_{ij}$ denote the eigenvalue of $h_i$ for $a_j$. We proceed by induction on $n$. If $n=1$ then the result is trivial. Suppose $n>1$ and that the conclusion is true for all $r<n$. Since $a_1,\ldots,a_n$ have different weights, there exists $1\leq i\leq m$ such that $z_{in}:=h_i-\lambda_{in}$ does not kill all $a_j$. Reorder the $a_j$ so that $a_j^{z_{in}}=0$ if and only if $j>r$ for some $1\leq r<n$ (it kills at least $a_n$). Put $a:=a_1+\cdots+a_n$. Then $a^{z_{in}} = \alpha_1a_1+\cdots + \alpha_r a_r\in \la a\ra_L$ with $0\neq\alpha_1,\ldots,\alpha_r\in K$. By the inductive hypothesis, $\la a_1,\ldots,a_r\ra_L = \la \alpha_1a_1,\ldots,\alpha_ra_r\ra_L = \la \alpha_1a_1+\cdots +\alpha_ra_r\ra_L \subseteq \la a\ra_L$, which implies that $a_{r+1}+\cdots+a_n\in \la a\ra_L$ since $a=a_1+\cdots+a_n$. By the inductive hypothesis again, $\la a_{r+1},\ldots,a_n\ra_L = \la a_{r+1}+\cdots+a_n\ra_L\subseteq \la a\ra_L$. Therefore $\la a \ra_L=I$.
\end{proof}

\end{pgraph}
\begin{pgraph}\textbf{$\FL$.}
The variety of $L$-algebras contains the \emph{free (nonunital associative) $L$-algebra} $\FL$, freely generated by the countably infinite set of variables $X:=\{x_1,x_2, \dots\}$, which satisfies the following universal property: each map $\gamma:X\rightarrow A$ to an $L$-algebra $A$ can be uniquely extended to an $L$-homomorphism $\FL\rightarrow A$, which we call the \emph{evaluation} of $\FL$ at elements $\gamma(x_1),\gamma(x_2)\ldots$ from $A$.
We can describe $\FL$ as follows: $\FL$ is generated as an algebra by the set $\{x_i^u \ | \ i\in \N^*, u\in\Uop\}$, subject to the relations $x_i^1=x_i$, $x_i^{\lambda u+v}=\lambda x_i^u + x_i^v$, $(\lambda x_i+x_j)^u =\lambda x_i^u+x_j^u$ for all $u,v\in\Uop$, $\lambda\in F$ and $i,j\in \N^*$. Note that given a basis $\mathcal B :=\{e_i\}_{i\in\mathcal I}$ of $\Uop$, $\FL$ is generated as an algebra by the set $\{x_i^{e_j} \ | \ i\in\N^*, j\in\mathcal I\}$ and, moreover, the set
\[\left\{x_{i_1}^{e_{j_1}}\cdots x_{i_n}^{e_{j_n}} \ | \ n\in \N^*, i_1,\ldots,i_n\in \N^*, e_{j_1},\ldots,e_{j_n}\in\mathcal B\right\}\]
is a basis of $\FL$.
The free $L$-algebra is endowed with an $L$-action determined  (as in \eqref{formula(U)}) by
\[(x_{i_1}^{e_{j_1}}\cdots x_{i_n}^{e_{j_n}})^u := \sum x_{i_1}^{e_{j_1}u_1}\cdots x_{i_n}^{e_{j_n}u_n}\]
for $e_{j_1},\ldots,e_{j_n}\in\mathcal B$ and $u\in\Uop$ with $\Delta_{n-1}(u)=\sum u_1\otimes\cdots\otimes u_n$.

The elements of the free $L$-algebra are called \emph{differential polynomials} or \emph{$L$-polynomials}. A \emph{$T_L$-ideal} of the free $L$-algebra is an $L$-ideal which in addition is invariant under all $L$-endomorphisms of $\FL$ or \emph{substitutions},  which send $x_i\mapsto f_i$ for $i\in\N^*$ and $f_i\in\FL$; e.g., there is a substitution sending $x_1$ to $x_1x_2$ and $x_i$ to $x_i$ for $i\neq1$. Special substitutions are those mapping $x_i\mapsto x_{\sigma(i)}^{u_i}$  for $i\in I$ and $x_j\mapsto x_j$ for $j\not\in I$, for given $I:=\{i_1,\ldots,i_n\}$, $\sigma\in S_n$ acting on $I$ and $u_i\in\Uop$ for $i\in I$, which we call substitutions \emph{swapping variables}. When referring to elements of a $T_L$-ideal in at most two variables we write them with ``generic'' variables $x,y$; since $T_L$-ideals are closed under substitutions, $x,y$ may be replaced by any $L$-polynomials $f,g\in\FL$.
Given a set $S\subseteq\FL$, by $\la S\ra_{T_L}$ we denote the smallest $T_L$-ideal containing $S$.
\end{pgraph}

\subsection{The variety of $(L,U)$-algebras}
\mbox{}

\medskip

We want to avoid, as much as possible, the infinite dimensionality of $U(L)$ in the determination of the differential identities of an $L$-algebra. For this reason, we introduce $(L,U)$-algebras.

Fix $L$, an $L$-algebra $A$, and the homomorphism $\phi: U(L)\to\E_F(A)$ corresponding to the  right Hopf $ \Uop$-action on $A$. We denote $ U:=\phi(U(L))\subseteq\E_F(A)$; note that $U$ is a unital associative algebra, which is finite dimensional when $A$ is, but not necessarily a Hopf algebra.  In the following we omit $\phi$ from the notation, but the reader should be aware of the fact that the concepts defined below depend not only on $L$ and $U$ but also on $\phi$.

\begin{pgraph}\textbf{$(L,U)$-algebras.} The $ \Uop$-action on $A$ induces a right action of $ U^{\op}$ as a unital associative algebra on $A$; this action is not necessarily a Hopf action (it is a generalized Hopf algebra action as defined by Gordienko in \cite[Section 2]{Gordienko2013}; see also Berele's \cite[Remark in p.878]{Berele1996}), but satisfies $a^u=a^{\phi(u)}$ for all $u\in\Uop$ and $a\in A$.
Accordingly, we say that an $L$-algebra $B$ is an \emph{$(L,U)$-algebra} if it is endowed with a right algebra $ U^{\op}$-action such that $b^u=b^{\phi(u)}$ for all $u\in\Uop$ and $b\in B$, which we call an \emph{$(L,U)$-action}.

If $B,C$ are two associative algebras and $C$ has a right $ U^{\op}$-action then the associative algebra $B\otimes_F C$ has a right $U^{\op}$-action given by $(b\otimes c)^u:=b\otimes c^u$ for $u\in U^{\op}$, $b\in B$, $c\in C$, while if $\delta$ is a derivation of $C$ then $1\otimes\delta$ is a derivation of $B\otimes_F C$. Therefore, if $C$ is an $(L,U)$-algebra then $B\otimes_F C$ is an $(L,U)$-algebra with $L$-action given by $(b\otimes c)^u:=b\otimes c^u$ for $u\in\Uop$, $b\in B$, $c\in C$.

The class of $(L,U)$-algebras is a variety that contains $A$, denoted by $\textsc V^{L,U}$. Ideals of $(L,U)$-algebras are closed under the $ U^{\op}$-action (equivalently, the $L$-action), and homomorphisms $f:B\rightarrow C$ between $(L,U)$-algebras $B,C$ must satisfy $f(b^u)=f(b)^u$ for $b\in B$, $u\in U^{\op}$ (equivalently, $u\in\Uop$). 
The variety of $(L,U)$-algebras contains the \emph{free (nonunital associative) $(L,U)$-algebra} $\FLU$ freely generated by $X$, which is isomorphic to the quotient $\FL/\ILU$ where
\[\ILU:=\langle\{f^z \ | \ f\in\FL, z\in\ker\phi\}\rangle, \tag{I}\label{formula(I)}\]
which is a $T_L$-ideal.
The structure of $\ILU$ is strongly dependent on the algebraic structure of $U$. For example, if $d:=\phi(\delta)$ with $\delta\in  L$ satisfies $d^2=0$, then $x^{d^2}=x^0=0$ for any $x\in\FLU$, and so
\[2x_1^dx_2^d = x_1^{d^2}x_2 +2x_1^dx_2^d +x_1x_2^{d^2} = (x_1x_2)^{d^2} = 0\]
in $\FLU$.

We refer to the elements of $\FLU$ as \emph{$(L,U)$-polynomials}. The \emph{$T_{L,U}$-ideals} of $\FLU$ are defined analogously to the $T_L$ ideals of $\FL$, with $\la S\ra_{T_{L,U}}$ denoting the smallest $T_{L,U}$-ideal containing the set $S$.
\end{pgraph}
\begin{pgraph}\textbf{$\FU$.}\label{FUdefinition}
Observe that the free $(L,U)$-algebra is generated as an associative algebra by the set $\{x_i^u \ | \ i\in \N^*, u\in\mathcal U\}$ for a basis $\mathcal U$ of $U$, albeit not freely, and the construction of a well-behaved basis of $\FLU$ may prove challenging. To circumvent this issue we introduce $\FU$, an algebra with more freeness than $\FLU$ and a better linear parallelism to $\FL$, which is an $L$-algebra but has no specified $U$-action.
We call $\FU$ the \emph{free algebra with $U$-exponents} and define it as follows: $\FU$ is generated as an algebra by the set $\{x_i^u \ | \ i\in \N^*, u\in U^{\op}\}$, subject to the relations $x_i^1=x_i$, $x_i^{\lambda u+v}=\lambda x_i^u + x_i^v$, $(\lambda x_i+x_j)^u =\lambda x_i^u+x_j^u$ for all $u,v\in U^{\op}$, $\lambda\in F$ and $x_i,x_j\in X$. Note that given a basis $\mathcal U:=\{u_1,\ldots,u_N\}$ of $U$, $\FU$ is freely generated as an associative algebra by $\{x_i^{u_j} \ | \ i\in \N^*, u_j\in \mathcal U\}$ (so this is the same algebra considered by Gordienko in \cite[Section 2]{Gordienko2013}). Moreover, the set
\[\{x_{i_1}^{u_{j_1}}\cdots x_{i_n}^{u_{j_n}} \ | \ n\in\N^*, i_1,\ldots,i_n\in \N^*, u_{j_1},\ldots,u_{j_n}\in\mathcal U\}\]
is a basis of $\FU$. In addition,  setting the subspaces $M_n^U:= \spn\{x_{i_1}^{u_{j_1}}\cdots x_{i_n}^{u_{j_n}} \ | \ i_1,\ldots,i_n\geq1, \ | u_{j_1},\ldots,u_{j_n}\in\mathcal U\}$ of \emph{$U$-monomials of degree $n$} for $n\in\N^*$, we get the grading $\FU=\bigoplus_{n\in \N^*} M_n^U$.

The $L$-action on $\FU$ is determined by
\[(x_{i_1}^{u_{j_1}}\cdots x_{i_n}^{u_{j_n}})^v := \sum x_{i_1}^{u_{j_1}\phi(v_1)}\cdots x_{i_n}^{u_{j_n}\phi(v_n)}\]
for $u_{j_1},\ldots,u_{j_n}\in\mathcal U$ and $v\in\Uop$ with $\Delta_{n-1}(v)=\sum v_1\otimes\cdots\otimes v_n$.
In addition, the operation $(x_i^u)^v:=x_i^{uv}$ for $u,v\in U^{\op}$ is well defined, which allows defining the $L$-endomorphisms of $\FU$ which we call \emph{substitutions swapping variables}, that map $x_i\mapsto x_{\sigma(i)}^{u_i}$ for $i\in I$ and $x_j\mapsto x_j$ for $j\not\in I$, for given $I:=\{i_1,\ldots,i_n\}$, $\sigma\in S_n$ acting on $I$ and $u_i\in U^{\op}$ for $i\in I$.

Although it has no specified $U$-action, and we are not considering any variety of $U$-algebras which would contain it as a free algebra, the algebra $\FU$ satisfies the following universal property: each map $\gamma:X\rightarrow B$ to an algebra $B$ with right $ U^{\op}$-action can be uniquely extended to a homomorphism of  associative algebras $\gamma:\FU\rightarrow B$ such that $\gamma(x_i^u)=\gamma(x_i)^u$ for all $x_i\in X$ and $u\in U^{\op}$. More importantly, although $\FU$ is not an $(L,U)$-algebra, it satisfies $x_i^u=x_i^{\phi(u)}$ for all $u\in\Uop$ and $x_i\in X$. Therefore $\FU$ also satisfies the following universal property: each map $\gamma:X\rightarrow B$ to an $(L,U)$-algebra $B$ can be uniquely extended to an $L$-homomorphism $\gamma:\FU\rightarrow B$ such that $\gamma(f^u)=\gamma(f)^{\phi(u)}$ for all $f\in\FU$ and $u\in\Uop$, which we call an \emph{evaluation} of $\FU$ at elements $\gamma(x_1),\gamma(x_2)\ldots$ from $B$.

We refer to the elements of $\FU$ as \emph{$U$-polynomials}. A \emph{$T_U$-ideal} of $\FU$ is an $L$-ideal (so, invariant under the Hopf $ \Uop$-action) which in addition is invariant  under all $L$-endomorphisms of $\FU$; in particular, under all linear substitutions of the form $x_j\mapsto \sum_{i\in I} \alpha_i x_i$ with fixed $j$, finite $I\subseteq\N^*$ and $\alpha_i\in F$ for $i\in I$, and all the substitutions swapping variables. Note that not every substitution of the variables by $U$-polynomials is valid, as not all are $L$-endomorphisms of $\FU$, with this phenomenon depending on the algebraic structure of $U$: e.g., if there is $\delta\in L$ such that $0\neq d:=\phi(\delta)$ satisfies $d^2=0$ then the substitution $\vp$ mapping $x_1\mapsto x_1x_2$ is not an $L$-endomorphism, as $\vp(x_1^{\delta^2}) = \vp(x_1^{d^2}) = 0 \neq 2x_1^dx_2^d = \vp(x_1)^{\delta^2}$.\\
The $T_U$-ideal generated by $f_1,\ldots,f_m\in\FU$ is the set of $U$-polynomials of the form
\[\sum_{j=1}^r g_j f_{i_j}^{u_j}(p_{j1},\ldots,p_{jk_j}) h_j = \sum_{j=1}^r g_j f_{i_j}(p_{j1},\ldots,p_{jk_j})^{u_j} h_j\tag{TU}\label{formula(TU)}\]
with $r\in\N^*$, $i_j\in\{1,\ldots,m\}$ (where we may have $i_j=i_k$ for $j\neq k$), $u_j\in\Uop$, $g_j,h_j\in\FU$ or $g_j=1$ or $h_j=1$, and $\vp_j(f):=f(p_{j1},\ldots,p_{jk_j})$ with $p_{j1},\ldots,p_{jk_j}\in\FU$ being an $L$-endomorphism of $\FU$ which maps $x_{t_i}\mapsto p_{ji}$ for some $x_{t_i}\in X$ and $1\leq i\leq k_j$.\\
When referring to elements of a $T_U$-ideal in at most two variables we write them with ``generic'' variables $x,y$; since $T_U$-ideals are closed under substitutions swapping variables, $x,y$ may be replaced by any variables $x_i^u,x_j^v$ with $x_i,x_j\in X$ and $u,v\in U^{\op}$.
Given a set $S\subseteq\FU$, by $\la S\ra_{T_U}$ we denote the smallest $T_U$-ideal containing $S$.

\begin{remarks}[\textbf{Action of $L$ on $\FU$}]\label{FUhasnoUaction}
\mbox{}

\enum
\item The free algebra with $U$-exponents is not an $(L,U)$-algebra in general: due to their respective universal properties with respect to $(L,U)$-algebras, if both $\FLU$ and $\FU$ were $(L,U)$-algebras, then they would be isomorphic as $(L,U)$-algebras (see \cite[Theorem 10.7]{BS2012}), in particular as $L$-algebras, which they are not in general.
\item Since $\FU$ is not an $(L,U)$-algebra, attention must be paid to the application of the $L$-action: exponents must be in $\Uop$ in general, and can only be taken from $U^{\op}$ when applied directly on ``isolated'' variables $x_i$. A expression like $(x_1\cdots x_n)^u$ with $n>1$ and $u\in U^{\op}$ makes no sense in $\FU$, and $(x_1\cdots x_n)^u$ with $u\in\Uop$ expands to $\sum x_1^{v_1}\cdots x_n^{v_n}$ for $\Delta_{n-1}(u)=\sum u_1\otimes\cdots\otimes u_n$ and $v_i:=\phi(u_i)$ for $i\in\{1,\ldots,n\}$. 
\item The Hopf $U(L)$-action on $\FU$ does not necessarily produce a $U$-action on $\FU$ via $x^{\phi(u)}:=x^u$ for $x\in\FU$, $u\in\Uop$, as we may have $u,v\in\Uop$ generating different actions on $\FU$ and such that $\phi(u)=\phi(v)$. E.g, if $\delta\in L$ satisfies $\phi(\delta^2)=0=\phi(0)$ but $\phi(\delta)\neq0$, then
    \[(x_1x_2)^{\delta^2} = 2x_1^{\phi(\delta)}x_2^{\phi(\delta)} \neq 0 = (x_1x_2)^0\]
    (recall that we have actually designed $\FU$ for this to happen).
\eenum
\end{remarks}

\end{pgraph}

\subsection{Identities and growth}
\mbox{}

\begin{pgraph}\textbf{$L$-identities, $L$-codimensions and $L$-exponent.} A differential polynomial $f(x_1, \dots , x_n)\in F\langle X| L\rangle$ is a \emph{differential identity} or  \emph{$L$-identity} of the $L$-algebra $B$ if $f(b_{1},\dots,b_{n})=0$ for any $b_1,\ldots,b_n\in B$ ($f$ vanishes under all evaluations of $\FL$ at elements from $B$). We denote by $\I^L(B)$ the set of differential identities of $B$, which is a $T_L$-ideal of the free $L$-algebra (in particular $\I^L(B)$ is closed under the Hopf $\Uop$-action and under substitutions). Note that $\I^L(B)$ is the intersection of all kernels of evaluations of $\FL$ from $B$.

For $n\geq 1$ we denote by $P_n^L$ the vector space of \emph{multilinear differential polynomials} in the variables $x_{1},\dots,x_{n}$, so that
\[P_n^L:=\spn_F\{x_{\sigma(1)}^{e_{i_1}}\cdots x_{\sigma(n)}^{e_{i_n}} \ | \ \sigma\in S_{n} ,e_{i_1},\ldots,e_{i_n}\in \mathcal{B} \},\]
where $S_n$ denotes the symmetric group acting on $\{1,\ldots,n\}$.
As in the ordinary case, since $F$ has characteristic zero, a Vandermonde plus linearization argument shows that the $T_L$-ideal $\I^L(B)$ is completely determined by its multilinear $L$-polynomials (see \cite[Proposition 4.2.3]{Drenskybook}).
We also consider the vector space
\[P_n^L(B):= \dfrac{P_n^L}{P_n^L \cap \I^L(B)}.\]
When the action of $\Uop$ is finite dimensional, i.e., when $U$ is a finite-dimensional algebra, the $n$th \emph{differential codimension} of $B$ is $c_n^L(B):=\dim_F P_n^L(B)$. Moreover, if $B$ is finite dimensional then the limit $\exp^L(B):=\lim_{n\to \infty}\sqrt[n]{c_n^L(B)}$ exists and is a nonnegative integer called the \emph{$L$-exponent of $B$} (\cite{Gordienko2013}).
\end{pgraph}
\begin{pgraph}\textbf{Growth of $L$-varieties.} Given a variety $\textsc V$ of $L$-algebras, the growth of $\textsc V$ is defined as the growth of the sequence of differential codimensions of any $L$-algebra $B$ generating $\textsc V$, i.e., $\textsc V=\V^L(B)$. In this case we set $c_n^L(\textsc V):=c_n^L(B)$, $n\geq 1$, and $\exp^L(\textsc V):=\exp^L(B)$. Then we say that $\textsc V$ \emph{has polynomial growth} if there exist $C,t>0$ such that $c_n^L(\textsc V) \leq C n^t$, i.e., $\exp^L(\textsc V)\leq 1$, and that $\textsc V$ \emph{has almost polynomial growth} if $c_n^L(\textsc V)$ is not polynomially bounded, i.e., $\exp^L(\textsc V)>1$, but every proper subvariety of $\textsc V$ has polynomial growth.
\end{pgraph}
\begin{pgraph}\textbf{Analogues for $U$-algebras and $(L,U)$-algebras.}
Mutatis mutandis, for $B$ an associative algebra with  right $ U^{\op}$-action (resp. an $(L,U)$-algebra), inside $\FU$ (resp. $\FLU$) we define the \emph{$U$-identities} (resp. \emph{$(L,U)$-identities}) of $B$, the $T_U$-ideal $\I^U(B)$ closed under the Hopf $\Uop$-action and the  valid substitutions (resp. the $T_{L,U}$-ideal $\I^{L,U}(B)$ closed under the Hopf $\Uop$-action and substitutions), the vector space of \emph{multilinear $U$-polynomials} $P_n^U$ (resp. of \emph{multilinear $(L,U$)-polynomials} $P_n^{L,U}$), the quotient $P_n^U(B)$ (resp. $P_n^{L,U}(B)$), the $n$th \emph{$U$-codimension} $c_n^U(B)$ (resp. the \emph{$(L,U)$-codimension} $c_n^{L,U}(B)$)  when $U$ is finite dimensional. If $\exp^U(B):=\lim_{n\to\infty}\sqrt[n]{c_n^U(B)}$ \big(resp. $\exp^{L,U}(B):=\lim_{n\to\infty}\sqrt[n]{c_n^{L,U}(B)}$\big) exists, we call it the \emph{$U$-exponent} (resp. \emph{$(L,U)$-exponent}) of $B$; see Lemma \ref{Ldata2UdataLEMMA} below.
Similarly, for a variety of $(L,U)$-algebras we define the growth and the notions of polynomial growth and almost polynomial growth.
\end{pgraph}

\subsection{Computing $L$-data through $U$-data}
\mbox{}

\medskip

In this section we relate $L$-identities to $U$-identities, $L$-varieties to $(L,U)$-varieties, and $L$-cocharacters to $U$-cocharacters.
Since $\FU$ is an $L$-algebra and $\FL$ is the free $L$-algebra, we can consider the $L$-homomorphism $\Psi:\FL\to\FU$ sending $x_i\mapsto x_i$, which is defined by $\Psi(x_i^u):=x_i^{\phi(u)}$ for all $x_i\in X$ and $u\in\Uop$. We denote $\IU:=\ker\Psi$. Analogously we have the $L$-homomorphism $\Theta:\FL\to\FLU$ sending $x_i\mapsto x_i$, defined by $\Theta(x_i^u):=x_i^{\phi(u)}$, which satisfies $\ker\Theta=\ILU$. We have $\IU\subseteq\ILU$. In addition, since $\FLU$ is an $(L,U)$-algebra, we have the $L$-homomorphism $\Gamma:\FU\to\FLU$ sending $x_i\mapsto x_i$, which satisfies $\Theta=\Gamma\circ\Psi$.

Let $B$ be any $(L,U)$-algebra. By definition $\Psi(\I^L(B))=\I^U(B)$, $\IU\subseteq \I^L(B)$, and $\Psi(P_n^L)=P_n^U$. Hence by the isomorphism theorems we get $\dfrac{\I^L(B)}{\IU} \cong \I^U(B)$, $\dfrac{\FL}{\I^L(B)} \cong \dfrac{\FU}{\I^U(B)}$(as $L$-algebras) and $P_n^L(B)\cong P_n^U(B)$ (as vector spaces). We get analogous results for $\FLU$ from $\Theta$, proving the next elementary lemma.

\begin{lemma}[\textbf{Relating $L$-identities to $U$-identities}]\label{Ldata2UdataLEMMA}
\mbox{}

Let $B$ be an $(L,U)$-algebra.
\enum
\item Let $G=\{g_i\}_{i\in\mathcal I}$ be a system of generators of $\I^U(B)$ (resp. $\I^{L,U}(B)$) as a $T_L$-ideal (resp. $T_{L,U}$-ideal), and for each $g_i\in G$ pick a fixed preimage $f_i\in\Psi^{-1}(g)$ (resp. $f_i\in\Theta^{-1}(g)$). Let $F:=\{f_i\}_{i\in\mathcal I}$. Then $\I^L(B)=\la F\ra_{T_L}+\IU$ (resp. $\I^L(B)=\la F\ra_{T_L}+\ILU$).
\item  If $U$ is finite dimensional, then $c_n^L(B)=c_n^U(B)=c_n^{L,U}(B)$ for all $n\geq 1$. Moreover, if $B$ is finite dimensional, then $\exp^U(B)$, $\exp^{L,U}(B)$ exist and $\exp^L(B)=\exp^U(B)=\exp^{L,U}(B)$.
\eenum
\end{lemma}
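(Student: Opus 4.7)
The plan is to exploit the exact sequences and universal properties set up in the paragraph immediately preceding the statement; most of the content is then bookkeeping, but one point—commutation of $\Psi$ with substitution—requires real care and is where I expect the main work to lie.

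For (2) I would argue as follows. The restriction $\Psi|_{P_n^L}:P_n^L\twoheadrightarrow P_n^U$ is surjective (it sends a monomial $x_{\sigma(1)}^{e_{i_1}}\cdots x_{\sigma(n)}^{e_{i_n}}$ to $x_{\sigma(1)}^{\phi(e_{i_1})}\cdots x_{\sigma(n)}^{\phi(e_{i_n})}$ and $\phi$ hits all of $U$) and has kernel $P_n^L\cap\IU$. Combining $\IU\subseteq\I^L(B)$ with $\Psi(\I^L(B))=\I^U(B)$ yields $\I^L(B)=\Psi^{-1}(\I^U(B))$, from which a short set-theoretic check gives $\Psi(P_n^L\cap\I^L(B))=P_n^U\cap\I^U(B)$. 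A standard quotient argument then produces a linear isomorphism $P_n^L(B)\cong P_n^U(B)$; running the same reasoning through $\Theta$ yields $P_n^L(B)\cong P_n^{L,U}(B)$. Hence $c_n^L(B)=c_n^U(B)=c_n^{L,U}(B)$ for every $n$, and Gordienko's existence theorem for $\exp^L(B)$ in the finite-dimensional case (\cite{Gordienko2013}) transfers directly to the other two sequences.

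For (1), the inclusion $\la F\ra_{T_L}+\IU\subseteq\I^L(B)$ is immediate: $\IU\subseteq\I^L(B)$ is given and $F\subseteq\Psi^{-1}(\I^U(B))=\I^L(B)$. For the reverse inclusion I would take $h\in\I^L(B)$, use $\Psi(h)\in\I^U(B)=\la G\ra_{T_U}$ to write $\Psi(h)=\sum_j T_j$ with $T_j=g_j\,g_{i_j}^{u_j}(p_{j1},\ldots,p_{jk_j})\,r_j$ of the form~\eqref{formula(TU)}, where $\varphi_j:x_k\mapsto p_{jk}$ is a valid $L$-endomorphism of $\FU$. Picking $\Psi$-preimages $\bar g_j,\bar r_j,\bar p_{jk}\in\FL$ (and using the chosen $f_{i_j}\in\Psi^{-1}(g_{i_j})$), I would form $\bar T_j:=\bar g_j\,\bar f_{i_j}^{u_j}(\bar p_{j1},\ldots,\bar p_{jk_j})\,\bar r_j\in\la F\ra_{T_L}$—legal in $\FL$ since every substitution there is automatically an $L$-endomorphism—and set $\bar T:=\sum_j\bar T_j$. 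Once I establish $\Psi(\bar T_j)=T_j$, it follows that $\Psi(\bar T)=\Psi(h)$, hence $h-\bar T\in\ker\Psi=\IU$ and $h\in\la F\ra_{T_L}+\IU$. The $(L,U)$-variant is the same argument with $(\Theta,\ILU,\FLU)$ replacing $(\Psi,\IU,\FU)$, and is marginally easier since in $\FLU$ every substitution is already an $L$-endomorphism.

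The step I expect to be the main obstacle is the verification that $\Psi(\bar T_j)=T_j$. Peeling off the outer multiplications by $\bar g_j,\bar r_j$ and the $u_j$-action (with each of which $\Psi$ commutes, being an algebra $L$-homomorphism), this reduces to $\Psi\bigl(\bar f_{i_j}(\bar p_{j1},\ldots,\bar p_{jk_j})\bigr)=g_{i_j}(p_{j1},\ldots,p_{jk_j})$, i.e.\ to the commutation $\Psi\circ\bar\varphi_j=\varphi_j\circ\Psi$, where $\bar\varphi_j:x_k\mapsto\bar p_{jk}$ is the substitution in $\FL$. I would prove this by observing that $\Psi\circ\bar\varphi_j$ and $\varphi_j\circ\Psi$ are both $L$-homomorphisms $\FL\to\FU$ sending $x_k\mapsto p_{jk}$, so by the universal property of the free $L$-algebra they coincide globally. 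This is also exactly where the hypothesis that $\varphi_j$ is a valid $L$-endomorphism of $\FU$ (and not merely an algebra substitution) becomes essential: without it $\varphi_j\circ\Psi$ need not be an $L$-homomorphism and the universal property could not be invoked.
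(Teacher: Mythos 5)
Your proof is correct and follows the paper's route. For item (2) the argument coincides with the paper's: $\Psi$ (resp.\ $\Theta$) restricts to a surjection $P_n^L\twoheadrightarrow P_n^U$ with kernel $P_n^L\cap\IU$, and $\I^L(B)=\Psi^{-1}(\I^U(B))$ then yields the isomorphism of quotients. For item (1) the paper invokes only ``the isomorphism theorems,'' which by itself does not give the inclusion $\I^L(B)\subseteq\la F\ra_{T_L}+\IU$; you correctly isolate the missing ingredient and supply it. The crux is the commutation $\Psi\circ\bar\varphi_j=\varphi_j\circ\Psi$ for any valid $L$-endomorphism $\varphi_j$ of $\FU$ and its lift $\bar\varphi_j$ to $\FL$, and your derivation via the uniqueness clause of the universal property of $\FL$ (both composites are $L$-homomorphisms $\FL\to\FU$ agreeing on $X$) is the clean way to get it; you also rightly note that the validity hypothesis on $\varphi_j$ built into Formula~\eqref{formula(TU)} is precisely what makes $\varphi_j\circ\Psi$ an $L$-homomorphism. (An equivalent packaging: every valid substitution of $\FU$ lifts along $\Psi$, so $\Psi(\la F\ra_{T_L})$ is a $T_U$-ideal containing $G$ and contained in $\la G\ra_{T_U}$, hence equals it, and one pulls back.) In short, same approach as the paper, with the step the paper leaves implicit made explicit and rigorous.
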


\begin{remark}[\textbf{$\IU$ is not a $T_L$-ideal}]
\mbox{}

Although $\ILU$ is a $T_L$-ideal, $\IU$ is not a $T_L$-ideal in general: it may not be invariant under the substitution $x_1\mapsto x_1x_2$, $x_i\mapsto x_i$ for $i\neq1$, as $x_1^z\in\IU$ for $z\in\ker\phi$ but $(x_1x_2)^z$ may not belong to $\IU$. E.g., $z:=\delta^2$ with $\delta\in L$, $\delta\not\in\ker\phi$ and $\phi(\delta)^2=0$ satisfies $x_1^z\in\IU$ and $(x_1x_2)^z = x_1^zx_2 + 2x_1^\delta x_2^\delta + x_1x_2^z\not\in\IU$ since $x_1^zx_2, x_1x_2^z\in\IU$ but $x_1^\delta x_2^\delta\not\in\IU$.

Nevertheless, $\IU$ is invariant under substitutions swapping variables (since $\FU\cong\FL/\IU$).
\end{remark}

Therefore, since $\IU\subseteq\I^L(A)$, which is a $T_L$-ideal, the $T_L$-ideal $\langle\IU\rangle_{T_L}$ may contain some interesting $L$-identities of $A$, obtained from substitutions in elements of $\IU$. More concretely, we have:

\begin{proposition}[\textbf{Structure of $\IU$}]\label{kernelPsiPROPOSITION}
\mbox{}

\enum
\item $\IU$ is generated as an associative algebra ideal by the set $\{x_i^z \ | \ x_i\in X, z\in\ker\phi\}$, and as an ideal with substitutions swapping variables by the identity $x^z$ for any $z\in\ker\phi$.
\item $\langle\IU\rangle_{T_L} = \ILU = \displaystyle\bigcap_{B\in\textsc V^{L,U}}\I^L(B)$.
\eenum
\end{proposition}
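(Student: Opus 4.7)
The plan is to read both items off the definitions by choosing an adapted basis of $\FL$ and then testing against $\FLU$ itself.

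For the first claim in (1), I would use a PBW-type basis. Pick an ordered basis $\mathcal B=\mathcal K\sqcup\mathcal V$ of $\Uop$ with $\mathcal K$ a basis of $\ker\phi$ and $\phi(\mathcal V)$ a basis of $U$; the associated monomials $x_{i_1}^{e_{j_1}}\cdots x_{i_n}^{e_{j_n}}$ with $e_{j_k}\in\mathcal B$ are then a basis of $\FL$. The homomorphism $\Psi$ sends such a monomial to $0$ whenever some $e_{j_k}\in\mathcal K$ and to a basis monomial of $\FU$ otherwise, so $\IU=\ker\Psi$ has as basis exactly those monomials carrying at least one exponent from $\mathcal K$. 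Every such monomial sits in the associative ideal generated by $\{x_i^z:i\in\N^*,\,z\in\ker\phi\}$, and conversely $\Psi(x_i^z)=x_i^0=0$ gives the reverse inclusion. For the second claim of (1), let $J$ be the associative ideal invariant under substitutions swapping variables that is generated by $\{x^z:z\in\ker\phi\}$; by the remark preceding the proposition, $\IU$ itself is stable under such substitutions, so $J\subseteq\IU$. Conversely, applying the $L$-endomorphism $x\mapsto x_i^u$ to $x^z$ yields $x_i^{uz}\in J$; since $\ker\phi$ is a two-sided ideal of $U(L)$, and hence of $\Uop$, and $\Uop$ is unital, one has $\Uop\cdot\ker\phi=\ker\phi$, so every $x_i^{z'}$ with $z'\in\ker\phi$ lies in $J$ and the first claim closes the argument.

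For (2), I would prove the two equalities separately. The inclusion $\langle\IU\rangle_{T_L}\subseteq\ILU$ is immediate because the generators $x_i^z$ of $\IU$ are among the generators $f^z$ of $\ILU$ and $\ILU$ is a $T_L$-ideal. For the reverse, I would recover each generator $f^z$ of $\ILU$ by applying the substitution $x\mapsto f$ to $x^z\in\IU$, which stays inside the $T_L$-ideal $\langle\IU\rangle_{T_L}$. For the equality $\ILU=\bigcap_{B\in\textsc V^{L,U}}\I^L(B)$, the inclusion $\subseteq$ follows because each evaluation $\FL\to B$ is an $L$-homomorphism, so a generator $f^z$ with $z\in\ker\phi$ evaluates to $g^{\phi(z)}=g^0=0$ in the $(L,U)$-algebra $B$; hence $\ILU\subseteq\I^L(B)$ for every such $B$. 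The inclusion $\supseteq$ is the crux: I would test against the $(L,U)$-algebra $\FLU$ itself and note that the evaluation of $f\in\FL$ at the generators $x_1,x_2,\ldots$ of $\FLU$ is just the quotient map $\Theta:\FL\to\FLU$, whose kernel is $\ILU$ by definition; consequently the intersection lies inside $\I^L(\FLU)\subseteq\ILU$.

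I foresee no serious obstacle. The adapted PBW basis makes $\ker\Psi$ transparent; the substitutions swapping variables together with the two-sided nature of $\ker\phi$ collapse the generating family $\{x_i^z\}$ to the generic identity $x^z$; and $\FLU$ plays the role of the universal $(L,U)$-algebra detecting exactly the elements of $\ILU$.
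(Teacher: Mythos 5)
Your proof is correct and follows essentially the same route as the paper: the adapted basis $\mathcal K\sqcup\mathcal V$ of $\Uop$ to read off $\ker\Psi$, the passage from $\{x_i^z\}$ to the single generic $x^z$ via substitutions swapping variables, the definitional rewriting $\ILU=\langle x^z\rangle_{T_L}=\langle\IU\rangle_{T_L}$, and the sandwich $\ILU\subseteq\bigcap_B\I^L(B)\subseteq\I^L(\FLU)\subseteq\ILU$ using the freeness of $\FLU$. The only superfluous step is invoking $\Uop\cdot\ker\phi=\ker\phi$: the substitution $x\mapsto x_i$ (that is, $u=1$) already produces every $x_i^z$ from $x^z$.
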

\begin{proof}
\mbox{}

\enum
\item Since the restriction $\phi: U(L)\rightarrow U$ is an epimorphism, we can write $ U=V\oplus\ker\phi$ for some $V$ such that the restriction $\phi:V\rightarrow U$ is an isomorphism of vector spaces. Let $\mathcal V$, $\mathcal K$ be bases of $V$ and $\ker\phi$, respectively, and set $\mathcal B:=\mathcal V\cup\mathcal K$, which is a basis of $\Uop$. Then $\mathcal F_L:=\{x_{i_1}^{e_{j_1}}\cdots x_{i_n}^{e_{j_n}} \ | \ n\in \N^*, i_1,\ldots,i_n\in \N^*, e_{j_1},\ldots,e_{j_n}\in\mathcal B\}$ is a basis of $\FL$, and we can write $\mathcal F_L=M^V\cup M^K$, where $M^V$ is the set of monomials whose variables have all its exponents in $V$ and $M^K$ is the set of monomials which have at least one variable with exponent in $\ker\phi$. Hence for $f\in\FL$ we can write $f=\sum_{i\in\mathcal I}\alpha^V_i m^V_i +\sum_{j\in\mathcal J}\alpha^K_j m^K_j$, where $\mathcal I,\mathcal J$ are finite sets, $\alpha^V_i,\alpha^K_j\in F$ for all $i\in\mathcal I, j\in\mathcal J$, $m^V_i\in M^V$ for $i\in\mathcal I$ and $m^K_j\in M^K$ for $j\in\mathcal J$. On the other hand, $\mathcal F_U:=\left\{x_{i_1}^{u_{j_1}}\cdots x_{i_n}^{u_{j_n}} \ | \ n\in \N^*, i_1,\ldots,i_n\in \N^*, u_{j_1},\ldots,u_{j_n}\in\mathcal U\right\}$ is a basis of $\FU$ such that $\Psi(M^V)=\mathcal F_U$. Then, since $\Psi(M^K)=0$, $\Psi(f)= \sum_{i\in\mathcal I}\alpha^V_i\Psi(m^V_i)$ is a linear combination of monomials from the basis $\mathcal F_U$ and thus $\Psi(f)=0$ implies $\alpha^V_i=0$ for all $i\in\mathcal I$, i.e., $f\in\spn M^K$ as we wanted to show.

 Since $\IU$ is invariant under substitutions swapping variables, the second claim of this item follows.
\item By definition, $\ILU=\langle\{x^z \ | \ z\in\ker\phi\}\rangle_{T_L}$, and clearly $\langle\{x^z \ | \ z\in\ker\phi\}\rangle_{T_L} = \langle\{x_i^z \ | \ x_i\in X, z\in\ker\phi\}\rangle_{T_L} = \langle\IU\rangle_{T_L}$ by item (1).
    On the other hand, given $f\in\ILU$ in $n$ variables we can write $f=\sum_{i\in\mathcal I} h_if_i^{z_i}g_i$ with $h_i,f_i,g_i\in\FL$ and $z_i\in\ker\phi$ for a finite set $\mathcal I$. Then, for any $(L,U)$-algebra $B$ and any $b_1,\ldots,b_n\in B$ we have $f(b_1,\ldots,b_n)= \sum_{i\in\mathcal I} h_i(b_1,\ldots,b_n)(f_i(b_1,\ldots,b_n))^{\phi(z_i)}g_i(b_1,\ldots,b_n) = 0$ since $\phi(z_i)=0$, what implies $\ILU\subseteq\I^L(B)$. Moreover, we have $\I^L(\FLU) = \ILU$ because $\FLU$ is the free $(L,U)$-algebra. Thus
    we get
\[\ILU\subseteq\bigcap_{\mathclap{B\in\textsc V^{L,U}}}\; \I^L(B)\subseteq\I^L(\FLU) = \ILU.\qedhere\]
\eenum
\end{proof}

\begin{figure}[h]
\begin{framed}
\centering
\[\xymatrix{\FL \ar[r]^-\Psi\ar@(dr,dl)[rr]_-\Theta & \FU\cong \FL/\IU \ar[r]^-\Gamma & \FLU\cong \FL/\ILU}\]
\medskip
\[\ILU = \langle\IU\rangle_{T_L} = \langle\{f^z \ | \ f\in\FL, z\in\ker\phi\}\rangle = \displaystyle\bigcap_{B\in\textsc V^{L,U}}\I^L(B)\]
\caption{\footnotesize Relationships between the different free algebras}
\label{Figure}
\end{framed}
\end{figure}

\medskip

From the previous results we derive the following \textbf{general strategy} for computing the differential identities of $A$:
\enumi
\item We determine its $U$-identities by exploiting the structure of the finite-dimensional algebra $ U\subseteq\End_F(A)$ and the good behavior of $\FU$. We find a system of generators of $\I^U(A)$ (which gives also a system of generators of $\I^{L,U}(A)$) and reduce it to a system $G$ by resorting to the $L$-action. We consider the system $F$ of some fixed preimages of $G$ in $\I^L(A)$.
\item We determine a system of generators $Z$ of the ideal $\ker\phi$, with the aid of the representation theory of $L$ applied to $U(L)$ and of the algebraic geometry of $U(L)$. Then $\ILU$ is generated by $K:=\{x^z, z\in Z\}$ by substitutions and the $L$-action, as $x^{uzv}=((x^u)^z)^v$ for $u,v\in\Uop$, $z\in\ker\phi$.
\item We check if any element of $F$ is generated by the others plus $\la\IU\ra_{T_L}=\ILU$. If so, we remove it and check again.
\item We find the (small) system of generators $F\cup K$ of $\I^L(A)$.
\eenum

\begin{remark}[\textbf{Same evaluations}]\label{sameevaluations}
\mbox{}

Let $B$ be an $(L,U)$-algebra. We not only have $\Psi(\I^L(B))=\I^U(B)$, but also $\Psi^{-1}(\I^U(B))=\I^L(B)$, and similarly $\Theta^{-1}(\I^{L,U}(B))=\I^L(B)$ and $\Gamma^{-1}(\I^{L,U}(B))=\I^U(B)$, since $f(b_1,\ldots,b_n)=\Psi(f)(b_1,\ldots,b_n)=\Theta(f)(b_1,\ldots,b_n)$ for all $f\in\FL$ and all $b_1,\ldots,b_n\in B$. In particular, $f\in \FL$ satisfies $f\in\I^L(B)$ if and only if $\Psi(f)\in\I^U(B)$ if and only if $\Theta(f)\in \I^{L,U}(B)$, and $f\in\FU$ satisfies $f\in\I^U(B)$ if and only if $\Gamma(f)\in\I^{L,U}(B)$.
\end{remark}

\begin{proposition}[\textbf{Relating $L$-varieties to $(L,U)$-varieties}]\label{equivalentvarietiesPROPOSITION}
\mbox{}

Let $B$ be an $(L,U)$-algebra and $C\in\V^L(B)$.
\enum
\item $C$ is an $(L,U)$-algebra such that $C\in\V^{L,U}(B)$.
\item  $\V^L(B)$ has almost polynomial growth if and only if $\V^{L,U}(B)$ has almost polynomial growth.
\item $\I^U(B) \subseteq \I^U(C)$, and $\V^{L,U}(C)$ is a proper subvariety of $\V^{L,U}(B)$ if and only if there exists a $U$-polynomial $f\in\I^U(C)\setminus\I^U(B)$.
\eenum
\end{proposition}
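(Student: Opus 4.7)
For item (1), the key observation is that since $B$ is an $(L,U)$-algebra, for every $z\in\ker\phi$ and every $b\in B$ we have $b^z=b^{\phi(z)}=0$, so $x^z\in\I^L(B)$. Consequently, any $C\in\V^L(B)$ must satisfy $c^z=0$ for all $c\in C$ and $z\in\ker\phi$, which means its right $U(L)^{\op}$-action factors through $\phi$ and one can unambiguously define $c^{\phi(u)}:=c^u$; this is the desired $(L,U)$-action on $C$. That $C\in\V^{L,U}(B)$ then follows from the chain $\I^L(B)\subseteq\I^L(C)$ combined with the identity $\Theta(\I^L(X))=\I^{L,U}(X)$ of Remark~\ref{sameevaluations}, which yields $\I^{L,U}(B)\subseteq\I^{L,U}(C)$.

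For item (2), my plan is to transport the growth-theoretic data through a bijection between $L$-subvarieties of $\V^L(B)$ and $(L,U)$-subvarieties of $\V^{L,U}(B)$. By item (1), every algebra in an $L$-subvariety of $\V^L(B)$ is automatically an $(L,U)$-algebra; conversely, an $(L,U)$-subvariety defined by a set $S\supseteq\I^{L,U}(B)$ of $(L,U)$-identities coincides with the $L$-subvariety defined by $\Theta^{-1}(S)\supseteq\I^L(B)$, using item (1) again to see that every algebra in the resulting class carries the compatible $(L,U)$-structure and satisfies $S$. This correspondence is inclusion-preserving and, by Lemma~\ref{Ldata2UdataLEMMA}(2), preserves the codimension sequence of any generating algebra. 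Hence both the superpolynomial growth of the ambient variety and the polynomiality of each proper subvariety transfer between the two settings, giving the equivalence of almost polynomial growth.

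For item (3), the inclusion $\I^U(B)\subseteq\I^U(C)$ follows by pulling back $\I^{L,U}(B)\subseteq\I^{L,U}(C)$ (from item (1)) along $\Gamma$, using $\Gamma^{-1}(\I^{L,U}(X))=\I^U(X)$ from Remark~\ref{sameevaluations}. For the biconditional, a witness $f\in\I^U(C)\setminus\I^U(B)$ yields $\Gamma(f)\in\I^{L,U}(C)\setminus\I^{L,U}(B)$ (same remark), so the variety inclusion is strict; conversely, given any $h\in\I^{L,U}(C)\setminus\I^{L,U}(B)$ witnessing strict inclusion, the surjectivity of $\Gamma$ (immediate from $\Theta=\Gamma\circ\Psi$ and surjectivity of $\Theta$, which is the quotient map $\FL\to\FL/\ILU$) furnishes $f\in\FU$ with $\Gamma(f)=h$, and Remark~\ref{sameevaluations} identifies this $f$ as an element of $\I^U(C)\setminus\I^U(B)$.

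I expect item (2) to be the main obstacle: item (1) is essentially an unpacking of definitions and item (3) is a direct consequence of the relationships summarized in Figure~\ref{Figure}, but item (2) requires carefully setting up the bijection between subvarieties and verifying that it interacts well with generators and proper inclusion before Lemma~\ref{Ldata2UdataLEMMA}(2) can be invoked to identify the two growth types.
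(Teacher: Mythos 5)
Your items (1) and (3) match the paper's argument essentially line for line, including the well-definedness check for the induced $U^{\op}$-action and the appeal to Remark~\ref{sameevaluations} via $\Theta$ and $\Gamma$. Item (2) is where you diverge: you construct an explicit inclusion-preserving bijection between $(L,U)$-subvarieties and $L$-subvarieties by pulling back $T_{L,U}$-ideals through $\Theta^{-1}$, then transport the codimension data along it. This is correct, but the paper takes a shorter route that your proposal slightly undersells. By item (1) together with the trivial observation that every $(L,U)$-algebra is in particular an $L$-algebra, the classes $\V^L(B)$ and $\V^{L,U}(B)$ are literally the same collection of algebras; and Lemma~\ref{Ldata2UdataLEMMA}(2) gives $c_n^L(C)=c_n^{L,U}(C)$ for each member $C$. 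Since almost polynomial growth is a condition entirely expressed in terms of codimension sequences of the ambient variety and of algebras generating its proper subvarieties, and both the generators and their codimensions are identical in the two settings, the equivalence is immediate — no $T$-ideal bijection needs to be built. Your approach buys a fully explicit correspondence at the level of defining ideals, which makes transparent a step the paper compresses into "which in particular implies"; the paper's approach buys brevity by arguing at the level of the underlying classes of algebras. Both are valid, and your anticipation that (2) would be the hardest item is understandable but, given the set-theoretic identity of the two varieties, somewhat overcautious.
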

\begin{proof}
\mbox{}

\enum
\item Since $B$ is an $(L,U)$-algebra, $\ILU\subseteq\I^L(B)$ by Proposition \ref{kernelPsiPROPOSITION}(2), and since $C\in\V^L(B)$, $\I^L(B)\subseteq \I^L(C)$. Therefore $\ILU\subseteq\I^L(C)$, whence $x^z\in\I^L(C)$ for all $z\in\ker\phi$, so the right $ U^{\op}$-action $c^u:=c^v$ is well defined for any $c\in C$, $u\in U$ and $v\in\phi^{-1}(u)$ ($\phi(v_1)=\phi(v_2)$ implies $v_1-v_2\in\ker\phi$, so $c^{v_1}=c^{v_2}$), and is clearly an $(L,U)$-action. In addition, $\I^{L,U}(B)=\Theta(\I^L(B))\subseteq\Theta(\I^L(C))=\I^{L,U}(C)$, hence $C\in\V^{L,U}(B)$.
\item By item (1) and the fact that every $(L,U)$-algebra is an $L$-algebra we get $\V^L(B)=\V^{L,U}(B)$ as sets, and $c_n^L(C)=c_n^{L,U}(C)$ for every $C\in\V^{L,U}(B)$ by Lemma \ref{Ldata2UdataLEMMA}(2), which in particular implies that $C\in\V^L(B)$ generates a proper $L$-subvariety if and only if it generates a proper $(L,U)$-subvariety of $\V^{L,U}(B)$.
\item 
$\I^{L,U}(B) \subseteq \I^{L,U}(C)$ by item (1), so $\I^{U}(B) \subseteq \I^{U}(C) $ by Remark \ref{sameevaluations}. Moreover, $\V^{L,U}(C)$ is a proper subvariety of $\V^{L,U}(B)$ if and only if there exists $g\in \I^{L,U}(C)\setminus \I^{L,U}(B)$, if and only if there exists $f\in\Gamma^{-1}(g)$ such that $f\in\I^U(C)\setminus \I^U(B)$ by Remark \ref{sameevaluations}.\qedhere
\eenum
\end{proof}

Therefore we can study the growth of $\V^L(A)$ and its subvarieties by considering $(L,U)$-algebras, $U$-polynomials, and $U$-codimensions.

\medskip

\section{Matrix setting}\label{Section:MatrixSetting}

\subsection{Derivations of $\M$}
\mbox{}
\medskip

In this section, we describe the enveloping algebra $ U$ of the Lie algebra of derivations of $\M$ for $k\geq 2$. Let $Z_k(F)$ denote the center of $\M$  (i.e., the scalar multiples of the identity matrix $I_k$) and let $\Sl_k(F)$ denote the special Lie algebra of order $k$, that is, the set of traceless matrices inside $\M$ endowed with the bracket product.

\begin{pgraph}\textbf{$\bs{\Der(\M)}$ is isomorphic to $\bs{\SL}$.}
As a consequence of the Noether-Skolem theorem, all derivations of $\M$ are inner (\cite[p.100]{Herstein1968}), so $\ad:\M\rightarrow\Der(\M)$ is a surjective linear map between vector spaces. In addition $\ad(A)=\ad(B)$ if and only if $A-B\in Z_k(F)$, so $\ad:\M/Z_k(F)\rightarrow\Der(\M)$ is a linear isomorphism, which moreover satisfies $\ad([A,B])=[\ad_A,\ad_B]$, giving an isomorphism of Lie algebras between $\Der(\M)$ and $\M/Z_k(F)$. On the other hand, since $\chr(F)=0$ we have $\M= Z_k(F)\oplus\SL$ (direct sum of Lie ideals) and hence $\M/Z_k(F)\cong\SL$ as Lie algebras in a natural way.
\end{pgraph}
 From now on we identify $\Der(\M)$ with $\SL$ as the inner derivations arising from $\SL\subseteq\M$, and we fix $L:=\SL$ for the rest of this paper. Observe that $L$ is split simple (\cite[IV.5 Theorem 6]{JacobsonLie}).
\begin{pgraph}\textbf{Structure of $\bs{U}$.} \label{StructureUSL}
From the exposition of the previous paragraph, we infer that the $L$-action of $\SL$ on $\M= Z_k(F)\oplus\SL$ is the direct sum of the trivial action $\rho_0$ on the center and the adjoint action $\Ad$ on $\SL$, whence the image  $U=\phi(U(L))$ of the left representation $\phi$ of $U(L)$ on $\M$ is the direct sum $U=U_1\oplus U_2\subseteq\E_F(\M)$ with $U_1=\E_F(F\cdot I_k)\cong F$ and $U_2$ the enveloping algebra of the adjoint action. Since $\Ad$ is finite dimensional and irreducible (because subrepresentations of $\Ad$ correspond to ideals of $L$, which is simple), by Theorem \ref{fullenvelopingalgebraTHEOREM} we have $U_2=\E_F(\SL)$. Therefore
\[U =\E_F(F\cdot I_k)\oplus\E_F(\Sl_k(F)).\]
In particular, $U$ is a split-semisimple algebra of dimension $(k^2-1)^2+1$.

\end{pgraph}

\subsection{Explicit description of $U^{\op}$}
\mbox{}
\medskip

In this section and the next we describe how to operate with exponents coming from $U^{\op}$. We denote the product of $U^{\op}$ by juxtaposition.

\begin{pgraph}\textbf{Basis of $\M$.}\label{Sbasis}
Let $\{e_{ij}\}_{i,j=1}^k$ be the standard matrix units of $\M$ (with 1 as $(i,j)$ entry and $0$ elsewhere) and denote $h_i:=e_{ii}-e_{i+1,i+1}$ for $1\leq i<k$. Then a basis of $\SL$ is
\[\mathcal{S}:=\{e_{ij}|1\leq i\neq j\leq k\}\cup\{h_1,\ldots,h_{k-1}\},\]
which we expand to a basis $\mathcal{M}$ of $\M=\SL\oplus Z_k(F)$ by appending $g:=I_k$,
\[\mathcal M:=\mathcal S\cup\{g\}.\]
We will also refer to elements $h_{ij}:=e_{ii}-e_{jj}$ for $i,j\in\{1,\ldots,k\}$, $i\neq j$ (thus $h_i=h_{ii+1}$ for $1\leq i<k$). We have $h_{ij}=-h_{ji}$. Let us write $(-1)^{i>j}:=1$ if $i\leq j$ and $(-1)^{i>j}:=-1$ if $i>j$. Then in basis $\mathcal S$ we have
\[h_{ij}=(-1)^{i>j}\sum_{l=\min(i,j)}^{\max(i,j)-1} h_l.\]
\end{pgraph}
\begin{pgraph}\textbf{Basis of $U^{\op}$.}\label{matrixunitsendomorphisms}
Write $x\in\Mat_k(F)$ in basis $\mathcal M$ as $x=\sum_{a\in\mathcal M}\mu_a^x a$, i.e., $\mu_a^x$ denotes the coefficient of $x$ with respect to $a\in\mathcal M$. Then, given $a,b\in\mathcal M$ define $\varphi_{ab}\in\E_F(\M)^{\op}$ by
\[\varphi_{ab}(x):=\mu_a^x b,\]
i.e., $\varphi_{ab}$ is the endomorphism sending basis element $a$ to basis element $b$ and the remaining basis elements to $0$.
For example, if $x=e_{12}+2h_1+3h_2\in\Mat_4(F)$ then $x^{\varphi_{h_1e_{23}}}=2e_{23}$, $x^{\varphi_{h_3e_{23}}}=0$ and
\[x^{\varphi_{h_1e_{23}}\varphi_{e_{23}h_4}}=(x^{\varphi_{h_1e_{23}}})^{\varphi_{e_{23}h_4}} = (2e_{23})^{\varphi_{e_{23}h_4}} = 2h_4.\]
We also define endomorphisms $\vp_{ab}$ for any $a\in\mathcal S$ and $b\in\SL$ by linearity. In particular, for elements $h_{ij}$ and $a\in\mathcal S$, we define
\[\vp_{ah_{ij}}:=(-1)^{i>j}\sum_{l=\min(i,j)}^{\max(i,j)-1} \vp_{ah_l}.\]
Note that, in $\E_F(\M)^{\op}$, for $a,b,c,d\in\mathcal M$ we have
\[\vp_{ab}\vp_{cd}=\delta_{bc}\vp_{ad} \tag{F}\label{formula(F)}\]
where $\delta_{bc}$ is Kronecker's delta, since $(x^{\vp_{ab}})^{\vp_{cd}}=(\mu_a^x b)^{\vp_{cd}} = \delta_{bc}\mu_a^x d = \delta_{bc}x^{\vp_{ad}}$ for $x\in\M$.
 In fact, $\{\varphi_{ab}\}_{a,b\in\mathcal M}$ is nothing else than the standard set of matrix units of $\E _F(\M)^{\op}\cong\Mat_{k^2}(F)$ when basis $\mathcal M$ is fixed for $\M$. With this presentation, $U_2^{\op}\cong\Mat_{k^2-1}(F)$ has $\{\varphi_{ab}\}_{a,b\in\mathcal S}$ as a basis and $U_1^{\op}\cong F$ corresponds to the subspace of endomorphisms spanned by $\varphi_{gg}$. This is the presentation we will use in the following; hence from now on we fix the basis of $U^{\op}$
\[\mathcal U:=\{\vp_{ab}\}_{a,b\in\mathcal S}\cup\{\vp_{gg}\}.\]
Notice that $I_{k^2}=\sum_{a\in\mathcal M} \varphi_{aa}$, so in this way we avoid the explicit use of the identity endomorphism and thus the participation of the problematic ordinary polynomial identities.

To prevent the notation from becoming too cumbersome, throughout the rest of the paper we will omit the letters $\varphi$ from the exponents when applying endomorphisms of $U^{\op}$ to $\M$ or writing $U$- or $(L,U$)-polynomials; so, for example, $x^{h_1h_2}$ is shorthand for $x^{\varphi_{h_1h_2}}$. This notation of the form $x^{ab}$ with $a,b\in\mathcal\SL$ for polynomial $x^{\vp_{ab}}$ in $\FU$ or in $\FLU$ should never be confused with notation for $L$-polynomial $(x^a)^b$ in $\FL$ with $a,b\in L$; we will never write $L$-polynomials in the latter way.
\end{pgraph}

\subsection{Computations involving $U^{\op}$}
\mbox{}

\begin{pgraph}\textbf{Multiplication in $\SL$.}\label{multiplicationtableLie}
The Lie multiplication table of $\mathcal M$ is summarized by the following relations:
\begin{enumerate}
\item $[g,x]=0$ for any $x\in\mathcal M$.
\item $[e_{ij},e_{kl}]=0$ ($j\neq k, l\neq i$), $[e_{ij},e_{jk}]=e_{ik}$ ($k\neq i$), $[e_{ij},e_{ji}]=h_{ij}$.
\item $[h_i,e_{ij}] = e_{ij}$ ($j\neq i,i+1$), $[h_{i-1},e_{ij}] =-e_{ij}$ ($j\neq i-1,i$), $[h_{ij},e_{ij}]=2e_{ij}$.
\item $[h_i,h_j]=0$.
\end{enumerate}
\end{pgraph}
\begin{pgraph}\textbf{Computations involving inner derivations.}\label{innerderivations}
Recall that for $c\in\SL$ we write $C:=\ad_{c}\in U^{\op}$. Among the elements in $U^{\op}$ we find the inner derivations $E_{ij}$ generated by the elements $e_{ij}$ ($i\neq j$), which will play a special role in some results. Denote $\vp_{h_0a}:=0, \vp_{h_ka}:=0$ for $a\in\mathcal S$. Then we can write $E_{ij}$ in basis $\mathcal U$ as
\[E_{ij}=\sum_{l\neq i,j}\vp_{e_{jl}e_{il}} -\sum_{l\neq i,j}\vp_{e_{li}e_{lj}} + \vp_{e_{ji}h_{ij}}+\vp_{h_{i-1}e_{ij}}-\vp_{h_{i}e_{ij}}-\vp_{h_{j-1}e_{ij}}+\vp_{h_{j}e_{ij}}.\tag{E}\label{formula(E)}\]
By Formulas \eqref{formula(F)} and \eqref{formula(E)}, the product of two of these inner derivations is given by
\begin{align*}
E_{ij}E_{rs} =&
\boldsymbol{\delta_{is}}\left(\sum_{l\neq i,j,r}\vp_{e_{jl}e_{rl}} + \boldsymbol{(1-\delta_{jr})}(\vp_{e_{jr}h_{ri}}+\vp_{h_{i-1}e_{rj}}-\vp_{h_{i}e_{rj}}-\vp_{h_{j-1}e_{rj}}+\vp_{h_{j}e_{rj}})\right)+\\
+&\boldsymbol{\delta_{jr}}\left(\sum_{l\neq i,j,s}\vp_{e_{li}e_{ls}} - \boldsymbol{(1-\delta_{is})}(\vp_{e_{si}h_{js}}+\vp_{h_{i-1}e_{is}}-\vp_{h_{i}e_{is}}-\vp_{h_{j-1}e_{is}}+\vp_{h_{j}e_{is}})\right)+\\
+&\boldsymbol{\delta_{is}\delta_{jr}}\left(\vp_{h_{i-1}h_{ji}} -\vp_{h_{i}h_{ji}} -\vp_{h_{j-1}h_{ji}}+\vp_{h_{j}h_{ji}}\right)+\\
+&\boldsymbol{\delta_{ji+1}}\left(\boldsymbol{\delta_{ir-1}}\vp_{e_{i+1i}e_{i+1s}} -\boldsymbol{\delta_{ir}}\vp_{e_{i+1i}e_{is}} -\boldsymbol{\delta_{is-1}}\vp_{e_{i+1i}e_{ri+1}} +\boldsymbol{\delta_{is}}\vp_{e_{i+1i}e_{ri}}\right)+\\
+&\boldsymbol{(1-\delta_{is})(1-\delta_{jr})}\left(\boldsymbol{(1-\delta_{ir})}\vp_{e_{jr}e_{is}}+\boldsymbol{(1-\delta_{js})}\vp_{e_{si}e_{rj}}\right).\tag{EE}\label{formula(EE)}
\end{align*}
In particular, a useful identity derived from \eqref{formula(EE)} is
\[E_{ij}^2 = -2\vp_{e_{ji}e_{ij}}.\tag{E2}\label{formula(E2)}\]
Observe that $e_{ij}^2=0$ ($i\neq j$) implies $E_{ij}^3=0$, since for all $x\in\M$,
\[x^{E_{ij}^3}=[e_{ij},[e_{ij},[e_{ij},x]]] = e_{ij}^3x-3e_{ij}^2xe_{ij}+3e_{ij}xe_{ij}^2 -xe_{ij}^3=0.\]
We will also make use of the important \emph{bracket formula} \[\vp_{ab}C = \vp_{a[c,b]}\tag{B}\label{formula(B)}\]
for any $c\in\SL$ and $a,b\in\mathcal S$ or $a=g=b$, which is true because for all $x\in\M$,
\[x^{\vp_{ab}C} = (\mu_a^xb)^C = [c,\mu_a^x b] = \mu_a^x [c,b] = x^{\vp_{a[c,b]}}.\]
In particular, for any  left $U$-algebra $A$, $x\in A$, and $a\in\mathcal S$, we have
\[(x^{ae_{ij}})^{-E_{ji}}=x^{ah_{ij}}, \,\, (x^{ah_{ij}})^{-\frac12E_{ij}}=x^{ae_{ij}}.\]
The action of the power of a derivation on a product is given by \emph{Leibniz's rule}: for $x,y\in A$, $c\in\SL$ and $p\in\N$,
\[(xy)^{C^p} = \sum_{i=0}^p \binom pi x^{C^i}y^{C^{p-i}}.\]
As an example, let us compute the action of the square of derivation $E_{ij}$ on a product by using Leibniz's rule, the bracket formula, and Formula \eqref{formula(E2)}:
\begin{align*}
(x^{ab}y^{cd})^{E_{ij}^2} &= x^{(ab)E_{ij}^2}y^{cd} + 2x^{(ab)E_{ij}}y^{(cd)E_{ij}} + x^{ab}y^{(cd)E_{ij}^2} =
-2x^{(ab)(e_{ji}e_{ij})}y^{cd} + 2x^{a[e_{ij},b]}y^{c[e_{ij},d]} -2x^{ab}y^{(cd)(e_{ji}e_{ij})} =\\
&= -2(\delta_{be_{ji}} x^{ae_{ij}}y^{cd} - x^{a[e_{ij},b]}y^{c[e_{ij},d]} + \delta_{de_{ji}}x^{ab}y^{ce_{ij}}).
\end{align*}
The action of a general composition of derivations on a product is given by Formula \eqref{formula(U)}.
\end{pgraph}

\subsection{Explicit description of $\Uop$}
\mbox{}
\medskip

 In this section we describe how to operate with exponents coming from $\Uop$. We denote the product of $\Uop$ by juxtaposition.

Recalling that $U^{\op}=\phi(U(L)^{\op})\cong U(L)^{\op}/\ker\phi$, fix a unique preimage $\vr_{ab}\in\phi^{-1}(\vp_{ab})$ for each $\vp_{ab}\in\mathcal U$. Then $U(L)^{\op}=V\oplus\ker\phi$, with
\[\mathcal V:=\{\vr_{ab}\ | \ a,b\in\mathcal S\text{ or }a=g=b\}\]
being a basis of $V$. We extend the notation $\vr_{ab}$ to any $a\in\mathcal S$, $b\in\SL$ by linearity.

\begin{pgraph}\textbf{Preimages of the basis elements.}
We first show a valid assignment of the elements $\vr_{ab}\in U(L)^{\op}$, formed with polynomials of degree at most $6$ in the elements $e_{ij}\in U(L)^{\op}$.
\begin{proposition}\label{assignmentsPROPOSITION} Let $c\cdot v$ denote the scalar product of vectors $c\in F^{k-1}$, $v \in (U(L)^{\op})^{k-1}$ ($c\cdot v:=c_1v_1+\cdots+c_{k-1}v_{k-1}$). Then one valid assignment of $\mathcal V$ is
 \begin{alignat*}{2}
&\vr_{e_{rs}e_{ij}}:= \frac12 e_{sr}^2e_{rj}e_{is} \,\, \medmath{(r\neq j, s\neq i; r\neq s, i\neq j)}\\
&\vr_{e_{rs}e_{ir}}:=-\frac12 e_{sr}^2e_{is} \,\, \medmath{(i\neq s; r\neq s, i\neq r)}, &&\vr_{e_{rs}e_{sj}}:= \frac12 e_{sr}^2e_{rj} \,\, \medmath{(r\neq j; r\neq s, s\neq j)}\\
&\vr_{e_{rs}e_{rs}}:= \frac14 e_{sr}^2e_{rs}^2 \,\, \medmath{(r\neq s)}, &&\vr_{e_{rs}e_{sr}}:=-\frac12 e_{sr}^2 \,\, \medmath{(r\neq s)}
\end{alignat*}%
\begin{alignat*}{2}
&\vr_{e_{rs}h_i}:= \frac12 e_{sr}^2e_{i+1s}e_{ri}e_{ii+1} \,\, \medmath{(i\neq r-1; i\neq r, i\neq s-1; s\neq r)}, \quad &&\vr_{e_{rs}h_{r}}:=-\frac12 e_{sr}^2e_{r+1s}e_{rr+1} \,\, \medmath{(s\neq r+1; s\neq r)}\\
&\vr_{e_{rs}h_{s-1}}:= \frac12 e_{sr}^2e_{rs-1}e_{s-1s} \,\, \medmath{(s\neq r+1; s\neq r)}, &&\vr_{e_{rs}h_{r-1}}:= \frac12 e_{sr}^2e_{r-1s}e_{rr-1} \,\, \medmath{(s\neq r-1; s\neq r)}\\
&\vr_{e_{rr-1}h_{r-1}}:= \frac12 e_{r-1r}^2e_{rr-1}, &&\vr_{e_{r-1r}h_{r-1}}:=-\frac12 e_{rr-1}^2e_{r-1r}
\end{alignat*}
\begin{alignat*}{1}
\vr_{h_ie_{rs}}:=&-\frac1{2k}(c_{is}\cdot v_{rs})e_{rs}^2,\text{ where }\\
&v_{rs}:=(v_{rs1},\ldots,v_{rss-1},v_{rss+1},v_{rss+2},\ldots,v_{rsk})\text{ with }\\
&v_{rsj}:=e_{sj}e_{jr}\text{ for }j\neq r,s\text{ and } v_{rsr}:=-e_{sr},\text{ and }\\
&c_{pq}:=(\overbrace{-k+p,\ldots,-k+p}^{p-1},m,\overbrace{p,\ldots,p}^{k-p-1}), m:=-k+p\text{ if }p<q\text{ and }m:=p\text{ if }p\geq q.\\
\vr_{h_ih_j}:=&\frac1{k}(c_{ij}\cdot w_j)e_{j+1j}e_{jj+1},\text{ where }\\
&w_j:=(w_{j1},\ldots,w_{jj},w_{jj+2},w_{jj+3},\ldots,w_{jk})\text{ with }\\
&w_{jr}:=e_{jr}e_{rj}-\frac14e_{j+1r}^2e_{rj+1}^2\text{ for }r\neq j,j+1\text{ and }w_{jj}:=\frac12e_{jj+1}e_{j+1j}.\\
\vr_{gg}:=&1-\sum_{a\in\mathcal S}\vr_{aa}.
\end{alignat*}
\end{proposition}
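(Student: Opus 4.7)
Since each of the proposed elements $\vr_{ab}\in U(L)^{\op}$ is given explicitly, the plan is to verify each assignment by direct computation: evaluate $x^{\vr_{ab}}$ for arbitrary $x\in\M$ and check that it equals $\mu_a^xb=\vp_{ab}(x)$. The main tools are the identity $E_{ij}^2=-2\vp_{e_{ji}e_{ij}}$ of Formula \eqref{formula(E2)}, which produces a ``base'' endomorphism $\phi(-\tfrac12 e_{sr}^2)=\vp_{e_{rs}e_{sr}}$ with first index $e_{rs}$; the bracket formula $\vp_{ab}C=\vp_{a[c,b]}$ of Formula \eqref{formula(B)}, which allows post-multiplication by an inner derivation to transform the second index; and the Lie multiplication table of paragraph \ref{multiplicationtableLie}. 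The closing case $\vr_{gg}=1-\sum_{a\in\mathcal S}\vr_{aa}$ will be immediate once all $\vr_{aa}$ with $a\in\mathcal S$ have been verified, since $\phi(1)=I_{k^2}$ and $\sum_{a\in\mathcal M}\vp_{aa}=I_{k^2}$ give $\phi(\vr_{gg})=I_{k^2}-\sum_{a\in\mathcal S}\vp_{aa}=\vp_{gg}$.

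For the first family of formulas (all $\vr_{e_{rs}b}$), the strategy is to begin with $\phi(-\tfrac12 e_{sr}^2)=\vp_{e_{rs}e_{sr}}$ and then right-multiply (in $U(L)^{\op}$) by a short chain of one or two inner derivations, steering the second index from $e_{sr}$ to the desired $b$ via iterated Lie brackets. For instance, in $\vr_{e_{rs}e_{ij}}=\tfrac12e_{sr}^2e_{rj}e_{is}$ the bracket chain is $e_{sr}\mapsto[e_{rj},e_{sr}]=-e_{sj}\mapsto -[e_{is},e_{sj}]=-e_{ij}$, so the two signs combine with the $-2$ of Formula \eqref{formula(E2)} and the scalar $\tfrac12$ to give $\vp_{e_{rs}e_{ij}}$ exactly. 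Each of the side-conditions (e.g., $r\neq j$, $s\neq i$, $i\neq s-1$) is precisely what is needed to ensure that no bracket in the chain vanishes or collapses into an unwanted element like $h_{rs}$; the various subcases $\vr_{e_{rs}e_{ir}}$, $\vr_{e_{rs}e_{sj}}$, $\vr_{e_{rs}e_{rs}}$, $\vr_{e_{rs}e_{sr}}$ record the alternative bracket patterns that emerge when one of these conditions fails. The four-factor formulas $\vr_{e_{rs}h_i}$, $\vr_{e_{rs}h_r}$, $\vr_{e_{rs}h_{s-1}}$, $\vr_{e_{rs}h_{r-1}}$ follow the same template, reaching some $e_{l,l+1}$ by brackets and collapsing via $[e_{l,l+1},e_{l+1,l}]=h_l$.

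The principal obstacle is the second family, $\vr_{h_ie_{rs}}$ and $\vr_{h_ih_j}$, where the first index $h_i$ cannot be produced directly from Formula \eqref{formula(E2)}. Here my plan is to evaluate the candidate word $(c_{is}\cdot v_{rs})e_{rs}^2$ (resp.\ $(c_{ij}\cdot w_j)e_{j+1,j}e_{j,j+1}$) on each basis element of $\mathcal M$ separately, observing that $e_{sj}e_{jr}$ sends each $h_l$ to a scalar multiple of $e_{sr}$ (whose coefficient is a piecewise linear function of $l$ dictated by the Cartan weights) while sending each $e_{pq}$ to some other matrix unit; the subsequent $e_{rs}^2$ then projects to zero every first-index contribution other than those of the form $\vp_{h_l,e_{rs}}$. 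The vector $c_{is}$ is calibrated so that the residual linear combination over $l$ equals $k\delta_{l,i}$, isolating the $h_i$-coefficient of the input; the global factor $-\tfrac1{2k}$ then produces exactly $\vp_{h_ie_{rs}}$. The correction term $v_{rsr}=-e_{sr}$ in the definition of $v_{rs}$, and the appearance of $k$ in the normalization, reflect the trace-zero condition defining $\SL\subset\M$ (equivalently, $\sum_{j=1}^k e_{sj}e_{jr}-e_{sr}$ vanishes on $\SL$-inputs). The analogous calibration of $c_{ij}$ against the piecewise $w_j$ yields $\vr_{h_ih_j}$. Checking these calibrations reduces to solving a small linear system in the $(k-1)$-dimensional Cartan subspace: this is tedious but routine, and constitutes the bulk of the verification.
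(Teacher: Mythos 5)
Your proposal is correct and follows essentially the same route as the paper's proof: both verify the $\vr_{e_{rs}b}$ formulas by chaining brackets from the base identity $\phi(-\tfrac12 e_{sr}^2)=\vp_{e_{rs}e_{sr}}$, and both reduce the Cartan cases $\vr_{h_ie_{rs}}$, $\vr_{h_ih_j}$ to observing that the trailing $e_{rs}^2$ (or $e_{j+1,j}e_{j,j+1}$) isolates a fixed second-index component, after which matching the remaining $(k-1)$-dimensional Cartan contribution is a linear-algebra problem whose solution is encoded in $c_{is}$ (the paper presents this as inverting an explicit $(k-1)\times(k-1)$ matrix $M(s)$ whose inverse has rows $-\tfrac1k c_{is}$, whereas you phrase it as a direct calibration check — computationally the same thing). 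One minor slip in wording: the projection $\phi(e_{rs}^2)=-2\vp_{e_{sr}e_{rs}}$ selects terms by their \emph{second} exponent index $e_{sr}$, not by ``first-index contribution''; the fact that the surviving terms all have first index of the form $h_l$ is a separate (true, and needed) observation that follows from Formula \eqref{formula(E)}.
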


For example, for $k=6$ we have $c_{35}=(-3,-3,-3,3,3)$, $v_{45}=(e_{51}e_{14},e_{52}e_{24},e_{53}e_{34},-e_{54},e_{56}e_{64})$ and
\[\vr_{h_3e_{45}}=\frac14(e_{51}e_{14}+e_{52}e_{24}+e_{53}e_{34}+e_{54}-e_{56}e_{64})e_{45}^2.\]

\begin{proof}
Proving the proposition is a matter of verifying that, for each assignment found in the statement of the form $\vr_{ab}:=\sum_{p,\ldots,q}\alpha_{p,\ldots,q} e_p\cdots e_q$ with $\alpha_{p,\ldots,q}\in F$ and $e_p,\ldots,e_q\in\SL$, the identity $\vp_{ab} = \sum_{p,\ldots,q}\alpha_{p,\ldots,q} E_p\cdots E_q$ holds in $U^{\op}$. Accordingly, we skip computations when they are straightforward.
\enum
\item The first 11 assignments of the statement are easily checked with Formulas \eqref{formula(E)}, \eqref{formula(E2)}, \eqref{formula(EE)} and \eqref{formula(F)}.
\item For $\vp_{h_ie_{rs}}$, first check that
\[V^{rsj}:= E_{sj}E_{jr}E_{rs}^2=2(-\vp_{h_{j-1}e_{rs}}+\vp_{h_je_{rs}}+\vp_{h_{s-1}e_{rs}}-\vp_{h_se_{rs}})\text{ for }j\neq r,s \tag{Ea}\label{formula(Ea)}\]
and
\[V^{rsr}:=-E_{sr}E_{rs}^2=2(-\vp_{h_{r-1}e_{rs}}+\vp_{h_re_{rs}}+\vp_{h_{s-1}e_{rs}}-\vp_{h_se_{rs}}).\tag{Eb}\label{formula(Eb)}\]
Then, for fixed $r,s$, write all identities in \eqref{formula(Ea)} for $1\leq j\leq k$, $j\neq r,s$ together with \eqref{formula(Eb)} as a $(k-1)\times(k-1)$  system of linear equations $V^{rs}=M(s)\cdot \vp^{rs}$, with vectors $\vp^{rs}:=(\vp_{h_1e_{rs}},\ldots,\vp_{h_{k-1}e_{rs}})$ and
\small\[
V^{rs}:=\frac12(V_{rs1},\ldots,V_{rsr-1},V_{rsr},V_{srr+1},\ldots,V_{rss-1},V_{rss+1},\ldots,V_{rsk}).\]
\normalsize
Compute $M(s)^{-1}$ to solve the system and find $\vp^{rs}=M(s)^{-1}V^{rs}$. The matrix of coefficients $M(s)$ is described as follows:
Suppose first $1<s<k$. For $i<s-1$ (corresponding to $j=i$ in (Ea) or $r=i$ in (Eb)) and for $i>s$ (corresponding to $j=i+1$ or $r=i+1$), the $i$th row has a $-1$ entry in columns $i-1$ and $s$, a $1$ entry in columns $i$ and $s-1$, and $0$ elsewhere. The $(s-1)$th row (corresponding to $j=s-1$ or $r=s-1$) has a $-1$ entry in columns $s-2$ and $s$ and a $2$ entry in column $s-1$. The $s$th row (corresponding to $j=s+1$ or $r=s+1$) has a $1$ entry in columns $s-1$ and $s+1$ and a $-2$ entry in column $s$. 
\footnotesize
\setcounter{MaxMatrixCols}{20}
\[M(s)=\begin{pmatrix*}[r]
1 & 0 & \cdots & \cdots & \cdots & 0 & 1 & -1 & 0 & \cdots & \cdots & \cdots & \cdots & 0\\
-1 & 1 & 0 & \cdots & \cdots & 0 & 1 & -1 & 0 & \cdots & \cdots & \cdots & \cdots & 0\\
0 &-1 & 1 & 0 & \cdots & 0 & 1 & -1 & 0 & \cdots & \cdots & \cdots & \cdots & 0\\
\vdots & \ddots & \ddots & \ddots & \ddots & \vdots & \vdots & \vdots & \vdots & \ddots & \ddots & \ddots & \ddots & \vdots\\
\vdots & \ddots & \ddots & \ddots & \ddots & \vdots & \vdots & \vdots & \vdots & \ddots & \ddots & \ddots & \ddots & \vdots\\
\vdots & \ddots & \ddots & \ddots & \ddots & \ddots & \vdots & \vdots & \vdots & \ddots & \ddots & \ddots & \ddots & \vdots\\
0 & \cdots & \cdots & \cdots & 0 & -1 & 2 & -1 & 0 & \cdots & \cdots & \cdots & \cdots & 0\\
0 & \cdots & \cdots & \cdots & \cdots & 0 & 1 & -2 & 1 & 0 & \cdots & \cdots & \cdots & 0\\
0 & \cdots & \cdots & \cdots & \cdots & 0 & 1 & -1 & -1 & 1 & 0 & \cdots & \cdots & 0\\
0 & \cdots & \cdots & \cdots & \cdots & 0 & 1 & -1 & 0 & -1 & 1 & 0 & \cdots & 0\\
\vdots & \ddots & \ddots & \ddots & \ddots & \vdots & \vdots & \vdots & \vdots & \ddots & \ddots & \ddots & \ddots& \vdots\\
\vdots & \ddots & \ddots & \ddots & \ddots & \vdots & \vdots & \vdots & \vdots & \ddots & \ddots & \ddots & \ddots& 0\\
0 & \cdots & \cdots & \cdots & \cdots & 0 & 1 & -1 & 0 & \cdots & \cdots \cdots & 0 & -1 & 1
\end{pmatrix*}.\]
\normalsize
Equivalently, if $i<s-1$ or $i>s$, the $i$th column $C_{is}$ of $M(s)$ has entries $1,-1$ in rows $i,i+1$ (resp. $i-1,i$); if $i=s-1$ (resp. $i=s$), entry $2$ (resp. $-2$) in row $s-1$ (resp. $s$) and $1$ (resp. $-1$) elsewhere. Let us show that $M(s)$ is invertible. It is straightforward to check that the row vector
\[c_{is}:=(\overbrace{-k+i,\ldots,-k+i}^{i-1},m,\overbrace{i,\ldots,i}^{k-i-1}), m:=-k+i\text{ if }i<s\text{ and }m:=i\text{ if }i\geq s\]
satisfies $c_{is}\cdot C_{js}=-k\delta_{ij}$, whence the matrix with rows $c_{1s},\ldots,c_{k-1s}$ scaled by $-1/k$ is the inverse of $M(s)$. Therefore $\vp_{h_ie_{rs}}=-\frac1{k}c_{is}\cdot V^{rs}$.\\
In the extreme cases, $s=1$ and $s=k$, the matrix $M(s)$ follows the same pattern with the obvious changes and the same formula gives the inverse.
\item For $\vp_{h_ih_j}$  check that, for $r\neq j,j+1$,
\[E_{jr}E_{rj}E_{j+1j}E_{jj+1} = \vp_{e_{rj+1}e_{rj+1}} +\vp_{h_{r-1}h_j}-\vp_{h_rh_j}-\vp_{h_{j-1}h_j}+\vp_{h_jh_j} \]
by showing first that
$E_{rj}E_{j+1j}=-\vp_{e_{jj+1}e_{rj}}-\vp_{e_{jr}e_{j+1j}}$
and
$(E_{rj}E_{j+1j})E_{jj+1}=\vp_{e_{jj+1}e_{rj+1}}-\vp_{e_{jr}h_j}.$
Next apply that $\vp_{e_{rj+1}e_{rj+1}}=\frac14E_{j+1r}^2E_{rj+1}^2 = \frac14E_{j+1r}^2E_{rj+1}^2E_{j+1j}E_{jj+1}$ to find
\[W^{jr}:=E_{rj}E_{j+1j}E_{jj+1} - \frac14E_{j+1r}^2E_{rj+1}^2E_{j+1j}E_{jj+1} = \vp_{h_{r-1}h_j}-\vp_{h_rh_j}-\vp_{h_{j-1}h_j}+\vp_{h_jh_j}\text{ for }r\neq j,j+1. \tag{Ec}\label{formula(Ec)}\]
Check also that \[W^{jj}:=E_{jj+1}E_{j+1j}^2E_{jj+1} = 2(-\vp_{h_{j-1}h_j}+2\vp_{h_jh_j}-\vp_{h_{j+1}h_j}).\tag{Ed}\label{formula(Ed)}\]
Now fix $j$ and proceed as in the previous case by solving for $\vp^j:=(\vp_{h_1h_j},\ldots,\vp_{h_{k-1}h_j})$ the $(k-1)\times(k-1)$ system of linear equations $W^j=-M(j)\vp^j$ generated by \eqref{formula(Ec)} and \eqref{formula(Ed)}, where $W^j:=(W^{j1},\ldots,W^{jj-1},\frac12W^{jj},W^{jj+2},\ldots,W^{jk})$
and $M(s)$ is (thankfully!) the matrix described in the previous item. Therefore $\vp_{h_ih_j}=\frac1k c_{ij}\cdot W^j$.
\item For $\vp_{gg}$ use that $1_{U^{\op}}=\vp_{gg} + \sum_{a\in\mathcal S}\vp_{aa}$.\qedhere
\eenum
\end{proof}
\end{pgraph}
\begin{pgraph}\textbf{Generator of $\ker\phi$.}
The ideal $\ker\phi$ of $\Uop$ is infinite dimensional, but is finitely generated; in this subsection, we show, through the primitive element lemma, that it is in fact a principal ideal. For most of this section we work with $U(L)$.

\medskip

Let $\Ad_k$ denote the adjoint representation of $\SL$ and $\chi_k$ its associated central character, and let $\phi_k$ be the representation of $\SL$ on $\M$ given by the action of $\ad$. We have $\phi_k=\Ad_k\oplus\rho_0$ with $\rho_0$ acting on $F\cdot g$. Fixing the Cartan subalgebra of diagonal traceless matrices and the set of positive roots giving $N_+=\spn\{e_{12},e_{23},\ldots,e_{k-1k}\}$, the highest weight vector of $\Ad_k$ is $e_{1k}$.

\medskip

Denote $x^i_j:=e_{ij}$ for $1\leq i\neq j\leq k$, and for $1\leq i\leq k$,
\[x^i_i:=\frac1k\sum_{j=1}^{k-1}\alpha_{ij}h_j,\,\,
\alpha_{ij}:=k-j\text{ if }j\geq i,\,\,
\alpha_{ij}:=-j\text{ if }j<i.\tag{X}\label{formula(X)}\]
The elements $x^i_j$ form a set of generators of $\SL$ satisfying $[x^i_j,x^r_s]=\delta_{jr}x^i_s-\delta_{is}x^r_j$. Then the Casimir elements
\[c_{p,k}:=\sum_{i_1,\ldots,i_p=1}^k x^{i_1}_{i_2}x^{i_2}_{i_3}\cdots x^{i_p}_{i_1}, \,\, 2\leq p\leq k,\tag{Ca}\label{formula(Ca)}\]
which have rational coefficients in the PBW basis, form a set of Casimir generators of $Z(U(L))$ (see \cite[(6),(64)]{PerelomovPopov1968}). For example,
\begin{align*}
c_{3,3}& = (x^1_1)^3+(x^1_1)^2x^1_2+(x^1_1)^2x^1_3+ x^1_1x^1_2x^2_2 +x^1_1x^1_2x^2_3 + \cdots + (x^3_3)^3 =\\
& = 2/9h_1^3 + 1/3h_1^2h_2 - 1/3h_1h_2^2 - 2/9h_2^3 + 2h_1^2 + h_1h_2 + 4h_1 + 2h_2 +\\
& + h_1e_{21}e_{12} + 2h_2e_{21}e_{12} - 2h_1e_{32}e_{23} - h_2e_{32}e_{23} + 3e_{31}e_{12}e_{23} + 3e_{21}e_{32}e_{13} + h_1e_{31}e_{13} - h_2e_{31}e_{13} +\\
& + 6e_{21}e_{12} + 3e_{31}e_{13}.
\end{align*}

Let $\lambda_{p,k}$ denote the eigenvalue of $c_{p,k}$ for $\Ad_k$. These eigenvalues are the following positive integers.
\begin{lemma}\label{eigenvalues} Put $\lambda_{1,k}:=0$ and $\lambda_{p,k}:=\chi_k(c_{p,k})$ for $2\leq p\leq k$. Then $\lambda_{p,k}=\lambda_{p-2,k}+k^{p-1}$ with $\lambda_{2,k}=2k$, thus
\[\lambda_{p,k}=\left\{\begin{array}{lc}\displaystyle
k\left(\frac{k^p-1}{k^2-1} + 1\right), & p\text{ even}\\\displaystyle
k^2\frac{k^{p-1}-1}{k^2-1}, & p\text{ odd}
\end{array}\right..\]
\end{lemma}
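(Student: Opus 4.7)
The plan is to compute $\lambda_{p,k}$ by applying the central element $c_{p,k}$ to the highest weight vector $e_{1k}$ of the adjoint representation, on which it must act as the scalar $\lambda_{p,k}$; the task then reduces to establishing the two-term recurrence $\lambda_{p,k}=\lambda_{p-2,k}+k^{p-1}$ together with the base values $\lambda_{1,k}=0$ and $\lambda_{2,k}=2k$, from which the closed forms follow by telescoping a geometric series in each parity class.

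For the base case I would compute $c_{2,k}\cdot e_{1k}=\sum_{i,j}\bigl[x^i_j,[x^j_i,e_{1k}]\bigr]$ directly from the bracket formula $[x^i_j,e_{rs}]=\delta_{jr}e_{is}-\delta_{si}e_{rj}$: only three families of $(i,j)$ contribute, namely $i=1,\,j\neq k$ (giving $(k-1)e_{1k}$), $i=1,\,j=k$ (giving $2\,e_{1k}$), and $i\neq 1,\,j=k$ (giving $(k-1)e_{1k}$), totalling $2k\,e_{1k}$.

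For the recurrence I would decompose each $\ad_{x^i_j}=L_{x^i_j}-R_{x^i_j}$ as the difference of left and right multiplication on $\M$, and expand the action of $c_{p,k}$ on $e_{1k}$ into the $2^p$ sign-weighted terms $\pm A_\epsilon(\bar{\imath})\cdot e_{1k}\cdot B_\epsilon(\bar{\imath})$ indexed by $\epsilon\in\{L,R\}^p$, where $A_\epsilon$ and $B_\epsilon$ are the matrix products (in $\M$) of the $x^{i_m}_{i_{m+1}}$ at positions with $\epsilon_m=L$ and $\epsilon_m=R$ respectively. The key algebraic tool is the identity
\[
\sum_{i,j}x^i_j\cdot A\cdot x^j_i\,=\,\tr(A)\cdot I_k\,-\,\tfrac{1}{k}\,A\qquad(A\in\M),
\]
verified by separating off-diagonal summands (using $\sum_{i\neq j}e_{ij}Ae_{ji}=\tr(A)I_k-\sum_i e_{ii}Ae_{ii}$) from the diagonal corrections coming from $x^i_i=e_{ii}-I_k/k$. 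I would apply this identity iteratively to collapse every pattern containing a matched $L$--$R$ pair sharing a summation index, reducing the corresponding $p$-fold sum to a $(p-2)$-fold sum; the $\tr(A)I_k$ piece vanishes at the end because $\tr(e_{1k})=0$, so only the $-\tfrac{1}{k}A$ piece survives, and after cleanup all mixed patterns reassemble into $c_{p-2,k}\cdot e_{1k}=\lambda_{p-2,k}\,e_{1k}$. The all-$L$ and all-$R$ contributions, which by $\GL_k(F)$-invariance must be scalar multiples of $e_{1k}$, together supply the remaining $k^{p-1}$ increment.

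The main obstacle will be the combinatorial bookkeeping in this last step: both the signs of the $2^p$ terms and the numerous diagonal corrections produced by $x^i_i=e_{ii}-I_k/k$ must conspire so that only the clean two-term output $k^{p-1}e_{1k}+\lambda_{p-2,k}\,e_{1k}$ survives. An alternative route that would bypass much of this bookkeeping is to invoke the Perelomov--Popov formula for the eigenvalues of the Gelfand invariants of $\mathfrak{gl}_k$ on the irreducible representation of highest weight $(1,0,\ldots,0,-1)$, and then to translate between the $\Sl_k(F)$-Casimirs $c_{p,k}$ and the corresponding $\mathfrak{gl}_k$-Casimirs via the decomposition $\M=\Sl_k(F)\oplus Z_k(F)$.
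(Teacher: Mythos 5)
Your second, briefly-mentioned route is essentially the paper's proof: it applies the Perelomov--Popov formula directly, getting $\lambda_{p,k}=\tr(A_k^p E_k)$ where $A_k$ is determined by the highest weight $(m_1,\dots,m_k)=(1,0,\dots,0,-1)$ of the adjoint representation, and then computes $A_k^p E_k$ in closed form by induction on $p$ to extract the two-term recurrence. (It cites the formula for $c_{p,k}$ as defined in \eqref{formula(Ca)}, so no separate translation between $\mathfrak{sl}_k$ and $\mathfrak{gl}_k$ Casimirs is needed.)

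Your primary route, however, has a genuine gap that goes beyond ``bookkeeping.'' The identity you isolate,
\[
\sum_{i,j}x^i_j\, A\, x^j_i=\tr(A)\,I_k-\tfrac1k A,
\]
is correct, but it only applies when the two matched factors $x^i_j$ and $x^j_i$ flank the sandwiched matrix $A$ and the index pair $(i,j)$ appears nowhere else. In the cyclic sum defining $c_{p,k}$, each $i_m$ is shared between the \emph{adjacent} factors $x^{i_{m-1}}_{i_m}$ and $x^{i_m}_{i_{m+1}}$; after you split into $A_\epsilon(\bar\imath)\,e_{1k}\,B_\epsilon(\bar\imath)$, an $L$--$R$ matched pair sharing a summation index is generally buried in the interior of $A_\epsilon$ or $B_\epsilon$, not sitting at the outer ends. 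What you actually need is the two-free-index version
\[
\sum_j x^i_j\,A\,x^j_l=\tr(A)\,e_{il}-\tfrac1k\bigl(A e_{il}+e_{il}A\bigr)+\tfrac{\delta_{il}}{k^2}A,
\]
which introduces anticommutator correction terms that proliferate under iteration. The assertion that, across all $2^p$ sign-weighted patterns $\epsilon$, these corrections plus the all-$L$ and all-$R$ contributions reassemble into exactly $\lambda_{p-2,k}\,e_{1k}+k^{p-1}e_{1k}$ is a nontrivial combinatorial lemma that you have not proved and whose truth is not evident from what you wrote. Similarly, the trace terms that appear at intermediate stages are traces of intermediate matrix products, not $\tr(e_{1k})$; their eventual disappearance is guaranteed only a posteriori by irreducibility (the result must be a multiple of $e_{1k}$), which is not an argument you can lean on inside the induction. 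Your base case computation for $p=2$ is correct, but the inductive step is, as you yourself flag, unfinished. Given that, I would develop the Perelomov--Popov route, which is what the paper does and which replaces the $2^p$-term expansion by a single determinantal/trace identity.
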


\begin{proof}
By \cite[(8),(14),(16)]{PerelomovPopov1968} we have $\lambda_{p,k}=\tr(A_k^pE_k)$, where $E_k$ is the $k\times k$ matrix full of ones and $A_k$ is the $k\times k$ upper triangular matrix with $(i,j)$ entries equal to $-1$ when $i<j$ and diagonal \[(m_1+k-1,m_2+k-2,\ldots,m_{k-1}+1,m_k),\]
where $m_i$ is the eigenvalue of $x_i^i$ for the highest weight vector of the adjoint representation, i.e., $x_i^ie_{1k}=:m_ie_{1k}$. A straightforward computation with Formula \eqref{formula(X)} produces $m_1=1$, $m_2=\cdots=m_{k-1}=0$, $m_k=-1$. By induction on $p$ with base case $p=1$ it is proven that $A_k^pE_k=A_k\cdot(A_k^{p-1}E_k)$ equals
\[\begin{pmatrix}
a_{p,k} & a_{p,k} & \cdots & a_{p,k}\\
0 & 0 & \cdots & 0\\
\vdots & \ddots & \ddots & \vdots\\
-1 & -1 & \cdots & -1
\end{pmatrix}\]
when $p$ is odd and
\[\begin{pmatrix}
a_{p,k} & a_{p,k} & \cdots & a_{p,k}\\
1 & 1 & \cdots & 1\\
\vdots & \ddots & \ddots & \vdots\\
1 & 1 & \cdots & 1
\end{pmatrix}\]
when $p$ is even, with $a_{1,k}=1$,
\[a_{p,k}=\left\{\begin{array}{lc}
ka_{p-1,k}+1, & p\text{ even}\\
ka_{p-1,k}-(k-1), & p>1\text{ odd}
\end{array}\right.,\]
and
\[\lambda_{p,k}=\tr(A^p_kE_k)=\left\{\begin{array}{lc}
a_{p,k}+k-1, & p\text{ even}\\
a_{p,k}-1, & p>1\text{ odd}
\end{array}\right..\]
Therefore
\[\lambda_{p,k}=\left\{\begin{array}{lc}
k(a_{p-1,k}+1) = k(\lambda_{p-1,k}+2), & p\text{ even}\\
k(a_{p,k}-1) = k(\lambda_{p-1,k}-k), & p>1\text{ odd}
\end{array}\right.,\]
with $\lambda_{2,k}=2k$, $\lambda_{1,k}=0$. Notice that $\lambda_{p,k}-\lambda_{p-2,k}=k(\lambda_{p-1,k}-\lambda_{p-3,k})$ regardless of whether $p$ is even or odd. By recursion $\lambda_{p,k}-\lambda_{p-2,k}=k^{p-3}(\lambda_{3,k}-\lambda_{1,k})=k^{p-3}k^2=k^{p-1}$, hence by recursion again we find
\[
\lambda_{p,k}=\left\{\begin{array}{lc}\displaystyle
\sum_{i=1}^{p/2-1} k^{2i+1} +2k= k\left(\frac{k^p-1}{k^2-1} + 1\right), & p\text{ even}\\\displaystyle
\sum_{i=1}^{(p-1)/2} k^{2i} = k^2\frac{k^{p-1}-1}{k^2-1}, & p\text{ odd}
\end{array}\right..\qedhere\]
\end{proof}

Clearly $c_{p,k}-\lambda_{p,k}\in\ker\chi_k\subseteq\ker\Ad_k$ for $2\leq p\leq k$, and we also have $e_{ij}^3\in\ker\Ad_k$ for $1\leq i\neq j\leq k$. We prove that these elements generate $\ker\Ad_k$ in $U(L)$ and show that $\ker\phi_k$ is a principal ideal. We build on ideas from \cite[Corollary 3.1.4 and Proposition 3.1.7]{Catoiu2000}, which solve the problem for $k=3$.

\begin{theorem}\label{KerPhiGeneratorsTHEOREM}
Denote $z_{p,k}:=c_{p,k}-\lambda_{p,k}$ for $2\leq p\leq k$ and $z'_{p,k}:=\lambda_{2,k}c_{p,k}-\lambda_{p,k}c_{2,k}$ for $3\leq p\leq k$.
\enum
\item $\ker\Ad_k = \la e_{12}^3, z_{2,k},\ldots,z_{k,k}\ra$.
\item $\ker\phi_k = \la e_{12}^3,e_{12}z_{2k}, z'_{3,k},\ldots,z'_{p,k}\ra$.
\item $\ker\phi_k = \la e_{12}^3+e_{12}z_{2,k} + e_{13}z_{3,k} +\cdots +e_{1k}z_{k,k}\ra$.
\eenum
\end{theorem}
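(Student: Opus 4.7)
The three parts build on each other, so I would prove them in order.

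\textbf{Part (1).} The plan is to apply Lemma~\ref{maximalideal} to $J_1 := \la e_{12}^3, z_{2,k}, \ldots, z_{k,k}\ra$. The inclusion $J_1 \subseteq \ker\Ad_k$ is immediate: each $z_{p,k}$ lies in $\ker\chi_k \subseteq \ker\Ad_k$ by Lemma~\ref{eigenvalues}, and $e_{12}^3$ acts as zero on $\SL$ since $e_{12}^3=0$ in $\M$. The intersection $J_1 \cap Z(U(L))$ contains $\la z_{2,k},\ldots,z_{k,k}\ra_{Z(U(L))} = \ker\chi_k$, which is maximal in $Z(U(L))$; as $J_1$ is proper, equality holds. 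The remaining hypothesis of Lemma~\ref{maximalideal} is that $J_1$ have finite codimension in $U(L)$, and this is \emph{the main technical obstacle}. Once established, Lemma~\ref{maximalideal} yields $J_1 \in \Max(U(L))$; since $\ker\Ad_k$ is also maximal (as $\Ad_k$ is finite-dimensional irreducible) and lies over $\ker\chi_k$, Dixmier's uniqueness forces $J_1 = \ker\Ad_k$. To attack finite codimension I would pass to the quotient $R := U(L)/U(L)\ker\chi_k$ (isomorphic as a vector space to the harmonics $\mathcal{H}$ by Kostant's theorem) and show that the two-sided ideal of $R$ generated by the image of $e_{12}^3$ is cofinite, exploiting the nilpotency of $\ad(e_{12})$ together with the $\ad(L)$-decomposition of $\mathcal{H}$ into finite-dimensional irreducibles.

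\textbf{Part (2).} Assuming part~(1), let $I_2$ be the ideal in the statement. The inclusion $I_2 \subseteq \ker\phi_k$ is a direct check on generators using $\phi_k = \Ad_k \oplus \rho_0$ (noting $c_{p,k} \in U^*(L)$, hence $\rho_0(c_{p,k})=0$). For the reverse, take $u\in\ker\phi_k$ and use part~(1) to write $u = \sum a_i e_{12}^3 b_i + \sum_{p=2}^k \gamma_p z_{p,k}$, pulling out the $z_{p,k}$ by centrality. Split $\gamma_p = \rho_0(\gamma_p)\cdot 1 + \tilde\gamma_p$ with $\tilde\gamma_p\in U^*(L)$; the constraint $\rho_0(u)=0$ forces $\sum_p\lambda_{p,k}\rho_0(\gamma_p)=0$, and the identity $\lambda_{2,k}z_{p,k} - \lambda_{p,k}z_{2,k} = z'_{p,k}$ rewrites $\sum_p\rho_0(\gamma_p)z_{p,k}$ as an $F$-combination of the $z'_{p,k}\in I_2$. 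It only remains to show $U^*(L)z_{p,k} \subseteq I_2$: starting from $e_{12}z_{2,k} \in I_2$ and using $\ad$-stability of $I_2$, the set $M:=\{y\in L : yz_{2,k}\in I_2\}$ is a nonzero Lie ideal of the simple algebra $L$, hence $M=L$, giving $Lz_{2,k}\subseteq I_2$ and thus $U^*(L)z_{2,k}\subseteq I_2$; for $p\geq 3$, the relation $z_{p,k}=(z'_{p,k}+\lambda_{p,k}z_{2,k})/\lambda_{2,k}$ reduces the problem to the $p=2$ case plus the explicit $z'_{p,k}\in I_2$.

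\textbf{Part (3).} Apply the primitive element lemma (Lemma~\ref{primitiveelementLEMMA}) to $w$. Under the $\ad$-action of the Cartan of $L$, the summands $e_{12}^3$, $e_{12}z_{2,k}$, and $e_{1p}z_{p,k}$ for $3\leq p\leq k$ are weight vectors of pairwise distinct weights (namely $3\alpha_{12}$, $\alpha_{12}$, and the distinct positive roots $\alpha_{1p}$). Since every associative ideal of $U(L)$ is automatically $\ad(L)$-stable and vice versa, the lemma yields $\la w\ra = K := \la e_{12}^3, e_{12}z_{2,k}, e_{13}z_{3,k},\ldots,e_{1k}z_{k,k}\ra$. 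The inclusion $K\subseteq\ker\phi_k$ holds on generators. For $K\supseteq\ker\phi_k$, repeat the simplicity argument of part~(2) starting from each $e_{1p}z_{p,k}\in K$ to get $U^*(L)z_{p,k}\subseteq K$ for every $p\geq 2$; then the key identity $z'_{p,k}=c_{2,k}z_{p,k}-c_{p,k}z_{2,k}$ (with $c_{2,k},c_{p,k}\in U^*(L)$) shows each $z'_{p,k}\in K$, and combined with $e_{12}^3,e_{12}z_{2,k}\in K$ this gives $K\supseteq I_2 = \ker\phi_k$ by part~(2).
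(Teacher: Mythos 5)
Parts (2) and (3) of your proposal are correct and take a genuinely different route from the paper. For (2), the paper argues ideal-theoretically: writing $\ker\phi_k=(I+J)\cap M=I+J\cap M$ with $M:=U^*(L)$, $J:=\la z_{2,k}\ra$, $I:=\la e_{12}^3,z'_{3,k},\ldots,z'_{k,k}\ra$, and showing $J\cap M=JM=\la e_{12}z_{2,k}\ra$ via maximality of $M$, centrality of $z_{2,k}$, and $U^*(L)=\la e_{12}\ra$. Your element-wise decomposition of $u\in\ker\phi_k$ using part (1), the augmentation $\rho_0$ to peel off the constant coefficients, and the Lie-ideal/simplicity argument to get $U^*(L)z_{2,k}\subseteq I_2$ is an independent and arguably more transparent proof of the same fact. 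For (3), the paper first upgrades the generating set to $\{e_{12}^3,e_{12}z_{2,k},\ldots,e_{1k}z_{k,k}\}$ (repeating the modular-law argument with $J:=\la z_{2,k},\ldots,z_{k,k}\ra$ and rotating $\la e_{12}z_{p,k}\ra=\la e_{1p}z_{p,k}\ra$) and only then applies the primitive element lemma; you apply the lemma directly to $w$ and separately verify $\la w\ra=\ker\phi_k$, a legitimate shortcut.

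The gap is in part (1), at exactly the step you flag as the main technical obstacle: the finite codimension of $J_1$. Your sketch — pass to $R:=U(L)/U(L)\ker\chi_k\cong\mathcal H$ via Kostant's theorem, then ``exploit the nilpotency of $\ad(e_{12})$ together with the $\ad(L)$-decomposition of $\mathcal H$ into finite-dimensional irreducibles'' — does not constitute a proof. The harmonics $\mathcal H$ form an infinite direct sum of finite-dimensional $\ad(L)$-modules, so knowing the ideal $\bar J$ of $R$ generated by the image of $e_{12}^3$ is $\ad(L)$-stable and splits into irreducibles is far from showing $R/\bar J$ is finite dimensional: you must argue that only finitely many of those irreducible summands escape $\bar J$, and neither the local nilpotency of $\ad(e_{12})$ nor Kostant's decomposition supplies a mechanism for this. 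The paper does it concretely: fixing a deglex order with $h_i>e_{ii+1}>e_{i+1i}$, it uses the adjoint-action identities $e_{ij}^3=\tfrac16\ad^3_{e_{il}}(e_{lj}^3)$ (and the $k=2$ variant) to show $e_{ij}^3\in I_k$ for all $i\neq j$, and the identity $\tfrac16\ad^3_{e_{ii+1}}(e_{i+1i}^3)=h_i^3-6e_{i+1i}e_{ii+1}h_i-3h_i^2+4h_i$ to get $h_i^3\in\LM(I_k)$; this bounds every exponent in a normal word, so $N(I_k)$ is finite and $U(L)=I_k\oplus\spn N(I_k)$ has finite codimension. You should also record, as the paper does at the outset, the reduction to $F=\Co$ by extension/restriction of scalars, since Lemma~\ref{maximalideal} and Dixmier's theorem are stated only over $\Co$.
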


\begin{proof}
Let $K$ be a field extension of $F$. Then $\Sl_k(K)=\SL\otimes_F K$, $U(\Sl_k(K))=U(L)\otimes_F K$, the adjoint representation of $\Sl_k(K)$ is $\Ad_k\otimes_F K$, and if $\rho$ is a representation of $\SL$ then $\rho\otimes_F K$ is a representation of $\Sl_k(K)$ such that
\[\ker_{U(\Sl_k(K))}(\rho\otimes_F K) = (\ker_{U(L)}\rho)\otimes_F K.\]
 In addition, if $I$ is an ideal of $U(L)$ such that $I\otimes_F K = \la g_1,\ldots,g_n\ra$ in $U(\Sl_k(K))$ with $g_i\in I$ for $1\leq i\leq n$ then $I=\la g_1,\ldots,g_n\ra$ in $U(L)$. Therefore, by extension and restriction of scalars, we can assume without loss of generality that $F=\Co$.
\enum
\item Clearly $I_k:=\la e_{12}^3, z_{2,k},\ldots,z_{k,k}\ra\subseteq\ker\Ad_k$. Let us show the opposite inclusion. First, we see that $I_k$ has finite codimension. Consider a deglex order on the set of monomials of $U(L)$ with \linebreak
    $h_i>e_{ii+1}>e_{i+1i}$ for $1\leq i<k$. Since $U(L)=I_k\oplus\spn N(I_k)$, the ideal $I_k$ has finite codimension if and only if $\spn N(I_k)$ has finite dimension, hence if and only if there are $n_1,\ldots,n_{k^2-1},m_1,\ldots,m_{k-1}\in\N$ such that $e_{ij}^{n_{ij}},h_l^{m_l}\in\LM(I_k)$ for all $1\leq i\neq j\leq k$ and $1\leq l<k$. In the next identities let $\ad$ denote the adjoint map of $U(L)$; since we have
    \begin{align*}
    &e_{21}^3 = -\frac1{6!}\ad_{e_{21}}^6(e_{12}^3)\text{ if } k=2,\\
    &e_{ij}^3 = \frac1{6}\ad_{e_{il}}^3(e_{lj}^3),\,\,\,\,
    e_{ij}^3 = -\frac1{6}\ad_{e_{lj}}^3(e_{il}^3)\,\,\text{ for }i\neq l\neq j\neq i\text{ if }k>2,
    \end{align*}
    starting from $e_{12}^3$ we can show $e_{ij}^3\in I_k$ for all $1\leq i\neq j\leq k$, for all $k\geq2$: for $k\geq3$, use the third identity to get $e_{1j}^3$ for all $3\leq j\leq k$ from $e_{12}^3$, the second identity to get $e_{i2}^3$ for all $3\leq i\leq k$ from $e_{12}^3$, then the second identity again to get $e_{21}^3$ from $e_{31}^3$; and so on. Also, since for all $k\geq2$ and all $1\leq i<k$ we have
    \[\frac16\ad^3_{e_{ii+1}}(e_{i+1i}^3) = h_i^3-6e_{i+1i}e_{ii+1}h_i -3h_i^2 +4h_i,\]
    we find $h_i^3\in\LM(I_k)$ for $1\leq i<k$.
    This proves that $I_k$ has finite codimension. Now, since $z_{2,k},\ldots,z_{k,k}$ is a set of Casimir generators, $I_k\cap Z(U(L))$ is a maximal ideal of $Z(U(L))$, and so Lemma \ref{maximalideal} shows that $I_k$ is a maximal ideal of $U(L)$, implying $I_k=\ker\Ad_k$.
\item The representation $\phi_k$ is the direct sum of the adjoint representation $\Ad_k$ and the trivial representation $\rho_0$, so $\ker\phi_k=\ker\Ad_k\cap\ker\rho_0=I_k\cap U^*(L)$, where $U^*(L)$ is the nonunital universal enveloping algebra of $L$; i.e., $\ker\phi_k$ is formed by those elements of $I_k$ which do not have a nonzero constant term.
We first change the set of Casimir generators to get rid of unnecessary constant terms in the generators of $I_k$. By Lemma \ref{eigenvalues} we have $\lambda_{2,k}=2k\neq0$, hence the matrix
\[\pma
1 & 0 & \cdots & \cdots & 0 \\
\lambda_{3,k} & -\lambda_{2,k} & 0 & \cdots & 0 \\
\lambda_{4,k} & 0 & -\lambda_{2,k} & \cdots & 0 \\
\vdots & \vdots & \ddots & \ddots & \vdots \\
\lambda_{k,k} & 0 & \cdots & 0 & -\lambda_{2,k}
\epma\]
is invertible. Therefore the central elements $c_{2,k}$ and $z'_{p,k}$ for $3\leq p\leq k$ form another set of Casimir generators such that $\chi_k(c_{2,k})=\lambda_{2,k}$, $\chi_k(z'_{p,k})=0$ for $3\leq p\leq k$. Then
\[\ker Ad_k=\la e_{12}^3,z_{2k}, z'_{3,k},\ldots,z'_{p,k}\ra\]
with $e_{12}^3, z'_{3,k},\ldots,z'_{p,k}\in U^*(\SL)$. Put $I:=\la e_{12}^3, z'_{3,k},\ldots,z'_{p,k}\ra$, $J:=\la z_{2,k}\ra$ and $M:=U^*(L)$. Then $\ker\phi_k=(I+J)\cap M = I+J\cap M$ because $I\subseteq M$. Since $M$ is a maximal ideal and $z_{2,k}\not\in M$, $J+M=U(L)$, whence
\[JM\subseteq J\cap M = U(L)(J\cap M) = (J+M)(J\cap M)\subseteq JM + MJ = JM\]
since $z_{2,k}$ is central. This shows $J\cap M = JM$. Moreover, since $\SL$ is simple, $U^*(L)=\la e_{12}\ra$, so $JM = \la z_{2,k})\I(e_{12}\ra = \la z_{2,k}e_{12}\ra$ because $z_{2,k}$ is central. Therefore
\[\ker\phi_k=I+JM = \la e_{12}^3,e_{12}z_{2,k},z_{3,k}',\ldots,z_{k,k}'\ra.\]
Now repeat the argument above with $I:=\la e_{12}^3\ra$, $J:=\la z_{2,k},\ldots,z_{k,k}\ra$ to arrive at
$\ker\phi_k = \la e_{12}^3,\allowbreak e_{12}z_{2,k},\ldots,e_{12}z_{k,k}\ra$. Since $[x,yz_{p,k}]=[x,y]z_{p,k}$ for all $x,y\in U(L)$ and $\la e_{1p}\ra=U^*(L)$ for all $2\leq p\leq k$, we get $\la e_{12}z_{p,k}\ra=\la e_{1p}z_{p,k}\ra$ for all $3\leq p\leq k$, so $\ker\phi_k = \la e_{12}^3,e_{12}z_{2,k},\ldots,e_{1k}z_{k,k}\ra$.
\item The elements $e_{1i}$ correspond to different roots $\alpha_i$ of $\SL$ and as such are weight vectors of different weights for the adjoint action of $\SL$ on $U(L)$. The identity $[h_i,e_{1p}z_{p,k}]=\alpha_p(h_i)e_{1p}z_{p,k}$ for $1\leq i<k$, $2\leq p\leq k$ shows that the elements $e_{1p}z_{p,k}$ are also weight vectors of $U(L)$ of different weights, which are also different from the weight $3\alpha_2$ of the weight vector $e_{12}^3$. Since ideals of $U(L)$ are $L$-ideals for the adjoint action, by the primitive element lemma (Lemma \ref{primitiveelementLEMMA}) we find
    $\ker\phi_k=\la e_{12}^3,e_{12}z_{2,k},\ldots,e_{1k}z_{k,k}\ra = \la e_{12}^3+e_{12}z_{2,k}+\cdots+e_{1k}z_{k,k}\ra$.
\qedhere
\eenum
\end{proof}

 Now we turn back to $\Uop$.
\begin{corollary}[\textbf{Generator of $\ker\phi$}]\label{kerphigeneratorCOROLLARY}
\[\ker\phi = \la e_{12}^3+e_{12}z_{2,k} + e_{13}z_{3,k} +\cdots +e_{1k}z_{k,k}\ra.\]
\end{corollary}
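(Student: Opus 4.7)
The plan is to reduce the corollary directly to Theorem~\ref{KerPhiGeneratorsTHEOREM}(3) by carefully tracking how passing from $U(L)$ to $\Uop$ affects kernels and two-sided ideals. First I would unpack the notational convention laid out in Section~\ref{Section:GeneralSetting}: the paper uses the symbol $\phi$ both for the original unital algebra homomorphism $\phi\colon U(L)\to\E_F(\M)$ and for its opposite $\phi^{\op}\colon \Uop\to\E_F(\M)^{\op}$, with $\phi^{\op}(u):=\phi(u)$. Thus the two maps agree as functions of underlying sets, and therefore $\ker\phi^{\op}=\ker\phi_k$ as subsets of the common underlying vector space of $U(L)$ and $\Uop$.

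Next I would record the purely formal observation that, for any associative ring $R$, a subset $I\subseteq R$ is a two-sided ideal of $R$ if and only if it is a two-sided ideal of $R^{\op}$: for $r,s\in R$ and $a\in I$ one has $r\cdot^{\op}a\cdot^{\op}s=sar$, so the conditions ``closed under left and right multiplication in $R$'' and ``closed under left and right multiplication in $R^{\op}$'' are the same. In particular, for any fixed element $z\in R$, the two-sided ideal $\la z\ra$ of $R$ coincides \emph{as a subset} with the two-sided ideal of $R^{\op}$ generated by $z$, since both equal the linear span of $\{r_1 z r_2\mid r_1,r_2\in R\cup\{1\}\}$.

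Applying these two remarks simultaneously to the element
\[z:=e_{12}^3+e_{12}z_{2,k}+e_{13}z_{3,k}+\cdots+e_{1k}z_{k,k},\]
and invoking Theorem~\ref{KerPhiGeneratorsTHEOREM}(3), one gets
\[\ker\phi=\ker\phi^{\op}=\ker\phi_k=\la z\ra_{U(L)}=\la z\ra_{\Uop},\]
which is precisely the statement of the corollary. There is no real obstacle here; the proof is entirely bookkeeping once Theorem~\ref{KerPhiGeneratorsTHEOREM}(3) is in hand. The only subtlety to highlight in the write-up is that the angle-bracket notation $\la\cdot\ra$ is interpreted in whichever associative algebra is ambient, and the identity $\la z\ra_{U(L)}=\la z\ra_{\Uop}$ as subsets is what makes the transfer automatic.
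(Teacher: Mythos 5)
Your proof is correct and reaches the conclusion via a slightly more general route than the paper does. The paper's own argument also begins from $\phi=(\phi_k)^{\op}$ and $\ker\phi=\ker\phi_k$ (as sets) together with Theorem~\ref{KerPhiGeneratorsTHEOREM}(3), but then appeals to the centrality of the $z_{p,k}$ in order to conclude that $z$ generates $\ker\phi$ in $\Uop$. You instead invoke the purely formal fact that, in any associative ring $R$, the two-sided ideal generated by a fixed element $z$ is the same subset of $R$ whether computed in $R$ or in $R^{\op}$ (since $\{\sum r_i z s_i\}$ and $\{\sum s_i z r_i\}$ coincide as $r_i,s_i$ range over $R\cup\{1\}$). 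That observation makes the transfer automatic for \emph{any} generator, and so makes no use of centrality. One thing worth being explicit about, which the paper's centrality remark silently handles, is that the formula $e_{12}^3+e_{12}z_{2,k}+\cdots+e_{1k}z_{k,k}$ in the corollary's statement — appearing just after the text returns to $\Uop$ — must denote the same element of the common underlying vector space whether the juxtaposed products are read in $U(L)$ or in $\Uop$; this is true precisely because each $z_{p,k}$ is central (so $e_{1p}z_{p,k}=z_{p,k}e_{1p}$) and because powers $e_{12}^3$ of a single element are insensitive to reversing the product. With that caveat made explicit, your argument is complete, and arguably cleaner: the ideal-transfer step needs no hypothesis on $z$ at all, while centrality is only needed to pin down which element the displayed formula names.
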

\begin{proof}
We have $\phi=(\phi_k)^{\op}$, so $\ker\phi=\ker\phi_k$ as sets with $z:=e_{12}^3+e_{12}z_{2,k} + e_{13}z_{3,k} +\cdots +e_{1k}z_{k,k}$ generating $\ker\phi_k$ in $U(L)$, and since $z_{p,k}\in Z(U(L))$ for $2\leq p<k$, $z$ generates $\ker\phi$ in $U(L)^{\op}$.
\end{proof}
\end{pgraph}

\subsection{$U$-polynomials and $U$-identities}
\mbox{}

\begin{pgraph}\textbf{Modifying the first index of an exponent.}\label{firstsuperindex}
As we will see below in \ref{generatingUidentities}, it is the second basis element in the subindex of the endomorphisms $\varphi_{ab}$ which carries the weight of the identities of $\M$. The first basis element is not that relevant, and in fact it can be freely changed by substitution: On the one hand, if $f(x_1,\ldots,x_n)$ is a $U$-identity of $(L,U)$-algebra $A$ and $u_1,\ldots,u_n\in U$, then $f(x_1^{u_1},\ldots,x_n^{u_n})$ is a $U$-identity of $A$. On the other hand,
$(x^{ab})^{bc}=x^{ac}$ for $a,b,c\in\mathcal S$. Therefore, if variable $x^{ab}$ with $a,b\in\mathcal S$ features in a $U$-identity of $A$, there is an analogous $U$-identity replacing $x^{ab}$ with variable $x^{cb} = (x^{ca})^{ab}$ for any $c\in\mathcal S$. Hence we can fix one element $a\in\mathcal S$ and assume that each endomorphism appearing as exponent in a $U$-identity is either $\vp_{gg}$ or of the form $\varphi_{ab}$ for some $b\in\mathcal S$; that is, when looking for the generating identities of the $T_U$-ideal of $A$ we may assume that all exponents different from $\vp_{gg}$ start with the same first basis element $a$. Taking this into account, for a fixed and previously specified $a\in\mathcal S$, we will write $x^b$ as a shorthand for $x^{ab}$ ($x^{\varphi_{ab}}$) with $b\in\SL$ (this should not be confused with element $x^b\in\FL$); we will also write $x^g$ as a shorthand for $x^{gg}$ ($x^{\vp_{gg}}$). 
 In particular, in this format, the bracket formula takes the simple form $(x^b)^C = x^{[c,b]}$ for $b,c\in\SL$ and $C=\ad_c$.
\end{pgraph}

\begin{pgraph}\textbf{Fixed-exponents components of $U$-identities.}\label{components}
We expand and add rigor to \ref{firstsuperindex}. Let us write multilinear $U$-polynomials of $P_n^U$ by grouping their terms with respect to the exponents of their variables, with one set of indices $\mathcal I$ for variables of the form $x_i^{gg}$ and another set of indices $\mathcal J$ for variables of the form $x_j^{a_jb_j}$ with $a_j,b_j\in\mathcal S$, and taking into account how the first exponent indices $a_j$ are paired with the variables $x_j$.
For $n\geq 1$, let $\stirlingtwo{[n]}{2}$ denote the set of pairs of sets $(\mathcal I,\mathcal J)$  such that $\{\mathcal I,\mathcal J\}$ is a partition of the set $\{1,\dots,n\}$ into two disjoint subsets, one of which may be empty  (observe that $(\mathcal I,\mathcal J)$ and $(\mathcal J,\mathcal I)$ are different elements belonging to $\stirlingtwo{[n]}{2}$). Then for $f\in P_n^U$ we have the decomposition
\[f=\sum_{(\mathcal I,\mathcal J)\in\stirlingtwo{[n]}{2}}\,\sum_{\bs a\in \mathcal{S}^{|\mathcal J|}} f_{\mathcal I, \bs a},\]
where for fixed $\mathcal I=\{i_1,\ldots,i_r\}$, $\mathcal J=\{j_1,\ldots,j_{n-r}\}$ and $\bs a=(a_1,\ldots,a_{n-r})$, the $U$-polynomial $f_{\mathcal I, \bs a}$ denotes the sum of all terms of $f$ in which only the variables
\[x_{i_1}^{gg}, \dots, x_{i_r}^{gg}\text{ and }x_{j_1}^{a_1b_1}, \dots, x_{j_{n-r}}^{a_{n-r}b_{n-r}}\text{ for any }(b_1,\dots, b_{n-r})\in \mathcal{S}^{n-r}\]
appear, in any order. Call $f_{\mathcal I,\bs a}$ the $(\mathcal I,\bs a)$ \emph{fixed-exponents component} of $f$. 

Any $U$-polynomial whose only nonzero fixed-exponents component is $(\mathcal I,\bs a)$ is of the form
\[f(\mathcal I,\bs a,\{\alpha_{\sigma,\bs b}\}):=\sum_{\sigma\in S_n}\sum_{\tiny\begin{array}{cc}\bs b=(b_1,\ldots,b_n)\in\mathcal M^n\\b_i=g\text{ if and only if }\sigma(i)\in \mathcal I\end{array}}\!\!\!\!\!\!\!\!
\alpha_{\sigma,\bs b} \, x_{\sigma(1)}^{a_{\sigma(1)}b_1}\cdots x_{\sigma(n)}^{a_{\sigma(n)}b_n},\]
where $a_{i}:=g$ for all $i\in \mathcal I$, $S_n$ is the symmetric group acting on $\{1,\ldots,n\}$, and $\alpha_{\sigma,\bs b}\in F$. If the first exponent index is \emph{homogeneous}, i.e., $\bs a=(a,\ldots,a)$ for $a\in\mathcal S$, then we write $f(\mathcal I, a,\{\alpha_{\sigma,\bs b}\})$. If $\mathcal I=\{1,\ldots,r\}$ then we write $f(r,\bs a,\{\alpha_{\sigma,\bs b}\})$ and say that $f$ \emph{has first $r$ $g$-exponents}.
We show that to study $P_n^U\cap\I^U(A)$ it is enough to study the $U$-identities with only one nonzero fixed-exponents component, with homogeneous first exponent index and first $r$ $g$-exponents.

\begin{lemma}\label{reductiontocomponentsLEMMA}
Let $A$ be an $(L,U)$-algebra, $(\mathcal I,\mathcal J)\in \stirlingtwo{[n]}{2}$ with $\mathcal I=\{i_1,\ldots,i_r\}$ and $\mathcal J=\{j_1,\ldots,j_{n-r}\}$, $\bs a = (a_{j_1},\ldots,a_{j_{n-r}})\in\mathcal S^{n-r}$, and $c\in\mathcal S$.
\enum
\item $f\in P_n^U$ is a $U$-identity of $A$ if and only if any nonzero fixed-exponents component of $f$ is a $U$-identity of $A$.
\item $f(\mathcal I,\bs a,\{\alpha_{\sigma,\bs b}\})$ is a $U$-identity of $A$ if and only if $f(\mathcal I,c,\{\alpha_{\sigma,\bs b}\})$ is a $U$-identity of $A$.
\item $f(\mathcal I,c,\{\alpha_{\sigma,\bs b}\})$ is a $U$-identity of $A$ if and only if $f(r,c,\{\alpha_{\sigma,\bs b}\})$ is a $U$-identity of $A$.
\eenum
\end{lemma}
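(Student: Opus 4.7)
The strategy is to realize each of the three equivalences through a single substitution swapping variables: since $\I^U(A)$ is a $T_U$-ideal of $\FU$, it is closed under such substitutions, and what remains is a short bookkeeping computation based on the orthogonality relation \eqref{formula(F)}.

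For (1), I would introduce the ``selection'' substitution $\psi_{\mathcal I,\bs a}$ given by $x_i\mapsto x_i^{gg}$ for $i\in\mathcal I$ and $x_j\mapsto x_j^{a_ja_j}$ for $j\in\mathcal J$. Setting $d_m:=g$ when $m\in\mathcal I$ and $d_m:=a_m$ when $m\in\mathcal J$, a generic factor $x_{\sigma(k)}^{c_k b_k}$ of a monomial of $f\in P_n^U$ is mapped by $\psi_{\mathcal I,\bs a}$ to $\delta_{d_{\sigma(k)}c_k}x_{\sigma(k)}^{d_{\sigma(k)}b_k}$, so that the whole monomial survives unchanged precisely when $c_k=d_{\sigma(k)}$ for every $k$ (exactly the defining condition of $f_{\mathcal I,\bs a}$) and vanishes otherwise; thus $\psi_{\mathcal I,\bs a}$ is idempotent with image $f_{\mathcal I,\bs a}$, so that $\psi_{\mathcal I,\bs a}(f)=f_{\mathcal I,\bs a}$. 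The forward implication then follows from closure of $\I^U(A)$ under $\psi_{\mathcal I,\bs a}$, and the reverse from the decomposition $f=\sum_{(\mathcal I,\mathcal J),\bs a}f_{\mathcal I,\bs a}$.

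For (2), I would apply the substitution $\rho:x_j\mapsto x_j^{ca_j}$ for $j\in\mathcal J$ (trivial on $\mathcal I$): by \eqref{formula(F)} it sends each factor $x_{\sigma(k)}^{a_{\sigma(k)}b_k}$ with $\sigma(k)\in\mathcal J$ to $x_{\sigma(k)}^{cb_k}$ and leaves the factors $x_{\sigma(k)}^{gg}$ for $\sigma(k)\in\mathcal I$ unchanged, transforming $f(\mathcal I,\bs a,\{\alpha_{\sigma,\bs b}\})$ into $f(\mathcal I,c,\{\alpha_{\sigma,\bs b}\})$ monomial-by-monomial with the same coefficients. The opposite substitution $x_j\mapsto x_j^{a_jc}$ reverses this transformation, yielding the equivalence.

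For (3), I would pick any permutation $\tau\in S_n$ with $\tau(\mathcal I)=\{1,\ldots,r\}$ and apply the relabeling substitution $x_i\mapsto x_{\tau(i)}$ (a substitution swapping variables with all $u_i=1$, whose inverse is the analogous relabeling by $\tau^{-1}$); this is an $L$-automorphism of $\FU$ preserving $\I^U(A)$, and bijects polynomials of the form $f(\mathcal I,c,\cdot)$ with those of the form $f(r,c,\cdot)$ via the induced re-indexing of coefficients, giving the equivalence. Among the three parts, (1) is the one requiring the most care: one must verify that the orthogonality relation together with multilinearity of $f$ forces $\psi_{\mathcal I,\bs a}$ to act as a projector onto the subspace of monomials of fixed-exponents type $(\mathcal I,\bs a)$, and that $\psi_{\mathcal I,\bs a}$ is a valid substitution swapping variables, so that $T_U$-closure of $\I^U(A)$ indeed applies.
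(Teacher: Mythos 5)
Your proof is correct and follows essentially the same strategy as the paper's — realizing each equivalence by a single substitution swapping variables (for (1) a projection $x_m\mapsto x_m^{u_m}$ with $u_m$ a suitable orthogonal idempotent, for (2) a change of first exponent index, for (3) a relabelling permutation), and then invoking that $\I^U(A)$ is closed under such substitutions. In fact your formulation of part (1) is slightly more careful than the paper's: the paper only composes the substitutions $\rho_j: x_j\mapsto x_j^{a_ja_j}$ over $j\in\mathcal J$, which kills the terms where some $\mathcal J$-variable has a wrong first index or a $g$-exponent, but does not by itself annihilate terms in which some $\mathcal I$-variable carries a non-$g$ exponent $\vp_{cd}$ with $c,d\in\mathcal S$; your $\psi_{\mathcal I,\bs a}$ also applies $x_i\mapsto x_i^{gg}$ for $i\in\mathcal I$, using $\vp_{gg}\vp_{cd}=0$ to kill those extraneous terms, so that the image really is exactly $f_{\mathcal I,\bs a}$. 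This makes the projector argument airtight where the paper's text, read literally, would leave behind the components $f_{\mathcal I',\bs a'}$ with $\mathcal I'\subsetneq\mathcal I$. Parts (2) and (3) match the paper's argument directly.
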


\begin{proof}
Along the proof we use repeatedly that if $f\in \I^U(A)$ and $\rho$ is a substitution endomorphism swapping variables then $\rho(f)\in\I^U(A)$ (see  \ref{FUdefinition}).
\enum\item One implication is clear from the definition, since any multilinear $U$-polynomial is the sum of its fixed-exponents components. For the other one, fix $f\in P_n^U \cap \I^U(A)$ and for $j\in\mathcal J$ let $\rho_j$ denote the substitution endomorphism sending variable $x_j$ to $x_j^{a_ja_j}$. Then, since $\vp_{a_ja_j}\vp_{a_jb}=\vp_{a_jb}$ and $\vp_{a_ja_j}\vp_{cd}=0$ for $c\neq a_j$,
\[f_{\mathcal I,\bs a} = \rho_{j_1}\circ\cdots\circ\rho_{j_{n-r}}(f).\]    
\item For $j\in\mathcal J$ let $\rho^c_j$ denote the substitution endomorphism sending variable $x_j$ to $x_j^{ca_j}$. Then
\[f(\mathcal I,c,\{\alpha_{\sigma,\bs b}\}) = \rho^c_{j_1}\circ\cdots\circ\rho^c_{j_{n-r}}(f(\mathcal I,\bs a,\{\alpha_{\sigma,\bs b}\})).\]
Analogously, for $j\in\mathcal J$ let $\rho^j_c$ denote the substitution endomorphism sending variable $x_j$ to $x_j^{a_jc}$. Then
\[f(\mathcal I,\bs a,\{\alpha_{\sigma,\bs b}\}) = \rho^{j_1}_c\circ\cdots\circ\rho^{j_{n-r}}_c(f(\mathcal I,c,\{\alpha_{\sigma,\bs b}\})).\]
\item For $\sigma\in S_n$ let $\rho_\sigma$ denote the substitution endomorphism sending variable $x_i$ to $x_{\sigma(i)}$ for all $1\leq i\leq n$. Let $\tau$ be the permutation sending $i_s$ to $s$ for $1\leq s\leq r$ and $j_s$ to $s+r$ for $1\leq s\leq n-s$.
Then
\[f(r,c,\{\alpha_{\sigma,\bs b}\}) = \rho_\tau(f(\mathcal I,c,\{\alpha_{\sigma,\bs b}\}))\text{ and }
f(\mathcal I,c,\{\alpha_{\sigma,\bs b}\}) = \rho_{\tau^{-1}}(f(r,c,\{\alpha_{\sigma,\bs b}\})).\qedhere\]
\eenum
\end{proof}
Moreover, it is now clear that given a set of $U$-identities, we can homogeneously fix one single element $a\in\mathcal S$ as the first exponent index of all $U$-identities in the set, first exponent index which we may elide.

\end{pgraph}
\begin{pgraph}\textbf{Formula for the $U$-codimensions.}\label{codimensionsformula}
For $(\mathcal I,\mathcal J)\in\stirlingtwo{[n]}{2}$ and $\bs a\in\mathcal{S}^{|\mathcal J|}$, we denote by $P_{\mathcal I,\mathcal J,\bs a}^U$ the subspace of $P_n^U$ composed of the multilinear $U$-polynomials whose only nonzero fixed-exponents component is $(\mathcal I,\bs a)$ (see \ref{components}). For $0\leq r\leq n$ and $a\in\mathcal{S}$, we denote $P^U_{r,n-r,a}:=P^U_{\{1,\ldots,r\},\{r+1,\ldots,n\},(a,\ldots,a)}$, the vector space of multilinear $U$-polynomials with only one nonzero fixed-exponents component, with homogeneous first exponent index $a$ and first $r$ $g$-exponents. If $A$ is an $(L,U)$-algebra, we denote $P_{r,n-r,a}^U(A):= \displaystyle\dfrac{P_{r,n-r,a}^U}{P_{r,n-r,a}^U \cap \I^U(A)}$
and $c^U_{r,n-r}(A):=\dim_F P^U_{r,n-r,a}(A)$ for any $a\in\mathcal S$ (it is independent of $a$, as deduced from item (1) of the following proposition).
We show that to study $P_n^U\cap\I^U(A)$ it is enough to study $P^U_{r,n-r,a}\cap\I^U(A)$ for some fixed $a\in\mathcal S$. 
\begin{proposition}\label{P_{r,n-r,c}isomorphismPROPOSITION}
Let $A$ be an $(L,U)$-algebra.
\enum
\item For $(\mathcal I,\mathcal J)\in\stirlingtwo{[n]}{2}$ such that $r=|\mathcal I|$ and any $\bs a\in\mathcal S^{n-r}$ and $c\in\mathcal S$ there is a linear isomorphism between $P_{\mathcal I,\mathcal J,\bs a}^U$ and $P_{r,n-r,c}^U$ sending $P_{\mathcal I,\mathcal J,\bs a}^U \cap\I^U(A)$ to $P_{r,n-r,c}^U \cap\I^U(A)$.
\item \[c_n^U(A)= \sum_{r=0}^{n} \binom{n}{r} (k^2 -1)^{n-r} c^U_{r,n-r}(A). \tag{C}\label{formula(C)}\]
\eenum
\end{proposition}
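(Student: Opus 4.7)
My proof plan for Proposition~\ref{P_{r,n-r,c}isomorphismPROPOSITION} is to build the isomorphism in part~(1) by composing the substitution maps already exhibited in Lemma~\ref{reductiontocomponentsLEMMA}, and then to derive part~(2) from a direct-sum decomposition of $P_n^U$ together with a straightforward counting argument.

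For part~(1), given $(\mathcal I,\mathcal J)\in\stirlingtwo{[n]}{2}$ with $|\mathcal I|=r$, $\bs a=(a_{j_1},\ldots,a_{j_{n-r}})\in\mathcal S^{n-r}$ and $c\in\mathcal S$, I would define
\[\Phi:=\rho_\tau\circ\rho^c_{j_{n-r}}\circ\cdots\circ\rho^c_{j_1}\colon P^U_{\mathcal I,\mathcal J,\bs a}\longrightarrow P^U_{r,n-r,c},\]
where each $\rho^c_j$ is the substitution from Lemma~\ref{reductiontocomponentsLEMMA}(2) sending $x_j\mapsto x_j^{ca_j}$ (which, via $\vp_{ca_j}\vp_{a_jb}=\vp_{cb}$, turns every exponent $\vp_{a_jb}$ into $\vp_{cb}$ while leaving the other variables untouched) and $\rho_\tau$ is the permutation substitution from Lemma~\ref{reductiontocomponentsLEMMA}(3) moving the indices of $\mathcal I$ onto $\{1,\ldots,r\}$. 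Each factor is a substitution swapping variables, hence an $L$-endomorphism of $\FU$, so $\Phi$ maps $\I^U(A)$ into $\I^U(A)$. The inverse $\Phi^{-1}:=\rho^{j_1}_c\circ\cdots\circ\rho^{j_{n-r}}_c\circ\rho_{\tau^{-1}}$, built from the opposite substitutions, is of the same type, so it too preserves $\I^U(A)$. Inspecting their action on basis monomials (each monomial is sent to a monomial with the same coefficient, only relabeling exponents and reordering variables) shows that $\Phi$ and $\Phi^{-1}$ are mutually inverse linear bijections; consequently $\Phi$ induces a bijection $P^U_{\mathcal I,\mathcal J,\bs a}\cap\I^U(A)\leftrightarrow P^U_{r,n-r,c}\cap\I^U(A)$, as required.

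For part~(2), I would use the direct-sum decomposition
\[P_n^U=\bigoplus_{(\mathcal I,\mathcal J)\in\stirlingtwo{[n]}{2}}\;\bigoplus_{\bs a\in\mathcal S^{|\mathcal J|}}P^U_{\mathcal I,\mathcal J,\bs a},\]
which holds because every multilinear basis monomial $x_{\sigma(1)}^{u_1}\cdots x_{\sigma(n)}^{u_n}$ with $u_i\in\mathcal U$ uniquely determines which variables carry the $\vp_{gg}$-exponent (giving $\mathcal I$) and, for the rest, the first exponent index (giving $\bs a$). Lemma~\ref{reductiontocomponentsLEMMA}(1) ensures that $\I^U(A)$ splits along this decomposition as $P_n^U\cap\I^U(A)=\bigoplus(P^U_{\mathcal I,\mathcal J,\bs a}\cap\I^U(A))$, so $c_n^U(A)=\sum\dim_F P^U_{\mathcal I,\mathcal J,\bs a}(A)$. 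For fixed $r$, there are $\binom{n}{r}$ choices of $\mathcal I\subseteq\{1,\ldots,n\}$ with $|\mathcal I|=r$ and, for each of them, $|\mathcal S|^{n-r}=(k^2-1)^{n-r}$ choices of $\bs a$; by part~(1) every summand contributes $c^U_{r,n-r}(A)$, and summing over $r$ yields Formula~\eqref{formula(C)}.

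Neither step presents a real obstacle: Lemma~\ref{reductiontocomponentsLEMMA} already contains the substantive content (producing the substitutions and verifying that they carry identities to identities), so the main care is bookkeeping. The only point deserving attention is checking that the $\rho^c_j$ and $\rho^j_c$ act bijectively on basis monomials of their respective domains, which follows immediately from $\vp_{ca_j}\vp_{a_jb}=\vp_{cb}$ and $\vp_{a_jc}\vp_{cb}=\vp_{a_jb}$; everything else is a routine consequence of the direct-sum decomposition and the counting.
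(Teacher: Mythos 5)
Your proof is correct and essentially identical to the paper's: part (1) composes the two invertible substitutions already exhibited in Lemma \ref{reductiontocomponentsLEMMA}(2)-(3), and part (2) uses the same direct-sum decomposition of $P_n^U$ over fixed-exponents components, the splitting of $\I^U(A)$ from Lemma \ref{reductiontocomponentsLEMMA}(1), and the $\binom{n}{r}(k^2-1)^{n-r}$ count. The only difference is that you spell out the composition $\Phi$ and its inverse explicitly, which the paper leaves implicit.
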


\begin{proof}
\mbox{}

\enum
\item The proof of item (2) of Lemma \ref{reductiontocomponentsLEMMA} shows a linear isomorphism between $P_{\mathcal I,\mathcal J,\bs a}^U$ and $P_{\mathcal I,\mathcal J,(c,\ldots,c)}^U$ such that $P_{\mathcal I,\mathcal J,\bs a}^U \cap\I^U(A) \cong P_{\mathcal I,\mathcal J,(c,\ldots,c)}^U\cap\I^U(A)$, while the proof of item (3) shows a linear isomorphism between $P_{\mathcal I,\mathcal J,(c,\ldots,c)}^U$ and $P_{r,n-r,c}^U$ such that $P_{\mathcal I,\mathcal J,(c,\ldots,c)}^U\cap\I^U(A) \cong P_{r,n-r,c}^U\cap\I^U(A)$ (the isomorphisms being given by the invertible substitutions specified there).
\item By definition of fixed-exponents components we have 
\[P_n^U = \bigoplus_{(\mathcal I,\mathcal J)\in\stirlingtwo{[n]}{2}, \bs a\in \mathcal{S}^{|\mathcal J|}} P_{\mathcal I,\mathcal J,\bs a}^U.\]
This identity, combined with Lemma \ref{reductiontocomponentsLEMMA}(1), leads to
\[P_n^U\cap\I^U(A) = \bigoplus_{(\mathcal I,\mathcal J)\in\stirlingtwo{[n]}{2}, \bs a\in \mathcal{S}^{|\mathcal J|}} (P_{\mathcal I,\mathcal J,\bs a}^U\cap\I^U(A)).\]
So
\[P_n^U(A) = \dfrac{P_n^U}{P_n^U\cap\I^U(A)} \cong \bigoplus_{(\mathcal I,\mathcal J)\in\stirlingtwo{[n]}{2}, \bs a\in \mathcal{S}^{|\mathcal J|}} \dfrac{P_{\mathcal I,\mathcal J,\bs a}^U}{P_{\mathcal I,\mathcal J,\bs a}^U\cap\I^U(A)}.\]
Now fix $c\in\mathcal S$. By item (1), for each $(\mathcal I,\mathcal J)\in\stirlingtwo{[n]}{2}$ such that $|\mathcal I|=r$ and $\bs a\in\mathcal{S}^{|\mathcal J|}$ there is an isomorphism between $\dfrac{P_{\mathcal I,\mathcal J,\bs a}^U}{P_{\mathcal I,\mathcal J,\bs a}^U\cap\I^U(A)}$ and $\dfrac{P_{r,n-r,c}^U}{P_{r,n-r,c}^U\cap\I^U(A)}$. Therefore,
\[P_n^U(A) \cong \bigoplus_{0\leq r\leq n,\, (\mathcal I,\mathcal J)\in\stirlingtwo{[n]}{2} \, | \, |\mathcal I|=r,\, \bs a\in \mathcal{S}^{n-r}} \dfrac{P_{r,n-r,c}^U}{P_{r,n-r,c}^U\cap\I^U(A)} =  \bigoplus_{0\leq r\leq n}\binom nr (k^2-1)^{n-r} \dfrac{P_{r,n-r,c}^U}{P_{r,n-r,c}^U\cap\I^U(A)},\]
since for any $0\leq r\leq n$ we have $\binom nr$ choices of the first $r$ variables and $(k^2 -1)^{n-r}$ distinct elements in $\mathcal{S}^{n-r}$.
Hence Formula \eqref{formula(C)} follows.\qedhere
\eenum
\end{proof}

Fixed $a\in \mathcal{S}$, when no confusion may arise, we will write $P^U_{r,n-r}:=P_{r,n-r,a}^U$, $P^U_{r,n-r}(A):=P_{r,n-r,a}^U(A).$
\end{pgraph}
\begin{pgraph}{\bf $U$-cocharacter and its decomposition.}\label{U-character and its decomposition} In view of the previous sections, instead of considering the usual permutations of variables as the actions of the symmetric groups on the spaces of multilinear polynomials, we consider the actions which permute variables together with their first index exponent, which is more natural in this context.
Denoting by $S_{\mathcal I}$ and  $S_{\mathcal J}$ the symmetric groups acting on the sets $\mathcal I$ and $\mathcal J$ respectively, the group $S_{\mathcal I} \times S_{\mathcal J}$ acts on $P_{\mathcal I,\mathcal J,\bs a}^U$ on the left in the following way:
\[(\sigma,\tau)(x_{\rho(1)}^{a_{\rho(1)}b_{1}}\cdots x_{\rho(n)}^{a_{\rho(n)}b_{n}}) := x_{\pi\rho(1)}^{a_{\pi\rho(1)}b_{1}}\cdots x_{\pi\rho(n)}^{a_{\pi\rho(n)}b_{n}}\]
where $\rho\in S_n$, $a_{\rho(i)}=g=b_i$ if $\rho(i)\in \mathcal I$, $(\sigma,\tau)\in S_{\mathcal I}\times S_{\mathcal J}$, and $\pi(i):=\sigma(i)$ if $i\in \mathcal I$ while $\pi(i):=\tau(i)$ if $i\in \mathcal J$.
In this way  $P_{\mathcal I,\mathcal J,\bs a}^U$ becomes an $S_{\mathcal I} \times S_{\mathcal J}$-module. 
If $A$ is an $(L,U)$-algebra, then $P_{\mathcal I,\mathcal J,\bs a}^U\cap \I^U(A)$ is invariant under the $S_{\mathcal I} \times S_{\mathcal J}$-action, making
\[P_{\mathcal I,\mathcal J,\bs a}^U(A):=\frac{P_{\mathcal I,\mathcal J,\bs a}^U}{P_{\mathcal I,\mathcal J,\bs a}^U\cap \I^U(A)}\]
an $S_{\mathcal I} \times S_{\mathcal J}$-module with the induced action.

If $|\mathcal I|=r$, then $S_{\mathcal I} \times S_{\mathcal J}\cong S_{r} \times S_{n-r}$ where  $S_r$ and $S_{n-r}$ denote the symmetric groups acting on the sets $\{1,\dots , r\}$ and $\{r+1,\dots, n\}$, respectively. Thus, for any $(\mathcal I,\mathcal J)\in\stirlingtwo{[n]}{2}$ such that $|\mathcal I|=r$ and $\bs a\in\mathcal{S}^{n-r}$, $P_{\mathcal I,\mathcal J,\bs a}^U(A)$ can be regarded as an $S_r\times S_{n-r}$-module. As a consequence, the space
\[P_{(n;r)}^U(A):= \bigoplus_{(\mathcal I,\mathcal J)\in\stirlingtwo{[n]}{2} \, | \, |\mathcal I|=r,\, \bs{a}\in \mathcal{S}^{n-r}}P_{\mathcal I,\mathcal J,\bs a}^U(A)\]
is also an $S_r\times S_{n-r}$-module, whose character we denote by $ \chi_{(n;r)}^U(A)$ and call the \emph{$(n,r)$th $U$-cocharacter of A.}

Now, fixed $a\in \mathcal S$,  the space $P^U_{r,n-r}(A)$ (defined in \ref{codimensionsformula}) is an $S_r \times S_{n-r}$-module whose character we denote by $\chi_{r,n-r}^U(A)$ (it is independent of a, as deduced Proposition \ref{P_{r,n-r,c}isomorphismPROPOSITION}(1)). From the same result we know that for each $(\mathcal I,\mathcal J)\in\stirlingtwo{[n]}{2}$ such that $|\mathcal I|=r$ and $\bs a\in\mathcal{S}^{n-r}$ there is an isomorphism of vector spaces between $P_{\mathcal I,\mathcal J,\bs a}^U(A)$ and $P^U_{r,n-r}(A)$, and since the $S_r\times S_{n-r}$-action commutes with the isomorphism, it is in addition an isomorphism of $S_r\times S_{n-r}$-modules. Therefore
\begin{equation}\label{eq:P_{(n;r)}^U(A)}
      P_{(n;r)}^U(A) \underset{S_r\times S_{n-r}}{\cong\phantom{m}}\binom nr (k^2-1)^{n-r} P^U_{r,n-r}(A). 
\end{equation}

Recall that the irreducible $S_r \times S_{n-r}$-characters are the tensor products $\chi_\lambda \otimes \chi_\mu$ of the irreducible $S_r$- and $S_{n-r}$-characters $\chi_\lambda$ and $\chi_\mu$, where $\lambda\vdash r$ and $\mu \vdash n-r$ are partitions.
Since $\chr(F)=0$, by complete reducibility we can write
	\begin{equation} \label{S_r X S_n-r character}
		\chi_{r,n-r}^U(A)=\sum_{(\lambda,\mu)\vdash(r,n-r)}  m_{\lambda,\mu}\,\chi_\lambda \otimes \chi_\mu,
	\end{equation}
	where $\lambda\vdash r$, $\mu \vdash n-r$, and $m_{\lambda, \mu}\geq 0$ is the multiplicity corresponding to $\chi_\lambda \otimes \chi_\mu$. Thus, as a consequence of \eqref{eq:P_{(n;r)}^U(A)} and \eqref{S_r X S_n-r character}, the $(n,r)$th $U$-cocharacter of A can be decomposed as
 \[ \chi_{(n;r)}^U(A)=\sum_{(\lambda,\mu)\vdash(r,n-r)}  \binom nr (k^2-1)^{n-r} m_{\lambda,\mu}\,\chi_\lambda \otimes \chi_\mu. \tag{$\chi$}\label{formula(chi)}
 \]
 \end{pgraph}

\section{Differential identities of $\M$}\label{Section:DifferentialIdentities}

 In this section we determine the $U$-identities, $(L,U)$-identities, and $L$-identities of the algebra $\M$ for $k\geq 2$.

\subsection{$U$-identities of $\M$}
\mbox{}

\begin{pgraph}\textbf{Multiplication table of $\M$.}\label{multiplicationtableassociative}
The multiplication table arising from $\mathcal M$ (see \ref{Sbasis}), with results expressed in matrix units, is summarized by the following relations:
\begin{enumerate}
\item $gx=x=xg$ for any $x\in\mathcal M$.
\item $e_{ij}e_{jk}=e_{ik}$, $e_{ij}e_{kl}=0$ ($j\neq k$).
\item $h_ie_{ij} = e_{ij}$, $h_ie_{i+1,j}=-e_{i+1,j}$, $e_{ij}h_j = e_{ij}$, $e_{i,j+1}h_{j} = -e_{i,j+1}$, $h_ie_{jk}=0$ ($j\not\in\{i,i+1\}$), $e_{ij}h_k=0$ ($k\not\in\{j,j+1\}$).
\item $h_i^2=e_{ii}+e_{i+1,i+1}$, $h_ih_{i+1}=-e_{i+1,i+1}=h_{i+1}h_i$, $h_ih_j=0$ ($j\not\in\{i-1,i,i+1\}$).
\end{enumerate}
\end{pgraph}

\begin{pgraph}
\textbf{Generating $U$-identities.}\label{generatingUidentities}
Due to the nature of the endomorphisms $\varphi_{ab}$ (see Formula \eqref{formula(F)}), the identities in the multiplication table of $\M$ (see \ref{multiplicationtableassociative}) translate well to $U$-identities of $\M$: for example, if $x,y\in\M$ and $a,b\in\mathcal S$ then
\[x^{ae_{ij}}y^{be_{lm}}=\mu_a^x\mu_b^ye_{ij}e_{lm}=0\text{ if }j\neq l.\]

This idea provides us at once with the following $U$-identities of $\M$ in two variables.
\begin{lemma}[\textbf{$U$-identities from multiplication table}]\label{multiplicationtablegivesidentitiesLEMMA}
\mbox{}

\noindent Fix $a_1,a_2\in \mathcal M$. For $1\leq i\leq r$, fix $\sigma_i\in S_2$ and let $\alpha_i\in F$ and $m^i_1,m^i_2\in\mathcal M$ be such that $\sum_{i=1}^r \alpha_im^i_1m^i_2=0$ in $\M$, with $a_1=g$ (resp. $a_2=g$) forcing $m^i_{\sigma_i(1)}=g$ (resp. $m^i_{\sigma_i(2)}=g$) for all $1\leq i\leq r$. Then 
\[\sum_{i=1}^r \alpha_i x^{a_{\sigma_i(1)}m^i_1}_{\sigma_i(1)}x^{a_{\sigma_i(2)}m^i_2}_{\sigma_i(2)}\in\I^U(\M).\]
\end{lemma}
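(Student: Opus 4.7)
The plan is to show that the given $U$-polynomial vanishes under every evaluation $x_1 \mapsto A_1$, $x_2 \mapsto A_2$ with $A_1, A_2 \in \M$, by reducing the evaluation to a scalar multiple of the hypothesis $\sum_i \alpha_i m^i_1 m^i_2 = 0$. The key identity is the very definition of the endomorphisms $\vp_{ab}$ from \ref{matrixunitsendomorphisms}, namely $x^{ab} = \vp_{ab}(x) = \mu_a^x\, b$ for $a,b \in \mathcal M$ (the side conditions on $a_1,a_2$ and $m^i_{\sigma_i(j)}$ ensure that each endomorphism $\vp_{a_{\sigma_i(j)} m^i_j}$ actually belongs to $\mathcal U$, so that the expression is a bona fide $U$-polynomial).

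Applying this definition to each term of the proposed $U$-identity yields, for every pair $A_1,A_2 \in \M$,
\[
A_{\sigma_i(1)}^{a_{\sigma_i(1)} m^i_1} A_{\sigma_i(2)}^{a_{\sigma_i(2)} m^i_2}
= \bigl(\mu_{a_{\sigma_i(1)}}^{A_{\sigma_i(1)}}\, m^i_1\bigr)\bigl(\mu_{a_{\sigma_i(2)}}^{A_{\sigma_i(2)}}\, m^i_2\bigr)
= \mu_{a_{\sigma_i(1)}}^{A_{\sigma_i(1)}}\, \mu_{a_{\sigma_i(2)}}^{A_{\sigma_i(2)}}\, m^i_1 m^i_2.
\]
The crucial observation, which is what makes the statement work uniformly in $i$, is that for $\sigma_i \in S_2$ the unordered pair $\{\sigma_i(1),\sigma_i(2)\} = \{1,2\}$, and the coefficients $\mu_a^A$ are scalars in $F$ which commute freely. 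Hence the scalar factor collapses to the $i$-independent quantity
\[
\mu_{a_{\sigma_i(1)}}^{A_{\sigma_i(1)}}\, \mu_{a_{\sigma_i(2)}}^{A_{\sigma_i(2)}} = \mu_{a_1}^{A_1}\, \mu_{a_2}^{A_2},
\]
regardless of whether $\sigma_i$ is the identity or the transposition.

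Factoring this scalar outside the summation, the evaluation becomes
\[
\sum_{i=1}^r \alpha_i\, A_{\sigma_i(1)}^{a_{\sigma_i(1)} m^i_1} A_{\sigma_i(2)}^{a_{\sigma_i(2)} m^i_2} = \mu_{a_1}^{A_1}\, \mu_{a_2}^{A_2} \sum_{i=1}^r \alpha_i\, m^i_1 m^i_2 = 0,
\]
by the hypothesis $\sum_i \alpha_i m^i_1 m^i_2 = 0$ in $\M$. Since this holds for arbitrary $A_1,A_2 \in \M$, the displayed polynomial lies in $\I^U(\M)$. The argument is essentially a one-line scalar factorization; there is no substantive obstacle, the only subtle point being the verification that the scalar prefactor does not depend on $\sigma_i$, which is what warrants permuting the variable index together with the first exponent index as in \ref{components}.
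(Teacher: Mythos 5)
Your proof is correct and follows exactly the same argument as the paper: evaluate at arbitrary $A_1,A_2\in\M$, use the defining relation $x^{\vp_{ab}}=\mu_a^x\,b$ to peel off the scalar $\mu_{a_{\sigma_i(1)}}^{A_{\sigma_i(1)}}\mu_{a_{\sigma_i(2)}}^{A_{\sigma_i(2)}}=\mu_{a_1}^{A_1}\mu_{a_2}^{A_2}$ (which is $i$-independent because $\{\sigma_i(1),\sigma_i(2)\}=\{1,2\}$), and factor it out of the sum to reduce to the hypothesis. The extra remark on why the hypotheses force each $\vp_{a_{\sigma_i(j)}m^i_j}$ to lie in $\mathcal U$ is a reasonable clarification, but otherwise the two proofs coincide.
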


\begin{proof}
Evaluating in $x_1,x_2\in\M$, by definition of $\vp_{ab}$ we get
\[\sum_{i=1}^r \alpha_i x^{a_{\sigma_i(1)}m^i_1}_{\sigma_i(1)}x^{a_{\sigma_i(2)}m^i_2}_{\sigma_i(2)} = \sum_{i=1}^r \mu^{x_{\sigma_i(1)}}_{a_{\sigma_i(1)}}\mu^{x_{\sigma_i(2)}}_{a_{\sigma_i(2)}}\alpha_im^i_1m^i_2 = \mu^{x_1}_{a_1}\mu^{x_2}_{a_2}\sum_{i=1}^r \alpha_im^i_1m^i_2 = 0.\qedhere\]  
\end{proof}

We will show in the following that all $U$-identities of $\M$ can be generated from $U$-identities in two variables, with at most two terms, arising from its multiplication table as in Lemma \ref{multiplicationtablegivesidentitiesLEMMA}.
\end{pgraph}

Recall that notation $x^b$ with $x\in\FU$, $b\in\mathcal S$ is shorthand for the element $x^{\vp_{ab}}\in\FU$, for a fixed and elided first exponent index $a\in\mathcal S$ which we will not explicitly mention in the next results (see \ref{firstsuperindex}). Similarly we write $P^U_{r,n-r}$ instead of $P^U_{r,n-r,a}$ (see \ref{codimensionsformula}).

\smallskip

We start the description of the $U$-identities and codimensions of $M_k(F)$ by appealing to the linear structure of the $T_U$-ideal. We will tackle the simpler case $k=2$ separately.

\begin{proposition}\label{Theorem Id^U K=2}
The $T_U$-ideal of $U$-identities of $M_2(F)$ is generated by the following $U$-polynomials:
\[[x^g,y^g], \ [x^g, y^a],\ x^a y^b - y^a x^b, \ x^c y^c, \  x^{h_1} y^c +  x^c y^{h_1}, \ x^{e_{12}} y^{e_{21}} + x^{e_{21}} y^{e_{12}}-x^{h_1} y^{h_1},\]
where $a,b \in \{h_1, e_{12}, e_{21}\}$ and $c\in \{e_{12}, e_{21}\}$. In addition, $c_n^U(M_2(F))=4^{n+1}- 3(n+1).$
\end{proposition}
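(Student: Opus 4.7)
The plan is to verify that the listed polynomials all lie in $\I^U(M_2(F))$, and then to show that the $T_U$-ideal $J$ they generate has exactly the codimension of $\I^U(M_2(F))$ in every degree; since $J\subseteq\I^U(M_2(F))$, this will force $J=\I^U(M_2(F))$. The verification is immediate from Lemma \ref{multiplicationtablegivesidentitiesLEMMA} applied to the relevant vanishing products of the multiplication table of $M_2(F)$ recorded in \ref{multiplicationtableassociative}: centrality of $g$ gives the first two families, $ab-ab=0$ gives $x^ay^b-y^ax^b$, and $e_{12}^2=e_{21}^2=0$, $h_1c+ch_1=0$, $e_{12}e_{21}+e_{21}e_{12}=h_1^2$ give identities 4, 5, 6 respectively.

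To compute $c_n^U(M_2(F))$ I would apply Formula \eqref{formula(C)}, which reduces the task to computing $c^U_{r,n-r}(M_2(F))=\dim P^U_{r,n-r,a}(M_2(F))$ for a fixed $a\in\mathcal S$. A monomial of $P^U_{r,n-r,a}$ evaluated at $(x_1,\ldots,x_n)\in M_2(F)^n$ yields $\bigl(\prod_{i\leq r}\mu^{x_i}_g\bigr)\bigl(\prod_{j>r}\mu^{x_j}_a\bigr)\cdot b_1b_2\cdots b_{n-r}$, where $b_1,\ldots,b_{n-r}\in\mathcal S$ are the second-index exponents of the non-$g$ variables in positional order; as the $\mu$-scalars are free independent parameters, $c^U_{r,n-r}(M_2(F))$ equals the dimension of the span in $M_2(F)$ of length-$(n-r)$ products from $\mathcal S$. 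This is $1$, $3$, $4$ for $n-r=0,1,\geq 2$ respectively, since already in length $2$ the elements $h_1^2=I$, $h_1e_{12}=e_{12}$, $h_1e_{21}=-e_{21}$, $e_{12}e_{21}=e_{11}$ span $M_2(F)$. Substituting into Formula \eqref{formula(C)},
\[
c_n^U(M_2(F))=1+9n+4\sum_{m=2}^n\binom{n}{m}3^m=1+9n+4(4^n-1-3n)=4^{n+1}-3(n+1).
\]

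To obtain the matching upper bound $\dim(P^U_n/(P^U_n\cap J))\leq 4^{n+1}-3(n+1)$, I would note that the arguments of Lemma \ref{reductiontocomponentsLEMMA} and Proposition \ref{P_{r,n-r,c}isomorphismPROPOSITION} use only substitutions swapping variables, hence apply verbatim modulo any $T_U$-ideal and reduce the problem to showing $\dim(P^U_{r,n-r,a}/J)\leq c^U_{r,n-r}(M_2(F))$ for each $r$. Identities 1 and 2 make $x^g$ central; combined with identity 3 (which swaps two variables while preserving positional exponents) they bring every element of $P^U_{r,n-r,a}$ modulo $J$ to a combination of the $3^{n-r}$ canonical monomials $x_1^g\cdots x_r^g\cdot x_{r+1}^{ab_{r+1}}\cdots x_n^{ab_n}$ with $b_i\in\mathcal S$, settling the cases $n-r\in\{0,1\}$. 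For $n-r\geq 2$, identities 4, 5, 6 applied at two adjacent non-$g$ positions become the homogeneous degree-$2$ relations $e^2,f^2,he+eh,hf+fh,ef+fe-h^2$ in the tensor algebra $T(V)$ of $V:=\spn(\mathcal S)$ (writing $h:=h_1$, $e:=e_{12}$, $f:=e_{21}$); letting $R$ be the two-sided ideal they generate, a short induction on $m\geq 2$ using the rewrite rules $e^2,f^2\to 0$, $eh\to -he$, $fh\to -hf$, $fe\to h^2-ef$ shows that $T(V)_m/R_m$ is spanned by $\{h^m,h^{m-1}e,h^{m-1}f,h^{m-2}ef\}$, by checking that right-multiplication by each of $h,e,f$ sends this $4$-element set into its own linear span (for instance $h^{m-1}f\cdot e=h^{m-1}(h^2-ef)=h^{m+1}-h^{m-1}ef$ and $h^{m-2}ef\cdot e=h^{m-2}e(h^2-ef)=h^me$). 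This inductive closure computation is the main obstacle; all other steps reduce to Formula \eqref{formula(C)}, Lemma \ref{multiplicationtablegivesidentitiesLEMMA}, and a direct binomial summation.
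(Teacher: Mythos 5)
Your proof is correct, and it reorganizes the paper's argument in a way worth noting. The paper establishes both $\I^U(M_2(F))\subseteq J$ and the codimension formula in one pass: it reduces an arbitrary multilinear identity modulo $J$ to a $4$-term normal form (the alternating words $h_1(ef)\cdots$, $h_1(fe)\cdots$, $(ef)\cdots$, $(fe)\cdots$) and then kills the coefficients by a single judicious evaluation at $x_i=g$, $y_j=h_1+e_{12}+e_{21}$. You instead split the two tasks: you compute $c^U_{r,n-r}(M_2(F))$ directly as the dimension of the span of length-$(n-r)$ products from $\mathcal S$ (a clean observation about the evaluation map, exploiting that the scalar prefactor $\prod\mu^{x_i}_g\prod\mu^{x_j}_a$ is common to every monomial of $P^U_{r,n-r,a}$), and you separately bound $\dim(P^U_{r,n-r,a}/J)$ by $4$ via a quotient of $T(V)$ by quadratic relations, with a different normal form $\{h^m,h^{m-1}e,h^{m-1}f,h^{m-2}ef\}$; the squeeze $J\subseteq\I^U$ together with equal codimensions then gives $J=\I^U(M_2(F))$. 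Your rewrite induction is correct (I checked all twelve products; the only nontrivial ones are $h^{m-1}f\cdot e$ and $h^{m-2}ef\cdot e$, both as you computed), and your remark that Lemma \ref{reductiontocomponentsLEMMA} and Proposition \ref{P_{r,n-r,c}isomorphismPROPOSITION} go through verbatim for any $T_U$-ideal is exactly right since their proofs use only substitutions swapping variables. What the paper's phrasing buys is that the same reduction yields both results at once with no separate codimension computation; what yours buys is a conceptually cleaner picture of $c^U_{r,n-r}$ as a property of the evaluation map alone, independent of any choice of generators, which also isolates the spanning/Gröbner computation as the sole place where the specific relations enter.
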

\begin{proof}
Firstly, note that $M_2(F)$ has no nontrivial $U$-identities of degree $1$ (as $x^\vp=0$ implies $\vp=0$ for any $\vp\in\E(M_2(F))$), and so $c_1^U(M_2(F)) = \dim_F(P_1^U) = \dim_F(U) =3^2 +1 = 4^2-3\cdot 2$ as needed. Hence in the following, we assume $n\geq2$.

Let $I$ be the $T_U$-ideal generated by the $U$-polynomials in the statement of the proposition. We will show that $I=\I^U(M_2(F))$.
Recall that $\mathcal S=\{h_1, e_{12}, e_{21}\}$ for $k=2$ by definition.  By Lemma \ref{multiplicationtablegivesidentitiesLEMMA} it follows that $I\subseteq \I^U(M_2(F))$. 

In order to prove the opposite inclusion let $f\in \I^U(M_2(F))$ with $\deg f=n$ and assume, as we may, that $f$ is multilinear and $f\in P^U_{r,n-r}$, where $0\leq r \leq n$ (see \ref{components} and \ref{codimensionsformula}). We will prove that $f\equi I 0$.
	
For all $ 1\leq i \leq n-r$, in order to simplify the notation, let us rename  $x_{r+i}^a$ to $y_i^a$,  $a\in \mathcal S$, so that variables $x_1,\ldots,x_r$ correspond to exponents $g$ and variables $y_1,\ldots,y_{n-r}$ correspond to exponents $a\in\mathcal S$. Since $ [x_1^g,x_2^g], \ [x^g, y^a]\in I$ for all $a\in\mathcal S$, $f$ modulo $I$ is a linear combination of $U$-monomials of type
\[x_1^g \cdots x_r^g y_{i_1}^{a_1}\cdots y_{i_{n-r}}^{a_{n-r}}\]
where $a_{1},\dots, a_{n-r}\in\mathcal S$. If $r=n$ we have $f\equi I \alpha x_1^g \cdots x^g_n$ for some $\alpha\in F$; by evaluating $x_i=g$ for $1\leq i\leq n$ we find, since $f$ is a $U$-identity of $M_2(F)$,
\[0=f(g,\ldots,g)\equi I \alpha(g)^{gg}\cdots (g)^{gg} = \alpha g^n = \alpha g,\]
hence $\alpha=0$ and $f\equi I 0$.
If $r=n-1$ then we have
\[f\equi I \alpha_1 x^g_1 \cdots x^g_{n-1}y_1^{h_1}+ \alpha_2 x^g_1 \cdots x^g_{n-1}y_1^{e_{12}}+ \alpha_3 x^g_1 \cdots x^g_{n-1}y_1^{e_{21}}\]
for some $\alpha_i\in F$, $1\leq i\leq 3$; by evaluating  $x_i=g$ for $1\leq i \leq n-1$ and $y_1=h_1 +e_{12}+ e_{21}$ we analogously get,
for some $a\in\{h_1,e_{12},e_{21}\}$,
\[\alpha_1(g)^{gg}\cdots (g)^{gg}(h_1+e_{12}+e_{21})^{ah_1} + \cdots  + \alpha_3(g)^{gg}\cdots (g)^{gg}(h_1+e_{12}+e_{21})^{ae_{21}} 
=\alpha_1h_1+\alpha_2e_{12}+\alpha_3e_{21}=0\]
and hence $\alpha_{i}=0$ for $1\leq i \leq 3$, and $f\equi I 0$.\\
Now let us assume that $0\leq r \leq n-2$. Since $y_1^a y_2^b - y_2^a y_1^b\in I$ with $a,b\in\mathcal S$, it is possible to reorder the variables $y_1,\ldots,y_r$ in each $U$-monomial of $f$ modulo $I$ without reordering their original exponents. Moreover, since $y_1^c y_2^c\in I$ for $c\in \{e_{12}, e_{21}\}$, modulo $I$ the nonzero terms of $f$ do not have two variables with exponent $e_{12}$ nor with exponent $e_{21}$ adjacent to each other. Since in addition $y_1^{h_1} y_2^c +  y_1^c y_2^{h_1}\in I$ for $c\in \{e_{12}, e_{21}\}$, we can permute the $h_1$ exponents with the $e_{12},e_{21}$ exponents; it then follows that $f$ modulo $I$ can be written as a linear combination of $U$-monomials of the following forms:
\begin{align*}
&x^g_1\cdots x^g_r y_{1}^{h_1} \cdots y_{m}^{h_1} y_{m+1}^{e_{12}} y_{m+2}^{e_{21}} \cdots y_{n-r-1}^{e_{12}} y_{n-r}^{e_{21}},\\
&x^g_1\cdots x^g_r y_{1}^{h_1} \cdots y_{m}^{h_1} y_{m+1}^{e_{21}} y_{m+2}^{e_{12}} \cdots y_{n-r-1}^{e_{21}} y_{n-r}^{e_{12}},\\
&x^g_1\cdots x^g_r y_{1}^{h_1} \cdots y_{m}^{h_1} y_{m+1}^{e_{12}} y_{m+2}^{e_{21}} y_{m+3}^{e_{12}}\cdots  y_{n-r-1}^{e_{21}} y_{n-r}^{e_{12}},\\
&x^g_1\cdots x^g_r y_{1}^{h_1} \cdots y_{m}^{h_1} y_{m+1}^{e_{21}} y_{m+2}^{e_{12}} y_{m+3}^{e_{21}} \cdots y_{n-r-1}^{e_{12}} y_{n-r}^{e_{21}},
\end{align*}
where $0\leq m \leq n-r$. Since $y_1^{e_{12}} y_2^{e_{21}} + y_1^{e_{21}} y_2^{e_{12}} -y_1^{h_1} y_2^{h_1} \in I$ it follows that, if $n-r\geq 2$, we can write $f$ modulo $I$ as a linear combination of $U$-monomials in which at most one exponent $h_1$ appears. Thus, if $n-r$ is even, we get that
\begin{align}
\label{n-r even}
f\equi I & \alpha_1 x^g_1\cdots x^g_r  y_{1}^{h_1} y_{2}^{e_{12}} y_{3}^{e_{21}}  y_{4}^{e_{12}}\cdots y_{n-r-1}^{e_{21}} y_{n-r}^{e_{12}}+\alpha_2 x^g_1\cdots x^g_r  y_{1}^{h_1} y_{2}^{e_{21}} y_{3}^{e_{12}}  y_{4}^{e_{21}} \cdots y_{n-r-1}^{e_{12}} y_{n-r}^{e_{21}}+\\
+& \alpha_3 x^g_1\cdots x^g_r  y_{1}^{e_{12}} y_{2}^{e_{21}}\cdots  y_{n-r-1}^{e_{12}} y_{n-r}^{e_{21}}
+ \alpha_4 x^g_1\cdots x^g_r  y_{1}^{e_{21}} y_{2}^{e_{12}} \cdots y_{n-r-1}^{e_{21}} y_{n-r}^{e_{12}} \notag
\end{align}
for some $\alpha_i\in F$, $1\leq i\leq 4$, whereas if $n-r$ is odd, then we have that
\begin{align*}
f\equi I & \alpha_1 x^g_1\cdots x^g_r  y_{1}^{h_1} y_{2}^{e_{12}} y_{3}^{e_{21}} \cdots y_{n-r-1}^{e_{12}} y_{n-r}^{e_{21}}+ \alpha_2 x^g_1\cdots x^g_r  y_{1}^{h_1} y_{1}^{e_{21}} y_{2}^{e_{12}} \cdots y_{n-r-1}^{e_{21}} y_{n-r}^{e_{12}}+\\
+& \alpha_3 x^g_1\cdots x^g_r  y_{1}^{e_{12}} y_{2}^{e_{21}} y_{3}^{e_{12}}\cdots  y_{n-r-1}^{e_{21}} y_{n-r}^{e_{12}}+ \alpha_4
x^g_1\cdots x^g_r  y_{1}^{e_{21}} y_{2}^{e_{12}} y_{3}^{e_{21}} \cdots y_{n-r-1}^{e_{12}} y_{n-r}^{e_{21}}
\end{align*}
for some $\alpha_i\in F$, $1\leq i\leq 4$.
Suppose that $f$ is as in \eqref{n-r even}. By making the evaluation $x_i=g$ for $1\leq i \leq r$ and $y_j=h_1 +e_{12}+ e_{21}$ for $1\leq j \leq n-r$ we get $\alpha_1 e_{12} -\alpha_2 e_{21}+ \alpha_3 e_{11}+ \alpha_4 e_{22}=0$. Thus $\alpha_i=0$ for $1\leq i \leq 4$, and $f$ is the zero $U$-polynomial modulo $I$. One can deal similarly with the other case. Thus $\I^U(M_2(F))=I$.
	
The argument above also proves that
\[c^U_{r,n-r}(M_2(F))=\begin{cases}
1,  & \mbox{ if } r=n, \\
3,  & \mbox{ if } r=n-1, \\
4, & \mbox{ if } 0\leq r\leq n-2.
\end{cases}\]
Therefore, by Formula \eqref{formula(C)},
\[c_n^U(M_2(F))= \sum_{r=0}^{n} \binom{n}{r} 3^{n-r} c^U_{r,n-r}(M_2(F))=4\sum_{r=0}^{n} \binom{n}{r} 3^{n-r} -3n-3= 4^{n+1}-3(n+1).\qedhere\]
\end{proof}

 Next lemma follows from simple computations.

\begin{lemma} \label{Remark e_ij}
If $k\geq 3$, then:
\enum
\item $x^{e_{il}}y^{e_{lj}} \equiv - x^{h_i}y^{e_{ij}}  \ (\md \langle x^{e_{i j}}y^{h_{j-1}}- x^{e_{i l}} y^{e_{lj}}, \ x^{h_i}y^{e_{ij}}+ x^{e_{ij}}y^{h_{j-1}} \rangle_{T_U})$, where $1\leq i \leq k-1$, $2\leq j \leq k$, $ j\neq i$, $1\leq l \leq k$;
\item $x^{e_{1j}}y^{e_{j1}} \equiv x^{h_1} y^{h_1}+ x^{h_1}y^{h_2} \ (\md \langle x^{e_{1j}} y^{e_{j1}} + x^{e_{2,l}} x_2^{e_{l,2}}-x^{h_1} x_2^{h_1}, \ x^{h_{1}} x_2^{h_2} + x^{e_{2j}} x_2^{e_{j2}} \rangle_{T_U})$, where $2\leq j \leq k$, $1\leq l \leq k$, $l \neq 2$;
\item $x^{e_{kj}}y^{e_{jk}} \equiv x^{h_{k-1}} y^{h_{k-1}}+ x^{h_{k-2}}y^{h_{k-1}} \ (\md \langle x^{e_{k-1,j}} y^{e_{j,k-1}} + x^{e_{k,l}} y^{e_{l,k}}-x^{h_{k-1}} y^{h_{k-1}}, \ x^{h_{k-2}} y^{h_{k-1}} + x^{e_{k-1,l}} y^{e_{l,k-1}} \rangle_{T_U})$, where $1\leq j \leq k-1$, $1\leq l \leq k$, $l\neq k-1$.
\eenum
\end{lemma}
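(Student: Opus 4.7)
Each of the three items is a purely formal algebraic consequence of setting the two listed generators to zero modulo the corresponding $T_U$-ideal, so the proof in every case amounts to a two-step chain of substitutions. The plan is to treat the three items separately, but in all of them the mechanism is the same: from the first generator extract one equivalence, from the second extract another, and concatenate. No use of the $T_U$-structure beyond the trivial fact $f \in \langle f\rangle_{T_U}$ is needed, and in particular no $L$-action, no substitution swapping variables, and no reduction to fixed-exponents components are invoked.

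For item (1), the first generator $x^{e_{ij}}y^{h_{j-1}}-x^{e_{il}}y^{e_{lj}}$ gives $x^{e_{il}}y^{e_{lj}}\equiv x^{e_{ij}}y^{h_{j-1}}$, and the second generator $x^{h_i}y^{e_{ij}}+x^{e_{ij}}y^{h_{j-1}}$ gives $x^{e_{ij}}y^{h_{j-1}}\equiv -x^{h_i}y^{e_{ij}}$; chaining yields the claim. Items (2) and (3) follow the same template. In item (2), the first generator rewrites $x^{e_{1j}}y^{e_{j1}}$ as $x^{h_1}y^{h_1}-x^{e_{2l}}y^{e_{l2}}$ for the permitted value of $l$, and the second generator identifies $x^{e_{2l}}y^{e_{l2}}$ with $-x^{h_1}y^{h_2}$, after which one collects terms to obtain $x^{e_{1j}}y^{e_{j1}}\equiv x^{h_1}y^{h_1}+x^{h_1}y^{h_2}$. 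Item (3) is the mirror argument at the bottom of the Cartan chain: the first generator replaces $x^{e_{kj}}y^{e_{jk}}$ by $x^{h_{k-1}}y^{h_{k-1}}-x^{e_{k-1,l}}y^{e_{l,k-1}}$, and the second sends the auxiliary monomial $x^{e_{k-1,l}}y^{e_{l,k-1}}$ to $-x^{h_{k-2}}y^{h_{k-1}}$.

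The only real care lies in index bookkeeping. In items (2) and (3) one must ensure that the parameter $l$ used in invoking the first generator satisfies the constraint $l\neq 2$ (resp.\ $l\neq k-1$) so that the corresponding instance of the second generator is an element of the listed ideal; one also has to keep track of which occurrences of the free index $j$ on both sides match and which are summation/indexing parameters. I expect this routine matching of indices to be the entire difficulty of the proof; once a valid choice of $l$ is fixed for each value of $j$, the two equivalences combine to give the claimed right-hand side without further work.
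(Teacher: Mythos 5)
Your proposal is correct and matches the paper's intent: the paper dispatches this lemma with the phrase ``follows from simple computations,'' and your two-step chaining of the listed generators is exactly the kind of computation meant. You also correctly identify that no $L$-action or variable-swapping substitution is needed --- just ideal arithmetic with instances of the two generators --- and that the only genuine subtlety is choosing the auxiliary parameter $l$ compatibly with the constraint $l\neq 2$ (resp.\ $l\neq k-1$) so that the relevant instance of the second generator lies in the listed $T_U$-ideal.
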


\begin{proposition}\label{Theorem Id^U K>2}
The $T_U$-ideal of $U$-identities of $\M$, $k\geq 3$, is generated by the following $U$-polynomials:
\enum
\item $[x^g,y^g], \ \ [x^g, y^a], \ \ x^a y^b - y^a x^b$, where $a,b\in \mathcal{S}$;
\item $x^{e_{ij}}y^{e_{lm}}$, where $1\leq i,j,l,m\leq k$, $j\neq l$;
\item $x^{h_i}y^{e_{jl}}$, where  $1\leq i \leq k-1$, $1\leq  j, l \leq k$, $ j\neq i, i+1$, $l\neq j$;\label{Id 2a}
\item $x^{e_{jl}}y^{h_i}$ where $1\leq i \leq k-1$, $1\leq j,l\leq k$, $l\neq i, i+1$, $j\neq l$; \label{Id 2b}
\item $x^{h_i}y^{e_{ij}}+ x^{e_{ij}}y^{h_{j-1}} $, where $ 1\leq i \leq k-1$, $ 2\leq j \leq k$,  $j\neq i$; \label{Id 1a}
\item $ x^{h_{i-1}}y^{e_{ij}}+ x^{e_{ij}}y^{h_j}$, where $ 2\leq i \leq k$, $1\leq j \leq k-1$, $ j\neq i$; \label{Id 1b}
\item $x^{h_{i-1}}y^{e_{i j}}+x^{e_{i l}} y^{e_{lj}}$, where $2\leq i \leq k$, $1\leq j,l\leq k$, $ j\neq i$, $i,j\neq l$; \label{Id 2c}
\item $x^{h_{i-1}} y^{h_i} + x^{e_{ij}} y^{e_{ji}}$, where $2\leq i \leq k-1$, $1\leq j \leq k$, $j\neq i$; \label{Id 3}
\item $x^{h_i}y^{h_j}$, where $1\leq i,j\leq k-1$, $j\neq i-1,i,i+1$, if $k\geq 4$; \label{Id 5}
\item $[x^{h_{i}}, y^{h_{i+1}}]$, where  $1\leq i \leq k-2$;	\label{Id 4}
\item $x^{e_{ij}} y^{e_{ji}} + x^{e_{i+1l}}y^{e_{li+1}}-x^{h_i}y^{h_i}$, where $ 1\leq i \leq k-1$, $1\leq j,l\leq k$, $j\neq i$, $ l\neq i+1$; \label{Id 1c}
\item $x^{e_{i j}}y^{h_{j-1}}+ x^{e_{i l}} y^{e_{lj}}$, where $1\leq i,l\leq k$, $2\leq j \leq k$, $j\neq i$, $i,j\neq l$. \label{Id 2d}
\eenum
In addition, $c_n^U(\M)=k^{2(n+1)}- (k^2-1)(n+1).$
\end{proposition}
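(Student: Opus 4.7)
The plan is to extend the argument of Proposition \ref{Theorem Id^U K=2} to $k\ge 3$. Let $I$ denote the $T_U$-ideal generated by the twelve families (1)--(12). First, I would verify $I\subseteq \I^U(\M)$ by checking each family via Lemma \ref{multiplicationtablegivesidentitiesLEMMA}: every listed $U$-polynomial corresponds to a vanishing product in $\M$ drawn from \ref{multiplicationtableassociative}. For instance, (2) comes from $e_{ij}e_{lm}=0$ when $j\ne l$; (5) from $h_ie_{ij}+e_{ij}h_{j-1}=e_{ij}-e_{ij}=0$; (8) from $h_{i-1}h_i+e_{ij}e_{ji}=-e_{ii}+e_{ii}=0$; (11) from $e_{ij}e_{ji}+e_{i+1,l}e_{l,i+1}-h_i^2=(e_{ii}+e_{i+1,i+1})-(e_{ii}+e_{i+1,i+1})=0$; the remaining cases are analogous.

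For the codimension and the reverse inclusion, the central tool is the evaluation map
\[\mathrm{ev}\colon P^U_{r,n-r}\to \M,\qquad x_{\sigma(1)}^{a_{\sigma(1)}b_1}\cdots x_{\sigma(n)}^{a_{\sigma(n)}b_n}\longmapsto b_1 b_2\cdots b_n.\]
A short calculation (using that the scalar factors $\mu^{x_j}_{a_j}$ commute and reindex independently of $\sigma$) shows that, for any $f\in P^U_{r,n-r}$ and any evaluation at matrices $x_1,\ldots,x_n\in\M$, one has $f(x_1,\ldots,x_n)=\bigl(\prod_{j=1}^n\mu^{x_j}_{a_j}\bigr)\cdot\mathrm{ev}(f)$. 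Hence $\ker(\mathrm{ev})=P^U_{r,n-r}\cap\I^U(\M)$. Because $\mathrm{Image}(\mathrm{ev})$ is spanned by products $b_1\cdots b_{n-r}$ of elements of $\mathcal S$ and $\mathrm{sl}_k(F)\cdot \mathrm{sl}_k(F)=\M$ for $k\ge 2$, the image equals $\M$ whenever $n-r\ge 2$, giving
\[c^U_{r,n-r}(\M)=\begin{cases} 1 & (n-r=0),\\ k^2-1 & (n-r=1),\\ k^2 & (n-r\ge 2).\end{cases}\]
Plugging these into Formula \eqref{formula(C)} and using $\sum_{r=0}^n\binom{n}{r}(k^2-1)^{n-r}=k^{2n}$ yields $c_n^U(\M)=1+n(k^2-1)^2+k^2\bigl(k^{2n}-n(k^2-1)-1\bigr)=k^{2(n+1)}-(k^2-1)(n+1)$.

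To prove $\I^U(\M)\subseteq I$, I would take any multilinear $f\in\I^U(\M)\cap P^U_{r,n-r}$ (see \ref{components}, \ref{codimensionsformula}) and reduce it to zero modulo $I$. The cases $r=n$ and $r=n-1$ follow by direct evaluation exactly as in the $k=2$ proof. For $n-r\ge 2$, I would proceed inductively: first apply the identities in (1) to sort the $x^g$'s to the front and to reorder the remaining variables by index, reducing $f$ to a linear combination of the $(k^2-1)^{n-r}$ monomials $x_1^g\cdots x_r^g y_1^{a_1}\cdots y_{n-r}^{a_{n-r}}$ with $(a_1,\ldots,a_{n-r})\in\mathcal S^{n-r}$; then annihilate exponent sequences containing any forbidden adjacent pair via (2)--(4); then use (5)--(7), (12), and the moves gathered in Lemma \ref{Remark e_ij} to trade $he$ and $eh$ patterns for $ee$ patterns; finally use (8)--(11) to normalize $e_{ij}e_{ji}$, $h_ih_i$, and $h_ih_j$ combinations. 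By induction on $n-r$, every monomial reduces modulo $I$ to a linear combination of a fixed system of $k^2$ representatives, one for each matrix unit of $\M$. Since these representatives map under $\mathrm{ev}$ to a basis of $\M$ and $I\subseteq\I^U(\M)$, they are linearly independent in $P^U_{r,n-r}(\M)$; thus $f\in\I^U(\M)$ forces all coefficients of the reduction to vanish, giving $f\equiv_I 0$ and $I=\I^U(\M)$.

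The main obstacle is the normalization procedure in the last paragraph: the individual rewriting moves from (1)--(12) and Lemma \ref{Remark e_ij} are all elementary, but orchestrating them into a coherent algorithm that handles arbitrary monomials of length $n-r$ and terminates in precisely $k^2$ normal forms—one per matrix unit—requires careful index bookkeeping. A workable strategy seems to be: first use (5)--(7) and (12) to push all $h$'s to one end of each word, converting the body into an $ee$-word $e_{i_0 i_1}e_{i_1 i_2}\cdots e_{i_{m-1} i_m}$; then collapse it via the path rule $e_{ij}e_{jl}=e_{il}$ (captured by iterated applications of Lemma \ref{multiplicationtablegivesidentitiesLEMMA}) together with (8), (11) to bring each word to a representative of length $\le 2$ indexed by its target $\mathrm{ev}$-image $e_{i_0 i_m}$ or diagonal $e_{ii}$.
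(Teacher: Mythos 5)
Your evaluation-map observation is a genuinely cleaner route to the codimensions than what the paper uses. The paper writes down explicit candidate bases and checks their independence by evaluating every variable at $g$ or at $\sum_{a\in\mathcal S}a$; your factorization $f(x_1,\ldots,x_n)=\bigl(\prod_{j}\mu^{x_j}_{a_j}\bigr)\cdot\mathrm{ev}(f)$, which holds because the scalar $\prod_j\mu^{x_j}_{a_j}$ is common to every monomial in $P^U_{r,n-r}$, gives $P^U_{r,n-r}\cap\I^U(\M)=\ker(\mathrm{ev})$ directly, and the codimension is $\dim_F(\mathrm{Image}(\mathrm{ev}))$ with no need to pick lucky evaluation points. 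One small point to fill in: to conclude $\mathrm{Image}(\mathrm{ev})=\M$ for every $n-r\geq2$, you should exhibit a length-$(n-r)$ product of elements of $\mathcal S$ hitting each matrix unit, e.g.\ $h_i^{n-r-2}e_{ij}e_{jl}=e_{il}$ for $j\neq i,l$ (using $h_ie_{ij}=e_{ij}$), since $\SL\cdot\SL=\M$ alone only covers $n-r=2$.

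The reverse inclusion $\I^U(\M)\subseteq I$ — equivalently, showing that $P^U_{r,n-r}$ modulo $I$ is spanned by at most $k^2$ monomials when $n-r\geq2$ — is where the paper spends the bulk of its effort, and your proposal is incomplete there. You describe a rewriting strategy and candidly label it the ``main obstacle,'' but you neither exhibit the $k^2$ normal forms nor verify termination of the reduction. Moreover, your sketch of ``collapsing'' an $ee$-word $e_{i_0i_1}e_{i_1i_2}\cdots$ via the path rule $e_{ij}e_{jl}=e_{il}$ cannot work as stated: the two exponents sit on distinct variables, so no $U$-identity merges them into one. Rather, identities \eqref{Id 2c}, \eqref{Id 3}, \eqref{Id 1c}, \eqref{Id 2d} and Lemma \ref{Remark e_ij} trade an $ee$-pair for an $he$- or $hh$-pair, and the paper's reduction iterates ``move $h$'s left'' with ``convert $ee$-pairs,'' converging to normal forms that are almost entirely repeated $h$-exponents with at most one other exponent at the very end ($y^{h_i}\cdots y^{h_i}$, $y^{h_1}\cdots y^{h_1}y^{h_2}$, $y^{h_i}\cdots y^{h_i}y^{e_{ij}}$, $y^{h_{k-1}}\cdots y^{h_{k-1}}y^{e_{kl}}$), not one monomial per matrix unit ending in a short $ee$-word. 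Until this combinatorial argument is carried through, the central spanning step remains unproven.
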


\begin{proof}
The proof of this result follows a scheme similar to that of Proposition \ref{Theorem Id^U K=2} for $2\times2$ matrices, with different computations. Firstly, note that there are no nontrivial identities of degree $1$ and thus $c_1^U(\M) = \dim_F(U) = (k^2-1)^2 +1 = k^4 - 2(k^2-1)$ as expected, so henceforth we assume $n\geq2$. Let $I$ be the $T_U$-ideal generated by the $U$-polynomials in the statement of the proposition.  By Lemma \ref{multiplicationtablegivesidentitiesLEMMA} it follows that $I\subseteq \I^U(\M)$. To prove the opposite inclusion, first, we find a set of generators of $P^U_{r,n-r}$ modulo $P^U_{r,n-r}\cap I$, for each $n \geq 1$ and $0\leq r\leq n$, and after that, we show, by evaluation, that the sets of generators found are actually bases of their corresponding vector spaces.
	
Let $f\in P^U_{r,n-r}$ be a multilinear $U$-polynomial of degree $n$. In order to simplify the notation, for all $ 1\leq i \leq n-r$ let us rename  $x_{r+i}^a$ to $y_i^a$,  $a\in \mathcal{S}$, so that variables $x_1,\ldots,x_r$ correspond to exponents $g$ and variables $y_1,\ldots,y_{n-r}$ correspond to exponents $a\in\mathcal S$. Since $[x^g_1,x^g_2], \ [x^g, y^a] \in I$ for all $a\in \mathcal{S}$, $f$ modulo $I$ is a linear combination of $U$-monomials of type
\[x_1^g \cdots x_r^g y_{i_1}^{a_1}\cdots y_{i_{n-r}}^{a_{n-r}}\]
where $a_{i_1},\dots, a_{i_{n-r}}\in \mathcal{S}$. So $P^U_{n,0}$ is generated modulo $P^U_{n,0}\cap I$ by the $U$-monomial $x_1^g \cdots x_n^g$. If $r=n-1$ then $f$ modulo $I$ is a linear combination of $U$-monomials
\begin{equation}
\label{generatori r=n-1}
x^g_1 \cdots x^g_{n-1}y_1^{h_i} \text{ for }1\leq i \leq k-1\text{ and }x^g_1 \cdots x^g_{n-1}y_1^{e_{jl}}\text{ for }1\leq j,l \leq k, j\neq l.
\end{equation}
It follows that $P^U_{n-1,1}$ is generated modulo $P^U_{n-1,1}\cap I$ by the $U$-monomials in \eqref{generatori r=n-1}.
	
Now suppose that $0\leq r \leq n-2$. By $U$-identities \ref{Id 2a}-\ref{Id 1b}, in each $U$-monomial of $f$ we can move to the left, modulo $I$, all the variables with exponent $h_i$ for $1\leq i \leq k-1$; moreover, since  $y_1^a y_2^b - y_2^a y_1^b\in I$ for all $a,b\in \mathcal{S}$, we can always reorder the indices of the variables with exponent in $\mathcal S$. Call (P1) to this moving and reordering procedure. From (P1) it follows that $f$ modulo $I$ is a linear combination of $U$-monomials of type
\[x^g_1\cdots x^g_r y_{1}^{h_{i_1}} \cdots y_{s}^{h_{i_s}} y_{s+1}^{b_1}  \cdots y_{n-r}^{b_{n-r}}\]
for $0\leq s\leq n-r$, $1\leq i_1, \dots, i_s\leq k-1$ and $b_1,\dots, b_{n-r}\in \{e_{ij}\ |  \ 1\leq i,j\leq k, i\neq j\}$.
Since $y_1^{e_{ij}}y_2^{e_{lm}}\in I$ for $1\leq i,j,l,m\leq k$, $j\neq l$, we can require in addition that the sequence of exponents $b_1,\dots, b_{n-r}$ has nonzero product $b_1\cdots b_{n-r}$, i.e., if $b_t=e_{ij}$ and $b_{t+1}=e_{lm}$ then $j=l$. Notice that the $U$-identities of Lemma \ref{Remark e_ij} also belong to $I$, thus by $U$-identities \ref{Id 2c},\ref{Id 3} and Lemma \ref{Remark e_ij} we can reduce each pair $y_t^{e_{il}}y_{t+1}^{e_{lj}}$:
\begin{itemize}
\item If $i\neq j,k$, to the pair $y_t^{h_i}y_{t+1}^{e_{ij}}$.
\item If $i=k\neq j$, to the pair $y_t^{h_{k-1}}y_{t+1}^{e_{kj}}$.
\item if $i=j$, to a linear combination of one or two $U$-monomials of the form $y_t^{h_l}y_{t+1}^{h_m}$ with $1\leq l\leq k-1$ and $m\in\{l,l+1\}$.
\end{itemize}
To this reduction procedure call (P2). By applying (P1) and (P2) repeatedly, we find that $f$ modulo $I$ is a linear combination of $U$-monomials of types
\[x^g_1\cdots x^g_r y_{1}^{h_{i_1}} \cdots y_{n-r}^{h_{i_{n-r}}}\text{ and }x^g_1\cdots x^g_r y_{1}^{h_{i_1}} \cdots y_{n-r-1}^{h_{i_{n-r-1}}} y_{n-r}^{e_{ij}}\]
for  $ 1\leq i_1, \dots, i_{n-r}\leq k-1$ and $1\leq i,j\leq k$, $i\neq j$. By $U$-identity \ref{Id 5} we can assume that the product of the exponents $h_{i_1}\cdots h_{i_{n-r}}$ is nonzero, and by $U$-identity \ref{Id 4} we can assume that $i_1 \leq \cdots \leq i_{n-r}$; hence we may assume that $i_{j+1}\in\{i_j,i_j+1\}$ for all $0\leq j<n-r$.
Moreover, since for $k\geq 4$, by $U$-identities \ref{Id 5},\ref{Id 4} we have
\[y_1^{h_{i}}y_2^{h_{i+1}}y_3^{h_{i+2}} = [y_1^{h_{i}}y_2^{h_{i+1}}]y_3^{h_{i+2}}+y_2^{h_{i+1}}(y_1^{h_{i}}y_3^{h_{i+2}})\in I\]
for $1\leq i \leq k-3$, we can assume that, for all $k\geq 3$, in each $U$-monomial of $f$ modulo $I$ there are at most two distinct (and consecutive) exponents $h_i$ ($1\leq i\leq k-1$), the rest of them being copies of one of those.
In addition, by $U$-identities \ref{Id 3} (applied twice) and \ref{Id 1c} we find
\begin{align*}
&y_1^{h_{i}}y_2^{h_{i+1}} +y_1^{h_{i-1}}y_2^{h_i} + y_1^{h_i}y_2^{h_i} =\\
&(y_1^{h_{i}}y_2^{h_{i+1}} + y_1^{e_{i+1j}}y_2^{e_{ji+1}}) + (y_1^{h_{i-1}}y_2^{h_i} + y_1^{e_{ij}}y_2^{e_{ji}})-(y_1^{e_{ij}} y_2^{e_{ji}} + y_1^{e_{i+1j}} y_2^{e_{ji+1}}-y_1^{h_i}y_2^{h_i})\in I
\end{align*}
for all $2\leq i \leq k-2$, so by recursion on $i$ we may suppose that, in each $U$-monomial of $f$, either all exponents $h_i$ are equal (for some $1\leq i\leq k-1$) or there are two distinct exponents, $h_1$ and $h_2$. Now notice that by $U$-identities \ref{Id 1c},\ref{Id 3},\ref{Id 2a} we have
\[y_1^{h_1}y_2^{h_2}y_3^{h_2} + y_1^{h_1} y_2^{h_1}y_3^{h_2} = y_1^{h_1}(y_2^{h_2}y_3^{h_2}-y_2^{e_{21}}y_3^{e_{12}}-y_2^{e_{31}}y_3^{e_{13}}) +y_1^{h_1}(y_2^{h_1}y_3^{h_2}+y_2^{e_{21}}y_3^{e_{12}})+ (y_1^{h_1}y_2^{e_{31}})y_3^{e_{13}}\in I,\]
so $y_1^{h_1}y_2^{h_2}y_3^{h_2} \equi I -y_1^{h_1} y_2^{h_1}y_3^{h_2}$. Hence $f$ modulo $I$ is a linear combination of $U$-monomials of types
\begin{alignat*}{2}
& x^g_1\cdots x^g_r y_{1}^{h_{i}} \cdots y_{n-r}^{h_{i}}, \quad && x^g_1\cdots x^g_r y_{1}^{h_{1}} \cdots y_{n-r-1}^{h_{1}}y_{n-r}^{h_2},\\
& x^g_1\cdots x^g_r y_{1}^{h_{i}} \cdots y_{n-r-1}^{h_i} y_{n-r}^{e_{lj}}, \quad &&  x^g_1\cdots x^g_r y_{1}^{h_{1}} \cdots y_{n-r-2}^{h_1} y_{n-r-1}^{h_2} y_{n-r}^{e_{lj}},
\end{alignat*}
for $1\leq i \leq k-1$, $1\leq j, l \leq k$, $l\neq j$. Finally, since $y_1^{h_{i-1}}y_2^{e_{ij}} + y_1^{h_i} y_2^{e_{ij}} \in \langle  y_1^{h_{i-1}}y_2^{e_{i j}}+ y_1^{e_{i l}} y_2^{e_{lj}}, \ y_1^{e_{i j}}y_2^{h_{j-1}}+ y_1^{e_{i l}} y_2^{e_{lj}}, \ y_1^{h_i}y_2^{e_{ij}}+ y_1^{e_{ij}}y_2^{h_{j-1}} \rangle_{T_U},$ where $2\leq i \leq k-1$, $1\leq j, l \leq k$, $j\neq i$, $l\neq i,j$, and by $U$-identity \ref{Id 2a}, it follows that $f$ modulo $I$ is a linear combination of the following $U$-monomials:
\begin{equation}\label{relativamente libera Mk}
\begin{split}
& x^g_1\cdots x^g_r y_1^{h_i} \cdots y_{n-r}^{h_i},\ \ x^g_1\cdots x^g_r y_1^{h_1} \cdots y_{n-r-1}^{h_1} y_{n-r}^{h_2},\\
& x^g_1\cdots x^g_r y_1^{h_i} \cdots y_{n-r-1}^{h_i} y_{n-r}^{e_{ij}},\ \
x^g_1\cdots x^g_r y_1^{h_{k-1}} \cdots y_{n-r-1}^{h_{k-1}} y_{n-r}^{e_{kl}},
\end{split}
\end{equation}
where $1\leq i, l \leq k-1$, $1\leq j \leq k$, $j\neq i$. Thus we have that $P^U_{r, n-r}$ modulo $P^U_{r,n-r}\cap I$ is generated by the $U$-monomials in \eqref{relativamente libera Mk}.
	
The $U$-monomial $x_1^g \cdots x_n^g$ can be seen to be nonzero modulo $I$ by evaluating $x_1=\cdots =x_n =g$. We next show that the $U$-monomials in \eqref{generatori r=n-1} and \eqref{relativamente libera Mk} are linearly independent modulo $\I^U(\M)$ if $r=n-1$ or $0\leq r\leq n-2$, respectively. To that end, let us assume first that $f\in \I^U(\M)$ is a linear combination of $U$-monomials in \eqref{generatori r=n-1}, i.e.,
\[f=\sum_{1\leq i \leq k-1} \alpha_i x^g_1 \cdots x^g_{n-1} y_1^{h_i}+\sum_{\substack{1\leq j,l \leq k\\ j\neq l}} \beta_{jl} x^g_1 \cdots x^g_{n-1} y_1^{e_{jl}}\]
for some $\alpha_i,\beta_{jl}\in F$. From the evaluation
$x_1=\cdots=x_{n-1}=g\text{ and } y_1=\sum_{a\in\mathcal S} a$
we get
\[\sum_{1\leq i \leq k-1} \alpha_i h_i + \sum_{\substack{1\leq j,l \leq k\\ j\neq l}} \beta_{jl} e_{jl}=0,\]
from which it follows, since $\mathcal S$ is a linearly independent set, that $\alpha_i=\beta_{lj}=0$ for all $1\leq i \leq k-1$, $1\leq j , l \leq k$, $j \neq l$. Therefore the $U$-monomials in \eqref{generatori r=n-1} are linearly independent modulo $\I^U(\M)$.
	
Let us assume now that $f\in \I^U(\M)$ is such that
\begin{align*}
f= & \sum_{1\leq i \leq k-1} \alpha_i x^g_1\cdots x^g_r y_1^{h_i} \cdots y_{n-r}^{h_i} + \beta x^g_1\cdots x^g_r y_1^{h_1} \cdots y_{n-r-1}^{h_1} y_{n-r}^{h_2}+ \sum_{\substack{1\leq i \leq k-1\\ 1\leq j \leq k\\ i\neq j}} \gamma_{ij} x^g_1\cdots x^g_r y_1^{h_i} \cdots y_{n-r-1}^{h_i} y_{n-r}^{e_{ij}}+\\
+&\sum_{1\leq l \leq k-1} \gamma_{kl}
x^g_1\cdots x^g_r y_1^{h_{k-1}} \cdots y_{n-r-1}^{h_{k-1}} y_{n-r}^{e_{kl}} .
\end{align*}
If we evaluate $x_1=\cdots=x_r=g$ and $y_1=\cdots=y_{n-r}=\sum_{a\in\mathcal S} a$, 
we get
\[\sum_{1\leq i \leq k-1} \alpha_i (e_{ii}\pm e_{i+1,i+1})+ \beta e_{22} +\sum_{\substack{1\leq i \leq k-1\\ 1\leq j \leq k\\ i\neq j}} \gamma_{ij} e_{ij}+ \sum_{1\leq l \leq k-1} (-1)^{n-r-1} \gamma_{kl} e_{kl} =0,\]
which produces $\alpha_{i}=\beta=\gamma_{lj}=0$ for all $1\leq i \leq k-1$, $1\leq l,j\leq k$. Therefore the elements in \eqref{relativamente libera Mk} are linearly independent modulo $P^U_{r,n-r}\cap \I^U(\M)$.
	
The fact that $P^U_{r,n-r}\cap \I^U(\M) \supseteq P^U_{r,n-r}\cap I$  for all $n\in\N$ and $0\leq r\leq n$ proves that $\I^U(\M)=I$, with $x_1^g\cdots x_n^g$ and the elements in \eqref{generatori r=n-1}, \eqref{relativamente libera Mk}  
forming a basis of $P^U_{r,n-r}$ modulo $P^U_{r,n-r}\cap \I^U(\M)$ for $r=n$, $r=n-1$ and $0\leq r \leq n-2$, respectively.
Thus, by counting we get
\[c^U_{r,n-r}(\M)=\begin{cases}
1,  & \mbox{ if } r=n, \\
k^2 -1,  & \mbox{ if } r=n-1, \\
k^2, & \mbox{ if } 0\leq r\leq n-2.
\end{cases}\]
Hence, by Formula \eqref{formula(C)} it follows that
\begin{align*}
c_n^U(\M) &= \sum_{r=0}^{n} \binom{n}{r} (k^2 -1)^{n-r} c^U_{r,n-r}(\M)=k^2 \sum_{r=0}^{n} \binom{n}{r} (k^2 -1)^{n-r} -(k^2 -1)n-(k^2 -1) = \\
&= k^{2(n+1)}-(k^2 -1)(n+1).\qedhere
\end{align*}
\end{proof}


\begin{pgraph}\textbf{Modifying the second index of an exponent.}
Through the $L$-action, the second basis element of the subindex of $\varphi_{ab}$ can also be changed in the search for identities, albeit the result is less straightforward. In particular, if $d\in L$ is a derivation then $(x^{ca}y^{cb})^d = x^{(ca)d}y^{cb} + x^{ca}y^{(cb)d}$. We will resort to this method to reduce the number of generators in the basis of $\I^U(\M)$.
\end{pgraph}

We are working in $\FU$, which has no $U$-action (see Remarks \ref{FUhasnoUaction}). Nevertheless, to simplify the notation, instead of writing exponents belonging to $\Uop$ whose action would eventually project to $U^{\op}$ once they landed on isolated variables, we will write the exponents directly in $U^{\op}$ with the caution of evaluating them only on isolated variables. For example, to apply the action of $e_{12}^2\in \Uop$ to $x^uy^v\in\FU$, we write
\[(x^uy^v)^{E_{12}^2} = x^{uE_{12}^2}y^v + 2x^{uE_{12}}y^{vE_{12}} + x^uy^{vE_{12}^2}\]
with $E_{12}\in U^{\op}$, and only then simplify the exponents $uE_{12}^2,uE_{12},vE_{12},vE_{12}^2$ by computing in $U^{\op}$.

\smallskip

\begin{theorem}\label{UidentitiesTHEOREM}
The $T_U$-ideal of $U$-identities of $\M$ is generated by the following $U$-polynomials:
\enum
\item Either {\Large $x^{e_{12}e_{12}}y^{e_{12}e_{12}}$} if $k=2$ or {\Large $x^{e_{12}e_{12}}y^{e_{12}e_{31}}$} if $k\geq3$,
\item {\Large $x^{e_{12}e_{12}}y^{e_{12}e_{21}}-y^{e_{12}e_{12}}x^{e_{12}e_{21}}$},
\item {\Large $[x^{gg},y^{gg}]$},
\item {\Large $[x^{gg},y^{e_{12}e_{12}}]$}.
\eenum
\end{theorem}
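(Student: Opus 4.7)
The plan is to reduce the extensive generating set of $\I^U(\M)$ established in Propositions \ref{Theorem Id^U K=2} and \ref{Theorem Id^U K>2} to the four $U$-polynomials of the statement by systematically exploiting the $L$-action, which through Leibniz's rule and the bracket formula $\vp_{ab}C=\vp_{a[c,b]}$ lets us modify the second index of an exponent. First I would check that each of the four polynomials lies in $\I^U(\M)$ via Lemma \ref{multiplicationtablegivesidentitiesLEMMA}: generator (1) comes from $e_{12}\cdot e_{12}=0$ (for $k=2$) or $e_{12}\cdot e_{31}=0$ (for $k\geq 3$); generator (2) is a tautological commutator of the scalar coefficients $\mu^x_{e_{12}}\mu^y_{e_{12}}$; and generators (3)–(4) follow from the centrality of $g=I_k$ in $\M$.

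The bulk of the work is to derive every generator from Propositions \ref{Theorem Id^U K=2}–\ref{Theorem Id^U K>2} from the four listed polynomials. For this I would iteratively apply the $L$-action to (1): given $d\in L$ with $D=\ad_d$, Leibniz's rule together with the bracket formula yields
\[(x^{cb_1}y^{cb_2})^D = x^{c[d,b_1]}y^{cb_2}+x^{cb_1}y^{c[d,b_2]},\]
and by choosing $d\in\{e_{ij}\}$ appropriately one transports the second-index position of (1) through the entire basis $\mathcal S$. For instance, two successive applications of $\ad_{e_{21}}$ to (1) in the $k=2$ case, using $[e_{21},e_{12}]=-h_1$ and $[e_{21},h_1]=2e_{21}$, produce a scalar multiple of $x^{e_{12}}y^{e_{21}}+x^{e_{21}}y^{e_{12}}-x^{h_1}y^{h_1}$, while a single application yields the anticommutator $x^{h_1}y^{e_{12}}+x^{e_{12}}y^{h_1}$, recovering two of the Proposition \ref{Theorem Id^U K=2} generators. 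For $k\geq 3$, an analogous but more elaborate orbit computation with $\ad_{e_{pq}}$ recovers all the zero-product identities $x^{e_{ij}}y^{e_{lm}}$ (with $j\neq l$) and the families (3)–(12) of Proposition \ref{Theorem Id^U K>2}. In parallel, generator (2) combined with the $L$-action produces the swap identities $x^ay^b-y^ax^b$ for all $a,b\in\mathcal S$, while (3)–(4) produce $[x^g,y^g]$ and $[x^g,y^a]$ for all $a\in\mathcal S$. Along the way, first-exponent indices are adjusted via substitutions swapping variables as described in \ref{firstsuperindex}.

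The main obstacle will be the combinatorial bookkeeping: tracking how iterated adjoint actions on the second-index position decompose in the basis $\mathcal S$ and verifying that the desired linear combinations for each family of generators are actually produced. The conceptual underpinning is the irreducibility of the adjoint representation $\Ad_k$ of $L$ (cf.\ \ref{StructureUSL}), which implies that the $L$-orbit of the weight vector $e_{12}$ spans all of $\mathcal S$, so that all the second-index modifications we need are in principle accessible; turning this into an explicit derivation of each family in Proposition \ref{Theorem Id^U K>2} is essentially linear algebra in the root-space decomposition of $L$ and can be organized by induction on the distance in the root lattice, starting from the core polynomial (1).
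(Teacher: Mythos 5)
Your proposal is correct and takes essentially the same route as the paper: both start from the exhaustive generating set of $\I^U(\M)$ produced in Propositions \ref{Theorem Id^U K=2} and \ref{Theorem Id^U K>2}, verify the four listed $U$-polynomials lie in $\I^U(\M)$, and then show each generator in that set is reachable from the four by iterated applications of inner derivations $E_{ij}$ (via Leibniz's rule and the bracket formula), with substitutions adjusting the first exponent index. The paper carries out the ``combinatorial bookkeeping'' you anticipate in a systematic case analysis (subsections (D1)--(D8) of the proof), including the occasional subtractions of already-deduced $U$-polynomials that your sketch quietly folds into ``orbit computations.''
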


\begin{proof}
By Propositions \ref{Theorem Id^U K=2} and \ref{Theorem Id^U K>2}, the $T_U$-ideal of $U$-identities of $\M$ is generated by the following list (L) of identities, for fixed $a\in\mathcal S$:
\enum\label{identitieslist}
\item\label{eidentities} $x^{ae_{ij}}y^{ae_{lm}}$ with $1\leq i\leq k-1$, $2\leq j\leq k$, $j\neq i$.
\item\label{hidentitiesleft} $x^{ah_i}y^{ae_{jl}}$ with $1\leq i \leq k-1$, $1\leq  j, l \leq k$, $ j\neq i, i+1$, $l\neq j$;
\item\label{hidentitiesright} $x^{ae_{jl}}y^{ah_i}$ with $1\leq i \leq k-1$, $1\leq j,l\leq k$, $l\neq i, i+1$, $j\neq l$;
\item\label{hidentitiesboth}  $x^{ah_i}y^{ah_j}$ with $1\leq i,j\leq k-1$, $j\neq i-1,i,i+1$.
\item\label{hidentities2oneright} $ x^{ae_{ij}}y^{ah_{j-1}} + x^{ae_{il}} y^{ae_{lj}}$ with $1\leq i,l\leq k$, $2\leq j \leq k$, $l,j\neq i$, $j\neq l$;
\item\label{hidentities2oneleft} $x^{ah_{i-1}}y^{ae_{ij}} + x^{ae_{il}} y^{ae_{lj}}$ with $2\leq i \leq k$, $1\leq j,l\leq k$, $ l,j\neq i$, $ j\neq l$;
\item\label{hidentities2twoij-1} $x^{ah_i}y^{ae_{ij}}+ x^{ae_{ij}}y^{ah_{j-1}}$ with $ 1\leq i \leq k-1$, $ 2\leq j \leq k$,  $j\neq i$;
\item\label{hidentities2twoi-1j}  $ x^{ah_{i-1}}y^{ae_{ij}}+ x^{ae_{ij}}y^{ah_j}$ with $ 2\leq i \leq k$, $1\leq j \leq k-1$, $ j\neq i$;
\item\label{hidentities2twotogether} $x^{ah_{i-1}} y^{ah_i} + x^{ae_{ij}} y^{ae_{ji}}$ with $2\leq i \leq k-1$, $1\leq j \leq k$, $j\neq i$;
\item\label{hidentities3} $x^{ae_{ij}} y^{ae_{ji}} + x^{ae_{i+1,l}} y^{ae_{l,i+1}}-x^{ah_i} y^{ah_i}$ with $ 1\leq i \leq k-1$, $1\leq j,l\leq k$, $j\neq i$, $l\neq i+1$;
\item\label{identitiespermuted} $x^{ab} y^{ac} - y^{ab} x^{ac}$ with $b,c\in \mathcal{S}$;
\item\label{identitiesbracketsg} $[x^{gg},y^{gg}], \, [x^{gg} y^{ab}]$ with  $b\in \mathcal{S}$;
\item\label{identitiesbracketsh} $[x^{ah_{i}}, y^{ah_{i+1}}]$ with $1\leq i \leq k-2$;
\eenum
\noindent where some identities are not realized for $k=2$ and $k=3$. Let $J$ be the $T_U$-ideal of $\FU$ generated by either $x^{e_{12}e_{12}}y^{e_{12}e_{12}}$ if $k=2$ or $x^{e_{12}e_{12}}y^{e_{12}e_{31}}$ if $k\geq3$, $x^{e_{12}e_{12}}y^{e_{12}e_{21}}-y^{e_{12}e_{12}}x^{e_{12}e_{21}}$, $[x^{gg},y^{gg}]$ and
$[x^{gg},y^{e_{12}e_{12}}]$. Clearly $J\subseteq\I^U(\M)$. In the following, we will prove that $J=\I^U(\M)$ by showing that all identities in the list (L) belong to $J$, mainly by the action of inner derivations. From now on we change, as we may, the first exponent index $a$ of any variable $x^{ab}$ with $b\in\mathcal S$ to $e_{12}$ and elide it by writing $x^b$; we also write $x^g$ for $x^{gg}$.

By the bracket formula \eqref{formula(B)}, the action of the inner derivation $E_{ij}$ (generated by $e_{ij}$) on $x^{e_{ab}}y^{e_{cd}}$ gives
{\Large\[(x^{e_{ab}}y^{e_{cd}})^{E_{ij}}=\delta_{ja}x^{e_{ib}}y^{e_{cd}} -\delta_{bi}x^{e_{aj}}y^{e_{cd}} +\delta_{jc}x^{e_{ab}}y^{e_{id}} -\delta_{di}x^{e_{ab}}y^{e_{cj}}\]}
for $i,j,a,b,c,d\in \{1,\ldots,k\}$, $i\neq j, a\neq b, c\neq d$, where $\delta_{rs}$ denotes Kronecker's delta (see \ref{innerderivations} to review the key computational facts about inner derivations that we will need in the sequel). From now on we assume without further notice that any element of the form $e_{ab}$ or $E_{ab}$ satisfies $a\neq b$ and imposes this restriction wherever it appears.

For $U$-polynomials $f,g\in\FU$ and $u\in\Uop$ let us write $f\overset{u}{\longrightarrow} g$ to denote $g=f^u$; we call this kind of operation a \emph{deduction}, with $f$ its \emph{starting} $U$-polynomial and $g$ its \emph{ending} $U$-polynomial, which is \emph{deduced from} $f$. More in general, we extend the notation and write $f\xrightarrow{u(*)}g$ if some subtractions of other $U$-polynomials also deduced from $f$ are needed in the process of getting $g$ from $f$, in addition to the action of $u$. Notice that if the starting $U$-polynomial of a deduction belongs to a $U$-ideal $I$ of $\FU$, then the ending $U$ñpolynomial of the deduction also belongs to $I$, as $I$ is $\Uop$-invariant. In the following, we will use this fact to show, in sequential steps, that the list of identities (L) belongs to $J$. When in this process we deduce that some $U$-polynomial $g$ is in $J$ because $f\in J$, we say that we \emph{reach} $g$ \emph{starting from $f$}.
\begin{enumerate} [label=\bfseries (D\arabic*)]
\item\textbf{Identities with one $U$-monomial, with exponents of type $e$ and 2 distinct subindices:}\label{deductions1e2}
{\Large\begin{alignat}{1}\label{actions1}
&x^{e_{ab}}y^{e_{ab}}\xrightarrow{\frac1{2}E_{ca}^2} x^{e_{cb}}y^{e_{cb}} \,\, \medmath{(c\neq a)}\nonumber\\
&x^{e_{ab}}y^{e_{ab}}\xrightarrow{\frac1{2}E_{bc}^2} x^{e_{ac}}y^{e_{ac}} \,\, \medmath{(c\neq b)}\\
&x^{e_{ab}}y^{e_{ab}}\xrightarrow{\frac1{24}E_{ba}^4} x^{e_{ba}}y^{e_{ba}}\nonumber
\end{alignat}}%
To aid comprehension of the rest of the computations, let us show the details of the second and third deductions. Recall that the appearance of $e_{ac}$ as the ending $U$-polynomial of the second deduction implies the additional restriction $c\neq a$:
\begin{alignat*}{1}
&x^{e_{ab}}y^{e_{ab}}\xrightarrow{E_{bc} (c\neq a)} -x^{e_{ac}}y^{e_{ab}} -x^{e_{ab}}y^{e_{ac}}\xrightarrow{E_{bc} (c\neq a)} x^{e_{ac}}y^{e_{ac}} + x^{e_{ac}}y^{e_{ac}} = 2x^{e_{ac}}y^{e_{ac}}\,\, \medmath{(c\neq b)}\\
&x^{e_{ab}}y^{e_{ab}}\xrightarrow{E_{ba}^4}\\ &(x^{e_{ab}})^{E_{ba}^4}y^{e_{ab}}+4(x^{e_{ab}})^{E_{ba}^3}(y^{e_{ab}})^{E_{ba}}+6(x^{e_{ab}})^{E_{ba}^2}(y^{e_{ab}})^{E_{ba}^2} +4(x^{e_{ab}})^{E_{ba}}(y^{e_{ab}})^{E_{ba}^3} + (x^{e_{ab}})(y^{e_{ab}})^{E_{ba}^4} = \\
&= 6(x^{e_{ab}})^{E_{ba}^2}(y^{e_{ab}})^{E_{ba}^2} = 24(x^{e_{ab}})^{\vp_{e_{ab}e_{ba}}}(y^{e_{ab}})^{\vp_{e_{ab}e_{ba}}} = 24x^{e_{ba}}y^{e_{ba}}
\end{alignat*}
since $E_{ba}^3=0$ and $E_{ba}^2=-2\vp_{e_{ab}e_{ba}}$.\\
From \eqref{actions1} we deduce that, starting from $x^{e_{12}}y^{e_{12}}\in J$, we can reach any $U$-monomial identity of the form $x^{e_{ab}}y^{e_{ab}}$ with $a,b\in\{1,\ldots,k\}$, for any $k\geq2$:
\begin{itemize}
\item If $a,b\neq1,2$, apply the first and second deductions to reach the target $U$-monomial.
\item If $a=1, b\neq2$, apply the second deduction.
\item If $a\neq1, b=2$, apply the first deduction.
\item If $a=2$ or $b=1$ (or both), apply the third deduction, then the first or second as needed.
\end{itemize}

\smallskip

In the following, we show the deductions of the rest of the identities with less degree of detail. The computations are lengthy but straightforward to check.

\item \textbf{Identities with one $U$-monomial, with exponents of type $e$ and 3 distinct subindices:}\label{deductions1e3}

We assume $a,b,c,d\in\{1,\ldots,k\}$ are pairwise different.
{\Large\begin{alignat}{1}\label{actions2}
&x^{e_{ab}}y^{e_{ca}}\xrightarrow{-\frac12E_{ba}^2} x^{e_{ba}}y^{e_{ca}} \nonumber\\
&x^{e_{ab}}y^{e_{ca}}\xrightarrow{-\frac12E_{ac}^2} x^{e_{ab}}y^{e_{ac}} \nonumber\\
&x^{e_{ab}}y^{e_{ca}}\xrightarrow{-\frac12E_{bc}^2} x^{e_{ac}}y^{e_{ba}} \nonumber\\
&x^{e_{ab}}y^{e_{ca}}\xrightarrow{\frac12E_{ba}E_{ab}E_{ba}(*)} x^{e_{ba}}y^{e_{cb}}\\
&x^{e_{ab}}y^{e_{ca}}\xrightarrow{-E_{da}E_{ad}(*)} x^{e_{db}}y^{e_{cd}}\nonumber\\
&x^{e_{ab}}y^{e_{ca}}\xrightarrow{-E_{bd}} x^{e_{ad}}y^{e_{ca}}\nonumber\\
&x^{e_{ab}}y^{e_{ac}}\xrightarrow{-E_{cb}} x^{e_{ab}}y^{e_{ab}}\nonumber
\end{alignat}}%
Let us show the fifth deduction of \eqref{actions2} explicitly:
\[x^{e_{ab}}y^{e_{ca}}\xrightarrow{E_{da}} x^{e_{db}}y^{e_{ca}} \xrightarrow{-E_{ad}} -x^{e_{ab}}y^{e_{ca}}+x^{e_{db}}y^{e_{cd}};\]
adding $x^{e_{ab}}y^{e_{ca}}$ (which is the starting $U$-monomial of the deduction) we deduce $x^{e_{db}}y^{e_{cd}}$.\\
Now we explicit the fourth deduction:
\[x^{e_{ab}}y^{e_{ca}}\xrightarrow{-E_{ba}} x^{h_{ab}}y^{e_{ca}} \, \text{(a)} \xrightarrow{-E_{ab}} 2x^{e_{ab}}y^{e_{ca}} + x^{h_{ab}}y^{e_{cb}};\]
subtracting $2x^{e_{ab}}y^{e_{ca}}$ (twice the starting $U$-monomial of the deduction) we deduce $x^{h_{ab}}y^{e_{cb}}$ and continue with
\[x^{h_{ab}}y^{e_{cb}}\xrightarrow{\frac12E_{ba}} x^{e_{ba}}y^{e_{cb}}+\frac12x^{h_{ab}}y^{e_{ca}};\]
subtracting now $\frac12x^{h_{ab}}y^{e_{ca}}$ (which we deduced in (a)) we finally deduce $x^{e_{ba}}y^{e_{cb}}$.\\
From \eqref{actions3} we deduce that, starting from $x^{e_{12}}y^{e_{31}}\in J$, we can reach any $U$-monomial identity of the forms $x^{e_{ab}}y^{e_{ca}}$, $x^{e_{ab}}y^{e_{ac}}$, $x^{e_{ba}}y^{e_{ca}}$ with $a,b,c\in\{1,\ldots,k\}$ pairwise different, for any $k\geq3$: deductions 1-4 show that we can get any valid permutation of the indices involved without introducing new ones (a crucial property for $k=3$), deductions 5-6 allow to change the indices to others not appearing in the original identity, and deduction 7 (together with deduction 2) shows that we can reach $x^{e_{12}}y^{e_{12}}$ and thus all identities with only two distinct indices (by \ref{deductions1e2}), whence $x^{e_{12}}y^{e_{12}}$ is superfluous in the basis of identities for $k\geq3$.

\item \textbf{Identities with one $U$-monomial, with exponents of type $e$ and 4 distinct subindices:}\label{deductions1e4}

We assume $a,b,c,d,i\in\{1,\ldots,k\}$ are pairwise different.
{\Large\begin{alignat}{1}\label{actions3}
&x^{e_{ab}}y^{e_{ca}}\xrightarrow{-E_{ad}} x^{e_{ab}}y^{e_{cd}}\nonumber\\
&x^{e_{ab}}y^{e_{cd}}\xrightarrow{-\frac12E_{ba}^2} x^{e_{ba}}y^{e_{cd}}\nonumber\\
&x^{e_{ab}}y^{e_{cd}}\xrightarrow{-\frac12E_{dc}^2} x^{e_{ab}}y^{e_{dc}}\\
&x^{e_{ab}}y^{e_{cd}}\xrightarrow{-\frac12E_{bc}^2} x^{e_{ac}}y^{e_{bd}} \nonumber\\
&x^{e_{ab}}y^{e_{cd}}\xrightarrow{E_{ia}} x^{e_{ib}}y^{e_{cd}}\nonumber
\end{alignat}}%
From \eqref{actions3} we deduce that, starting from $x^{e_{12}}y^{e_{31}}\in J$, we can reach any $U$-monomial identity of the form $x^{e_{ab}}y^{e_{cd}}$ with $a,b,c,d\in\{1,\ldots,k\}$ all distinct: the first deduction allows to reach four distinct subindices, deductions 2-4 generate all permutations of the 4 subindices (through 3 transpositions) without the need of introducing new subindices (a crucial property for $k=4$), and the last deduction allows the introduction of new subindices.\\
Through \ref{deductions1e2},\ref{deductions1e3},\ref{deductions1e4} we have shown that identity \ref{eidentities} of list (L) is in $J$.

\item\textbf{Identities with one $U$-monomial, with exponents of type $h$:}\label{deductions1h}
{\Large\begin{alignat}{1}\label{actions4}
&x^{e_{ii+1}}y^{e_{ab}}\xrightarrow{-E_{i+1i}} x^{h_i}y^{e_{ab}} \,\, \medmath{(a\neq i; b\neq i+1)}\nonumber\\
&x^{e_{ii+1}}y^{e_{ai+1}}\xrightarrow{-E_{i+1i}} x^{h_i}y^{e_{ai+1}} +x^{e_{ii+1}}y^{e_{ai}} \equi J x^{h_i}y^{e_{ai+1}} \,\, \medmath{(a\neq i,i+1)}\nonumber\\
&x^{e_{ab}}y^{e_{ii+1}}\xrightarrow{-E_{i+1i}} x^{e_{ab}}y^{h_i} \,\, \medmath{(a\neq i; b\neq i+1)}\\
&x^{e_{ib}}y^{e_{ii+1}}\xrightarrow{E_{i+1i}} x^{e_{i+1b}}y^{e_{ii+1}}+x^{e_{ib}}y^{h_i}\equi J x^{e_{ib}}y^{h_i} \,\, \medmath{(b\neq i,i+1)}\nonumber\\
&x^{h_i}y^{e_{jj+1}}\xrightarrow{-E_{j+1j}} x^{h_i}y^{h_j} \,\, \medmath{(i\neq j-1,j,j+1)}\nonumber
\end{alignat}}%
In the second and fourth deductions we have used \ref{deductions1e3}.\\
From \eqref{actions4} we deduce that, starting from $x^{e_{12}}y^{e_{31}}\in J$, we can reach any $U$-monomial identity of the forms $x^{h_i}y^{e_{ab}}$ ($a\neq i,i+1$), $x^{e_{ab}}y^{h_i}$ ($b\neq i,i+1$), $x^{h_i}y^{h_j}$ ($i\neq j-1,j,j+1$), with $a,b\in\{1,\ldots,k\}$, $i,j\in\{1,\ldots,k-1\}$: by \ref{deductions1e4} we can reach the starting $U$-monomials of deductions 1-4 (with the corresponding restrictions on $a,b,i$), so we can reach their ending $U$-monomials; then the ending $U$-monomials of deductions 1-2 are the starting $U$-monomials of deduction 5. Therefore identities \ref{hidentitiesleft},\ref{hidentitiesright},\ref{hidentitiesboth} of list (L) are in $J$.

\item \textbf{Auxiliary identities:}\label{deductions2auxiliary}
{\Large\begin{alignat}{1}\label{actions5}
&x^{e_{ij}}y^{e_{li}}\xrightarrow{-E_{jl}} x^{e_{il}}y^{e_{li}}-x^{e_{ij}}y^{e_{ji}} \,\, \medmath{(l\neq j)}\nonumber\\
&x^{e_{il}}y^{h_j}\xrightarrow{-E_{lj}} x^{e_{ij}}y^{h_j}-x^{e_{il}}y^{e_{lj}} \,\, \medmath{(l\neq j+1)}\nonumber\\
&x^{e_{ij+1}}y^{e_{jj+1}}\xrightarrow{\frac12E_{j+1j}^2} x^{e_{ij}}y^{h_j}-x^{e_{ij+1}}y^{e_{j+1j}} \\
&x^{h_i}y^{e_{lj}}\xrightarrow{E_{il}} x^{h_i}y^{e_{ij}}-x^{e_{il}}y^{e_{lj}} \,\, \medmath{(l\neq i+1)}\nonumber\\
&x^{e_{i+1i}}y^{e_{i+1j}}\xrightarrow{\frac12E_{ii+1}^2} x^{h_i}y^{e_{ij}}-x^{e_{ii+1}}y^{e_{i+1j}}\nonumber
\end{alignat}}%
Since $x^{e_{ij}}y^{e_{li}}$ is an identity in $J$ for $l\neq j$ (by \ref{deductions1e3}), by deduction 1 we have
{\Large\[x^{e_{il}}y^{e_{li}}\equi J x^{e_{ij}}y^{e_{ji}} \tag{A1}\label{formula(A1)}\]}
for all valid $1\leq i,j,l\leq k$.
Since $x^{e_{il}}y^{h_j}$ ($l\neq j,j+1$) and $x^{e_{ij+1}}y^{e_{jj+1}}$ are identities in $J$ (by \ref{deductions1e3}, \ref{deductions1h}), by deductions 2-3 we get
{\Large\[x^{e_{ij}}y^{h_j}\equi J x^{e_{il}}y^{e_{lj}} \tag{A2}\label{formula(A2)}\]}
for all valid $1\leq i,j,l\leq k$.
Analogously, from deductions 4-5, we get
{\Large\[x^{h_i}y^{e_{ij}}\equi J x^{e_{il}}y^{e_{lj}}. \tag{A3}\label{formula(A3)}\]}

\item \textbf{Identities in 2 or 3 $U$-monomials with variables in the same order:}\label{deductions2same}
{\Large\begin{alignat}{1}\label{actions6}
&x^{e_{il}}y^{h_{j-1}}\xrightarrow{-E_{lj}} x^{e_{ij}}y^{h_{j-1}}+x^{e_{il}}y^{e_{lj}} \,\, \medmath{(l\neq j-1)}\nonumber\\
&x^{e_{ij-1}}y^{e_{jj-1}}\xrightarrow{-\frac12E_{j-1j}^2} x^{e_{ij}}y^{h_{j-1}}+x^{e_{ij-1}}y^{e_{j-1j}} \,\, \nonumber\\
&x^{h_{i-1}}y^{e_{lj}}\xrightarrow{E_{il}} x^{h_{i-1}}y^{e_{ij}}+x^{e_{il}}y^{e_{lj}} \,\, \medmath{(l\neq i-1)}\\
&x^{e_{i-1i}}y^{e_{i-1j}}\xrightarrow{-\frac12E_{ii-1}^2} x^{h_{i-1}}y^{e_{ij}}+x^{e_{ii-1}}y^{e_{i-1j}} \,\, \nonumber
\end{alignat}}%
Since $x^{e_{il}}y^{h_{j-1}}$ ($l\neq j-1,j$) and $x^{e_{ij-1}}y^{e_{jj-1}}$ are identities in $J$ (by \ref{deductions1e3}, \ref{deductions1h}), deductions 1-2 give identity \ref{hidentities2oneright} from list (L). Analogously, deductions 3-4 give identity \ref{hidentities2oneleft} from list (L). Now identities \ref{hidentities2twoij-1},\ref{hidentities2twoi-1j} from list (L) are linear combinations of \ref{hidentities2oneright},\ref{hidentities2oneleft} and auxiliary identities \eqref{formula(A2)},\eqref{formula(A3)}:
{\Large\begin{align*}
&x^{h_i}y^{e_{ij}}+x^{e_{ij}}y^{h_{j-1}} = (x^{h_i}y^{e_{ij}}-x^{e_{il}}y^{e_{lj}})+(x^{e_{ij}}y^{h_{j-1}}+x^{e_{il}}y^{e_{lj}}),\\
& x^{h_{i-1}}y^{e_{ij}} + x^{e_{ij}}y^{h_j} = (x^{h_{i-1}}y^{e_{ij}}+x^{e_{il}}y^{e_{lj}})+(x^{e_{ij}}y^{h_j}-x^{e_{il}}y^{e_{lj}}).
\end{align*}}%
In addition, by auxiliary identity \eqref{formula(A1)} we find
\[x^{h_{i-1}}y^{e_{ij}}+x^{e_{il}}y^{e_{lj}}\xrightarrow{-E_{ji}} x^{h_{i-1}}y^{h_{ij}}+x^{e_{il}}y^{e_{li}}-x^{e_{jl}}y^{e_{lj}} +x^{e_{ji}}y^{e_{ij}}\equi J x^{h_{i-1}}y^{h_{ij}}+x^{e_{il}}y^{e_{li}} \,\, \medmath{(j\neq i-1)}.\]
Since $x^{h_{i-1}}y^{e_{ij}}+x^{e_{il}}y^{e_{lj}}$ is an identity in $J$ for all $1\leq i,j,l\leq k$, particularizing for $j=i+1$ and changing $l$ to $j$ we get the identity
{\Large\[x^{h_{i-1}}y^{h_i}+x^{e_{ij}}y^{e_{ji}}\]}%
in $J$, which is identity \ref{hidentities2twotogether} in list (L).\\
Finally, starting from \eqref{formula(A3)} particularized with $j=i+1$ and changing $l$ to $j$ we get
\[x^{h_i}y^{e_{ii+1}}- x^{e_{ij}}y^{e_{ji+1}}\xrightarrow{-E_{i+1i}} x^{h_i}y^{h_i}-2x^{e_{i+1i}}y^{e_{ii+1}} +x^{e_{i+1j}}y^{e_{ji+1}} -x^{e_{ij}}y^{e_{ji}} \equi J x^{h_i}y^{h_i}-x^{e_{i+1l}}y^{e_{li+1}}-x^{e_{ij}}y^{e_{ji}},\]
since $x^{e_{i+1i}}y^{e_{ii+1}}\equi J x^{e_{i+1j}}y^{e_{ji+1}}\equi J x^{e_{i+1l}}y^{e_{li+1}}$ by \eqref{formula(A1)}. Hence identity \ref{hidentities3} from list (L) is in $J$.\\
All identities in this subsection have been reached starting from $x^{e_{12}}y^{e_{31}}$.

\item\textbf{Identities in 2 $U$-monomials with permuted variables:}\label{deductions2permuted}\\
If $x^ay^b\in\I(\M)$ with $a,b\in\mathcal S$ then $x^ay^b\in J$  by \ref{deductions1e2}-\ref{deductions1h} and hence $x^ay^b-y^ax^b\in J$, reached starting from $x^{e_{12}}y^{e_{12}}$ if $k=2$ and from $x^{e_{12}}y^{e_{31}}$ if $k\geq3$. Let us see that the rest of the identities of the form $x^ay^b-y^ax^b$ with $a,b\in\mathcal S$ are reached from $x^{e_{12}}y^{e_{21}}-y^{e_{12}}x^{e_{21}}$. Those of the form $x^{e_{ab}}y^{e_{bc}}-y^{e_{ab}}x^{e_{bc}}$ are reached just from $x^{e_{12}}y^{e_{21}}-y^{e_{12}}x^{e_{21}}$ by the application of the following deductions, similar to those in \ref{deductions1e2}. We assume $a,b,c\in\{1,\ldots,k\}$ are pairwise different:
{\Large\begin{alignat}{1}\label{actions7}
&x^{e_{ab}}y^{e_{ba}}\xrightarrow{-E_{ac}} x^{e_{ab}}y^{e_{bc}}\nonumber\\
&x^{e_{ab}}y^{e_{ba}}\xrightarrow{E_{ca}} x^{e_{cb}}y^{e_{ba}}\nonumber\\
&x^{e_{ab}}y^{e_{ba}}\xrightarrow{-E_{ca}E_{ac}(*)} x^{e_{cb}}y^{e_{bc}}\\
&x^{e_{ab}}y^{e_{ba}}\xrightarrow{\frac14E_{ba}E_{ab}E_{ba}E_{ab}(*)} x^{e_{ba}}y^{e_{ab}}\nonumber
\end{alignat}}%
E.g., we reach $x^{e_{21}}y^{e_{12}}-y^{e_{21}}x^{e_{12}}\in J$ applying deduction 4 to $x^{e_{12}}y^{e_{21}}-y^{e_{12}}x^{e_{21}}$ (by linearity), etc.
Let us explicit deduction 4:
\[x^{e_{ab}}y^{e_{ba}}\xrightarrow{-E_{ba}} x^{h_{ab}}y^{e_{ba}} \, \text{(a)} \xrightarrow{E_{ab}} -2x^{e_{ab}}y^{e_{ba}} + x^{h_{ab}}y^{h_{ab}};\]
adding $2x^{e_{ab}}y^{e_{ba}}$ (twice the starting $U$-monomial of the deduction) we deduce $x^{h_{ab}}y^{h_{ab}} \, \text{(b)}$ and continue with
\[x^{h_{ab}}y^{h_{ab}}\xrightarrow{\frac12E_{ba}} x^{e_{ba}}y^{h_{ab}}+x^{h_{ab}}y^{e_{ba}};\]
subtracting $x^{h_{ab}}y^{e_{ba}}$ (which we deduced in (a)) we deduce $x^{e_{ba}}y^{h_{ab}}$ and continue with
\[x^{e_{ba}}y^{h_{ab}}\xrightarrow{-\frac12E_{ab}} -\frac12x^{h_{ab}}y^{h_{ab}}+x^{e_{ba}}y^{e_{ab}};\]
adding now $\frac12x^{h_{ab}}y^{h_{ab}}$ (which we deduced in (b)) we finally deduce $x^{e_{ba}}y^{e_{ab}}$.

To reach $x^{e_{ij}}y^{h_j}-y^{e_{ij}}x^{h_j}$ and $x^{h_i}y^{e_{ij}}-y^{h_i}x^{e_{ij}}$, we respectively apply $x^{e_{ij}}y^{h_j}\equi J x^{e_{il}}y^{e_{lj}}\equi J x^{h_i}y^{e_{ij}}$ from auxiliary identities \eqref{formula(A2)},\eqref{formula(A3)} (reached in \ref{deductions2auxiliary}), then deduce $x^{e_{il}}y^{e_{lj}}$ from $x^{e_{12}}y^{e_{21}}$ as done in \eqref{actions7}.
To reach $x^{e_{ij}}y^{h_{j-1}}-y^{e_{ij}}x^{h_{j-1}}$ and $x^{h_{i-1}}y^{e_{ij}}-y^{h_{i-1}}x^{e_{ij}}$ we respectively apply $x^{e_{ij}}y^{h_{j-1}}\equi J -x^{h_i}y^{e_{ij}}$ and $x^{h_{i-1}}y^{e_{ij}}\equi J -x^{e_{ij}}y^{h_j}$ from identities \ref{hidentities2twoij-1},\ref{hidentities2twoi-1j} of list (L).
To reach $x^{h_{i-1}}y^{h_i}-y^{h_{i-1}}x^{h_i}$ we apply $x^{h_{i-1}}y^{h_i}\equi J -x^{e_{ij}}y^{e_{ji}}$ by identity \ref{hidentities2twotogether} of list (L), then deductions \eqref{actions7}.
To reach $x^{h_i}y^{h_i}-y^{h_i}x^{h_i}$ we apply $x^{h_i}y^{h_i}\equi J x^{e_{ij}}y^{e_{ji}}+x^{e_{i+1l}}y^{e_{li+1}}$ by identity \ref{hidentities3} of list (L).
Finally, starting from identity \ref{hidentities2oneright} of list (L) we reach
\[x^{e_{i+1i}}y^{h_{i-1}}+x^{e_{i+1l}}y^{e_{li}} \xrightarrow{E_{ii+1}} x^{h_i}y^{h_{i-1}} + x^{e_{i+1i}}y^{e_{ii+1}}+x^{e_{il}}y^{e_{li}} - x^{e_{i+1l}}y^{e_{li+1}} \equi J x^{h_i}y^{h_{i-1}} + x^{e_{il}}y^{e_{li}},\]
since $x^{e_{i+1i}}y^{e_{ii+1}} \equi J x^{e_{i+1l}}y^{e_{li+1}}$ by auxiliary identity \eqref{formula(A3)}. Therefore
\[x^{h_i}y^{h_{i-1}} \equi J -x^{e_{il}}y^{e_{li}} \tag{A4}\label{formula(A4)}\] and so
$x^{h_i}y^{h_{i-1}}-y^{h_i}x^{h_{i-1}}\in J$ by deductions \eqref{actions7}.\\
In this subsection, we have shown that identity \ref{identitiespermuted} is in $J$.

\item \textbf{Identities with brackets:}\label{deductions2brackets}\\
$U$-polynomial $[x^g,y^g]$ is already a generator of $J$. To reach $[x^g,y^a]$ with $a\in\mathcal S$ starting from $[x^g,y^{e_{12}}]$ we apply the following deductions. We assume $a,b,c\in\{1,\ldots,k\}$ are pairwise different:
{\Large\begin{alignat}{1}\label{actions8}
&x^gy^{e_{ab}}\xrightarrow{E_{ca}} x^gy^{e_{cb}} \nonumber\\
&x^gy^{e_{ab}}\xrightarrow{-E_{bc}} x^gy^{e_{ac}}\nonumber\\
&x^gy^{e_{ab}}\xrightarrow{-\frac12E_{ba}^2} x^gy^{e_{ba}}\\
&x^gy^{e_{ii+1}}\xrightarrow{-E_{i+1i}} x^gy^{h_i} \nonumber
\end{alignat}}%
In addition, by the previous subsection,
\[x^{h_{i-1}}y^{h_i}\equi J y^{h_{i-1}}x^{h_i} \equiv_J y^{h_i}x^{h_{i-1}},\]
since by identity \eqref{formula(A4)} and identity (11) of the list (L) we get
\[x^{h_{i-1}}y^{h_i}\equi J -x^{e_{il}}y^{e_{li}} \equi J x^{h_i}y^{h_{i-1}}. \tag{A5}\label{formula(A5)}\]
Therefore $[x^{h_{i-1}},y^{h_i}]\in J$. With this we have shown that identities \ref{identitiesbracketsg},\ref{identitiesbracketsh} of list (L) are in $J$.\qedhere
\end{enumerate}
\end{proof}

\begin{theorem}\label{minimalUgeneratorsTHEOREM}
The set of generators of $\I^U(\M)$ formed by
\[x^{e_{12}e_{12}}y^{e_{12}e}, \, x^{e_{12}e_{12}}y^{e_{12}e_{21}}-y^{e_{12}e_{12}}x^{e_{12}e_{21}}, \, [x^{gg},y^{gg}], \, [x^{gg},y^{e_{12}e_{12}}]\]
with $e:=e_{12}$ if $k=2$ and $e:=e_{31}$ if $k\geq3$, is minimal.
\end{theorem}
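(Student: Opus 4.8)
The plan is to show that none of the four generators in the list can be removed, i.e., for each generator $g$ in the set $\mathcal G = \{g_1,g_2,g_3,g_4\}$ (with $g_1 = x^{e_{12}e_{12}}y^{e_{12}e}$, $g_2 = x^{e_{12}e_{12}}y^{e_{12}e_{21}}-y^{e_{12}e_{12}}x^{e_{12}e_{21}}$, $g_3 = [x^{gg},y^{gg}]$, $g_4 = [x^{gg},y^{e_{12}e_{12}}]$), one constructs an $(L,U)$-algebra $A_g$ on which $\mathcal G\setminus\{g\}$ vanishes identically but $g$ does not. Equivalently, by Remark \ref{sameevaluations} and the fact that $T_U$-ideals are generated under the $L$-action, substitutions swapping variables, and multiplication, it suffices to exhibit for each $g$ a $U$-algebra (any algebra with a right $U^{\op}$-action satisfying $b^u = b^{\phi(u)}$) not satisfying $g$ but satisfying the $T_U$-ideal $\la\mathcal G\setminus\{g\}\ra_{T_U}$; since $\la\mathcal G\ra_{T_U} = \I^U(\M)$ by Theorem \ref{UidentitiesTHEOREM}, it is enough to check membership $g\notin\la\mathcal G\setminus\{g\}\ra_{T_U}$ directly. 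I would carry this out by a degree-and-structure analysis of the $T_U$-ideal generated by any proper subset, using the grading $\FU = \bigoplus_n M_n^U$ and the fixed-exponents decomposition of \ref{components}.

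First I would handle $g_3 = [x^{gg},y^{gg}]$: it is the only generator all of whose monomials lie in the component $P^U_{\mathcal I,\varnothing,()}$ with all exponents equal to $gg$ (the "purely central" part). The other three generators each contain at least one variable with an exponent $\vp_{ab}$, $a,b\in\mathcal S$, and the $T_U$-ideal they generate, being closed under the operations listed in \eqref{formula(TU)} (which never create a monomial with all exponents $gg$ out of one that has an $\mathcal S$-exponent, because $\vp_{aa}\vp_{gg}=0$ and the $L$-action via \eqref{formula(B)} keeps $\mathcal S$-exponents inside $\mathcal S$), cannot produce $[x^{gg},y^{gg}]$; concretely, the $L$-algebra $F\cdot g \cong F$ with trivial everything is commutative, satisfies $g_1,g_2,g_4$ trivially, but one must instead take a noncommutative $(L,U)$-algebra on which $L$ acts so that only the central block survives — e.g. realize this via $F\oplus F$ placed in degree-$gg$ with a suitable non-symmetric bilinear form, or more cleanly argue the multilinear projection onto $P^U_{2,0}(A)$ directly. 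Symmetrically, $g_4 = [x^{gg},y^{e_{12}e_{12}}]$ is the unique generator mixing one $gg$-variable with one $\mathcal S$-variable in a bracket, so I would isolate the bidegree-$(1,1)$ component $P^U_{1,1}$: the $T_U$-ideal $\la g_1,g_2,g_3\ra_{T_U}$ contributes nothing in $P^U_{1,1}$ of the form $x^{gg}y^{e_{12}e_{12}} - y^{e_{12}e_{12}}x^{gg}$ because $g_1$ has degree-$2$ content entirely in $\mathcal S$-exponents, $g_2$ likewise, and $g_3$ entirely in $gg$; closure under the stated operations preserves this separation in the lowest relevant degree.

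For $g_1$ and $g_2$ the discriminating invariant is the number of $U$-monomials (one term versus two) and the commutativity behavior. $g_1$ is the only generator which is a single $U$-monomial of degree $2$; the algebra $\M$ with $L$ acting by $\ad$ shows $g_1\notin\la g_2,g_3,g_4\ra_{T_U}$ because $g_2,g_3,g_4$ are all consequences already of the list $\mathrm{(L)}$ restricted to identities that do \emph{not} include the "nilpotence-type" monomial relations \ref{eidentities} of list (L) in Theorem \ref{UidentitiesTHEOREM}; more precisely I would exhibit the $2$-dimensional $(L,U)$-algebra (or a small nilpotent extension of $\M$) on which $x^{e_{12}e_{12}}y^{e_{12}e}$ evaluates nonzero while $g_2,g_3,g_4$ vanish. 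Finally $g_2$, the only generator built from \emph{two} $U$-monomials with an $\mathcal S$-exponent and no bracket with $gg$, is shown to be independent by an algebra on which all one-term $\mathcal S$-identities like $g_1$ hold (so the whole "deductions (D1)–(D6)" machinery of Theorem \ref{UidentitiesTHEOREM} collapses the $\mathcal S$-part) but the permutation identity \ref{identitiespermuted} of list (L) fails for the pair $(e_{12},e_{21})$ — e.g. the $2\times2$ upper triangular algebra $UT_2(F)$ with a suitable $U$-action, or a twisted version thereof.

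The main obstacle I anticipate is constructing, for $g_1$ and for $g_2$, the small witnessing $(L,U)$-algebras explicitly and verifying that the other three generators genuinely vanish on them: because the operations allowed in a $T_U$-ideal include the full $\Uop$-action (not merely $U^{\op}$-substitutions), one cannot argue purely by restricting exponents, and one must check that no sequence of inner-derivation deductions (of the kind catalogued in the proof of Theorem \ref{UidentitiesTHEOREM}) produces the missing generator from the others. The cleanest route around this is probably not to build algebras at all but to set up a linear algebra obstruction: grade $\FU / \la\mathcal G\setminus\{g\}\ra_{T_U}$ by multidegree and by the fixed-exponents type of \ref{components}, compute the dimension of the relevant multilinear component using Formula \eqref{formula(C)}-style counting, and show it is strictly larger than $c^U_{r,n-r}(\M)$ in the bidegree where $g$ lives — forcing $g\notin\la\mathcal G\setminus\{g\}\ra_{T_U}$. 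I expect the bidegree-$(1,1)$ and degree-$2$ computations to be short; the only real care is needed to confirm that the deductions in Theorem \ref{UidentitiesTHEOREM} which reduce $g_1$-type monomials to each other all genuinely \emph{use} $g_1$ (so removing it breaks the chain), which follows because every such deduction has a starting $U$-polynomial that is itself a single $\mathcal S$-exponent monomial of degree $2$, and no element of $\la g_2,g_3,g_4\ra_{T_U}$ of degree $2$ is a single monomial.
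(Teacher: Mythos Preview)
Your overall strategy and your handling of $g_3=[x^{gg},y^{gg}]$ and $g_4=[x^{gg},y^{e_{12}e_{12}}]$ match the paper: separation by exponent type via the idempotents $\vp_{gg}$ and $\vp_{e_{12}e_{12}}$ (the paper phrases this as substitutions $x\mapsto x^{gg}$, $y\mapsto y^{gg}$, etc., rather than building algebras). Your remark that no element of $\la g_2,g_3,g_4\ra_{T_U}$ in degree~$2$ is a single $U$-monomial is essentially the paper's argument for $g_1\notin\la g_2\ra_{T_U}$: after the exponent-type reduction, the paper observes that every multilinear degree-$2$ element of $\la g_2\ra_{T_U}$ is \emph{skew-symmetric} in $x_1,x_2$, whereas $g_1$ is not. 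What you are missing is the preliminary structural step the paper proves first: any multilinear degree-$2$ element of $\la f\ra_{T_U}$ (for $f$ multilinear of degree~$2$ without $g$-exponents) has the form $\sum_{\sigma,a,b,c,d,u}\alpha\, f(x_{\sigma(1)}^{ab},x_{\sigma(2)}^{cd})^u$, which makes the skew-symmetry transparent and dispenses with any need to build a witnessing algebra for $g_1$.

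The genuine gap is your treatment of $g_2$. Neither of your proposed routes works as stated. Suggesting $UT_2(F)$ ``with a suitable $U$-action'' is problematic: an $(L,U)$-algebra here must carry an action of $L=\Sl_k(F)$ whose induced $U(L)$-action factors through the fixed $U=\End_F(F\cdot I_k)\oplus\End_F(\Sl_k(F))$, and there is no evident such structure on $UT_2(F)$. The dimension-counting alternative is circular, since computing $\dim P_2^U/(P_2^U\cap\la\mathcal G\setminus\{g_2\}\ra_{T_U})$ presupposes exactly the independence you are trying to establish. The paper's solution is to take the $(L,U)$-algebra $M_2(F)\otimes_F\M$ with $L$ acting on the second tensor factor only, and to evaluate at $e_{12}\otimes e_{12}$ and $e_{21}\otimes e_{12}$: a direct computation gives $g_2(e_{12}\otimes e_{12},\,e_{21}\otimes e_{12})=h_1\otimes e_{11}\neq0$, while every $F_1\in\la g_1\ra_{T_U}$ vanishes there because the evaluation collapses to a scalar times $e_{12}e$ in the second factor, and $e_{12}e_{12}=0=e_{12}e_{31}$. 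This tensor construction is the key idea your proposal lacks.
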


\begin{proof}
Let us show first that the $U$-polynomials $[x^{gg},y^{gg}]$ and $[x^{gg},y^{e_{12}e_{12}}]$ cannot be generated by nor help in the generation of the other three identities. Suppose $f\in P_2^U$ is of the form $f=f_{gg}+f_{ge_{12}}+f_{e_{12}e_{12}}$ with $f_{gg}\in\la [x^{gg},y^{gg}]\ra_{T_U}$, $f_{ge_{12}}\in\la [x^{gg},y^{e_{12}e_{12}}]\ra_{T_U}$ and $f_{e_{12}e_{12}}$ and $f$ in the ideal generated by the remaining two $U$-identities. Then by substitution (see \ref{firstsuperindex}, \ref{components}), since $\vp_{gg}\vp_{e_{12}e_{12}}=0$ while $\vp_{gg}^2=\vp_{gg}$ and $\vp_{e_{12}e_{12}}^2 = \vp_{e_{12}e_{12}}$, by Formula \eqref{formula(TU)} we get $0=f(x^{gg},y^{gg}) = f_{gg}$ and $0=f(x^{gg},y^{e_{12}e_{12}})=f_{ge_{12}}$, forcing $f=f_{e_{12}e_{12}}$. Analogously, if $f\in \la [x^{gg},y^{gg}]\ra_{T_U}$ then $f_{ge_{12}} = 0 = f_{e_{12}e_{12}}$, $f = f_{gg}$, and if $f\in\la [x^{gg},y^{e_{12}e_{12}}]\ra_{T_U}$ then $f_{gg} = 0 = f_{e_{12}e_{12}}$, $f = f_{ge_{12}}$.

Therefore it remains to show that $x^{e_{12}e_{12}}y^{e_{12}e}$ is not in the $T_U$-ideal generated by $x^{e_{12}e_{12}}y^{e_{12}e_{21}}-y^{e_{12}e_{12}}x^{e_{12}e_{21}}$ and vice versa. To this end, we first show  the general form of multilinear $U$-polynomials in variables $x_1,x_2$ belonging to the $T_U$-ideal of a multilinear $U$-polynomial $f$ in variables $x_1,x_2$ and without $g$-exponents. By Formula \eqref{formula(TU)}, any $U$-polynomial in $\la f\ra_{T_U}$ is of the form $G:=\sum_{i=1}^m g_if(p_{i1},p_{i2})^{u_i}g'_i$, with the $i$th substitution $L$-endomorphism sending, for $j\in\{1,2\}$, $x_j$ to $p_{ij}:=\sum_{a,b\in\mathcal S\text{ or }a=g=b}(\alpha^{ab}_{ij} x_1^{ab} +\beta^{ab}_{ij} x_2^{ab}) + q_{ij}$, with $\alpha^{ab}_{ij},\beta^{ab}_{ij}\in F$ and $q_{ij}\in\FU$ such that $\deg q_{ij}>1$. Write
\[f=\sum_{\sigma\in S_2;r,s,t,u\in\mathcal S} \alpha_{rstu}^{\sigma}\, x_{\sigma(1)}^{rs}x_{\sigma(2)}^{tu}\]
with $\alpha_{rstu}^{\sigma}\in F$. 
Then $f(p_{i1},p_{i2}) = \sum_{\sigma\in S_2;a,b,c,d\in\mathcal S}\alpha_{abcd}^{\sigma,i} f(x_{\sigma(1)}^{ab},x_{\sigma(2)}^{cd}) + q_i$, where $\alpha_{abcd}^{\sigma,i}\in F$ and $q_i\in\FU$ is of degree greater than $2$ or not multilinear, and
\begin{align*}
G = & \sum_{i=1}^m g_i\left(\sum_{\sigma\in S_2;a,b,c,d\in\mathcal S}\alpha_{abcd}^{\sigma,i} f(x_{\sigma(1)}^{ab},x_{\sigma(2)}^{cd}) + q_i\right)^{u_i}g'_i = \\
= &\sum_{1\leq i\leq m;\sigma\in S_2;a,b,c,d\in\mathcal S}\alpha_{abcd}^{\sigma,i} g_if(x_{\sigma(1)}^{ab},x_{\sigma(2)}^{cd})^{u_i}g'_i + g_iq_i^{u_i}g'_i.
\end{align*}
Now, due to the grading of $\FU$ and the fact that $g_iq_i^{u_i}g'_i$ is of degree greater than $2$ or not multilinear, when $G$ is multilinear in $x_1,x_2$ it must equal the sum of its multilinear terms in $x_1,x_2$ (the rest of terms must cancel out). Therefore, reordering if necessary, there are some $1\leq i\leq m'$ such that $g_i=1=g'_i$ and 
\[G = \sum_{1\leq i\leq m';\sigma\in S_2;a,b,c,d\in\mathcal S}\alpha_{abcd}^{\sigma,i} f(x_{\sigma(1)}^{ab},x_{\sigma(2)}^{cd})^{u_i} = \sum_{\sigma\in S_2;a,b,c,d\in\mathcal S} f(x_{\sigma(1)}^{ab},x_{\sigma(2)}^{cd})^{u_{abcd}^\sigma}\]
(with $u_{abcd}^\sigma:=\sum_{1\leq i\leq m'} \alpha_{abcd}^{\sigma,i}u_i$).
Hence, any multilinear element of $\la f\ra_{T_U}$ in $x_1,x_2$ is of the form
\[F(x_1,x_2):=\sum_{\sigma\in S_2;a,b,c,d\in\mathcal S; u\in\Uop}\alpha_{a,b,c,d,u}^{\sigma} f(x_{\sigma(1)}^{ab},x_{\sigma(2)}^{cd})^u,\]
where $\alpha_{a,b,c,d,u}^\sigma\in F$ and only a finite number of these coefficients is nonzero.
Now put
\[f_1(x,y):=x^{e_{12}e_{12}}y^{e_{12}e}\text{ and }f_2(x,y):=x^{e_{12}e_{12}}y^{e_{12}e_{21}}-y^{e_{12}e_{12}}x^{e_{12}e_{21}}\]
in $\FU$; then all $U$-polynomials in $\la f_1\ra_{T_U}$ (resp. $\la f_2\ra_{T_U}$) are of the form $F_1$ (resp. $F_2$).\\
Let us show that $f_1\not\in\la f_2\ra_{T_U}$. If $f_1\in\la f_2\ra_{T_U}$, then for some $F_2$ we have
\[f_1(x_1,x_2)=F_2(x_1,x_2)=\sum_{\sigma\in S_2; a,b\in\mathcal S; u\in\Uop}\!\!\!\!\alpha_{a,b,u}^{\sigma} (x_{\sigma(1)}^{ae_{12}}x_{\sigma(2)}^{be_{21}} -x_{\sigma(2)}^{be_{12}}x_{\sigma(1)}^{ae_{21}})^u,\]
 which is a skew-symmetric $U$-polynomial, so we have $f_1(x_1,x_2)=F_2(x_1,x_2)=-F_2(x_2,x_1)=-f_1(x_2,x_1)$, a contradiction since $f_1(x_1,x_2),f_1(x_2,x_1)$ are different elements of the basis of $\FU$.\\
To show that $f_2\not\in\la f_1\ra_{T_U}$ we consider the $(L,U)$-algebra $\Mat_2(F)\otimes_F \M$ with Hopf $U(L)$-action given by $(a\otimes b)^u:=a\otimes b^u$ for $u\in\Uop$, $a\in\Mat_2(F)$, $b\in\M$.
The evaluation at $u_1,u_2\in M_2(F)\otimes\M$ of a $U$-polynomial $F_1$ takes the form
\[F_1(u_1,u_2)=\sum_{\sigma\in S_2;a,b,c,d\in\mathcal S\text{ or }c=g=d}\!\!\!\!\alpha_{a,b,c,d}^{\sigma} (u_{\sigma(1)}^{ae_{12}}u_{\sigma(2)}^{be})^{cd}.\]
On the one hand, we have
\begin{align*}
&f_2(e_{12}\otimes e_{12},e_{21}\otimes e_{12})=\\
=&(e_{12}\otimes e_{12})^{e_{12}e_{12}}(e_{21}\otimes e_{12})^{e_{12}e_{21}}- (e_{21}\otimes e_{12})^{e_{12}e_{12}}(e_{12}\otimes e_{12})^{e_{12}e_{21}} =\\
=& (e_{12}\otimes e_{12}^{e_{12}e_{12}})(e_{21}\otimes e_{12}^{e_{12}e_{21}})- (e_{21}\otimes e_{12}^{e_{12}e_{12}})(e_{12}\otimes e_{12}^{e_{12}e_{21}}) =\\
=& (e_{12}\otimes e_{12})(e_{21}\otimes e_{21})-(e_{21}\otimes e_{12})(e_{12}\otimes e_{21}) = e_{11}\otimes e_{11} -e_{22}\otimes e_{11} = \\
=& h_1 \otimes e_{11}\neq0.
\end{align*}
On the other hand, we get
\begin{align*}
&F_1(e_{12}\otimes e_{12},e_{21}\otimes e_{12}) = \\
=&\sum_{\sigma\in S_2;a,b,c,d\in\mathcal S\text{ or }c=g=d}\!\!\!\!\alpha_{a,b,c,d}^{\sigma} ((e_{\sigma(1)\sigma(2)}\otimes e_{12})^{ae_{12}}(e_{\sigma(2)\sigma(1)}\otimes e_{12})^{be})^{cd} = \\
=&\sum_{\sigma\in S_2;a,b,c,d\in\mathcal S\text{ or }c=g=d}\!\!\!\!\alpha_{a,b,c,d}^{\sigma} (e_{\sigma(1)\sigma(2)}\otimes e_{12}^{ae_{12}})(e_{\sigma(2)\sigma(1)}\otimes e_{12}^{be})^{cd} = \\
=&\sum_{\sigma\in S_2;c,d\in\mathcal S\text{ or }c=g=d}\!\!\!\!\alpha_{e_{12},e_{12},c,d}^{\sigma} ((e_{\sigma(1)\sigma(2)}\otimes e_{12})(e_{\sigma(2)\sigma(1)}\otimes e))^{cd} = \\
=&\sum_{\sigma\in S_2;c,d\in\mathcal S\text{ or }c=g=d}\!\!\!\!\alpha_{e_{12},e_{12},c,d}^{\sigma} (e_{\sigma(1)\sigma(1)}\otimes e_{12}e)^{cd} = 0,
\end{align*}
as $e_{12}e_{12}=0=e_{12}e_{31}$.
Since $f_2(e_{12}\otimes e_{12},e_{21}\otimes e_{12})\neq F_1(e_{12}\otimes e_{12},e_{21}\otimes e_{12})$ for any $F_1\in\la f_1\ra_{T_U}$, we conclude that $f_2\neq F_1$ for any $F_1\in\la f_1\ra_{T_U}$, hence $f_2\not\in\la f_1\ra_{T_U}$.
\end{proof}

\begin{remark}[\textbf{Generators of $\I^U(\M)$ in the unital case}]
\mbox{}

In this remark we consider $\M$ in the variety of unital associative $(L,U)$-algebras and take $\FU$ to be unital. We have to take into account the $L$-endomorphisms of the form $x_i\mapsto \lambda$ for fixed $i$ and $\lambda\in F\cdot1$, which allow to reduce the degree of $U$-polynomials with factors of the form $x_i^{gg}$, since $\vp_{gg} = 1 - s$ with $s:=\sum_{a\in\mathcal S}\vp_{aa}$ in $U$ implies $\lambda^{gg}=\lambda$ in $\FU$, as $s$ is generated by derivations and $\lambda\in F\cdot1$. These \emph{reducing substitutions} do not affect the result of Theorem \ref{minimalUgeneratorsTHEOREM}, since they amount to the fact of allowing substitutions with $U$-polynomials with nonzero constant terms in the general expression given by Formula \eqref{formula(TU)}, and since in the bulk of the proof of Theorem \ref{minimalUgeneratorsTHEOREM} we only have to consider $U$-polynomials $f$ with no $g$-exponents, such constant terms get killed by the derivations.

On the other hand, the reducing substitutions allow to show that $\I^U(\M)$ is principal as a $T_U$-ideal, at the expense of using three variables, for it is generated by the $U$-identity
\[ x^gy^{e_{12}}z^e + x^{e_{12}}y^{e_{21}}z^g -y^{e_{12}}x^{e_{21}}z^g +[x^g,z^g]y^g + [x^g,z^{e_{12}}] y^g\]
(we get $y^{e_{12}}z^e$ from $x\mapsto 1$, $x^{e_{12}}y^{e_{21}} -y^{e_{12}}x^{e_{21}}$ from $z\mapsto 1$, and $[x^g,z^g], [x^g,z^{e_{12}}]$ from $y\mapsto 1$ then $z\mapsto z^g$ and $z\mapsto z^{e_{12}}$, respectively).
\end{remark}

\smallskip

\subsection{$(L,U)$-identities of $\M$}
\mbox{}

\begin{theorem}\label{(L,U)identitiesTHEOREM}
The $T_{L,U}$-ideal of $(L,U)$-identities of $\M$ is generated by the following identities:
\enum
\item {\Large $[x^{gg},y^{gg}]$},
\item {\Large $[x^{gg},y^{e_{12}e_{12}}]$},
\item {\Large $x^{e_{12}e_{12}}y^{e_{12}e_{21}}-y^{e_{12}e_{12}}x^{e_{12}e_{21}}$},
\item {\Large $x^{e_{12}e_{12}}y^{e_{12}e_{31}}$} if $k\geq3$.
\eenum
Moreover $\I^{L,U}(\M)$ is principal, generated as a $T_{L,U}$-ideal by the sum of identities (1)-(4).
\end{theorem}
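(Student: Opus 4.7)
The plan has two parts: first, to establish that identities (1)--(4) generate $\I^{L,U}(\M)$ as a $T_{L,U}$-ideal by transferring the corresponding result from $\I^U(\M)$; second, to show that their sum alone generates the same $T_{L,U}$-ideal by extracting each summand using substitutions and the $L$-action.

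\textbf{Part 1 (generation by (1)--(4)).} By Theorem \ref{minimalUgeneratorsTHEOREM}, the four $U$-polynomials $f_1,\ldots,f_4$ underlying (1)--(4) generate $\I^U(\M)$ as a $T_U$-ideal of $\FU$. Setting $g_i:=\Gamma(f_i)$ for $\Gamma:\FU\to\FLU$ the natural $L$-epimorphism, Remark \ref{sameevaluations} gives $\Gamma(\I^U(\M))=\I^{L,U}(\M)$, so each $g_i$ is an $(L,U)$-identity of $\M$. Conversely, every $h\in\I^{L,U}(\M)$ lifts via the surjective $\Gamma$ to some $f\in\I^U(\M)=\la f_1,\ldots,f_4\ra_{T_U}$, and Formula \eqref{formula(TU)} expresses $f$ as a combination of left/right multiplications, $\Uop$-exponents, and $L$-endomorphism substitutions applied to the $f_i$. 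Since $\Gamma$ is an $L$-homomorphism and each $L$-endomorphism of $\FU$ descends to an $(L,U)$-endomorphism of $\FLU$ compatible with $\Gamma$, applying $\Gamma$ to this combination yields $h\in\la g_1,\ldots,g_4\ra_{T_{L,U}}$. Hence $\I^{L,U}(\M)=\la g_1,g_2,g_3,g_4\ra_{T_{L,U}}$.

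\textbf{Part 2 (principality).} Let $g:=g_1+g_2+g_3+g_4$. To show $\I^{L,U}(\M)=\la g\ra_{T_{L,U}}$, I will extract each $g_i$ from $g$ using two kinds of $T_{L,U}$-operations: substitutions with the orthogonal idempotents $\vp_{gg},\vp_{e_{12}e_{12}}\in U^{\mathrm{op}}$ (which satisfy $\vp_{gg}\vp_{e_{12}e_{12}}=0=\vp_{e_{12}e_{12}}\vp_{gg}$ by \eqref{formula(F)}), which isolate terms by their first exponent indices; and the $L$-action of the Cartan elements $h_1,\ldots,h_{k-1}$, which separates components of different weights. By the bracket formula \eqref{formula(B)}, each variable $x^{\vp_{ab}}$ is a weight vector whose weight equals that of $b$; hence $g_1,g_3$ have weight zero, while $g_2$ and (for $k\geq 3$) $g_4$ have nonzero weights distinguishable on the Cartan. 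Concretely, the substitution $x\mapsto x^{gg},\,y\mapsto y^{gg}$ fixes $g_1$ and annihilates $g_2,g_3,g_4$, giving $g_1\in\la g\ra_{T_{L,U}}$; the substitution $x\mapsto x^{e_{12}e_{12}},\,y\mapsto y^{e_{12}e_{12}}$ annihilates $g_1,g_2$ and fixes $g_3,g_4$, yielding $g_3+g_4\in\la g\ra_{T_{L,U}}$. For $k\geq 3$, a direct computation with the bracket formula gives $(g_3+g_4)^{h_2}=-2g_4$ (since $g_3$ has weight $0$ and $g_4$ has weight $-2$ on $h_2$), separating $g_4$ and hence $g_3=(g_3+g_4)-g_4$; finally $g_2=g-g_1-g_3-g_4$. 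For $k=2$, identity (4) is absent, and the same substitutions yield $g_1$ and $g_3$, with $g_2=g-g_1-g_3$.

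\textbf{Main obstacle.} The essential difficulty is that $g_1$ and $g_3$ share the same (zero) weight under the Cartan, so the primitive element lemma (Lemma \ref{primitiveelementLEMMA}) cannot by itself separate them into a single principal generator. The resolution is to exploit the orthogonal idempotents $\vp_{gg}$ and $\vp_{e_{12}e_{12}}$ of $U^{\mathrm{op}}$ as substitution targets in the richer $T_{L,U}$-structure of $\FLU$, which does split $g_1$ from $g_3$ and complements the weight-based extraction of $g_2$ and $g_4$.
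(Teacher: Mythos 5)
Your Part 2 (principality) is correct and takes a genuinely different route from the paper. The paper invokes the primitive element lemma (Lemma \ref{primitiveelementLEMMA}) to collapse the three weight vectors $(2)$, $(4)$, and $(1)+(3)$ (of pairwise distinct weights) into their sum, and then splits $(1)$ from $(3)$ with the idempotent substitution $x\mapsto x^{e_{12}e_{12}}$. You instead extract all four summands by concrete $T_{L,U}$-operations: $x,y\mapsto x^{gg},y^{gg}$ isolates $(1)$; $x,y\mapsto x^{e_{12}e_{12}},y^{e_{12}e_{12}}$ isolates $(3)+(4)$; the Cartan action of $H_2$ annihilates $(3)$ and scales $(4)$ by its weight $-2$, separating them; and $(2)$ is the remainder. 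Both proofs must use the orthogonal idempotents $\vp_{gg}$, $\vp_{e_{12}e_{12}}$ precisely because $(1)$ and $(3)$ collide in weight zero, but your explicit weight computation on $h_2$ replaces the abstract appeal to the primitive element lemma — a clean, more elementary variant.

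Part 1 has a genuine gap for $k=2$. Theorem \ref{minimalUgeneratorsTHEOREM} gives as its first $U$-generator $x^{e_{12}e_{12}}y^{e_{12}e}$ with $e=e_{12}$ when $k=2$, so your $g_4=\Gamma(x^{e_{12}e_{12}}y^{e_{12}e_{12}})$ is \emph{not} among identities $(1)$--$(3)$, and $(4)$ does not exist for $k=2$. Your Part 1 therefore proves $\I^{L,U}(M_2(F))=\la g_1,g_2,g_3,g_4\ra_{T_{L,U}}$ with four generators, not the theorem's three. To close this you must show $\Gamma(x^{e_{12}e_{12}}y^{e_{12}e_{12}})=0$ in $\FLU$, which is exactly where $\FLU$ differs from $\FU$: since $\FLU$ is an $(L,U)$-algebra, $b^{u}=b^{\phi(u)}$ for $u\in\Uop$, so for $b:=x^{e_{12}e_{21}}y^{e_{12}e_{21}}$ one has $b^{e_{12}^4}=b^{E_{12}^4}=0$ because $E_{12}^3=0$ in $U^{\op}$, while the Leibniz expansion of $b^{e_{12}^4}$ leaves only the middle term $\binom42 x^{(e_{12}e_{21})E_{12}^2}y^{(e_{12}e_{21})E_{12}^2}=24\,x^{e_{12}e_{12}}y^{e_{12}e_{12}}$, forcing it to vanish. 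Without this observation both the generating-set claim for $k=2$ and the assertion that the principal generator is $(1)+(2)+(3)$ (rather than $(1)+(2)+(3)+g_4$) remain unproved; your later remark that ``for $k=2$, identity (4) is absent'' silently assumes it.
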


\begin{proof}
The first assertion is a direct consequence of Theorem \ref{minimalUgeneratorsTHEOREM}, which provides generators of $\I^U(\M)$ and hence of $\I^{L,U}(\M)$, with the caveat that if $k=2$, the $(L,U)$-polynomial $x^{e_{12}e_{12}}y^{e_{12}e_{12}}$ is redundant because it is $0$ in $\FLU$: we can write
\[\large x^{e_{12}e_{12}}y^{e_{12}e_{12}} = \frac14 x^{(e_{12}e_{21})E_{12}^2}y^{(e_{12}e_{21})E_{12}^2} = \left(\frac1{24}x^{e_{12}e_{21}}y^{e_{12}e_{21}}\right)^{E_{12}^4}=0\]
because $E_{12}^3=0$.

The second assertion is a consequence of the primitive element lemma (\ref{primitiveelementLEMMA}): By the bracket formula \eqref{formula(B)}, since $H_1,\ldots,H_{k-1}\in U^{\op}$ are derivations, for $a,b,c,d\in S$ or $a=g=b$ or $c=g=d$, with $b,d$ weight vectors of $\M$ for the adjoint action of $L$ of respective weights $\alpha_b,\alpha_d$, we get 
\[(x^{ab}y^{cd})^{H_i} = x^{a[h_i,b]}y^{cd} + x^{ab}y^{c[h_i,d]} = (\alpha_b(h_i)+\alpha_d(h_i))x^{ab}y^{cd},\]
which shows that $x^{ab}y^{cd}$ is a weight vector of $\FLU$ for the $L$-action, of weight $\alpha_b+\alpha_d$. Now, the elements $g,e_{12},e_{21},e_{31}\in\M$ are weight vectors of $\M$ for the adjoint action of $L$, with respective weights $0,\alpha,-\alpha,\beta$.
Hence the $(L,U)$-polynomials (1)-(4) in the statement are weight vectors of $\FLU$, with (2),(4) having different nonzero weights and (1),(3) having zero weight. By Lemma \ref{primitiveelementLEMMA} we have $\la (1)+(2)+(3)+(4) \ra_{T_{L,U}} = \la (2),(4),(1)+(3)\ra_{T_{L,U}}$. In addition, applying the substitution $x\mapsto x^{e_{12}e_{12}}$ to $(1)+(3)$, as $\vp_{e_{12}e_{12}}\vp_{gg}=0$ and $\vp_{e_{12}e_{12}}^2 = \vp_{e_{12}e_{12}}$, we get $(3)$ as a consequence of $(1)+(3)$ (hence also $(1)$, by subtraction). Therefore $\la (1)+(3)\ra_{T_{L,U}} = \la (1),(3)\ra_{T_{L,U}}$ and the result is proven.
\end{proof}

\begin{question}[\textbf{Minimality of the set of generators}]
\mbox{}
Is the set of generators of $\I^{L,U}(\M)$ appearing in Theorem \ref{(L,U)identitiesTHEOREM} minimal? We can reason similarly to the proof of Theorem \ref{minimalUgeneratorsTHEOREM} to find that (1),(2) are independent of themselves and of the rest, and that (3) is independent of (4); but the proof of (4) being independent of (3) now fails because we don't know a basis of $\FLU$. So either (1)-(3) or (1)-(4) is a minimal set of generators. It could even happen that (4) is $0$ in $\FLU$ (hence redundant), but this we are not able to prove nor disprove at the moment, for we have found neither an expression of (4) in terms of the elements of $\ILU$, nor an $(L,U)$-algebra for which (4) is not an $(L,U)$-identity. Note that (1)-(3) are not $0$ in $\FLU$, since there are nonzero evaluations for them from the $(L,U)$-algebra $M_2(F)\otimes_F\M$ as in the proof of \ref{minimalUgeneratorsTHEOREM} (respectively, with elements $x:=e_{12}\otimes g,y:=e_{21}\otimes g$, \, $x:=e_{12}\otimes g,y:=e_{21}\otimes e_{12}$, and $x:=e_{12}\otimes e_{12},y:=e_{21}\otimes e_{12}$).
\end{question}

\subsection{$L$-identities of $\M$}
\mbox{}

\begin{theorem}\label{differentialidentitiesTHEOREM}
For $k\geq2$ let $L$ be the Lie algebra of all derivations of $\M$, $c_{p,k}$ for $2\leq p< k$ be its $p$th Casimir element (given in \eqref{formula(Ca)}), $\lambda_{p,k}$ the eigenvalue of $c_{p,k}$ for $\Ad_k$ (given in Lemma \ref{eigenvalues}), and denote $z_{p,k}:=c_{p,k}-\lambda_{p,k}$ and $z:=e_{12}^3+e_{12}z_{2,k}+\cdots+e_{1k}z_{k,k}$. Let $\rho_{ab}$ be a valid assignment of preimages of $\vp_{ab}$ in $\Uop$ (as in Proposition \ref{assignmentsPROPOSITION}).
The $T_L$-ideal of differential identities of $\M$ is generated by the following $L$-polynomials:
\enum
\item {\Large $x^z$},
\item {\Large $[x^{\rho_{gg}},y^{\rho_{gg}+\rho_{e_{12}e_{12}}}] + x^{\rho_{e_{12}e_{12}}}y^{\rho_{e_{12}(e_{21}+e_{31})}}-y^{\rho_{e_{12}e_{12}}}x^{\rho_{e_{12}e_{21}}}$},\\
the $\rho_{e_{12}e_{31}}$ exponent appearing only if $k\geq3$.
\eenum
Moreover, $c_n^L(\M)=k^{2(n+1)}- (k^2-1)(n+1).$
\end{theorem}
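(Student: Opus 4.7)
The plan is to assemble the final theorem from three main ingredients developed in the previous sections: the principal generator of $\I^{L,U}(\M)$ given by Theorem \ref{(L,U)identitiesTHEOREM}, the principal generator of $\ker\phi$ given by Corollary \ref{kerphigeneratorCOROLLARY}, and the bridge Lemma \ref{Ldata2UdataLEMMA}(1) between $L$-data and $(L,U)$-data, which states $\I^L(\M)=\langle F\rangle_{T_L}+\ILU$ for $F$ a set of $\Theta$-preimages of a set of $T_{L,U}$-generators of $\I^{L,U}(\M)$. Identity (1) will account for the ``kernel'' piece $\ILU\subseteq\I^L(\M)$, and identity (2) will lift the principal $(L,U)$-identity to $\FL$ via the explicit preimages $\rho_{ab}$ from Proposition \ref{assignmentsPROPOSITION}.

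To obtain identity (1), I would argue that by Proposition \ref{kernelPsiPROPOSITION}(2) we have $\ILU=\langle\IU\rangle_{T_L}$, and by Proposition \ref{kernelPsiPROPOSITION}(1) the set $\IU$ is generated, as an ideal invariant under substitutions swapping variables, by $\{x_i^{w} : i\in\N^*, w\in\ker\phi\}$. Since $\ker\phi$ is the two-sided ideal of $\Uop$ generated by the single element $z$ of the statement (Corollary \ref{kerphigeneratorCOROLLARY}), every $w\in\ker\phi$ can be written as $\sum u_j z v_j$, and hence $x^w=\sum\bigl((x^{u_j})^{z}\bigr)^{v_j}$ lies in the $T_L$-ideal generated by $x^z$; conversely $x^z\in\IU\subseteq\ILU$. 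This yields $\ILU=\langle x^z\rangle_{T_L}$, producing identity (1).

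To obtain identity (2), I would apply Lemma \ref{Ldata2UdataLEMMA}(1) to the principal generator of $\I^{L,U}(\M)$ delivered by Theorem \ref{(L,U)identitiesTHEOREM}. A preimage under $\Theta$ of this generator is produced by replacing every exponent $ab$ appearing in it by $\rho_{ab}$, since by construction $\Theta(x^{\rho_{ab}})=x^{\phi(\rho_{ab})}=x^{ab}$. The resulting element of $\FL$ has the form
\[[x^{\rho_{gg}}, y^{\rho_{gg}}] + [x^{\rho_{gg}}, y^{\rho_{e_{12}e_{12}}}] + x^{\rho_{e_{12}e_{12}}}y^{\rho_{e_{12}e_{21}}} - y^{\rho_{e_{12}e_{12}}}x^{\rho_{e_{12}e_{21}}} + x^{\rho_{e_{12}e_{12}}}y^{\rho_{e_{12}e_{31}}},\]
with the last term present only when $k\geq3$. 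The bilinearity of the commutator, the linearity of $x^{(-)}$ in the exponent (a defining relation of $\FL$), and the fact that $\rho$ has been extended linearly in its second argument (so that $\rho_{e_{12}e_{21}}+\rho_{e_{12}e_{31}}=\rho_{e_{12}(e_{21}+e_{31})}$) let me collapse this preimage into identity (2) of the statement. Combined with identity (1), Lemma \ref{Ldata2UdataLEMMA}(1) then gives $\I^L(\M)=\langle\text{(2)}\rangle_{T_L}+\ILU=\langle x^z,\,\text{(2)}\rangle_{T_L}$.

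The codimension formula is then immediate: Lemma \ref{Ldata2UdataLEMMA}(2) gives $c_n^L(\M)=c_n^U(\M)$, and the explicit value $k^{2(n+1)}-(k^2-1)(n+1)$ has already been established in Proposition \ref{Theorem Id^U K=2} for $k=2$ and in Proposition \ref{Theorem Id^U K>2} for $k\geq3$. No genuine obstacle remains at this final assembly step; all the hard work (the determination of $\I^U(\M)$, the reduction to a principal $(L,U)$-generator through the primitive element lemma, and the computation of an explicit generator for $\ker\phi$) has been done in the preparatory results, and the residual task is merely the bookkeeping rewriting of the preimage into the compact form of identity (2).
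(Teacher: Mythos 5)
Your proposal is correct and matches the paper's proof: both assemble the result from Lemma \ref{Ldata2UdataLEMMA}(1), the principal generator of $\ker\phi$ in Corollary \ref{kerphigeneratorCOROLLARY} (giving $x^z$ as a generator of $\ILU$), the principal $(L,U)$-generator from Theorem \ref{(L,U)identitiesTHEOREM} lifted via the $\rho_{ab}$ preimages, and the $U$-codimension formulas from Propositions \ref{Theorem Id^U K=2} and \ref{Theorem Id^U K>2} combined with Lemma \ref{Ldata2UdataLEMMA}(2). You spell out the collapsing of the lifted generator into the compact form of identity (2) a little more explicitly than the paper does, but the argument is the same.
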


\begin{proof}
The generators arise from Lemma \ref{Ldata2UdataLEMMA}(1), Formula \eqref{formula(I)} together with Corollary \ref{kerphigeneratorCOROLLARY} (which provide the first identity as generator of $\ILU$), and Theorem \ref{(L,U)identitiesTHEOREM} (which gives the generator of $\I^{L,U}(\M)$). The $L$-codimensions formula arises from Lemmas \ref{Ldata2UdataLEMMA}(2), \ref{Theorem Id^U K=2} (which gives $c_n^U(M_2(F))$) and \ref{Theorem Id^U K>2} (which gives $c_n^U(\M)$ for $k>2$).
\end{proof}

\medskip

\noindent From $c_n^L(\M)=k^{2(n+1)}- (k^2-1)(n+1)$ we immediately get:
\begin{corollary}
\mbox{}

\enum
\item $\exp^L(\M)=k^2.$
\item The generating function of the differential codimensions, $C_k^L(x):=\sum_{n=0}^\infty c_n^L(\M)x^n$, is rational:
\[C_k^L(x)=\frac{k^2}{1-k^2x}-\frac{k^2-1}{(1-x)^2}.\]
\eenum
\end{corollary}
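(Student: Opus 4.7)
The plan is to derive both items directly from the closed form $c_n^L(\M) = k^{2(n+1)} - (k^2-1)(n+1)$ that has been established in Theorem \ref{differentialidentitiesTHEOREM}. Neither item requires any further identity-theoretic input; each is essentially a one-step elementary computation with a real sequence and its ordinary generating function. There is no genuine obstacle here, so the proposal is short.

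For item (1), I would argue as follows. Write $c_n^L(\M) = k^{2n+2}\bigl(1 - (k^2-1)(n+1)k^{-2n-2}\bigr)$. Since $k\geq 2$, the factor $(k^2-1)(n+1)k^{-2n-2}$ tends to $0$ as $n\to\infty$, so $c_n^L(\M)\sim k^{2n+2}$. Taking $n$th roots yields $\sqrt[n]{c_n^L(\M)} \to k^2$, which is the value of $\exp^L(\M)$ by the definition recalled in the $L$-exponent paragraph of Section \ref{Section:GeneralSetting}. (Since $\M$ is finite dimensional, the limit exists by Gordienko's theorem, so there is no convergence issue to address.)

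For item (2), I would split the generating function along the two terms of the closed formula:
\begin{equation*}
C_k^L(x) = \sum_{n=0}^\infty k^{2(n+1)} x^n \; - \; \sum_{n=0}^\infty (k^2-1)(n+1) x^n.
\end{equation*}
The first series is geometric, $\sum_{n=0}^\infty k^{2(n+1)} x^n = k^2\sum_{n=0}^\infty (k^2x)^n = \dfrac{k^2}{1-k^2x}$, convergent for $|x|<k^{-2}$. The second series is the standard derivative-of-geometric sum, $\sum_{n=0}^\infty (n+1)x^n = \dfrac{1}{(1-x)^2}$, convergent for $|x|<1$, multiplied by $k^2-1$. Combining the two on the common domain $|x|<k^{-2}$ gives the stated rational expression
\begin{equation*}
C_k^L(x) = \frac{k^2}{1-k^2x} - \frac{k^2-1}{(1-x)^2},
\end{equation*}
which is manifestly a quotient of polynomials in $x$ and hence rational. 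This completes both parts.
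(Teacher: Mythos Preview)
Your proposal is correct and matches the paper's approach exactly: the paper simply states that both items follow immediately from the closed formula $c_n^L(\M)=k^{2(n+1)}-(k^2-1)(n+1)$, without writing out any details. Your write-up is more explicit than the paper's one-line justification, but there is no difference in method.
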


\section{Growth of differential codimensions of $\M$}\label{Section:AlmostPolynomialGrowth}
\setcounter{subsection}{1}

In this section, we shall prove that $\V^L(\M)$, $k\geq 2$, is a variety of almost polynomial growth. To this end it is enough to work with $\V^{L,U}(\M)$ and $U$-polynomials, by Propositions \ref{equivalentvarietiesPROPOSITION} and \ref{Ldata2UdataLEMMA}(2).  Recall that notation $x^b$ with $x\in\FU$, $b\in\mathcal S$ is shorthand for the element $x^{\vp_{ab}}\in\FU$, for a fixed and elided first exponent index $a\in\mathcal S$ which we will not explicitly mention in most of the next results (see \ref{firstsuperindex}).

We start by proving some results on proper subvarieties of $\V^{L,U}(\M)$.

\begin{lemma} \label{Remark substitution}
For $a\in\mathcal S$,
\[(x^{ae_{12}}y^{ae_{21}})^{gg}\equiv \dfrac{1}{k}\left( \sum_{i=1}^{k-1}x^{ah_i}y^{ah_i}+ \sum_{i=1}^{k-2}x^{ah_i}y^{ah_{i+1}}\right) \ (\md \I^U(\M)).\]
\end{lemma}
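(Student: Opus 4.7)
The plan is to verify the congruence by showing that the left- and right-hand sides produce the same element of $\M$ under every evaluation at $x,y\in\M$; by definition of $\I^U(\M)$ this forces their difference, viewed as a $U$-polynomial, to lie in $\I^U(\M)$. The left-hand side is interpreted via a preimage $\rho_{gg}\in\Uop$ of $\vp_{gg}\in U^{\op}$ (for instance the one provided by Proposition \ref{assignmentsPROPOSITION}), and Remark \ref{sameevaluations} guarantees that its evaluation on the $(L,U)$-algebra $\M$ agrees with applying $\vp_{gg}$ directly to the evaluated product; moreover, different preimages differ by an element of $\ker\phi$, so the answer modulo $\I^U(\M)$ is independent of this choice.

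For the right-hand side, using $x^{ah_i}=\mu_a^x\,h_i$ and $y^{ah_i}=\mu_a^y\,h_i$ in $\M$, the evaluation reduces to $\tfrac{1}{k}\mu_a^x\mu_a^y\,S$ with $S:=\sum_{i=1}^{k-1}h_i^2+\sum_{i=1}^{k-2}h_ih_{i+1}$. Using the associative multiplication table in \ref{multiplicationtableassociative}, $h_i^2=e_{ii}+e_{i+1,i+1}$ and $h_ih_{i+1}=-e_{i+1,i+1}$, a brief telescoping yields $S=\sum_{i=1}^{k}e_{ii}=g$, so the right-hand side evaluates to $\tfrac{1}{k}\mu_a^x\mu_a^y\,g$.

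For the left-hand side, $x^{ae_{12}}y^{ae_{21}}$ evaluates to $\mu_a^x\mu_a^y\,e_{12}e_{21}=\mu_a^x\mu_a^y\,e_{11}$, and applying $\vp_{gg}$ extracts the coefficient of $g$ in the basis $\mathcal M$. A direct expansion (obtainable from $e_{11}-\tfrac{1}{k}g\in\SL$ together with $h_i=e_{ii}-e_{i+1,i+1}$) gives $e_{11}=\tfrac{1}{k}g+\tfrac{1}{k}\sum_{i=1}^{k-1}(k-i)h_i$, whence the coefficient in question is $\tfrac{1}{k}$ and the left-hand side also evaluates to $\tfrac{1}{k}\mu_a^x\mu_a^y\,g$. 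The two evaluations coincide for every $x,y\in\M$, which proves the congruence.

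The main (minor) obstacle is bookkeeping the meaning of the action $(\cdot)^{gg}$ on a product in $\FU$, since $\FU$ itself has no $U$-action in general (see Remarks \ref{FUhasnoUaction}); this is handled transparently by passing to the preimage $\rho_{gg}\in\Uop$ and invoking Remark \ref{sameevaluations}. Once this is set up the computation is entirely routine and reduces to the telescoping identity $\sum_{i=1}^{k-1}h_i^2+\sum_{i=1}^{k-2}h_ih_{i+1}=g$ in $\M$.
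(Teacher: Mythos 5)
Your proof is correct and follows essentially the same route as the paper: evaluate both sides at arbitrary $x,y\in\M$, reduce to the coefficient extraction $x^{ab}=\mu_a^x b$, and compare via the expansion of $e_{11}$ in the basis $\mathcal M$ on one side and the identity $g=\tfrac1k\bigl(\sum_{i=1}^{k-1}h_i^2+\sum_{i=1}^{k-2}h_ih_{i+1}\bigr)$ on the other. Your explicit attention to the meaning of $(\cdot)^{gg}$ on a product in $\FU$ via a preimage in $\Uop$ and Remark \ref{sameevaluations} is a careful but not substantively different treatment of a point the paper handles implicitly by working directly with evaluations in $\M$.
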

\begin{proof}
Since for any $x\in \M$ and $a,b\in \mathcal{S}$, $x^{ab}=\mu^{x}_a b$ (see \ref{matrixunitsendomorphisms}), for all $x, y\in \M$ we have
\[(x^{ae_{12}}y^{ae_{21}})^{gg}= \mu^{x}_{a} \mu^{y}_{a} (e_{11})^{gg}=\dfrac{1}{k} \mu^{x}_{a} \mu^{y}_{a} \left(  \sum_{i=1}^{k-1} (k-i) h_i + g \right)^{gg}= \dfrac{1}{k}\mu^{x}_{a} \mu^{y}_{a}  g\]
	since $e_{11}=\dfrac{1}{k} \left(  \sum_{i=1}^{k-1} (k-i) h_i + g \right)$. On the other hand, since $g=\dfrac{1}{k} \left( \sum_{i=1}^{k-1}h_i^2+ \sum_{i=1}^{k-2} h_i h_{i+1}\right)$,  we have
\[\dfrac{1}{k}\left( \sum_{i=1}^{k-1}x^{ah_i}y^{ah_i}+ \sum_{i=1}^{k-2}x^{ah_i}y^{ah_{i+1}}\right)= \dfrac{1}{k} \mu^{x}_{a} \mu^{y}_{a}  \left( \sum_{i=1}^{k-1}h_i^2+ \sum_{i=1}^{k-2} h_i h_{i+1}\right)= \dfrac{1}{k} \mu^{x}_{a} \mu^{y}_{a}  g\]
	and the proof is complete.
\end{proof}

We prove now a characterization of the proper subvarieties of $\V^{L,U}(M_2(F))$.

\begin{proposition}\label{subvariety of M_2}
	Let $\textsc V=\V^{L,U}(A)$ be a subvariety of $\V^{L,U}(M_2(F))$. Then $\textsc V$ is a proper subvariety if and only if there exists $t\geq 2$ such that $x_1^{e_{12}}x_2^{e_{21}}\cdots x_{t-1}^{e_{12}}x_t^{e_{21}}\in \I^U(A)$.
\end{proposition}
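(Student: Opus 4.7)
For $(\Leftarrow)$, I would evaluate $m_t$ at $x_1=x_2=\cdots=x_t=e_{12}\in M_2(F)$. Under the fixed first-exponent convention $x^b:=x^{\vp_{e_{12},b}}$, the element $e_{12}^{\vp_{e_{12},c}}$ equals $c$ for any $c\in\mathcal M$, so $m_t(e_{12},\ldots,e_{12})=e_{12}e_{21}e_{12}e_{21}\cdots e_{12}e_{21}=e_{11}\neq 0$. Hence $m_t\notin\I^U(M_2(F))$, so $\I^U(A)\supsetneq\I^U(M_2(F))$, and by Proposition \ref{equivalentvarietiesPROPOSITION}(3), $\textsc V$ is a proper subvariety.

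For $(\Rightarrow)$, assuming $\textsc V$ proper, I would extract a multilinear $f\in\I^U(A)\setminus\I^U(M_2(F))$ via Proposition \ref{equivalentvarietiesPROPOSITION}(3), and by Lemma \ref{reductiontocomponentsLEMMA} I may assume $f\in P^U_{r,n-r}$ for some $0\leq r\leq n$. Since the image of $f$ in $P^U_{r,n-r}(M_2(F))$ is a nonzero combination of the basis monomials exhibited in the proof of Proposition \ref{Theorem Id^U K=2}, idempotent first-index substitutions $y_j\mapsto y_j^{\vp_{b_jb_j}}$ (which preserve $y_j^{\vp_{b_j,c}}$ for any second index $c$ and annihilate $y_j^{\vp_{c',c}}$ for $c'\neq b_j$) would allow me to isolate a single basis monomial $m\in\I^U(A)$.

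I would then perform case analysis on $m$. If $r=0$ and $m$ has the alternating $e_{12},e_{21}$-pattern on the $y$-variables, then $m$ is $m_t$ up to possibly prepending $y_0^{e_{12}}$ or appending $y_{n+1}^{e_{21}}$ to fix parity and endpoints. If $r=0$ and $m$ begins with $h_1$, I would left-multiply by $y_0^{h_1}$ and invoke identity (11) of list (L) from the proof of Theorem \ref{UidentitiesTHEOREM}, namely $z^{h_1}w^{h_1}\equiv z^{e_{12}}w^{e_{21}}+z^{e_{21}}w^{e_{12}}\pmod{\I^U(M_2(F))}$, combined with the killing identities $e_{12}^2=e_{21}^2=0$ on adjacent equal-exponent $e$-pairs, to reduce to the previous subcase. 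For the extreme case $r=n$, i.e., $m=x_1^g\cdots x_n^g\in\I^U(A)$, I would use Lemma \ref{Remark substitution}, $(y^{e_{12}}z^{e_{21}})^g\equiv\frac{1}{2}y^{h_1}z^{h_1}\pmod{\I^U(M_2(F))}$, to identify $y^{h_1}z^{h_1}$ as lying in $A_g:=\vp_{gg}(A)$, which is a commutative subalgebra by centrality of $x^g$ arising from $[x^g,y^?]\in\I^U(M_2(F))$; since $x_1^g\cdots x_n^g\in\I^U(A)$ is equivalent to $(A_g)^n=0$, the product $y_1^{h_1}\cdots y_{2n}^{h_1}=\prod_{i=1}^n(y_{2i-1}^{h_1}y_{2i}^{h_1})$ lies in $(A_g)^n=0$ in $A$, giving a pure $h_1$-identity; expanding each $h_1$-pair via identity (11) and isolating the single alternating-$e$ summand by left-multiplying by $y_0^{e_{12}}$ and right-multiplying by $y_{2n+1}^{e_{21}}$ (the remaining summands being killed by $e_{12}^2=e_{21}^2=0$) would yield $m_{2n+2}\in\I^U(A)$.

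The main obstacle will be the intermediate case $1\leq r<n$ with nontrivial $y$-part, since the $g$-prefix cannot be directly removed by any $T_U$-ideal operation ($(x^g)^\delta=0$ for all $\delta\in L$, and no substitution swapping variables can alter a second exponent index). This will be handled by the same strategy as the $r=n$ case: rearrange using centrality of $x^g$ modulo $\I^U(M_2(F))$ to collect all $g$-factors, multiply on the right by sufficiently many fresh $h_1$-pairs to grow the central product in $(A_g)^{r+k}$, and combine with Lemma \ref{Remark substitution} to convert the $g$-prefix and subsequent $h_1$-pairs into a pure alternating-$e$ identity $m_t\in\I^U(A)$ via identity (11) and the killing identities.
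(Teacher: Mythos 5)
Your forward direction $(\Leftarrow)$ is correct: evaluating $m_t$ at $x_1=\cdots=x_t=e_{12}$ gives $e_{11}\neq 0$ in $M_2(F)$, so $m_t\notin\I^U(M_2(F))$ and Proposition \ref{equivalentvarietiesPROPOSITION}(3) concludes; this is a cleaner direct verification than the paper's appeal to the proof of Proposition \ref{Theorem Id^U K=2}.

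The backward direction has a genuine gap at the very first reduction. You claim that idempotent first-index substitutions $y_j\mapsto y_j^{\vp_{b_jb_j}}$ let you ``isolate a single basis monomial $m\in\I^U(A)$'' from a nonzero linear combination. But by construction of $P^U_{r,n-r}=P^U_{r,n-r,a}$ (Lemma \ref{reductiontocomponentsLEMMA} and \ref{codimensionsformula}), every $y_j$ in $f$ already carries the \emph{same} first exponent index $a$ and every $x_i$ carries $g$. A substitution $y_j\mapsto y_j^{\vp_{b_jb_j}}$ acts via $\vp_{b_jb_j}\vp_{ac}=\delta_{b_j a}\vp_{b_jc}$, so it either annihilates all terms at once (if $b_j\neq a$) or leaves all of them unchanged (if $b_j=a$). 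It never discriminates between basis monomials that differ only in their \emph{second} exponent indices, which is exactly what the four basis monomials of $P^U_{r,n-r}(M_2(F))$ do. So no choice of $b_j$'s isolates a single $m$, and the subsequent case analysis (which assumes a single $m$) does not get off the ground.

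The paper avoids this by never trying to isolate a monomial. It keeps the full linear combination, splits on which coefficient $\alpha_i\neq 0$, and kills unwanted terms by \emph{multiplying} $f$ on the left or right by fresh variables with chosen second-index exponents, exploiting relations from Lemma \ref{multiplicationtablegivesidentitiesLEMMA} (e.g., right-multiplying by $y^{e_{21}}$ kills any summand ending in $y^{e_{21}}$ since $y^{e_{21}}z^{e_{21}}\in\I^U(M_2(F))$), together with commuting $[x^g,y^?]$ to reposition the new factor. Your subsequent steps — using identity (11) of list (L) to convert $h_1$-pairs to alternating $e$-pairs, using the killing identities $y^{c}z^{c}$, using Lemma \ref{Remark substitution} to turn $g$-factors into $h_1$-pairs, and the observation that the $g$-prefix case $1\leq r<n$ is the delicate one — are all in the right spirit and match the paper's toolkit. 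But as written they are all conditional on having isolated one monomial, so the proof needs to be rewritten to work directly with the linear combination via the multiplication-and-kill strategy and a case split on nonvanishing coefficients.
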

\begin{proof}
	By the proof of Proposition \ref{Theorem Id^U K=2}, $x_1^{e_{12}}x_2^{e_{21}}\cdots x_{t-1}^{e_{12}}x_t^{e_{21}}\notin \I^U(M_2(F))$ for any $t\geq 1$, hence one implication is clear by Proposition \ref{equivalentvarietiesPROPOSITION}(3). Now, suppose that $\textsc V$ is a proper subvariety of $\V^{L,U}(M_2(F))$. Then by Proposition \ref{equivalentvarietiesPROPOSITION}(3) there exists a $U$-polynomial $f\in\I^U(A)$ such that $f\notin \I^U(M_2(F))$.
 We may assume that $f$ is a multilinear $U$-polynomial of degree $n$. Moreover, by Lemma \ref{reductiontocomponentsLEMMA} and Proposition \ref{P_{r,n-r,c}isomorphismPROPOSITION} we may suppose that $f\in P^U_{r,n-r}\cap \I^U(A)$ for some $0\leq r\leq n$. In order to simplify the notation let us identify $x_{r+i}^a$ with $y_i^a$,  $a\in \mathcal S$, for all $ 1\leq i \leq n-r$, so that variables $x_1,\ldots,x_r$ correspond to exponents $g$ and variables $y_1,\ldots,y_{n-r}$ correspond to exponents $a\in\mathcal S$. Since by Proposition \ref{equivalentvarietiesPROPOSITION}(3) we have $\I^U(M_2(F))\subseteq \I^U(A)$, we may reduce $f$ modulo $\I^U(M_2(F))$. Thus, by the proof of Proposition \ref{Theorem Id^U K=2} we may suppose that either
	$f= x_1^g \cdots x_n^g $
	if $r=n$,
	\begin{equation}
		 \label{polynomial n-1}
		f= \alpha_1 x_1^g \cdots x_{n-1}^g y_1^{h_1}+ \alpha_2 x_1^g \cdots x_{n-1}^g y_1^{e_{12}}+ \alpha_3 x_1^g \cdots x_{n-1}^g y_1^{e_{21}}
	\end{equation}
	with $\alpha_i\in F$, $1\leq i \leq 3$, not all zero, if $r=n-1$,
	\begin{align}
	\label{polynomial n-r even}
	f= & \alpha_1 x_1^g\cdots x_r^g y_{1}^{h_1} y_{2}^{e_{12}} y_{3}^{e_{21}}  y_{4}^{e_{12}}\cdots y_{n-r-1}^{e_{21}} y_{n-r}^{e_{12}}+\alpha_2 x_1^g\cdots x_r^g y_{1}^{h_1} y_{1}^{e_{21}} y_{2}^{e_{12}}  y_{4}^{e_{21}} \cdots y_{n-r-1}^{e_{12}} y_{n-r}^{e_{21}}\\
	\notag &+ \alpha_3 x_1^g\cdots x_r^g y_{1}^{e_{12}} y_{2}^{e_{21}}\cdots  y_{n-r-1}^{e_{12}} y_{n-r}^{e_{21}}
	+ \alpha_4 y_1^g\cdots y_r^g x_{1}^{e_{21}} x_{2}^{e_{12}} \cdots x_{n-r-1}^{e_{21}} x_{n-r}^{e_{12}}
	\end{align}
	with $\alpha_i\in F$, $1\leq i \leq 4$, not all zero, if $0\leq r\leq n-2$ and $n-r$ is even, or
	\begin{align} \label{polynomial n-r odd}
	f= & \alpha_1 x_1^g\cdots x_r^g y_{1}^{h_1} y_{2}^{e_{12}} y_{3}^{e_{21}} \cdots y_{n-r-1}^{e_{12}} y_{n-r}^{e_{21}}+ \alpha_2 x_1^g\cdots x_r^g y_{1}^{h_1} y_{1}^{e_{21}} y_{2}^{e_{12}} \cdots y_{n-r-1}^{e_{21}} y_{n-r}^{e_{12}}\\
	\notag &+ \alpha_3 x_1^g\cdots x_r^g y_{1}^{e_{12}} y_{2}^{e_{21}} y_{3}^{e_{12}}\cdots  y_{n-r-1}^{e_{21}} y_{n-r}^{e_{12}}+ \alpha_4
	x_1^g\cdots x_r^g y_{1}^{e_{21}} y_{2}^{e_{12}} y_{3}^{e_{21}} \cdots y_{n-r-1}^{e_{12}} y_{n-r}^{e_{21}}
	\end{align}
		with $\alpha_i\in F$, $1\leq i \leq 4$, not all zero, if $0\leq r\leq n-2$ and $n-r$ is odd.
	
	Suppose that $f$ is as in \eqref{polynomial n-r even}. Let us assume that $\alpha_1\neq 0$; the remaining cases follow from similar arguments.  Multiplying $f$ on the right by $y_{n-r+1}^{e_{21}}$, since $y_1^{e_{21}} y_2^{e_{21}}\in \I^U(M_2(F))$  by Lemma \ref{multiplicationtablegivesidentitiesLEMMA}, we get that
	\[
	f':= \alpha_1 x_1^g\cdots x_r^g y_{1}^{h_1} y_{2}^{e_{12}} y_{3}^{e_{21}}  y_{4}^{e_{12}}\cdots y_{n-r-1}^{e_{21}} y_{n-r}^{e_{12}} y_{n-r+1}^{e_{21}}
	+ \alpha_4 y_1^g\cdots y_r^g x_{1}^{e_{21}} x_{2}^{e_{12}} \cdots x_{n-r-1}^{e_{21}} x_{n-r}^{e_{12}}y_{n-r+1}^{e_{21}}\in \I^U(A).
	 \]
	Since $[x^g,y^{e_{21}}],\ y_1^{e_{21}}y_2^{h_1}+ y_1^{h_1}y_2^{e_{21}}, \ y_1^{e_{21}} y_2^{e_{21}} \in \I^U(M_2(F))$ (Lemma \ref{multiplicationtablegivesidentitiesLEMMA}) and $\alpha_1 \neq 0$, if we multiply $f'$ on the left by $y_{n-r+2}^{e_{21}}$ we get that
 $x_1^g\cdots x_r^g y_{1}^{h_1} y_{n-r+2}^{e_{21}} y_{2}^{e_{12}} y_{3}^{e_{21}}  y_{4}^{e_{12}}\cdots y_{n-r-1}^{e_{21}} y_{n-r}^{e_{12}} y_{n-r+1}^{e_{21}}\in \I^U(A)$.
  We substitute variables to reorder them (we \emph{rename} variables)  and get
	\[f'':=x_1^g\cdots x_r^g y_{1}^{h_1} y_{2}^{e_{21}} y_{3}^{e_{12}}  y_{4}^{e_{21}}\cdots y_{n-r+1}^{e_{12}} y_{n-r+2}^{e_{21}}\in \I^U(A).\]
	Now for all $1\leq i \leq r$ we substitute in $f''$ the variable $x_i$ with $y_{n-r+2+i}^{e_{12}}y_{n+2+i}^{e_{21}}$. By Lemma \ref{Remark substitution}  and after renaming variables we obtain that
 $y_1^{h_1}\cdots y_{2r+1}^{h_1}y_{2r+2}^{e_{21}}  y_{2r+3}^{e_{12}}  y_{2r+4}^{e_{21}}\cdots y_{n+r+1}^{e_{12}} y_{n+r+2}^{e_{21}} \in \I^U(A).$
 
	 Hence if we multiply the last $U$-polynomial on the left by $y_{n+r+3}^{h_1}$, by  renaming the variables we obtain that
	 $
	 y_1^{h_1}\cdots y_{2r+2}^{h_1}y_{2r+3}^{e_{21}}  y_{2r+4}^{e_{12}}  y_{2r+5}^{e_{21}}\cdots y_{n+r+2}^{e_{12}} y_{n+r+3}^{e_{21}} \in \I^U(A)
	 $. Now recall that $  y_1^{e_{12}} y_2^{e_{21}} + y_1^{e_{21}} y_2^{e_{12}}-y_1^{h_1} y_2^{h_1}, \ y_1^c y_2^c\in \I^U(M_2(F))$ for $c\in \{e_{12}, e_{21}\}$ (Lemma \ref{multiplicationtablegivesidentitiesLEMMA}). Thus it follows that
	 $y_1^{e_{21}}  y_{2}^{e_{12}} \cdots y_{n+r+2}^{e_{12}} y_{n+r+3}^{e_{21}} \in \I^U(A)$.
   Multiplying the last $U$-polynomial on the left by $y_{n+r+4}^{e_{12}}$ and renaming variables we get that
	 \[ y_1^{e_{12}}y_2^{e_{21}}\cdots y_{n+r+3}^{e_{12}}y_{n+r+4}^{e_{21}}\in \I^U(A) \]
	 and we are done. One deals in a similar way with case \eqref{polynomial n-r odd}. 
	
	Suppose now that $f= x_1^g \cdots x_n^g$. If we substitute variable $x_i$ with $y_{i}^{e_{12}}y_{i+n}^{e_{21}}$ for all $1\leq i \leq n$, then by Lemma \ref{Remark substitution}  and after renaming variables we get that $y_{1}^{h_1}y_{2}^{h_1}\cdots y_{2n-1}^{h_1}y_{2n}^{h_1}\in \I^U(A)$.
 Since  $y_1^{e_{12}} y_2^{e_{21}} + y_1^{e_{21}} y_2^{e_{12}}-y_1^{h_1} y_2^{h_1}, \ y_1^c y_2^c\in \I^U(M_2(F))$ for $c\in \{e_{12}, e_{21}\}$, it follows that
	$y_{1}^{e_{12}}y_{2}^{e_{21}}\cdots y_{2n-1}^{e_{12}}y_{2n}^{e_{21}}+ y_{1}^{e_{21}}y_{2}^{e_{12}}\cdots y_{2n-1}^{e_{21}}y_{2n}^{e_{12}}\in \I^U(A)$.
	 Thus we obtain a $U$-polynomial of the form \eqref{polynomial n-r even} with $r=0$ and we are done.
	
	Finally, suppose that  $f$ is as in \eqref{polynomial n-1}. If we substitute in $f$ the variable $x_i$ with $y_{i+1}^{e_{12}}y_{i+n}^{e_{21}}$ for all $1\leq i \leq n-1$ then, by Lemma \ref{Remark substitution}, we obtain that
	\[ \alpha_1 y_{2}^{h_1}y_{n+1}^{h_1} \cdots y_{n}^{h_1}y_{2n-1}^{h_1} y_1^{h_1}+ \alpha_2 y_{2}^{h_1}y_{n+1}^{h_1} \cdots  y_{n}^{h_1}y_{2n-1}^{h_1} y_1^{e_{12}}+ \alpha_3 y_{2}^{h_1}y_{n+1}^{h_1} \cdots  y_{n}^{h_1}y_{2n-1}^{h_1} y_1^{e_{21}}\in \I^U(A). \]
	Thus, as we have done above, by reducing modulo $\I^U(M_2(F))$ we obtain a $U$-polynomial of the form \eqref{polynomial n-r odd}  with $r=0$. Now the proof is complete.
\end{proof}

Next, we prove some technical lemmas that lead to a similar characterization for the proper subvarieties of $\V^{L,U}(\M)$ for $k\geq 3$.

\begin{lemma} \label{Lemma h_i}
	Let $A\in \V^{L,U}(\M)$ for $k\geq 3$. If $x_1^{h_i} \cdots x_s^{h_i}\in \I^U(A)$ for some $1\leq i \leq k-1$ and $s\geq 1$, then there exists $t \geq s$ such that $x_1^{a_1}\cdots x_t^{a_t}\in \I^U(A)$ for all $a_1, \dots, a_t\in\mathcal{S}$.
\end{lemma}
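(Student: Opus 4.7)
The plan is to show that, for some $t\geq s$, the whole space $P^U_{0,t}$ of multilinear $U$-polynomials of degree $t$ whose variables carry exponents in $\mathcal S$ lies in $\I^U(A)$; by Lemma \ref{reductiontocomponentsLEMMA} this is equivalent to the stated conclusion. Since $A\in\V^{L,U}(\M)$ gives $\I^U(\M)\subseteq\I^U(A)$ and, by the proof of Proposition \ref{Theorem Id^U K>2}, the quotient $P^U_{0,t}/(P^U_{0,t}\cap\I^U(\M))$ is spanned by the $k^2$ explicit canonical basis monomials listed in \eqref{relativamente libera Mk} with $r=0$, it suffices to produce each of those $k^2$ monomials inside $\I^U(A)$ for a common $t\geq s$.

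Since $\I^U(A)$ is a two-sided algebra ideal of $\FU$, right-multiplying $f=x_1^{h_i}\cdots x_s^{h_i}$ by fresh variables $x_{s+1}^{h_i}\cdots x_t^{h_i}$ immediately yields $x_1^{h_i}\cdots x_t^{h_i}\in\I^U(A)$ for every $t\geq s$. Next, I apply the inner derivation $\ad_{e_{il}}\in\Uop$ (for $l\neq i,i+1$) via the Hopf $\Uop$-action: by Leibniz's rule and the bracket formula $(x^{\vp_{ah_i}})^{\ad_c}=x^{\vp_{a[c,h_i]}}$, this produces the identity $-\sum_{m=1}^{t}x_1^{h_i}\cdots x_m^{e_{il}}\cdots x_t^{h_i}\in\I^U(A)$. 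Modulo $\I^U(\M)$, identity \ref{identitiespermuted} (``$x^{ab}y^{ac}\equiv y^{ab}x^{ac}$'') identifies every position of the $e_{il}$-exponent with the last one, so this sum collapses to $-t\,x_1^{h_i}\cdots x_{t-1}^{h_i}x_t^{e_{il}}$ in the quotient; since $t\neq 0$, the canonical basis monomial $x_1^{h_i}\cdots x_{t-1}^{h_i}x_t^{e_{il}}$ lies in $\I^U(A)$. Varying $l$ and using also $\ad_{e_{l,i}}$ and $\ad_{e_{l,i+1}}$ combined with identities \ref{hidentities2twotogether} and \ref{hidentities3} of $\I^U(\M)$ to rewrite the resulting $e_{l,i}$- and $e_{l,i+1}$-ending products in canonical form, this step yields every basis monomial of the form $x_1^{h_i}\cdots x_{t-1}^{h_i}x_t^{e_{ij}}$ from \eqref{relativamente libera Mk}.

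To reach the remaining basis monomials, in particular the pure $x_1^{h_j}\cdots x_t^{h_j}$ for $j\neq i$, I will apply the quadratic element $u:=\ad_{e_{i+2,i+1}}\ad_{e_{i+1,i+2}}\in\Uop$ (when $i+2\leq k$; otherwise its symmetric variant $\ad_{e_{i-1,i}}\ad_{e_{i,i-1}}$), which on an isolated variable acts as $(x^{\vp_{ah_i}})^u=-x^{\vp_{ah_{i+1}}}$ through the double-bracket computation in \ref{innerderivations}. Distributing $u$ through the coproduct $\Delta_{t-1}(u)$ on the product $x_1^{h_i}\cdots x_t^{h_i}$ yields, besides the ``diagonal'' terms $-\sum_{j=1}^t x_1^{h_i}\cdots x_j^{h_{i+1}}\cdots x_t^{h_i}$ (which by identity \ref{identitiespermuted} reduce modulo $\I^U(\M)$ to $-t$ times a monomial with a single $h_{i+1}$-exponent at the last position), a controlled sum of ``off-diagonal'' products of the form $x_1^{h_i}\cdots x_j^{e_{i+2,i+1}}\cdots x_{j'}^{e_{i+1,i+2}}\cdots x_t^{h_i}$ which, after further reduction modulo $\I^U(\M)$ via identities \ref{hidentities3} and \ref{hidentities2twotogether} (relating $x^{e_{rs}}y^{e_{sr}}$ to $x^{h_r}y^{h_r}$ and to $x^{h_{r-1}}y^{h_r}$), become linear combinations of canonical basis monomials already shown to lie in $\I^U(A)$ by the preceding paragraph. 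Solving for the extracted monomial $x_1^{h_i}\cdots x_{t-1}^{h_i}x_t^{h_{i+1}}$ and then iterating the same quadratic step (now with some variables already converted to $h_{i+1}$, and with $t$ increased as needed) propagates the conversion from $h_i$ to $h_{i+1}$ across all positions, producing $x_1^{h_{i+1}}\cdots x_t^{h_{i+1}}\in\I^U(A)$. Inductively applying this to $h_{i+1}\to h_{i+2},\ldots,h_{k-1}$ and to $h_i\to h_{i-1},\ldots,h_1$ yields all pure-$h_j$ identities, and a final run of the first derivation step (now starting from the pure-$h_{k-1}$ identity) produces the last basis monomials $x_1^{h_{k-1}}\cdots x_{t-1}^{h_{k-1}}x_t^{e_{kl}}$ and the mixed $x_1^{h_1}\cdots x_{t-1}^{h_1}x_t^{h_2}$.

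The main obstacle is the propagation step above: the coproduct of the quadratic element produces an order-$t^2$ sum of mixed off-diagonal terms, and careful combinatorial bookkeeping, together with systematic use of the identities of $\M$ from Proposition \ref{Theorem Id^U K>2}, is needed to show that all off-diagonal contributions reduce to identities already established in $\I^U(A)$, so that the pure-$h_{i+1}$ identity can be cleanly isolated.
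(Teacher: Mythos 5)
Your framing (reduce modulo $\I^U(\M)\subseteq\I^U(A)$ and kill the canonical monomials of \eqref{relativamente libera Mk} with $r=0$) is correct, but the derivation-based machinery you use has two genuine gaps. (a) You claim the Leibniz sum $-\sum_m x_1^{h_i}\cdots x_m^{e_{il}}\cdots x_t^{h_i}$ collapses modulo $\I^U(\M)$ to $-t\,x_1^{h_i}\cdots x_{t-1}^{h_i}x_t^{e_{il}}$ via identity \ref{identitiespermuted}; that identity, $x^{ab}y^{ac}-y^{ab}x^{ac}$, permutes the \emph{variable names} while keeping the \emph{exponent sequence} fixed in its slots, so it does not move the $e_{il}$-exponent from position $m$ to position $t$. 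Sliding an $e$-exponent past an $h$-exponent requires identities \ref{hidentities2twoij-1}--\ref{hidentities2twoi-1j}, which change the $h$-index or introduce signs, so the $t$ terms reduce to \emph{different} basis monomials and the collapse fails. (It is also unnecessary: $\I^U(A)$ is a two-sided ideal, so right-multiplying $x_1^{h_i}\cdots x_{t-1}^{h_i}\in\I^U(A)$ by $x_t^{e_{il}}$ already gives the target.) (b) The real content of the lemma is the $h_i\to h_{i+1}$ conversion, which you propose via the quadratic element $\ad_{e_{i+2,i+1}}\ad_{e_{i+1,i+2}}\in\Uop$, and you explicitly leave open the control of the $O(t^2)$ off-diagonal terms in $\Delta_{t-1}$. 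Some of these do vanish modulo $\I^U(\M)$ (e.g.\ $h_ie_{i+2,i+1}=0$), but a careful case analysis of the surviving ones is essential and missing; as written the proof is not complete.

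The paper never applies a derivation to a product, so it avoids the coproduct entirely. Starting from $x_1^{h_i}\cdots x_s^{h_i}\in\I^U(A)$, it multiplies on the right by $x_{s+1}^{e_{i+1,i}}x_{s+2}^{e_{i,i+1}}$ and rewrites with the two-term $\M$-identities $x^{h_i}y^{e_{i+1,i}}+x^{h_{i+1}}y^{e_{i+1,i}}$ and $[x^{h_i},y^{h_{i+1}}]$ to flip every $h_i$ to $h_{i+1}$; it obtains a companion by multiplying on the left by $x_{s+1}^{e_{i+2,i}}$ and on the right by $x_{s+2}^{e_{i,i+2}}$; and it combines the two via $x^{e_{i+1,i}}y^{e_{i,i+1}}+x^{e_{i+2,i}}y^{e_{i,i+2}}-x^{h_{i+1}}y^{h_{i+1}}$ to obtain $x_1^{h_{i+1}}\cdots x_{s+2}^{h_{i+1}}\in\I^U(A)$. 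This is a finite, coproduct-free computation; iterating upward and (symmetrically) downward and then appending fresh variables on the right kills every basis monomial. I would replace your Hopf-action route with this purely multiplicative one.
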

\begin{proof}
	Since $\I^U(\M)\subseteq \I^U(A)$ by Proposition \ref{equivalentvarietiesPROPOSITION}(3), by the proof of Proposition \ref{Theorem Id^U K>2} it is enough to prove that modulo $\I^U(\M)$ there exists $t\geq s$ such that
\[x_1^{h_j} \cdots x_t^{h_j}, \ x_1^{h_j} \cdots x_{t-1}^{h_j}x_t^{e_{jl}}, \ x_1^{h_{k-1}}\cdots x_{t-1}^{h_{k-1}}x_t^{e_{km}}, \ x_1^{h_1}\cdots x_{t-1}^{h_1}x_t^{h_2}\in \I^U(A)\]
	for all $1\leq j,m \leq k-1$, $1\leq l\leq k$, $l\neq j$.
	
	Suppose that $x_1^{h_i} \cdots x_s^{h_i}\in \I^U(A)$ with $1\leq i\leq k-2$. If we multiply  $x_1^{h_i} \cdots x_s^{h_i}$ on the right by $x_{s+1}^{e_{i+1,i}}x_{s+2}^{e_{i,i+1}}$, then since $x^{h_i}y^{e_{i+1,i}}+ x^{h_{i+1}}y^{e_{i+1,i}}, [x^{h_i}, y^{h_{i+1}}]\in \I^U(\M)$  by Lemma \ref{multiplicationtablegivesidentitiesLEMMA}, we get that
	\begin{equation}\label{Identita 1}
		x_1^{h_{i+1}} \cdots x_s^{h_{i+1}}x_{s+1}^{e_{i+1,i}}x_{s+2}^{e_{i,i+1}}\in \I^U(A).
	\end{equation}
	Moreover, since $x^{e_{i+2,i}}y^{h_i}- x^{h_{i+1}}y^{e_{i+2,i}}\in \I^U(\M)$  (Lemma \ref{multiplicationtablegivesidentitiesLEMMA}) , by multiplying $x_1^{h_i} \cdots x_s^{h_i}$ on the left by $x_{s+1}^{e_{i+2,i}}$, we get $	x_1^{h_{i+1}} \cdots x_s^{h_{i+1}}x_{s+1}^{e_{i+2,i}}\in \I^U(A)$. Thus it follows that
	\begin{equation}\label{Identita 2}
			x_1^{h_{i+1}} \cdots x_s^{h_{i+1}}x_{s+1}^{e_{i+2,i}}x_{s+2}^{e_{i,i+2}}\in \I^U(A).
	\end{equation}
	Hence since $x^{e_{i+1,i}}y^{e_{i,i+1}}+ x^{e_{i+2,i}}y^{i,i+2}-x^{h_{i+1}}y^{h_{i+1}}\in \I^U(\M)$ (Lemma \ref{multiplicationtablegivesidentitiesLEMMA}), by \eqref{Identita 1} and \eqref{Identita 2} it follows that $x_1^{h_{i+1}}\cdots x_{s+2}^{h_{i+1}}\in \I^U(A)$. Therefore, by iteration, we obtain that $x_1^{h_j}\cdots x_{s+2}^{h_{j}}\in \I^U(A)$ for all $i \leq j \leq k-1$.
	
	Now if $2\leq i \leq k-1$, then we multiply $x_1^{h_i} \cdots x_s^{h_i}$ on the left by $x_{s+1}^{e_{i-1,i}}$ and on the right by $x_{s+2}^{e_{i,i-1}}$. Thus since $x^{e_{i-1,i}}y^{h_i}-x^{h_{i-1}}y^{e_{i-i,i}}\in \I^U(\M)$  (Lemma \ref{multiplicationtablegivesidentitiesLEMMA}), we get
	\begin{equation} \label{Identita 3}
		x_1^{h_{i-1}} \cdots x_s^{h_{i-1}}x_{s+1}^{e_{i-1,i}}x_{s+2}^{e_{i,i-1}}\in \I^U(A).
	\end{equation}
	Now since $x^{h_i}y^{e_{i,i+1}}+x^{h_{i-1}}y^{e_{i,i+1}}\in \I^U(\M)$  (Lemma \ref{multiplicationtablegivesidentitiesLEMMA}), by multiplying $x_1^{h_i} \cdots x_s^{h_i}$ on the right by $x_{s+1}^{e_{i,i+1}}x_{s+2}^{e_{i+1,i}}$ we have that
	\begin{equation} \label{Identita 4}
			x_1^{h_{i-1}} \cdots x_s^{h_{i-1}}x_{s+1}^{e_{i,i+1}}x_{s+2}^{e_{i+1,i}}\in \I^U(A).
	\end{equation}
	Thus since $x^{e_{i-1,i}}y^{e_{i,i-1}}+ x^{e_{i,i+1}}y^{e_{i+1,i}}-x^{h_{i-1}}y^{h_{i-1}}\in \I^U(\M)$ (Lemma \ref{multiplicationtablegivesidentitiesLEMMA}), from \eqref{Identita 3} and \eqref{Identita 4} we obtain that  $x_1^{h_{i-1}}\cdots x_{s+2}^{h_{i-1}}\in \I^U(A)$. Hence, by iteration, we get that $x_1^{h_j}\cdots x_{s+2}^{h_{j}}\in \I^U(A)$ for all $1 \leq j \leq k-1$. Therefore it follows that $x_1^{h_j} \cdots x_{s+2}^{h_j}x_{s+3}^{e_{jl}}, \ x_1^{h_{k-1}}\cdots x_{s+2}^{h_{k-1}}x_{s+3}^{e_{km}}, \ x_1^{h_1}\cdots x_{s+2}^{h_1}x_{s+3}^{h_2}\in \I^U(A)$ for all $1\leq j,m \leq k-1$, $1\leq l\leq k$, $l\neq j$ and the proof is complete.
\end{proof}

\begin{lemma} \label{Lemma e_ij}
	Let $A\in \V^{L,U}(\M)$,  for $k\geq 3$. If $x_1^{h_i} \cdots x_{s-1}^{h_i}x_s^{e_{ij}}\in \I^U(A)$ for some $1\leq i \leq k-1$, $1\leq j \leq k$ and $s\geq 1$, then there exists $t \geq s$ such that $x_1^{a_1}\cdots x_t^{a_t}\in \I^U(A)$ for all $a_1, \dots, a_t\in\mathcal{S}$.
\end{lemma}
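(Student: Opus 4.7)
The strategy mirrors that of Lemma \ref{Lemma h_i}: I would reduce the hypothesis to a pure $h$-type identity $x_1^{h_l}\cdots x_t^{h_l}\in\I^U(A)$ for some $l\in\{1,\ldots,k-1\}$ and $t\geq s$, and then invoke Lemma \ref{Lemma h_i} to obtain the full conclusion.

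Setting $f := x_1^{h_i}\cdots x_{s-1}^{h_i}x_s^{e_{ij}}\in\I^U(A)$, I first multiply $f$ on the right by a fresh variable $x_{s+1}^{e_{ji}}$ to obtain $f\,x_{s+1}^{e_{ji}}\in\I^U(A)$. Identity (11) of Proposition \ref{Theorem Id^U K>2}, namely $x^{e_{ij}}y^{e_{ji}}+x^{e_{i+1,l}}y^{e_{l,i+1}}-x^{h_i}y^{h_i}\in\I^U(\M)$ (valid for $1\leq i\leq k-1$ and $l\neq i+1$), then lets me rewrite $x_s^{e_{ij}}x_{s+1}^{e_{ji}}$ as $x_s^{h_i}x_{s+1}^{h_i}-x_s^{e_{i+1,l}}x_{s+1}^{e_{l,i+1}}$ modulo $\I^U(\M)$, yielding $x_1^{h_i}\cdots x_{s+1}^{h_i}-R\in\I^U(A)$ with $R:=x_1^{h_i}\cdots x_{s-1}^{h_i}x_s^{e_{i+1,l}}x_{s+1}^{e_{l,i+1}}$. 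It then suffices to prove $R\in\I^U(A)$, for from this $x_1^{h_i}\cdots x_{s+1}^{h_i}\in\I^U(A)$ and Lemma \ref{Lemma h_i} applies.

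To eliminate $R$, I would iterate identity (11): each application pushes the row index of the residual $e$-pair from $i+m$ to $i+m+1$, producing a cross-term of the form $x_s^{h_{i+m}}x_{s+1}^{h_{i+m}}$ as a byproduct. Once left-multiplied by the $h_i$-chain $x_1^{h_i}\cdots x_{s-1}^{h_i}$, these cross-terms vanish modulo $\I^U(\M)$ by identity (9) whenever $|i-(i+m)|\geq 2$; the remaining cases ($m\in\{0,1\}$) collapse through the auxiliary relation $y_1^{h_i}y_2^{h_{i+1}}y_3^{h_{i+1}}\equiv -y_1^{h_i}y_2^{h_i}y_3^{h_{i+1}}$ modulo $\I^U(\M)$ derived inside the proof of Proposition \ref{Theorem Id^U K>2}. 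The recursion terminates at row $k$, where Lemma \ref{Remark e_ij}(3) converts the boundary pair $e_{kj_k}e_{j_k,k}$ into $h_{k-1}h_{k-1}+h_{k-2}h_{k-1}$ modulo $\I^U(\M)$; these are again killed by the leading $h_i$-chain for $i\leq k-3$, or otherwise reduced via the same auxiliary relation together with identity (10) to commute adjacent $h$'s.

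The main obstacle will be the bookkeeping of signs, variable relabelings via identity (1), and commutations via identity (10) throughout the iterated reduction, together with the case split required by the boundary values $i\in\{1,k-1\}$ and $j\in\{i-1,i+1\}$, where the chain of $h_i$'s is shorter and the bracket formula \eqref{formula(B)} produces coefficients $\pm 2$ in place of $\pm 1$. Should the recursion prove too unwieldy, an alternative route is to act on $f$ by the derivation $E_{ji}\in\Uop$ via the Hopf action and exploit the bracket formula \eqref{formula(B)}: this produces a term $x_1^{h_i}\cdots x_{s-1}^{h_i}x_s^{h_{ij}}$ (with $h_{ij}$ a linear combination of the $h_l$'s) plus a symmetric sum of terms with a single $e_{ji}$-exponent inside the $h_i$-chain, which by identity (1) and the commutators (10) can be collapsed more directly to a pure $h$-form identity, thereby sidestepping the boundary-case analysis.
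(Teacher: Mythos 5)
The key idea of the paper's proof, which you miss, is to first reduce the column index $j$ to $i+1$: multiplying $x_1^{h_i}\cdots x_{s-1}^{h_i}x_s^{e_{ij}}$ on the right by $x_{s+1}^{e_{j,i+1}}$ and using $x^{h_i}y^{e_{i,i+1}}-x^{e_{ij}}y^{e_{j,i+1}}\in\I^U(\M)$ yields $x_1^{h_i}\cdots x_s^{h_i}x_{s+1}^{e_{i,i+1}}\in\I^U(A)$. From this \emph{reduced} form the paper gets \emph{both} $e$-pair terms of identity (11) (with $j=i+1$, $l=i$) prefixed by the $h_i$-chain: the term $x_s^{e_{i,i+1}}x_{s+1}^{e_{i+1,i}}$ by right-multiplying with $e_{i+1,i}$, and the term $x_s^{e_{i+1,i}}x_{s+1}^{e_{i,i+1}}$ by left-multiplying with $e_{i+1,i}$ and pushing it through the $h_i$-chain via the anticommutation $x^{e_{i+1,i}}y^{h_i}+x^{h_i}y^{e_{i+1,i}}\in\I^U(\M)$ (plus variable reordering). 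Identity (11) then forces the $h_i^2$ term into $\I^U(A)$, and Lemma \ref{Lemma h_i} concludes.

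Your version establishes only \emph{one} of the two $e$-pair terms (by right-multiplying $f$ by $x_{s+1}^{e_{ji}}$) and must therefore dispose of the residue $R=x_1^{h_i}\cdots x_{s-1}^{h_i}x_s^{e_{i+1,l}}x_{s+1}^{e_{l,i+1}}$. This $R$ involves the exponent $e_{i+1,l}$, which bears no direct relation to the hypothesis: it is not in $\I^U(\M)$ (identity (3) fails because the row index $i+1$ is excluded), and the multiplications you propose do not derive it from the hypothesis. The iteration you sketch does not close the gap: each pass of identity (11) with leading index $i+m$ produces a cross-term $x_s^{h_{i+m}}x_{s+1}^{h_{i+m}}$ prefixed by the $h_i$-chain, and for $m\in\{0,1\}$ that cross-term is \emph{not} killed mod $\I^U(\M)$ (identity (9) requires $|i-(i+m)|\geq 2$, and identity (10) does not apply since the two indices are equal). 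The auxiliary relation you quote merely rewrites $y^{h_i}y^{h_{i+1}}y^{h_{i+1}}$ into another nonzero monomial; it does not make the cross-term vanish and does not convert it into something known to lie in $\I^U(A)$. So the recursion produces an ever-growing combination with residual $h$-cross-terms and never yields $R\in\I^U(A)$. Reducing to $j=i+1$ first is what makes the two required $e$-pair terms both reachable from the hypothesis by mere multiplication and the anticommutation identity; without that step the argument does not go through. Your alternative route using the action of $E_{ji}$ is not developed enough to evaluate, but as stated it produces a sum of monomials with a single interior $e_{ji}$-exponent whose vanishing modulo $\I^U(A)$ again needs justification of exactly the missing kind.
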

\begin{proof}
Recall that $\I^U(\M)\subseteq \I^U(A)$ (by Proposition \ref{equivalentvarietiesPROPOSITION}(3)). Then  since $x^a y^b- y^a x^b\in \I^U(\M)$ for all $a,b\in \mathcal{S}$ by Lemma \ref{multiplicationtablegivesidentitiesLEMMA}, modulo $\I^U(\M)$, we can always reorder the indices $i_l$, $1\leq l \leq m$, in any $U$-monomial of type $x_{i_1}^{a_1} \cdots x_{i_m}^{a_m}$ with $a_1, \dots, a_m\in \mathcal{S}$.
	
	Let now $1\leq i \leq k-1$ and $1\leq j \leq k$ such that $x_1^{h_i} \cdots x_{s-1}^{h_i}x_s^{e_{ij}}\in \I^U(A)$, $s\geq 1$. Then we may assume that $j=i+1$: indeed if $j\neq i+1$, then $x_1^{h_i} \cdots x_{s-1}^{h_i}x_s^{e_{ij}}x_{s+1}^{e_{j,i+1}}\in \I^U(A)$. Thus since  $x^{h_i}y^{e_{i,i+1}}-x^{e_{ij}}y^{e_{j,i+1}}\in \I^U(\M)$  (Lemma \ref{multiplicationtablegivesidentitiesLEMMA}), it follows that $x_1^{h_i} \cdots x_{s}^{h_i}x_{s+1}^{e_{i,i+1}}\in \I^U(A)$.
	
	Since  $x_1^{e_{i+1,i}}x_2^{h_i}+x_1^{h_{i}}x_2^{e_{i+1,i}}\in \I^U(\M)$  (Lemma \ref{multiplicationtablegivesidentitiesLEMMA}), if we multiply $x_1^{h_i} \cdots x_{s-1}^{h_i}x_s^{e_{i,i+1}}\in \I^U(A)$ on the left by $x_{s+1}^{e_{i+1,i}}$, then by reordering the index of the variables, as we may modulo $\I^U(\M)$, we get $x_1^{h_i} \cdots x_{s-1}^{h_i}x_s^{e_{i+1,i}}x_{s+1}^{e_{i,i+1}}\in \I^U(A)$. Moreover, we have also that  $x_1^{h_i} \cdots x_{s-1}^{h_i}x_s^{e_{i,i+1}}x_{s+1}^{e_{i+1,i}}\in \I^U(A)$. Thus since $x^{e_{i,i+1}}y^{e_{i+1,i}}+ x^{e_{i+1,i}}y^{e_{i,i+1}}-x^{h_{i}}y^{h_{i}}\in \I^U(\M)$  (Lemma \ref{multiplicationtablegivesidentitiesLEMMA}), we obtain that $x_1^{h_i} \cdots x_{s+1}^{h_i}\in \I^U(A)$ and by Lemma \ref{Lemma h_i} we are done.
\end{proof}

\begin{lemma} \label{Lemma h_1h_2}
	Let $A\in \V^{L,U}(\M)$,  for $k\geq 3$. If $x_1^{h_1} \cdots x_{s-1}^{h_1}x_s^{h_2}\in \I^U(A)$ for some $1\leq i \leq k-1$ and $s\geq 2$, then there exists $t \geq s$ such that $x_1^{a_1}\cdots x_t^{a_t}\in \I^U(A)$ for all $a_1, \dots, a_t\in\mathcal{S}$.
\end{lemma}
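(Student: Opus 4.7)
The plan is to manipulate the hypothesis so that it reduces to the form required by Lemma \ref{Lemma e_ij}, from which the conclusion follows immediately, thereby paralleling the strategy of Lemmas \ref{Lemma h_i} and \ref{Lemma e_ij} themselves. The cleanest way to effect this reduction will be through the $L$-action rather than through multiplications as in the previous two lemmas: I will apply the inner derivation $E_{12}=\ad_{e_{12}}\in U^{\op}$ to $f:=x_1^{h_1}\cdots x_{s-1}^{h_1}x_s^{h_2}$, which stays inside $\I^U(A)$ by $L$-invariance of the $T_U$-ideal.

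Using the bracket formula \eqref{formula(B)}, the short computations $[e_{12},h_1]=-2e_{12}$ and $[e_{12},h_2]=e_{12}$ together with Leibniz's rule will yield
\[f^{E_{12}} = -2\sum_{i=1}^{s-1} T_i + T_0,\]
where $T_0:=x_1^{h_1}\cdots x_{s-1}^{h_1}x_s^{e_{12}}$ and, for $1\leq i\leq s-1$, $T_i:=x_1^{h_1}\cdots x_{i-1}^{h_1}x_i^{e_{12}}x_{i+1}^{h_1}\cdots x_{s-1}^{h_1}x_s^{h_2}$. I will then reduce this equality modulo $\I^U(\M)\subseteq\I^U(A)$ (using Proposition \ref{equivalentvarietiesPROPOSITION}(3)). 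Thanks to the multiplication table of $\mathcal{M}$ (specifically $h_1e_{12}=e_{12}$, $e_{12}h_1=-e_{12}$, and $e_{12}h_2=e_{12}$), a direct evaluation shows that each $T_i$ induces the same multilinear map $\M^s\to\M$ as $T_0$, up to the sign $(-1)^{s-1-i}$. Since $\I^U(\M)$ is by definition the kernel of the evaluation map, two multilinear $U$-polynomials of a given degree coincide modulo $\I^U(\M)$ precisely when they induce the same evaluation map on $\M$; consequently $T_i\equiv(-1)^{s-1-i}T_0\pmod{\I^U(\M)}$.

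Substituting these equivalences will produce $f^{E_{12}}\equiv c(s)\,T_0\pmod{\I^U(\M)}$ for a nonzero scalar $c(s)\in\{-1,1\}$ (its value depending on the parity of $s$, via the signed sum $\sum_{i=1}^{s-1}(-1)^{s-1-i}$). Hence $T_0=x_1^{h_1}\cdots x_{s-1}^{h_1}x_s^{e_{12}}\in\I^U(A)$, which matches exactly the hypothesis of Lemma \ref{Lemma e_ij} with $i=1$ and $j=2$; applying that lemma will produce the required $t\geq s$. I do not foresee any real obstacle here: the only mildly delicate point is the sign bookkeeping in the evaluations of the $T_i$, which ultimately reduces to the elementary identities $h_1^re_{12}=e_{12}$ and $e_{12}h_1^r=(-1)^re_{12}$ in $\M$.
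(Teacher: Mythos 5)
Your argument is correct and, after reducing modulo $\I^U(\M)$, lands on the hypothesis of Lemma \ref{Lemma e_ij} just as the paper does — but you reach it by a genuinely different route. The paper multiplies on the right by a fresh variable $x_{s+1}^{e_{21}}$, then uses the two specific $U$-identities $[x^{h_1},y^{h_2}]$ and $x^{h_1}y^{e_{21}}+x^{h_2}y^{e_{21}}$ from Lemma \ref{multiplicationtablegivesidentitiesLEMMA} to turn all exponents into $h_2$'s, arriving at $x_1^{h_2}\cdots x_s^{h_2}x_{s+1}^{e_{21}}\in\I^U(A)$ (so it invokes Lemma \ref{Lemma e_ij} with $i=2$, $j=1$, and $s+1$ variables). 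You instead apply the $L$-action of $e_{12}$, use the bracket formula together with $[e_{12},h_1]=-2e_{12}$, $[e_{12},h_2]=e_{12}$, and then observe that the various terms $T_i$ all agree with $T_0$ up to a sign modulo $\I^U(\M)$ because $\I^U(\M)$ is exactly the kernel of evaluation on $\M$ and $h_1^{i-1}e_{12}h_1^{s-1-i}h_2=(-1)^{s-1-i}e_{12}$ in $\M$. The sign bookkeeping checks out: the sum $\sum_{i=1}^{s-1}(-1)^{s-1-i}$ is $0$ or $1$ according to the parity of $s$, and in either case $c(s)\in\{-1,1\}$, so $T_0=x_1^{h_1}\cdots x_{s-1}^{h_1}x_s^{e_{12}}\in\I^U(A)$, which is Lemma \ref{Lemma e_ij} with $i=1$, $j=2$ on $s$ variables. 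Your approach is a bit cleaner in that it avoids introducing an extra variable and replaces a choice of specific relations from $\I^U(\M)$ with a direct evaluation argument; it is also in keeping with the spirit of Section \ref{Section:DifferentialIdentities}, where the $L$-action is repeatedly used to deduce new $U$-identities from old ones. One small point of care (which you handle correctly, since you follow the paper's conventions from just before Theorem \ref{UidentitiesTHEOREM}): writing the exponent as $E_{12}\in U^{\op}$ on a degree-$s$ monomial is shorthand for applying $e_{12}\in\Uop$ via the Hopf action, since $\FU$ carries no $U$-action in general.
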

\begin{proof}
	If $x_1^{h_1} \cdots x_{s-1}^{h_1}x_s^{h_2}\in \I^U(A)$ for some $s\geq 2$, then also $x_1^{h_1} \cdots x_{s-1}^{h_1}x_s^{h_2}x_{s+1}^{e_{21}}\in \I^U(A)$. Since $[x^{h_1}, y^{h_2}],\allowbreak x^{h_1}y^{e_{21}}+ x^{h_2}y^{e_{21}}\in \I^U(\M)$  (by Lemma \ref{multiplicationtablegivesidentitiesLEMMA}) and $\I^U(\M)\subseteq \I^U(A)$ (by Proposition \ref{equivalentvarietiesPROPOSITION}(3)), it follows that $x_1^{h_2}\cdots x_s^{h_2}x_{s+1}^{e_{21}}\in \I^U(A)$ and by Lemma \ref{Lemma e_ij} we are done.
\end{proof}

We are now in position to prove the following characterization of the proper subvarieties of $\V^{L,U}(\M)$ for $k\geq 3$.
\begin{proposition}\label{subvariety of M_k}
	Let $\textsc V=\V^{L,U}(A)$ be a subvariety of $\V^{L,U}(\M)$, $k\geq 3$. Then $\textsc V$ is a proper subvariety if and only if there exists $t\geq 1$ such that $x_1^{a_1}\cdots x_t^{a_t}\in \I^U(A)$ for all $a_1, \dots, a_t\in\mathcal{S}$.
\end{proposition}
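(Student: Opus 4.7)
The easy direction ($\Leftarrow$) will be immediate: by the proof of Proposition \ref{Theorem Id^U K>2}, the monomials in \eqref{relativamente libera Mk} are linearly independent modulo $\I^U(\M)$, so $x_1^{h_1}\cdots x_t^{h_1}\notin\I^U(\M)$ for every $t\geq1$. If $x_1^{a_1}\cdots x_t^{a_t}\in\I^U(A)$ for every choice of $a_i\in\mathcal S$, then in particular $x_1^{h_1}\cdots x_t^{h_1}\in\I^U(A)\setminus\I^U(\M)$, and Proposition \ref{equivalentvarietiesPROPOSITION}(3) gives that $\textsc V$ is proper.

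For the converse, I will pick, via Proposition \ref{equivalentvarietiesPROPOSITION}(3), some $f\in\I^U(A)\setminus\I^U(\M)$; by linearization, Lemma \ref{reductiontocomponentsLEMMA} and Proposition \ref{P_{r,n-r,c}isomorphismPROPOSITION}, I may assume $f\in P^U_{r,n-r}$ is multilinear of degree $n$, identifying $x_{r+i}^a$ with $y_i^a$. Using $\I^U(\M)\subseteq\I^U(A)$, I will reduce $f$ modulo $\I^U(\M)$ via the procedure in the proof of Proposition \ref{Theorem Id^U K>2}, rewriting it as a nontrivial linear combination of the normal form monomials of \eqref{relativamente libera Mk}: up to the common prefix $x_1^g\cdots x_r^g$, these form four families with $y$-parts respectively $y_1^{h_i}\cdots y_{n-r}^{h_i}$ (type $A_i$), $y_1^{h_1}\cdots y_{n-r-1}^{h_1}y_{n-r}^{h_2}$ (type $B$), $y_1^{h_i}\cdots y_{n-r-1}^{h_i}y_{n-r}^{e_{ij}}$ (type $C_{ij}$), and $y_1^{h_{k-1}}\cdots y_{n-r-1}^{h_{k-1}}y_{n-r}^{e_{kl}}$ (type $D_l$).

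My goal is to transform $f$, via operations preserving membership in $\I^U(A)$, into a single monomial of one of the forms $x_1^{h_i}\cdots x_s^{h_i}$, $x_1^{h_i}\cdots x_{s-1}^{h_i}x_s^{e_{ij}}$, or $x_1^{h_1}\cdots x_{s-1}^{h_1}x_s^{h_2}$, after which Lemma \ref{Lemma h_i}, \ref{Lemma e_ij}, or \ref{Lemma h_1h_2}, respectively, gives the required $t$. I will proceed by case analysis on which coefficient is nonzero. In each case, I will multiply $f$ on the right by a variable $y_{n+1}^{e_{i_0,i_0+1}}$ with $i_0$ chosen so that identity \ref{hidentitiesleft} of list (L) in the proof of Theorem \ref{UidentitiesTHEOREM} (namely $x^{h_i}y^{e_{jl}}\in\I^U(\M)$ for $j\neq i,i+1$) together with $x^{e_{ab}}y^{e_{cd}}\in\I^U(\M)$ when $b\neq c$ kill all families whose ending is incompatible; a parallel left multiplication, combined with the two-exponent identities \ref{hidentities2twoij-1}--\ref{hidentities2twotogether} and renaming of variables, prunes the combination down to a single surviving family. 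The $g$-exponent variables $x_p^g$ will then be eliminated by substituting $x_p\mapsto z_p^{e_{12}}w_p^{e_{21}}$: by Lemma \ref{Remark substitution}, $(z_p^{e_{12}}w_p^{e_{21}})^g$ is equivalent modulo $\I^U(\M)$ to a linear combination of products $z_p^{h_i}w_p^{h_i}$ and $z_p^{h_i}w_p^{h_{i+1}}$, and a final pruning pass will isolate the desired single monomial.

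The main obstacle will be the case-heavy bookkeeping: unlike the $k=2$ setting of Proposition \ref{subvariety of M_2}, where the only pure $h$-family is $y_1^{h_1}\cdots y_t^{h_1}$, for $k\geq3$ the four families $A_i, B, C_{ij}, D_l$ with distinct endings coexist, so successive multiplications must be designed to kill several families at once without annihilating the nonzero coefficient that witnesses $f\notin\I^U(\M)$. In particular, when only $D_l$-monomials survive, one further reduction via $x^{h_{k-1}}y^{e_{kj}}+x^{e_{kl}}y^{e_{lj}}\in\I^U(\M)$ (identity \ref{hidentities2oneleft} of list (L) with $i=k$) will be needed to convert them into $C$-type forms amenable to Lemma \ref{Lemma e_ij}.
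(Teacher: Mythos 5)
Your proposal follows essentially the same route as the paper: the easy direction via Proposition \ref{equivalentvarietiesPROPOSITION}(3), then picking $f\in\I^U(A)\setminus\I^U(\M)$, reducing to a multilinear element of $P^U_{r,n-r}$ in normal form \eqref{relativamente libera Mk}, using left/right multiplications by matrix-unit exponents and identities from list (L) to isolate a single ``tail'' monomial, eliminating $g$-exponents by the substitution of Lemma \ref{Remark substitution}, and finishing via Lemmas \ref{Lemma h_i}, \ref{Lemma e_ij}, \ref{Lemma h_1h_2}. So the strategy is right and matches the paper's.

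However, what you have written is a plan, not a proof. The entire content of this proposition sits in the case analysis that you label ``the main obstacle'' and then do not carry out: for each family $A_i$, $B$, $C_{ij}$, $D_l$ with a nonzero leading coefficient one must pick left and right multipliers $y^{e_{mp}}$, $y^{e_{qp}}$ with indices depending on which coefficient survives, verify via list (L) that those multiplications kill every competing family while \emph{preserving} the nonzero coefficient witnessing $f\notin\I^U(\M)$, and then unwind to a pure monomial. You neither specify the multipliers nor check the annihilation. As it stands the proposal asserts that a chain of manipulations exists without producing it, and in one place the sketch is actually too narrow to work: you propose multiplying only by variables of the form $y^{e_{i_0,i_0+1}}$, but the paper's argument needs multipliers such as $e_{1k}$, $e_{k,k-1}$, $e_{mp}$ with non-adjacent indices (for instance to peel off the $\alpha_{k-1}$ coefficient, or to handle $\gamma_{pq}$ with $|p-q|>1$), which your restriction to adjacent indices cannot reach. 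The conceptual skeleton is correct; the proof is not.
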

\begin{proof}
	By the proof of Proposition \ref{Theorem Id^U K>2}, $x_1^{a_1}\cdots x_t^{a_t}\notin \I^U(\M)$ for any $t\geq 1$ and any $a_1, \dots, a_t\in\mathcal{S}$, hence by Proposition \ref{equivalentvarietiesPROPOSITION}(3) one implication is clear. Let $\textsc V=\V^{L,U}(A)$ be a proper subvariety of $\V^{L,U}(\M)$, $k\geq 3$. Then by Proposition \ref{equivalentvarietiesPROPOSITION}(3)  there exists a $U$-polynomial $f\in\I^U(A)$ such that $f\notin \I^U(\M)$. We may assume that $f$ is a multilinear $U$-polynomial of degree $n$. Moreover, by Lemma \ref{reductiontocomponentsLEMMA} and Proposition \ref{P_{r,n-r,c}isomorphismPROPOSITION} we may suppose that $f\in P^U_{r,n-r}\cap \I^U(A)$ where $0\leq r\leq n$.
 In order to simplify the notation let us identify  $x_{r+i}^a$ with $y_i^a$,  $a\in \mathcal S$, for all $ 1\leq i \leq n-r$, so that variables $x_1,\ldots,x_r$ correspond to exponents $g$ and variables $y_1,\ldots,y_{n-r}$ correspond to exponents $a\in\mathcal S$. Since by Proposition \ref{equivalentvarietiesPROPOSITION}(3) we have $\I^U(M_k(F))\subseteq \I^U(A)$, we may reduce $f$ modulo $\I^U(M_k(F))$. Thus, by the proof of Proposition \ref{Theorem Id^U K>2} we may suppose that either
	$f=x_1^g\cdots x_n^g$
	 if $r=n$,
	 \[ f=\sum_{1\leq i \leq k-1} \alpha_i x_1^g \cdots x_{n-1}^g y_1^{h_i}+\sum_{\substack{1\leq j,l \leq k\\ j\neq l}} \beta_{jl} x_1^g \cdots x_{n-1}^g y_1^{e_{jl}} \]
	  with $\alpha_i,$ $\beta_{jl}$ not all zero, if $r=n-1$, or
	\begin{align*}
	f= & \sum_{1\leq i \leq k-1} \alpha_i x_1^g \cdots x_r^g y_1^{h_i} \cdots y_{n-r}^{h_i} + \beta x_1^g \cdots x_r^g y_1^{h_1} \cdots y_{n-r-1}^{h_1} y_{n-r}^{h_2}+ \sum_{\substack{1\leq i \leq k-1\\ 1\leq j \leq k\\ i\neq j}} \gamma_{ij} x_1^g \cdots x_r^g y_1^{h_i} \cdots y_{n-r-1}^{h_i} y_{n-r}^{e_{ij}}\\
	& +\sum_{1\leq l \leq k-1} \gamma_{kl}
	x_1^g \cdots x_r^g y_1^{h_{k-1}} \cdots y_{n-r-1}^{h_{k-1}} y_{n-r}^{e_{kl}}
	\end{align*}
	with $\alpha_i, \beta, \gamma_{lj}$ not all zero, if $0\leq r\leq n-1$.
	
	Suppose first that $r=0$, i.e.
\[f=  \sum_{1\leq i \leq k-1} \alpha_i y_1^{h_i} \cdots y_{n}^{h_i} + \beta  y_1^{h_1} \cdots y_{n-1}^{h_1} y_{n}^{h_2}+ \sum_{\substack{1\leq i \leq k-1\\ 1\leq j \leq k\\ i\neq j}} \gamma_{ij}  y_1^{h_i} \cdots y_{n-1}^{h_i} y_{n}^{e_{ij}}
	 +\sum_{1\leq l \leq k-1} \gamma_{kl}
	 y_1^{h_{k-1}} \cdots y_{n-1}^{h_{k-1}} y_{n}^{e_{kl}} ,\]
	with $\alpha_i, \beta, \gamma_{lj}$ not all zero. Let us assume that $\alpha_{k-1}\neq 0$. Since $y_1^{e_{1k}}y_2^{h_i}, \ y_1^{e_{jl}}y_2^{e_{k,k-1}}\in \I^U(\M)$ for all $1\leq i\leq k-2,$ $1\leq j\leq k$, $ 1\leq l \leq k-1$, $l \neq j$ by Lemma \ref{multiplicationtablegivesidentitiesLEMMA}, by multiplying $f$ on the left by $y_{n+1}^{e_{1k}}$ and on the right by $y_{n+2}^{e_{k,k-1}}$ we get
	$\alpha_{k-1}y_{n+1}^{e_{1k}} y_1^{h_{k-1}} \cdots y_{n}^{h_{k-1}}y_{n+2}^{e_{k,k-1}}+\gamma_{k-1,k} y_{n+1}^{e_{1k}}  y_1^{h_k-1} \cdots y_{n-1}^{h_k-1} y_{n}^{e_{k-1k}} y_{n+2}^{e_{k,k-1}}
	\in \I^U(A)$. Since $y_1^{e_{1k}}y_2^{h_{k-1}} + y_1^{h_1}y_2^{e_{1k}}, \ y_1^{e_{1k}} y_2^{e_{k-1,k}}\in \I^U(\M)$ (Lemma \ref{multiplicationtablegivesidentitiesLEMMA}) and $\alpha_{k-1}\neq 0$ and after renaming the variables, it follows that $y_1^{h_1}  \cdots y_n^{h_1}y_{n+1}^{e_{1k}}y_{n+2}^{e_{k,k-1}}\in \I^U(A)$.
 Moreover, since $y_1^{e_{1k}}y_2^{e_{k,k-1}}- y_1^{h_1}y_2^{e_{1,k-1}}\in \I^U(\M)$ for $k\geq 3$  (Lemma \ref{multiplicationtablegivesidentitiesLEMMA}), then $y_1^{h_1}\cdots y_{n+1}^{h_1}y_{n+2}^{e_{1,k-1}}\in \I^U(A)$ and by Lemma \ref{Lemma e_ij} we are done. Thus we may assume that $\alpha_{k-1}=0$.
	 Now if $k\geq 4$ and $\alpha_{k-2}\neq 0$, then we multiply $f$ on the left by $y_{n+1}^{e_{1,k-1}}$ and on the right by $y_{n+2}^{e_{k-1,k-2}}$ and we obtain that $\alpha_{k-2}y_{n+1}^{e_{1,k-1}} y_1^{h_{k-2}} \cdots y_{n}^{h_{k-2}}y_{n+2}^{e_{k-1,k-2}}+\gamma_{k-2,k-1} y_{n+1}^{e_{1,k-1}}  y_1^{h_k-2} \cdots y_{n-1}^{h_k-2} y_{n}^{e_{k-2,k-1}} y_{n+2}^{e_{k-1,k-2}}\in \I^U(A)$ since $y_1^{e_{1,k-1}}y_2^{h_i}, \ y_1^{e_{jl}}y_2^{e_{k-1,k-2}}\in \I^U(\M)$ for all $1\leq i\leq k-3,$ $1\leq j\leq k$, $ 1\leq l \leq k$, $l \neq k-1$, $j\neq l$ (Lemma \ref{multiplicationtablegivesidentitiesLEMMA}). As above since $y_1^{e_{1k-1}}y_2^{h_{k-2}} + y_1^{h_1}y_2^{e_{1k-1}}, \ y_1^{e_{1k-1}} y_2^{e_{k-2,k-1}}\in \I^U(\M)$  (Lemma \ref{multiplicationtablegivesidentitiesLEMMA}), and $\alpha_{k-2}\neq 0$,  after renaming variables we get that $ y_1^{h_1} \cdots y_{n+1}^{e_{1,k-1}}y_{n+2}^{e_{k-1,k-2}}\in \I^U(A)$. Moreover since $y_1^{e_{1,k-1}}y_2^{e_{k-1,k-2}}- y_1^{h_1}y_2^{e_{1,k-2}}\in \I^U(\M)$ for $k\geq 4$  (Lemma \ref{multiplicationtablegivesidentitiesLEMMA}), then $y_1^{h_1}\cdots y_{n+1}^{h_1}y_{n+2}^{e_{1,k-2}}\in \I^U(A)$ and again by Lemma \ref{Lemma e_ij} we are done. Then we may assume that $\alpha_{k-2}=0$. Since if $k\geq 5$ we can iterate the above procedure, we may assume that $\alpha_2=\cdots=\alpha_{k-1}=0$.
	
	Now assume that $\alpha_1\neq 0$. By multiplying $f$ on the left by $y_{n+1}^{e_{21}}$ and on the right by $y_{n+2}^{e_{12}}$ we get that $y_{n+1}^{e_{21}}y_1^{h_1} \cdots y_{n}^{h_1}y_{n+2}^{e_{12}}\in \I^U(A)$ because $y_1^{e_{21}}y_2^{h_i}, \ y_1^{h_2}y_2^{e_{12}}, \ y_1^{e_{lj}}y_2^{e_{12}}\in\I^U(\M)$ for all $2\leq i \leq k-1,$ $2\leq j \leq k$, $1\leq l \leq k-1$, $j\neq l$  (Lemma \ref{multiplicationtablegivesidentitiesLEMMA}). Hence since $y_1^{e_{21}}y_2^{h_1}-y_1^{h_1}y_2^{e_{21}}, y_1^{e_{21}}y_2^{e_{12}}+ y_1^{h_1}y_2^{h_2} \in \I^U(\M)$  (Lemma \ref{multiplicationtablegivesidentitiesLEMMA}),  after renaming variables it follows that $y_1^{h_1} \cdots y_{n+1}^{h_1}y_{n+2}^{h_2}\in \I^U(A)$ and by Lemma \ref{Lemma h_1h_2} we are done. So we may assume also that $\alpha_1=0$.
	
	Let us suppose then that $\beta\neq 0$. By multiplying $f$ on the left by $y_{n+1}^{e_{12}}$ and on the right by $y_{n+2}^{e_{23}}$, as above we obtain that $y_1^{h_1} \cdots y_{n+1}^{h_1}y_{n+2}^{e_{13}}\in \I^U(A)$ since $y_1^{e_{12}}y_2^{h_i}, \ y_1^{e_{lj}}y_2^{e_{23}}, \ y_1^{e_{12}}y_2^{h_1}+ y_1^{h_1}y_2^{e_{12}}, \ y_1^{e_{12}}y_2^{h_2}-y_1^{h_1}y_2^{e_{12}}, \ y_1^{e_{12}}y_2^{e_{lj}}, \ y_1^{e_{12}}y_2^{e_{23}}-y_1^{h_1}y_2^{e_{13}} \in \I^U(\M)$ for all $3\leq i \leq k-1$, $1\leq j,j \leq k$, $j\neq 2$, $l \neq j$  (Lemma \ref{multiplicationtablegivesidentitiesLEMMA}). Thus by Lemma \ref{Lemma e_ij} we are done. So we may assume that
\[f= \sum_{\substack{1\leq i \leq k-1\\ 1\leq j \leq k\\ i\neq j}} \gamma_{ij}  y_1^{h_i} \cdots y_{n-1}^{h_i} y_{n}^{e_{ij}}
	+\sum_{1\leq l \leq k-1} \gamma_{kl}
y_1^{h_{k-1}} \cdots y_{n-1}^{h_{k-1}} y_{n}^{e_{kl}}\]
	where at least one $\gamma_{ij}$, $1\leq i,j\leq k$, $i\neq j$, is not zero. Let $1\leq p,q\leq k$, $p \neq q$, such that $\gamma_{p,q}\neq 0$. Since $k\geq 3$, there exists $1\leq m\leq k$ such that $m\neq p,q$. Since $y_1^{e_{mp}}y_2^{h_i}, \ y_1^{e_{lj}}y_2^{e_{qp}}\in \I^U(\M)$, $1\leq i \leq k-1, \ 1\leq j,l \leq k, \ i\neq p, p-1, \ j \neq l,q$ (Lemma \ref{multiplicationtablegivesidentitiesLEMMA}), if we multiply $f$ on the left by $y_{n+1}^{e_{mp}}$ and on the right by $y_{n+2}^{e_{qp}}$, then we get that either
	\[ \gamma_{pq}y_{n+1}^{e_{mp}} y_1^{h_p} \cdots y_{n-1}^{h_p} y_{n}^{e_{pq}}y_{n+2}^{e_{qp}}\in \I^U(A) \]
	 in case $p\neq k, k-1$ or in case $(p,q)=(k-1,k)$, or
	\[ \gamma_{k-1,q}y_{n+1}^{e_{m p}} y_1^{h_{k-1}} \cdots y_{n-1}^{h_{k-1}} y_{n}^{e_{k-1,q}}y_{n+2}^{e_{q,p}}+  \gamma_{kq} y_{n+1}^{e_{m, p}} y_1^{h_{k-1}} \cdots y_{n-1}^{h_{k-1}} y_{n}^{e_{kq}}y_{n+2}^{e_{qp}}\in \I^U(A) \]
	in case $p=k-1$ and $q\neq k$ or in case $p= k$. Let us suppose first that $p\neq k$ and $m=k$. Notice that from $m=k$ it follows that $q\neq k$. Now since  $y_1^{e_{kp}}y_2^{h_p}+ y_1^{h_{k-1}}y_2^{e_{kp}}, \ y_1^{e_{k,k-1}}y_2^{e_{kq}} \in \I^U(\M)$  (Lemma \ref{multiplicationtablegivesidentitiesLEMMA}) and $\gamma_{pq}\neq 0$,  after renaming we have that $y_1^{h_{k-1}} \cdots y_{n-1}^{h_{k-1}} y_{n}^{e_{kp}} y_{n+1}^{e_{pq}}y_{n+2}^{e_{qp}}\in \I^U(A)$. Thus since $y_1^{e_{ki}}y_2^{e_{ij}}+y_1^{h_{k-1}}y_2^{e_{kj}}\in \I^U(\M)$ with $(i,j)=(p,q)$ or $(i,j)=(q,p)$  (Lemma \ref{multiplicationtablegivesidentitiesLEMMA}), it follows that $ y_1^{h_{k-1}} \cdots y_{n+1}^{h_{k-1}} y_{n+2}^{e_{k-1,p}}\in \I^U(A)$ and by Lemma \ref{Lemma e_ij} we are done. The cases $p,m\neq k$ and $p=k$ follow with similar computations. Hence for $r=0$ the proof is complete.
	
	Suppose now that $r=n$, i.e., $f=x_1^{g}\cdots x_n^{g}\in \I^U(A)$. If for all $1\leq i \leq n$ we substitute in $f$ the variable $x_i$ with $y_{i}^{e_{12}}y_{i+n}^{e_{21}}$, then by Lemma \ref{Remark substitution} we obtain a $U$-polynomial with exponents in $\{h_i\}_{1\leq i \leq k-1}$, and by reducing it modulo $\I^U(\M)$ (using the same strategy of Proposition \ref{Theorem Id^U K>2}'s proof) we get that
	\[ \sum_{1\leq i \leq k-1} \alpha_i  y_1^{h_i} \cdots y_{n+r}^{h_i} + \beta  y_1^{h_1} \cdots y_{n+r-1}^{h_1} y_{n+r}^{h_2}\in \I^U(A),\]
	with $ \alpha_i, \beta$ not all zero. Therefore, by repeating the same argument of the case $r=0$, we are done.
	
	Finally, let us assume that $1\leq r \leq n-1$. By Lemma \ref{Remark substitution}, if we substitute  $x_i$ with $y_{i+n-r}^{e_{12}}y_{i+n}^{e_{21}}$ for all $1\leq i \leq r$ we obtain a $U$-polynomial with exponents in $\mathcal{S}$. Thus by reducing it modulo $\I^U(\M)$ we get that
	\begin{align*}
	 &\sum_{1\leq i \leq k-1} \alpha'_i  y_1^{h_i} \cdots y_{n+r}^{h_i} + \beta'  y_1^{h_1} \cdots y_{n+r-1}^{h_1} y_{n+r}^{h_2} + \sum_{\substack{1\leq i \leq k-1\\ 1\leq j \leq k\\ i\neq j}} \gamma'_{ij} y_1^{h_i} \cdots y_{n+r-1}^{h_i} y_{n+r}^{e_{ij}}\\
	& +\sum_{1\leq l \leq k-1} \gamma'_{kl}
	y_1^{h_{k-1}} \cdots y_{n+r-1}^{h_{k-1}} y_{n+r}^{e_{kl}} \in \I^U(A)
	\end{align*}
	with $\alpha'_i, \beta', \gamma'_{lj}$ not all zero. Thus by repeating the same argument of the case $r=0$ the proof is complete.
\end{proof}

We are now in position to prove the main result of this section.

\begin{theorem}\label{AlmostPolynomialGrowthTHEOREM}
Let $L$ be the Lie algebra of all derivations of $\M$, $k\geq 2$. Then the variety $\V^L(\M)$ has almost polynomial growth for all $k\geq 2$.
\end{theorem}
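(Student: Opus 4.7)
The plan is to verify the two defining conditions of almost polynomial growth: that $\V^L(\M)$ has exponential growth, and that every proper $L$-subvariety has polynomial growth. By Proposition \ref{equivalentvarietiesPROPOSITION}(2) and Lemma \ref{Ldata2UdataLEMMA}(2) it is equivalent to work throughout with $(L,U)$-varieties and $U$-codimensions. Exponential growth of $\V^L(\M)$ is immediate from Theorem \ref{differentialidentitiesTHEOREM}, which gives $c_n^L(\M)=k^{2(n+1)}-(k^2-1)(n+1)$ and hence $\exp^L(\M)=k^2\ge 4>1$. For a proper subvariety $\V^{L,U}(A)$, by formula \eqref{formula(C)} together with the trivial bound $c^U_{r,n-r}(A)\le c^U_{r,n-r}(\M)\le k^2$, it suffices to produce an integer $T$ such that $c^U_{r,n-r}(A)=0$ whenever $n-r\ge T$, for then $c_n^U(A)\le k^2\sum_{s<T}\binom{n}{s}(k^2-1)^s=O(n^{T-1})$.

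By Proposition \ref{equivalentvarietiesPROPOSITION}(3), any proper subvariety must carry a $U$-identity outside $\I^U(\M)$. For $k\ge 3$ this is handled directly by Proposition \ref{subvariety of M_k}, which yields $t$ such that $x_1^{a_1}\cdots x_t^{a_t}\in\I^U(A)$ for \emph{every} tuple $(a_1,\ldots,a_t)\in\mathcal S^t$. Then, in any multilinear $U$-monomial of $P^U_{r,n-r}$ with $n-r\ge t$, I use $[x^g,y^a]\in\I^U(\M)\subseteq\I^U(A)$ to bring all $g$-exponent variables to the left, leaving a product of at least $t$ variables with $\mathcal S$-exponents which belongs to $\I^U(A)$ as a two-sided-ideal extension of one of the guaranteed identities; hence $T=t$ works.

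For $k=2$ the argument is subtler, since Proposition \ref{subvariety of M_2} only furnishes a single alternating identity $P_t:=y_1^{e_{12}}y_2^{e_{21}}\cdots y_{t-1}^{e_{12}}y_t^{e_{21}}\in\I^U(A)$, while the proof of Proposition \ref{Theorem Id^U K=2} shows that $P^U_{r,n-r}(\M)$ is four-dimensional for $0\le r\le n-2$. Two-sided-ideal closure extends $P_t$ by right multiplication to $P_s\in\I^U(A)$ for every $s\ge t$ of matching parity, giving the ``element 3'' basis polynomial. A direct Leibniz-rule and bracket-formula computation then shows that for \emph{even} $s$ the inner derivations $E_{12}$ and $E_{21}$ applied to $P_s$ yield scalar multiples of elements 1, 2 and 4 modulo $\I^U(\M)$; the underlying reason is that $P^U_{r,s}(\M)$ decomposes as $3\oplus 1$ under $L$, with $P_s$ a weight-zero vector projecting nontrivially onto both summands, so its $\Uop$-orbit is the full four-dimensional space. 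For \emph{odd} $s$, however, $P_s$ is a highest-weight vector of the $3$-dimensional summand, so its $L$-orbit captures only three basis elements, missing the $L$-invariant combination ``element 1 $-$ element 2''. The main obstacle, and the key step resolving it, is to obtain elements 1 and 2 of odd length $s$ directly as left multiples $y^{h_1}\cdot P_{s-1}$ and $y^{h_1}\cdot P'_{s-1}$, where $P'_{s-1}\in\I^U(A)$ is the swapped-exponent ``element 4'' of even length $s-1$ (already derived via the $L$-action in the even case); elements 3 and 4 of odd length $s$ are correspondingly obtained by right-multiplying $P_{s-1}$ and $P'_{s-1}$ by appropriate $\mathcal S$-exponent variables. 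This combined strategy produces all four basis elements in $\I^U(A)$ for $n-r\ge t$, so $T=t$ suffices and polynomial growth of $\V^{L,U}(A)$ follows.
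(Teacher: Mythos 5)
Your proof is correct and follows the same overall strategy as the paper: reduce to $U$-codimensions via Proposition \ref{equivalentvarietiesPROPOSITION}(2) and Lemma \ref{Ldata2UdataLEMMA}(2), use Formula \eqref{formula(C)} to reduce to showing $c^U_{r,n-r}(A)=0$ for every proper subvariety $\V^{L,U}(A)$ once $n-r$ exceeds some $N$, and invoke Propositions \ref{subvariety of M_2} and \ref{subvariety of M_k} for the killing identity. Your $k\ge 3$ treatment matches the paper's, and you explicitly record exponential growth of $\V^L(\M)$, which the paper takes as immediate from the codimension formula. The one genuine variant is the $k=2$ case. The paper derives the reversed pattern $x^{e_{21}}x^{e_{12}}\cdots$ from $P_t$ purely by left and right multiplication, and then uses the two-term $M_2(F)$-identities $x^cy^{h_1}+x^{h_1}y^c$ to dispose of each of the four basis elements of $P^U_{r,n-r}(M_2(F))$. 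You instead apply the $L$-action: for even $s$, iterated applications of $E_{12}$ and $E_{21}$ to $P_s$ sweep out all four basis elements modulo $\I^U(M_2(F))$, and you explain this by the $\Sl_2(F)$-module decomposition $P^U_{r,s}(M_2(F))\cong 3\oplus 1$ together with the assertion that $P_s$ projects nontrivially onto both summands. This is correct and is a pleasing conceptual account of why the paper's multiplication argument succeeds, though the nontriviality of the projection onto the trivial summand is a claim you state without proof; it does hold (one Leibniz computation shows $P_s^{E_{12}E_{21}}$ is not a scalar multiple of $P_s$ modulo $\I^U(M_2(F))$). Your fallback for odd $s$, obtaining elements 1 and 2 by left-multiplying $P_{s-1}$ and its $L$-derived reversal by $y^{h_1}$, is essentially the paper's own move with the $h_1$-commutation identity. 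A minor aside: your argument reaches the threshold $N=t$ directly, while the paper states $N=t+2$ and, read literally, its chain of multiplications produces the reversed pattern only from lengths $t+1$ and $t+2$ onward, so its odd-length element~2 strictly needs $n-r\ge t+3$; such constant offsets are irrelevant to the conclusion.
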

\begin{proof}
	Let $\textsc V$ be a proper subvariety of $\V^L(\M)$, $k\geq 2$. By Propositions \ref{equivalentvarietiesPROPOSITION}(2) and \ref{Ldata2UdataLEMMA}(2) it suffices to show that $c_n^U(\textsc V)$ is polynomially bounded. Notice that as a consequence of Formula \eqref{formula(C)} it is enough to prove that there exists $N\geq 1$ such that $c^U_{r,n-r}(\textsc V)=0$ whenever $n-r\geq N$: indeed, if such $N$ exists, then we have that
\[c_n^U(\textsc V)=\sum_{r=0}^n \binom{n}{n-r} (k^2 -1)^{n-r}c^U_{r,n-r}(\textsc V)\leq k^2 \sum_{n-r< N} \binom{n}{n-r} (k^2 -1)^{n-r} \leq k^2(k^2 -1)^{N} \sum_{n-r< N} n^N \leq \alpha n^N,\]
	for $\alpha:=k^2(k^2 -1)^{N} N$. Thus it follows that $\V^L(\M)$ has almost polynomial growth for all $k\geq 2$.
	
	Suppose first that $k=2$. Then by Proposition \ref{subvariety of M_2} there exists $t\geq 2$ such that $x_1^{e_{12}}x_2^{e_{21}}\cdots x_{t-1}^{e_{12}}x_t^{e_{21}}\in \I^U(\textsc V)$. It follows that $x_1^{e_{12}}x_2^{e_{21}}\cdots x_{t-1}^{e_{12}}x_t^{e_{21}}x_{t+1}^{e_{12}}, \ x_1^{e_{21}}x_2^{e_{12}}\cdots x_{t-1}^{e_{21}} x_t^{e_{12}} x_{t+1}^{e_{21}}, \ x_1^{e_{21}}x_2^{e_{12}}\cdots x_{t+1}^{e_{21}}x_{t+2}^{e_{12}}\in \I^U(\textsc V)$. Therefore since $[x^g,y^g], [x^g,y^a], x^c y^{h_1}+ x^{h_1}y^c\in \I^U(\textsc V)$ for all $a\in \mathcal{S}$ and $c\in \{e_{12}, e_{21}\}$  (Lemma \ref{multiplicationtablegivesidentitiesLEMMA}), we have that $c^U_{r,n-r}(\textsc V)=0$ if $n-r\geq t+2$.
	
	Now let us assume that $k\geq 3$. By Proposition \ref{subvariety of M_k} there exists $t\geq 1$ such that $x_1^{a_1}\cdots x_t^{a_t}\in \I^U(A)$ for all $a_1, \dots, a_t\in\mathcal{S}$. Since $[x^g,y^g], [x^g,y^a]\in \I^U(\textsc V)$ for all $a\in \mathcal{S}$, it follows that $c^U_{r,n-r}(\textsc V)=0$ if $n-r\geq t$ and we are done.
\end{proof}

\section{$U$-cocharacter of $\M$}\label{Section:Ucocharacter}
\setcounter{subsection}{1}

In this section, we shall compute the $(n,r)$th  $U$-cocharacter of $\M$ for $k\geq 2$.  Recall that notation $x^b$ with $x\in\FU$, $b\in\mathcal S$ is shorthand for the element $x^{\vp_{ab}}\in\FU$, for a fixed and elided first exponent index $a\in\mathcal S$ which we will not explicitly mention in most of the next results (see \ref{firstsuperindex}). Similarly we write $P^U_{r,n-r}$ instead of $P^U_{r,n-r,a}$ (see \ref{codimensionsformula}).


We start by proving some technical lemmas which give us a lower bound for the multiplicities $m_{\lambda, \mu}$ of the $S_r \times S_{n-r}$-character 
\[\chi_{r,n-r}^U(\M)=\sum_{(\lambda,\mu)\vdash(r,n-r)} m_{\lambda,\mu} \ \chi_\lambda \otimes \chi_\mu,\]
of $P^U_{r,n-r}(\M)$, for $k\geq 2$ (see \ref{U-character and its decomposition}).
 To this end recall that any irreducible left $S_r\times S_{n-r}$-module $W_{\lambda,\mu}\subseteq P_{r,n-r}^U$ with character $\chi_\lambda \otimes \chi_\mu$ can be generated as an $S_r\times S_{n-r}$-module by an element of the form $e_{T_\lambda}e_{T_\mu}f$, for some $f\in W_{\lambda,\mu}$ and
some pair of Young tableaux $(T_\lambda,T_\mu)$ of shape $\lambda\vdash r$ and $\mu\vdash n-r$, respectively. Here $e_{T_{\nu}}=\sum_{\substack{\sigma\in R_{T_{\nu}} \\ \tau\in C_{T_{\nu}}}} (\sgn\tau)\sigma\tau$  stands for the symmetrizer corresponding to
some Young tableau $T_\nu$ of shape $\nu \vdash p$, where $R_{T_{\nu}}$ and $C_{T_{\nu}}$ are the subgroups of $S_p$ stabilizing the rows and columns of $T_{\nu}$, respectively.

\begin{lemma}
	\label{Lemma multiplicities k^2-1}
	If $\lambda=(n-1)$ and $\mu=(1)$, then $m_{\lambda,\mu}\geq k^2 -1$.
\end{lemma}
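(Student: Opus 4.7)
The plan is to leverage the explicit basis of $P^U_{n-1,1}(M_k(F))$ already exhibited in the proofs of Propositions~\ref{Theorem Id^U K=2} and~\ref{Theorem Id^U K>2}, and then observe that the $S_{n-1}\times S_1$-action from~\ref{U-character and its decomposition} acts trivially on every basis element.

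More concretely, I would first recall from the case $r=n-1$ of those proofs that, modulo $\I^U(M_k(F))$, a basis of $P^U_{n-1,1}(M_k(F))$ is given by the $k^2-1$ multilinear $U$-monomials
\[ x_1^g\cdots x_{n-1}^g\, y_1^{h_i}\quad(1\le i\le k-1), \qquad x_1^g\cdots x_{n-1}^g\, y_1^{e_{jl}}\quad(1\le j\neq l\le k), \]
so that in particular $\dim_F P^U_{n-1,1}(M_k(F))=c^U_{n-1,1}(M_k(F))=k^2-1$.

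Next I would verify that the $S_{n-1}\times S_1$-action defined in \ref{U-character and its decomposition} fixes each of these basis vectors. Since $[x^{gg},y^{gg}]\in\I^U(M_k(F))$, the $g$-exponent variables $x_1^g,\ldots,x_{n-1}^g$ commute modulo $\I^U(M_k(F))$, so for any $\sigma\in S_{n-1}$ and $a\in\mathcal S$,
\[ \sigma\cdot\bigl(x_1^g\cdots x_{n-1}^g\, y_1^{a}\bigr)=x_{\sigma(1)}^g\cdots x_{\sigma(n-1)}^g\, y_1^{a}\equiv x_1^g\cdots x_{n-1}^g\, y_1^{a}\pmod{\I^U(M_k(F))}, \]
while the $S_1$-factor acts automatically trivially. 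Therefore $P^U_{n-1,1}(M_k(F))$ is a direct sum of $k^2-1$ copies of the trivial $S_{n-1}\times S_1$-module, whose character is exactly $\chi_{(n-1)}\otimes\chi_{(1)}$. Consequently $m_{(n-1),(1)}=k^2-1$, which gives the desired inequality (indeed with equality).

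There is no genuine obstacle here: the whole argument is a direct reading of the basis already produced in Section~\ref{Section:DifferentialIdentities} together with the commutativity identity $[x^{gg},y^{gg}]\in\I^U(M_k(F))$. The only point to state carefully is the identification of the basis with $k^2-1$ trivial $S_{n-1}\times S_1$-submodules, so that the whole module's character is exhibited and the multiplicity read off directly.
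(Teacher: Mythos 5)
Your proof is correct, and it takes a genuinely different route from the paper's. The paper applies the standard PI machinery: it takes the Young tableaux $T_\lambda$, $T_\mu$ of shapes $(n-1)$ and $(1)$, attaches to each $b\in\mathcal S$ the highest-weight polynomial $f_b=x^g\cdots x^g y^b$, checks by the evaluation $x=g$, $y=\sum_{a\in\mathcal S}a$ that the $f_b$ are linearly independent modulo $\I^U(\M)$, and concludes from the complete linearizations $e_{T_\lambda}e_{T_\mu}(x_1^g\cdots x_{n-1}^g x_n^b)$ that there are at least $k^2-1$ independent copies of $W_{(n-1),(1)}$. You instead make the module-theoretic observation that $P^U_{n-1,1}(\M)$ already \emph{is} a multiple of the trivial $S_{n-1}\times S_1$-module: the basis from the proofs of Propositions \ref{Theorem Id^U K=2}/\ref{Theorem Id^U K>2} has size $k^2-1$, and the identity $[x^{gg},y^{gg}]\in\I^U(\M)$ forces the $S_{n-1}$-action (and trivially the $S_1$-action) to fix each basis class, so $\chi^U_{n-1,1}(\M)=(k^2-1)\,\chi_{(n-1)}\otimes\chi_{(1)}$. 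This buys you the exact value $m_{(n-1),(1)}=k^2-1$ directly, whereas the paper only establishes the lower bound here and obtains equality later in Theorem \ref{UcocharacterTHEOREM} by comparing degrees against $c^U_{n-1,1}(\M)$. The paper's choice is uniform with its treatment of Lemma \ref{Lemma multiplicites k^2} and with the general cocharacter methodology; your observation is sharper for this particular case but relies on having the basis in hand, which the paper happens to provide.
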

\begin{proof}
	Let us consider the following tableaux
\[T_{\lambda}=\begin{array}{|c|c|c|c|}\hline
	1 & 2 & \cdots &n-1 \\ \hline
	\end{array}\ ,\quad T_{\mu}=\begin{array}{|c|}\hline
	n \\ \hline
	\end{array} \ .\]
 Then, the $U$-polynomials
\[f_b(x,y)= \underbrace{x^g \cdots x^g}_{n-1} y^b, \ \ \ b\in \mathcal{S},\]
 are obtained from the symmetrizers corresponding to the pair of tableaux $(T_\lambda, T_\mu)$ by identifying all the elements in the row of $\lambda$. It is easily checked that $f_b(x,y)$, $b\in \mathcal{S}$, are not $U$-identities for $\M$.
  Moreover, such $U$-polynomials are linearly independent modulo $\I^U(\M)$. In fact, suppose that
\[\sum_{1\leq i\leq k-1}\alpha_i f_{h_i}(x,y) + \sum_{\substack{1\leq l,j \leq k\\  l\neq j}}\beta_{lj} f_{e_{lj}}(x,y) \equiv 0 \ (\md \ \I^U(\M)).\]
	The evaluation $x=g$ and  $y=\displaystyle \sum_{a\in\mathcal S} a$, gives
\[\sum_{1\leq i\leq  k-1}\alpha_i h_i+ \sum_{\substack{1\leq l,j \leq k\\  l\neq j}} \beta_{lj} e_{lj}=0.\]
	Thus $\alpha_i=\beta_{lj}=0$, $1 \leq i \leq k-1$, $1\leq l, j \leq k$, $l\neq j$. As a consequence the $U$-polynomials $f_b$, $b\in \mathcal{S}$, are linearly independent modulo $\I^U(\M)$. For each $b\in \mathcal{S}$ let $e_{\lambda,\mu, b}(x_1,\dots,x_n):=e_{T_{\lambda}}e_{T_\mu}(x_{1}^{g}\cdots x_{n-1}^{g}x_n^{b})$ be the complete linearization of $f_b(x,y)$; it follows that the $k^2-1$ $U$-polynomials $e_{\lambda,\mu, b}(x_1,\dots,x_n)$ are linearly independent modulo $\I^U(\M)$. This implies that $ m_{(n-1),(1)}\geq k^2 -1$. 
\end{proof}

\begin{lemma}
	\label{Lemma multiplicites k^2}
	If $\lambda=(r)$ and $\mu=(n-r)$, $0\leq r \leq n-2$, then $m_{\lambda,\mu}\geq k^2$.
\end{lemma}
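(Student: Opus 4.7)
The plan is to apply the Young symmetrizer corresponding to the trivial $S_r\times S_{n-r}$-representation to the $k^2$ basis monomials of $P^U_{r,n-r}(\M)$ already produced in the proof of Proposition \ref{Theorem Id^U K>2}, and to show that the symmetrized polynomials remain linearly independent modulo $\I^U(\M)$. Since the image of such a symmetrizer coincides (up to the scalar $r!(n-r)!$) with the $\chi_{(r)}\otimes\chi_{(n-r)}$-isotypic component of $P^U_{r,n-r}(\M)$, whose dimension is precisely $m_{(r),(n-r)}$, this will yield the bound $m_{(r),(n-r)}\geq k^2$.

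Take $T_\lambda,T_\mu$ to be the single-row tableaux of shapes $(r)$ and $(n-r)$, so that $e_{T_\lambda}e_{T_\mu}=\sum_{(\sigma,\tau)\in S_r\times S_{n-r}}\sigma\tau$ is the pure symmetrizer. Recall from the proof of Proposition \ref{Theorem Id^U K>2} that (writing $y_t:=x_{r+t}$) the monomials
\begin{align*}
B_i &:= x_1^g \cdots x_r^g\, y_1^{h_i} \cdots y_{n-r}^{h_i}, \quad 1 \leq i \leq k-1,\\
B &:= x_1^g \cdots x_r^g\, y_1^{h_1} \cdots y_{n-r-1}^{h_1}\, y_{n-r}^{h_2},\\
B_{ij} &:= x_1^g \cdots x_r^g\, y_1^{h_i} \cdots y_{n-r-1}^{h_i}\, y_{n-r}^{e_{ij}}, \quad 1\leq i\leq k-1,\ 1\leq j\leq k,\ j\neq i,\\
B_{kl} &:= x_1^g \cdots x_r^g\, y_1^{h_{k-1}} \cdots y_{n-r-1}^{h_{k-1}}\, y_{n-r}^{e_{kl}}, \quad 1\leq l\leq k-1,
\end{align*}
form a basis of $P^U_{r,n-r}(\M)$ (so in particular they are linearly independent modulo $\I^U(\M)$), and the hypothesis $n-r\geq 2$ guarantees that all of them are well defined.

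The key observation is that, by Lemma \ref{multiplicationtablegivesidentitiesLEMMA}, the identity $x^ay^b-y^ax^b$ belongs to $\I^U(\M)$ for all $a,b\in\mathcal{M}$, which allows one to swap any pair of variable subindices while keeping the string of exponents in place. Iterating this, for every basis monomial $B_\ast$ above and every $(\sigma,\tau)\in S_r\times S_{n-r}$ we have $(\sigma,\tau)\cdot B_\ast\equiv B_\ast\pmod{\I^U(\M)}$, hence $e_{T_\lambda}e_{T_\mu}B_\ast\equiv r!(n-r)!\,B_\ast\pmod{\I^U(\M)}$. Consequently, if $\sum_\ast\alpha_\ast\,e_{T_\lambda}e_{T_\mu}B_\ast\in\I^U(\M)$ then $r!(n-r)!\sum_\ast\alpha_\ast B_\ast\in\I^U(\M)$, which by linear independence of the $B_\ast$ forces all $\alpha_\ast=0$.

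Thus the $k^2$ polynomials $e_{T_\lambda}e_{T_\mu}B_\ast$ are linearly independent elements of the trivial isotypic component of $P^U_{r,n-r}(\M)$, which proves $m_{(r),(n-r)}\geq k^2$. There is no real obstacle: the whole argument is routine given Proposition \ref{Theorem Id^U K>2}, the only point requiring some care being the claim $e_{T_\lambda}e_{T_\mu}B_\ast\equiv r!(n-r)!\,B_\ast\pmod{\I^U(\M)}$, which is an immediate consequence of the swap identity together with the fact that the $x$-block already has constant exponent~$g$ and, in each $B_\ast$, only one $y$-position differs in exponent from the others.
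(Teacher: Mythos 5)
Your argument is correct and takes a genuinely different, slicker route than the paper's. The paper constructs $k^2$ explicit highest weight polynomials $f_\ast(x,y)$ in identified variables (one per basis monomial of $P^U_{r,n-r}(\M)$) and then re-establishes their linear independence modulo $\I^U(\M)$ from scratch by evaluation; the complete linearizations then yield $k^2$ independent copies of the irreducible module. You instead observe that the $U$-identities $[x^{g},y^{g}]$, $[x^{g},y^{a}]$, $x^a y^b - y^a x^b$ ($a,b\in\mathcal S$) make the $S_r\times S_{n-r}$-action on every basis monomial $B_\ast$ of $P^U_{r,n-r}(\M)$ trivial modulo $\I^U(\M)$, so the one-row symmetrizer acts as the nonzero scalar $r!(n-r)!$; the linear independence then comes for free from the basis already produced in the codimension computation. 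This is more economical and in fact shows that the whole of $P^U_{r,n-r}(\M)$ is a trivial $k^2$-dimensional $S_r\times S_{n-r}$-module, i.e.\ $\chi^U_{r,n-r}(\M)=k^2\,\chi_{(r)}\otimes\chi_{(n-r)}$ exactly, which is what Theorem \ref{UcocharacterTHEOREM} ultimately asserts. One small slip: the basis monomials you list, and Proposition \ref{Theorem Id^U K>2} which you cite, apply only for $k\geq3$ (the monomial $B$ involves $h_2$, which does not exist when $k=2$). For $k=2$ you should instead invoke Proposition \ref{Theorem Id^U K=2} and its four basis monomials of $P^U_{r,n-r}(M_2(F))$; the trivial-action observation carries over verbatim, so the proof works for all $k\geq2$ with that correction.
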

\begin{proof}
	For $0\leq r \leq n-2$, let us consider the tableaux
\[T_{\lambda}=\begin{array}{|c|c|c|c|}\hline
	1 & 2 & \cdots &r \\ \hline
	\end{array}\ ,\quad T_{\mu}=\begin{array}{|c|c|c|c|}\hline
	r+1 & r+2 & \cdots &n \\ \hline
	\end{array}\ .\]
Suppose first $k=2$. We associate to the pair of tableaux $(T_\lambda,T_\mu)$ the following $U$-polynomials:
	\begin{align*}
f_1(x,y) &= \underbrace{x^g \cdots x^g}_{r} \ \underbrace{y^{h_1} y^{e_{12}} y^{e_{21}} y^{e_{12}} \cdots y^{e_{21}} y^{e_{12}}}_{n-r},\quad
f_2 (x,y)= \underbrace{x^g \cdots x^g}_{r} \ \underbrace{y^{h_1} y^{e_{21}} y^{e_{12}} y^{e_{21}} \cdots y^{e_{12}} y^{e_{21}}}_{n-r},\\
f_3(x,y)&= \underbrace{x^g \cdots x^g}_{r} \ \underbrace{y^{e_{12}} y^{e_{21}}\cdots y^{e_{12}} y^{e_{21}}}_{n-r},\quad
f_4(x,y)= \underbrace{x^g \cdots x^g}_{r} \ \underbrace{y^{e_{21}} y^{e_{12}} \cdots y^{e_{21}} y^{e_{12}}}_{n-r},
\end{align*}
if $n-r$ is even, or
\begin{align*}
f_1(x,y) &= \underbrace{x^g \cdots x^g}_{r} \ \underbrace{y^{h_1} y^{e_{12}} y^{e_{21}} \cdots y^{e_{12}} y^{e_{21}}}_{n-r},\quad
f_2 (x,y)= \underbrace{x^g \cdots x^g}_{r} \ \underbrace{y^{h_1} y^{e_{21}} y^{e_{12}} \cdots y^{e_{21}}y^{e_{12}}}_{n-r},\\
f_3(x,y)&= \underbrace{x^g \cdots x^g}_{r}  \ \underbrace{y^{e_{12}} y^{e_{21}} y^{e_{12}} \cdots y^{e_{21}} y^{e_{12}}}_{n-r},\quad
f_4(x,y)= \underbrace{x^g \cdots x^g}_{r} \ \underbrace{y^{e_{21}} y^{e_{12}} y^{e_{21}} \cdots y^{e_{12}} y^{e_{21}}}_{n-r},
\end{align*}
	if $n-r$ is odd. These $U$-polynomials are obtained from the symmetrizers corresponding to the pair of tableaux $(T_\lambda,T_\mu)$ by identifying all the elements in the row of $T_\lambda$ and $T_\mu$, respectively. Clearly they do not vanish on $M_2(F)$.
 Also, they are linearly independent modulo  $\I^U(M_2(F))$. In fact, suppose that
 \[ \alpha_1 f_1(x,y) + \alpha_2 f_2(x,y) +\alpha_3 f_3(x,y) +\alpha_4 f_4(x,y) \equiv 0 \ (\md \I^U(M_2(F))). \]
 Then if we evaluate $x=g$ and $y= h_1 +  e_{12}+ e_{21}$, we get either $\alpha_1 e_{12}-\alpha_2 e_{21}+ \alpha_3 e_{11}+ \alpha_4 e_{22}=0$ if $n-r$ is even, or   $\alpha_1 e_{11}-\alpha_2 e_{22}+ \alpha_3 e_{12}+ \alpha_4 e_{21}=0$ if $n-r$ is odd. Thus $\alpha_1=\alpha_2=\alpha_3=\alpha_4=0$ in both cases. This implies that $f_i(x,y)$, $1\leq i\leq 4$, are linearly independent modulo $\I^U(M_2(F))$. 
 As in \ref{Lemma multiplicities k^2-1}, this implies that $m_{(r), (n-r)}\geq 4$, for $0\leq r \leq n-2$.
	
	Now let $k>2$. Then we consider the following $U$-polynomial associated to the pair of tableaux $(T_\lambda,T_\mu)$:
	\begin{align*}
		f_{i,j}(x,y) &= \underbrace{x^g \cdots x^g}_{r} \ \underbrace{y^{h_i} \cdots y^{h_i} y^{e_{ij}}}_{n-r},\quad
		f_{k,l} (x,y)= \underbrace{x^g \cdots x^g}_{r} \ \underbrace{y^{h_{k-1}}  \cdots y^{h_{k-1}} y^{e_{kl}}}_{n-r},\\
		g_m(x,y) &=\underbrace{x^g \cdots x^g}_{r} \ \underbrace{y^{h_m} \cdots y^{h_m} }_{n-r},\quad
		p (x,y)= \underbrace{x^g \cdots x^g}_{r} \ \underbrace{y^{h_1}  \cdots y^{h_1} y^{h_2}}_{n-r},
	\end{align*}
	$1\leq i,j,l\leq k$, $i\neq j$, $l\neq k$, $1\leq m\leq k-1$. These $U$-polynomials are obtained from the symmetrizers corresponding to the pair of tableaux $(T_\lambda,T_\mu)$ by identifying all the elements in the row of $T_\lambda$ and $T_\mu$, respectively. Also, it is clear that $f_{i,j}(x,y),$ $g_m(x,y)$, $p(x,y)$, $1\leq i,j\leq k$, $i\neq j$, $1\leq m\leq k-1$, are not $U$-identities of $\M$. Next, we shall prove that they are linearly independent modulo $\I^U(\M)$. Suppose that
\[\sum_{\substack{1\leq i,j \leq k\\  i\neq j}}\alpha_{i,j} f_{i,j}(x,y) + \sum_{1\leq m \leq k-1} \beta_m g_m (x,y)+ \gamma p (x,y) \equiv 0 \ (\md \ \I^U(\M)).\]
	If we evaluate $x=g$ and $y=\displaystyle \sum_{a\in\mathcal S} a$, then we get
\[\sum_{\substack{1\leq i \leq k-1\\ 1\leq j \leq k\\  i\neq j}}\alpha_{i,j} e_{ij} + (-1)^{n-r-1}\sum_{1\leq j \leq k-1} \alpha_{kj} e_{kj}+\sum_{1\leq m \leq k-1} \beta_m h_m + \gamma e_{22}=0.\]
	Thus it follows that $\alpha_{i,j}=\beta_m=\gamma=0$, $1\leq i,j\leq k$, $i\neq j$, $1\leq m\leq k-1$. Hence the $U$-polynomials $f_{i,j}(x,y),$ $g_m(x,y)$, $p(x,y)$, $1\leq i,j\leq k$, $i\neq j$, $1\leq m\leq k-1$, are linearly independent modulo $\I^U(\M)$. Again, as in \ref{Lemma multiplicities k^2-1}, this implies that $m_{(r),(n-r)}\geq k^2$ for $0\leq r \leq n-2$.
\end{proof}

At this point, we can prove the main result of this section.

\begin{theorem}\label{UcocharacterTHEOREM}
Let $n\geq 1$ and $0\leq r\leq n$. The $(n,r)$th $U$-cocharacter of $\M$, $k\geq 2$, is 
\[\chi^U_{(n;r)}(\M)=\displaystyle\begin{cases}
	\displaystyle\binom{n}{k}\,\chi_{(n)}\otimes\chi_{\emptyset}  & \mbox{ if } r=n, \\
	\displaystyle\binom{n}{k}(k^2-1)^2\,\chi_{(n-1)}\otimes\chi_{(1)}  & \mbox{ if } r=n-1, \\
	\displaystyle\binom{n}{k}(k^2-1)^{n-r}k^2\,\chi_{(r)}\otimes\chi_{(n-r)} & \mbox{ if } n-2\geq r\geq 0.
	\end{cases}\]
\end{theorem}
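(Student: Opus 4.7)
The plan is to combine the codimension computations from Propositions \ref{Theorem Id^U K=2} and \ref{Theorem Id^U K>2} with the explicit multiplicity bounds from Lemmas \ref{Lemma multiplicities k^2-1} and \ref{Lemma multiplicites k^2}, and then invoke Formula \eqref{formula(chi)}. Recall that the decomposition \eqref{S_r X S_n-r character} gives
\[ c^U_{r,n-r}(\M) = \dim_F P^U_{r,n-r}(\M) = \sum_{(\lambda,\mu)\vdash(r,n-r)} m_{\lambda,\mu}\, f^{\lambda}f^{\mu}, \]
where $f^{\nu}$ denotes the degree of the irreducible $S_p$-character $\chi_\nu$. The key observation is that the pair $(\lambda,\mu)=((r),(n-r))$ corresponds to the trivial $S_r\times S_{n-r}$-character, which has degree $1$, so its multiplicity $m_{(r),(n-r)}$ contributes directly to the codimension.

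First I would dispatch the case $r=n$: from the proofs of Propositions \ref{Theorem Id^U K=2} and \ref{Theorem Id^U K>2} we have $c^U_{n,0}(\M)=1$, so $P^U_{n,0}(\M)$ is one-dimensional and generated modulo $\I^U(\M)$ by $x_1^g\cdots x_n^g$. Since $[x^g,y^g]\in\I^U(\M)$ this monomial is fixed by $S_n$, so the character is $\chi_{(n)}$ and hence $m_{(n),\emptyset}=1$. Next, for $r=n-1$, the same propositions give $c^U_{n-1,1}(\M)=k^2-1$, while Lemma \ref{Lemma multiplicities k^2-1} shows $m_{(n-1),(1)}\geq k^2-1$. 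Since the degree of $\chi_{(n-1)}\otimes\chi_{(1)}$ is $1$, the inequality
\[ k^2-1 \leq m_{(n-1),(1)} \leq \sum_{(\lambda,\mu)\vdash(n-1,1)} m_{\lambda,\mu}f^{\lambda}f^{\mu} = c^U_{n-1,1}(\M) = k^2-1 \]
forces $m_{(n-1),(1)}=k^2-1$ and $m_{\lambda,\mu}=0$ for every other pair.

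The case $0\leq r\leq n-2$ follows the same pattern: we have $c^U_{r,n-r}(\M)=k^2$ by Propositions \ref{Theorem Id^U K=2} and \ref{Theorem Id^U K>2}, and $m_{(r),(n-r)}\geq k^2$ by Lemma \ref{Lemma multiplicites k^2}. Since $\deg\chi_{(r)}\otimes\chi_{(n-r)}=1$, the squeezing argument again forces $m_{(r),(n-r)}=k^2$ and all other multiplicities to vanish. Consequently $\chi^U_{r,n-r}(\M)$ equals $c^U_{r,n-r}(\M)\,\chi_{(r)}\otimes\chi_{(n-r)}$ in each of the three regimes. Substituting into Formula \eqref{formula(chi)} and using $\binom{n}{r}(k^2-1)^{n-r}m_{(r),(n-r)}$ as the coefficient of $\chi_{(r)}\otimes\chi_{(n-r)}$ in $\chi^U_{(n;r)}(\M)$ yields the three cases of the statement.

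I do not foresee any real obstacles since the heavy lifting (basis-of-$P^U_{r,n-r}(\M)$ count and construction of linearly independent highest-weight vectors for the trivial representations) has already been done in the preceding results; the only thing to verify carefully is that the lower bounds match the total dimension so that no other partitions can contribute. The argument is short and uniform in $k\geq 2$, handling $k=2$ and $k\geq 3$ simultaneously because both sets of preceding codimension counts and multiplicity lemmas are available.
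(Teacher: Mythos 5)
Your proposal is correct and follows essentially the same route as the paper: compute $c^U_{r,n-r}(\M)$ from Propositions \ref{Theorem Id^U K=2} and \ref{Theorem Id^U K>2}, obtain the lower bounds $m_{(n-1),(1)}\geq k^2-1$ and $m_{(r),(n-r)}\geq k^2$ from Lemmas \ref{Lemma multiplicities k^2-1} and \ref{Lemma multiplicites k^2}, and squeeze using $c^U_{r,n-r}(\M)=\sum m_{\lambda,\mu}\deg\chi_\lambda\deg\chi_\mu$ to force all other multiplicities to vanish, then feed the result into Formula \eqref{formula(chi)}. The only cosmetic difference is your direct argument for $r=n$ (noting that the unique class modulo $\I^U(\M)$ is $S_n$-fixed), where the paper instead exhibits a nonvanishing tableau-associated element and squeezes, but the substance is identical.
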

\begin{proof} By formula \eqref{formula(chi)} we have $\chi^U_{(n;r)}(\M)=\sum_{(\lambda,\mu)\vdash(r,n-r)} \binom{n}{k}(k^2-1)^{n-r} m_{\lambda,\mu} \ \chi_\lambda \otimes \chi_\mu$, so it is enough to compute the multiplicities $m_{\lambda,\mu}$ of the $S_r \times S_{n-r}$ character $\displaystyle \chi_{r,n-r}^U(\M)$ of $P^U_{r,n-r}(\M)$. We show that
\[m_{\lambda,\mu}=\begin{cases}
	1  & \mbox{ if } r=n\text{ and }(\lambda,\mu)=((n), \emptyset), \\
	k^2 -1  & \mbox{ if } r=n-1\text{ and }(\lambda,\mu)=((n-1), (1)), \\
	k^2 & \mbox{ if } \ n-2\geq r\geq 0\text{ and }(\lambda,\mu)=((r), (n-r)),\\
	0 & \mbox{ otherwise}.
	\end{cases}\]
To this end we shall use that $ c^U_{r,n-r}(\M)=\deg \chi_{r,n-r}^U(\M)=  \sum_{(\lambda,\mu)\vdash(r,n-r)} m_{\lambda,\mu} \ d_\lambda d_\mu $, where $d_\lambda=\deg \chi_\lambda$ and $d_\mu=\deg \chi_\mu$.
	
First notice that if $r=n$, $\lambda=(n)$ and $\mu=\emptyset$, then $f(x_1,\dots,x_n)=x_{1}^{g} \cdots x_{n}^{g}$ is a $U$-polynomial associated to the pair of tableaux
\[T_{\lambda}=\begin{array}{|c|c|c|c|}\hline
	1 & 2 & \cdots &n \\ \hline
	\end{array}\, ,\, \quad T_{\mu}=\emptyset\]
	that is not an $U$-identity of $\M$. Thus $m_{(n),\emptyset}\geq 1$. By Propositions \ref{Theorem Id^U K=2} and \ref{Theorem Id^U K>2} we have that
	\[
	1=c_{n,0}(\M)\geq d_{(n)} d_{\emptyset}=1.
	 \]
	Hence  $m_{(n),\emptyset}=1$.
 Now assume that $r=n-1$. By Propositions \ref{Theorem Id^U K=2}, \ref{Theorem Id^U K>2} and Lemma \ref{Lemma multiplicities k^2-1} we get
	\[ k^2 - 1=c_{n-1,1}(\M)\geq (k^2 -1)d_{(n-1)} d_{(1)}=k^2-1. \]
	Thus $m_{(n-1),(1)}=k^2 -1$.
 Finally, suppose $\lambda=(r)$ and $\mu=(n-r)$, $0\leq r \leq n-2$. By Propositions \ref{Theorem Id^U K=2}, \ref{Theorem Id^U K>2} and Lemma \ref{Lemma multiplicites k^2}, it follows that
	 \[ k^2=c^U_{r,n-r}(\M)\geq k^2 d_{(r)}d_{(n-r)}=k^2.\] Thus $m_{(n-r),(r)}=k^2$
  and we are done.
\end{proof}

\textbf{Acknowledgements:} We are grateful to Professor Alberto Elduque for pointing us to absolute irreducibility for the proof of Theorem \ref{fullenvelopingalgebraTHEOREM} and for detecting a mistake in an earlier version of Lemma \ref{maximalideal}.

\end{document}